\documentclass[reqno]{amsart} 

\usepackage{blindtext}
\usepackage[margin=1.5in]{geometry}
\usepackage[dvipsnames]{xcolor}
\usepackage{tikz-cd}
\usepackage[utf8]{inputenc} 
\usepackage[english]{babel}
\usepackage{graphicx} 
\usepackage{amsmath,amssymb}

\usepackage{amsthm} 
\usepackage{color} 
\thispagestyle{plain}
\usepackage[T1]{fontenc} 
\usepackage{etoolbox} 
\usepackage{mathtools}

\usepackage{blindtext} 
\usepackage{enumerate,comment} 
\usepackage{enumerate,amssymb, xcolor,mathrsfs} 
\usepackage[colorlinks]{hyperref} 

\newtheorem{mainthm}{Theorem} 
\newtheorem{maincor}[mainthm]{Corollary}

\usepackage{accents}

\newtheorem{thm}{Theorem}[section]
\newtheorem*{theorem*}{Theorem}
\newtheorem{cor}[thm]{Corollary}
\newtheorem{lem}[thm]{Lemma}
\newtheorem{prop}[thm]{Proposition}

\theoremstyle{definition}
\newtheorem{defn}[thm]{Definition}
\theoremstyle{remark}

\numberwithin{equation}{section}

\theoremstyle{definition}

\newcommand{\calH}{\mathcal{H}}

\newcommand{\calC}{\mathcal{C}}
\newcommand{\calB}{\mathcal{B}}

\newcommand{\calM}{\mathcal{M}}
\newcommand{\calU}{\mathcal{U}}
\newcommand{\calL}{\mathcal{L}}

\newcommand{\calK}{\mathcal{K}}
 
\newcommand{\calR}{\mathcal{R}}

\newcommand{\calQ}{\mathcal{Q}}
\newcommand{\calI}{\mathcal{I}}
\newcommand{\calT}{\mathcal{T}}

\newcommand{\Z}{\mathcal Z}

\makeatletter
\def\bign#1{\mathclose{\hbox{$\left#1\vbox to8.5\p@{}\right.\n@space$}}\mathopen{}}
\makeatother

\usepackage{graphicx}

\let\svthefootnote\thefootnote
\newcommand\freefootnote[1]{%
  \let\thefootnote\relax%
  \footnotetext{#1}%
  \let\thefootnote\svthefootnote%
}

\hypersetup{
    colorlinks,
    citecolor=black,
    filecolor=black,
    linkcolor=black,
    urlcolor=black
}

\begin{document}  
\title{Uniqueness for embeddings of nuclear $C^*$-algebras into type II$_{1}$ factors}  
\author{Shanshan Hua}
\address{Mathematisches Institut, Fachbereich Mathematik und
Informatik der Universität Münster, Einsteinstrasse 62, 48149 Münster, Germany.} 
\email{shua@uni-muenster.de} 
\author{Stuart White} 
\address{Mathematical Institute, University of Oxford, Andrew Wiles Building, Radcliffe Observatory Quarter, Woodstock Road, Oxford, OX2 6GG, United Kingdom}
\email{stuart.white@maths.ox.ac.uk}
\thanks{The first author was funded by the Deutsche Forschungsgemeinschaft (DFG, German Research Foundation) under Germany’s Excellence Strategy EXC 2044-390685587, Mathematics Münster: Dynamics–Geometry–Structure, by the SFB 1442 of the DFG. The second author was supported by the Engineering and Physical Sciences Research Council [EP/X026647/1]. For the purpose of open access, the authors have applied a CC-BY license to any author accepted manuscript arising from this submission. } 
\maketitle 

\begin{abstract} 
     Let $A$ be a separable, unital and exact $C^*$-algebra satisfying the universal coefficient theorem. We prove uniqueness theorems up to unitary conjugacy for unital, full and nuclear maps from $A$ into norm-ultraproducts of finite von Neumann factors: any two such maps agreeing on traces and total $K$-theory are unitarily equivalent. There are two consequences. Firstly if one takes the factors to be a sequence $(M_{k_n})_{n}$ of matrix algebras, we obtain a uniqueness result for quasidiagonal approximations of $A$. Secondly, when $(\mathcal M,\tau_{\calM})$ is a II$_1$ factor, a pair  $\phi,\psi:A\to\mathcal M$ of unital, injective and nuclear maps are norm approximately unitarily equivalent if and only if $\tau_{\calM}\circ\phi=\tau_{\calM}\circ\psi$.

     The main strategy is to use Schafhauser's classification of lifts along the trace-kernel extension from \cite{Schafhauser.Annals}.  Since our codomains may lack the tensorial absorption properties needed in \cite{Schafhauser.Annals}, the main new ingredient is a suitable $KK$-uniqueness theorem tailored to our situation. This is inspired by $KK$-uniqueness theorems of Loreaux, Ng and Sutradhar (\cite{ess-codim}).
\end{abstract} 

\section*{Introduction}

\subsection*{Uniqueness of maps into II$_1$ factors} The Gelfand--Naimark--Segal (GNS) construction (\cite{GN,Segal:GNS}) is a foundational result in the subject of operator algebras, showing that $C^*$-algebras have faithful representations as bounded operators on a Hilbert spaces. Precisely, for any $C^*$-algebra $A$ there exist a Hilbert space $\mathcal H$, which can be taken to be the infinite dimensional separable Hilbert space when $A$ is separable, and an injective $^*$-homomorphism $A\hookrightarrow\mathcal B(\mathcal H)$.  Over 30 years later, Voiculescu's non-commutative Weyl--von Neumann theorem (\cite[Theorem 1.5]{Voiculescu}; see also Arveson's account in \cite{Arveson:DJM}), gave a uniqueness counterpart to the GNS construction. Famously, any two essential representations of a $C^*$-algebra on a Hilbert space are approximately unitarily equivalent (and moreover the approximate unitary equivalence can be taken relative to the compacts).  Hadwin was working on related topics in his PhD thesis (\cite{Hadwin:thesis}) obtaining the following version of the uniqueness statement in terms of ranks (\cite[Theorem 2.5]{Hadwin:TAMS}).

\begin{theorem*}[Hadwin's formulation of Voiculescu's theorem] Let $A$ be a $C^*$-algebra and let $\mathcal H$ be a Hilbert space. Let $\phi,\psi\colon A\to\mathcal B(\mathcal H)$ be non-degenerate $^*$-homomorphisms. Then $\phi$ and $\psi$ are approximately unitarily equivalent if and only if $\phi(a)$ and $\psi(a)$ have the same rank in $\mathcal B(\mathcal H)$ for all $a\in A$.
\end{theorem*}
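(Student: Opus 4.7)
The forward direction is easy. Rank is a unitary invariant, and the set $\{b\in\calB(\calH):\mathrm{rank}(b)\le k\}$ is norm closed for each fixed $k$, so rank is lower semicontinuous in norm. Combined with the symmetry of approximate unitary equivalence, this forces $\mathrm{rank}(\phi(a))=\mathrm{rank}(\psi(a))$ for every $a\in A$ (and every cardinal $k$, handling infinite ranks by ruling out each finite value in turn).

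For the reverse direction my plan is to decompose each representation into a ``compact part'' and an ``essential part,'' match these pieces separately, and then invoke Voiculescu's theorem on the essential part. First, since $\ker\phi=\{a:\mathrm{rank}(\phi(a))=0\}$, the rank hypothesis gives $\ker\phi=\ker\psi$, and one reduces to the faithful case by passing to $A/\ker\phi$. Next, set $\calI=\{a\in A:\phi(a)\in\calK(\calH)\}$. I would show that $\calI$ coincides with the analogous ideal for $\psi$ by observing that $\phi(a)\in\calK(\calH)$ if and only if $\phi(f_{\varepsilon}(|a|))$ has finite rank for every $\varepsilon>0$, where $f_{\varepsilon}$ is a continuous cut-off vanishing on $[0,\varepsilon/2]$; since $f_{\varepsilon}(|a|)\in A$, the rank hypothesis transfers compactness between $\phi$ and $\psi$. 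The closed $\phi$-invariant subspace $\calH_c\defeq\overline{\phi(\calI)\calH}$ then yields a decomposition $\phi=\phi_c\oplus\phi_e$ with $\phi_e$ factoring through $A/\calI$ and with image containing no non-zero compact operator, and the same decomposition works for $\psi$.

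On the compact part, $\phi_c|_{\calI}$ lands in $\calK(\calH_c)$, so by the standard structure of representations into the compacts it decomposes as a direct sum of irreducibles with various multiplicities; the rank data on elements of $\calI$ pin down these multiplicities, and I would use this to produce an honest unitary equivalence $\phi_c\cong\psi_c$ after identifying the two compact subspaces. On the essential part, $\phi_e$ and $\psi_e$ are faithful essential representations of $A/\calI$ on Hilbert spaces of matching dimension (which is detected by the rank of $\phi(a)$ for a suitably chosen $a\notin\calI$), and Voiculescu's theorem applied to faithful essential representations of the same algebra on the same Hilbert space yields $\phi_e\sim_{au}\psi_e$. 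Reassembling gives $\phi\sim_{au}\psi$.

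The main obstacle I anticipate is in the compact part: translating the numerical rank data on $\calI$ into a precise matching of spectral multiplicities for the direct-sum decomposition is delicate when $\calI$ has infinitely many inequivalent irreducibles with possibly transfinite multiplicities, and one has to choose positive test elements of $\calI$ whose ranks detect every isotypic component. A cleaner route that avoids this bookkeeping altogether is to invoke Voiculescu's absorption theorem, $\sigma\oplus\tau\sim_{au}\sigma$ for any faithful essential $\sigma$: after embedding $\psi$ as a summand into an ambient Hilbert space alongside $\phi$ (or an amplification thereof), absorption yields $\phi\sim_{au}\phi\oplus\psi\sim_{au}\psi$ directly, provided the rank conditions ensure the relevant dimensions match.
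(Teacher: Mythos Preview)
The paper does not give a proof of this statement; it is quoted in the introduction as background and attributed to \cite[Theorem~2.5]{Hadwin:TAMS}. So there is no proof in the paper to compare your proposal against.

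On its own merits, your outline is the standard reduction and is essentially correct when $A$ and $\calH$ are separable. But the theorem as stated carries no separability hypotheses, and Hadwin's cited paper is explicitly about the \emph{nonseparable} case. The step where you invoke ``Voiculescu's theorem applied to faithful essential representations of the same algebra on the same Hilbert space'' is precisely what needs proof in that generality: Voiculescu's original theorem assumes separable $A$ acting on separable $\calH$, and Hadwin's contribution is exactly the extension beyond this. Your absorption alternative $\phi\sim_{au}\phi\oplus\psi\sim_{au}\psi$ has the same circularity. So as written, your argument establishes only the separable case, where it indeed collapses to Voiculescu plus elementary structure theory for representations into the compacts.

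There is also a smaller technical wrinkle in the decomposition: it is not automatic that $\phi_e$ has image disjoint from the compacts. If $A/\calI$ admits finite-dimensional irreducibles that occur in $\phi_e$ with finite multiplicity (for instance $A=\widetilde{\calK}$ and $\phi=\mathrm{id}\oplus\pi$ with $\pi$ the one-dimensional character), then $\phi_e$ will have nonzero compact image on that summand, and a direct appeal to Voiculescu fails there. This can be repaired by iterating the split or by absorbing such finite-dimensional pieces into the ``compact part'' and matching them via the rank data, but it needs to be said.
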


The building blocks of von Neumann algebras are \emph{factors} (those von Neumann algebras with trivial centre). Murray and von Neumann's fundamental work divides the factors into types based on the behaviour of their projections. The type I factors are those with minimal non-zero projections. These are all of the form $\mathcal B(\mathcal H)$ for some Hilbert space $\mathcal H$. Type II$_1$ factors have the property that all projections are finite, but have no minimal projections. These have a continuous dimension range measured by a trace. The II$_\infty$ factors have both infinite and finite projections, but again no minimal projections. These are always von Neumann algebra tensor products of a type II$_1$ factor with an infinite type I factor. Finally, the type III factors are characterised by the property that all non-zero projections are infinite (and equivalent in the separable predual case). This paper is in a line of work instigated by Ding and Hadwin in their 2005 article \cite{DingHadwin} (which contains the results from Ding's 1993 PhD thesis \cite{Ding:Thesis} supervised by Hadwin), which asks what happens when $\mathcal B(\mathcal H)$ is replaced by another von Neumann algebra $\mathcal M$, and in particular with a II$_1$ factor?  A special case of our main result is as follows (the hypotheses, particularly the universal coefficient theorem (UCT), will be discussed later in the introduction). This will be proved as Theorem \ref{II1_factor}. 

\begin{mainthm} 
\label{II1_factor:intro} 
Let $A$ be a separable, unital and nuclear $C^*$-algebra satisfying Rosenberg and Schoquet's universal coefficient theorem and let $\calM$ be a II$_{1}$-factor with trace $\tau_{\calM}$. Let $\phi, \psi: A\rightarrow \calM$ be unital faithful $^*$-homomorphisms such that $\tau_{\calM} \circ \phi = \tau_{\calM} \circ \psi$, then there exists a sequence of unitaries $(u_{n})_{n}$ in $\calM$ such that 
\begin{equation}\label{II1_factor.1} 
\|u_n \phi(a)u_n^*-\psi(a)\|\to 0, \quad a\in A. 
\end{equation} 
\end{mainthm}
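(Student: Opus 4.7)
The plan is to reformulate approximate unitary equivalence in $\calM$ as genuine unitary equivalence in the norm ultrapower $\prod_\omega \calM$, and to analyse the pair $(\phi,\psi)$ as lifts along the \emph{trace-kernel extension}
\[
0 \longrightarrow J_\calM \longrightarrow \prod\nolimits_\omega \calM \longrightarrow \calM^\omega \longrightarrow 0,
\]
where $J_\calM$ is the trace-kernel ideal (sequences $(x_n)$ with $\tau_\calM(x_n^*x_n)\to 0$ along $\omega$) and $\calM^\omega$ is the tracial ultrapower, itself a II$_1$ factor.

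First I would align $\phi$ and $\psi$ modulo $J_\calM$. Since $\tau_\calM\circ\phi = \tau_\calM\circ\psi$, the von Neumann closures of $\phi(A)$ and $\psi(A)$ inside $\calM^\omega$ are trace-preservingly isomorphic copies of $\pi_\tau(A)''$. Uniqueness of trace-preserving embeddings of a finite von Neumann algebra into a II$_1$ factor (a Murray--von Neumann--style argument) then produces a unitary $v \in \calM^\omega$ conjugating the image of $\phi$ onto that of $\psi$. Lifting $v$ to a unitary $V \in \prod_\omega\calM$ and replacing $\phi$ by $V\phi V^*$, one reduces to the case where $\phi,\psi$ are two nuclear $^*$-homomorphisms $A \to \prod_\omega \calM$ inducing the same quotient map to $\calM^\omega$.

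Second, I would invoke a Schafhauser-style classification of lifts along the trace-kernel extension (\cite{Schafhauser.Annals}): under nuclearity and UCT, two nuclear lifts of a common quotient map that agree on the relevant $KK$ / total-$K$-theory invariant are unitarily equivalent in $\prod_\omega\calM$. A standard reindexing manoeuvre then converts unitary equivalence in the ultrapower into the desired approximating sequence $(u_n) \subset \calM$ with $\|u_n\phi(a)u_n^* - \psi(a)\|\to 0$.

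The principal obstacle is verifying that the two lifts agree on the required $KK$-invariant. Schafhauser's original theorem leverages strong tensorial absorption of the codomain ($\calZ$- or $\calO_2$-stability), which a general II$_1$ factor $\calM$ does not enjoy. The key new ingredient (flagged in the abstract) is therefore a bespoke $KK$-uniqueness theorem, adapted from Loreaux--Ng--Sutradhar \cite{ess-codim}, showing that for maps into $\prod_\omega\calM$ the combination of nuclearity, fullness, and agreement on traces is enough to collapse the relative $KK$-obstruction (the $K$-theory of a single II$_1$ factor being governed by $\tau_\calM$). This is the technical heart of the argument, and is where I would expect the greatest difficulty.
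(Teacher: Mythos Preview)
Your outline matches the paper's strategy closely: pass to the norm ultrapower $\calM_\omega$, use the trace--kernel extension to reduce to a Cuntz pair over $J_\calM$, and then apply a bespoke $KK$/$KL$-uniqueness theorem (the paper's Theorem~\ref{KL-uniqueness}, built on the $K_1$-injectivity result Corollary~\ref{RR0-K1-inj}) in place of Schafhauser's $\calQ$-stable version. You have also correctly located the technical heart of the argument.

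One point needs correction. Your justification for aligning $\phi$ and $\psi$ in $\calM^\omega$ --- ``uniqueness of trace-preserving embeddings of a finite von Neumann algebra into a II$_1$ factor (a Murray--von Neumann--style argument)'' --- is not valid in general. Jung's theorem (cited in the paper's introduction) shows that such uniqueness \emph{characterises} hyperfiniteness of the embedded algebra; for a general finite von Neumann algebra it fails. The correct input here is Connes' theorem: nuclearity of $A$ forces $\pi_\tau(A)''$ to be hyperfinite, and it is this (packaged in the paper as \cite[Proposition 1.1]{Schafhauser.Annals}) that gives the required $2$-norm approximate unitary equivalence of $q_B\circ\phi$ and $q_B\circ\psi$. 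This is not a structural problem for your plan --- the fix is exactly the tool you need --- but the attribution to Murray--von Neumann is misleading and would fail as stated.

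A secondary clarification: in your final paragraph you describe the new $KK$-uniqueness theorem as ``collapsing the relative $KK$-obstruction''. In the paper these are two separate steps. The vanishing of $[\phi,\psi]_{KL_{\mathrm{nuc}}(A,J)}$ is a $K$-theory computation (via the universal multicoefficient theorem and the fact that $K_*(\calM;\mathbb Z/n\mathbb Z)=0$ for a II$_1$ factor), carried out after separabilisation. The $KK$-uniqueness theorem is the implication \emph{from} vanishing $KL_{\mathrm{nuc}}$ \emph{to} proper asymptotic unitary equivalence, and this is where the Loreaux--Ng--Sutradhar machinery and the new relatively-purely-large-ideal construction (Theorem~\ref{posi_ideal}) enter. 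Keeping these two steps distinct will make the argument cleaner.
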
 

The previous state of the art result in this direction is the following theorem of Hadwin, Li and Liu (which is covered by \cite[Theorem 5]{LHL:OM}) building on Ding and Hadwin's result for approximately homogeneous (AH) algebras from \cite{DingHadwin}.  This encompasses domains which are inductive limits of type I $C^*$-algebras (those whose bidual is a type I von Neumann algebra). Every subhomogeneous $C^*$-algebra (those whose irreducible representations have uniformly bounded finite dimension) is type I, and hence the class of approximately subhomogeneous (ASH) algebras consisting of inductive limits of subhomogenous $C^*$-algebras is covered by the Hadwin--Li--Liu theorem.

\begin{theorem*}[Ding--Hadwin, Hadwin--Li--Liu; Uniqueness for maps from inductive limits of type I $C^*$-algebras to II$_1$ factors]
    Let $A$ be a separable and unital $C^*$-algebra which is an inductive limit of type I $C^*$-algebras (such as an ASH algebra) and let $\mathcal M$ be a II$_1$-factor.  Any two unital $^*$-homomorphisms $\phi,\psi\colon A\to\mathcal M$ with $\tau_\mathcal M\circ\phi=\tau_\mathcal M\circ\psi$ are approximately unitarily equivalent in the norm topology. 
\end{theorem*}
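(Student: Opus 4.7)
My plan is to proceed in two stages: first establish the result for subhomogeneous $A$ by direct spectral and functional-calculus arguments inside the II$_1$ factor $\mathcal M$, then bootstrap to the ASH case via a one-sided intertwining argument.

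For the subhomogeneous case, since an $n$-subhomogeneous unital $C^*$-algebra embeds into some $C(X,M_n)$ for a compact metric $X$, I would first handle the two extreme special cases. For commutative $A=C(X)$, a unital $^*$-homomorphism $\phi\colon A\to\mathcal M$ is equivalent data to a projection-valued measure on $X$ whose pushforward under $\tau_{\mathcal M}$ is the probability measure $\mu_\phi$ determined by $\tau_{\mathcal M}\circ\phi$; the spectral theorem in $\mathcal M$ (together with the fact that $\mathcal M$ is diffuse and factorial) lets one conjugate any two diffuse commutative subalgebras with the same spectral measure, so $\mu_\phi=\mu_\psi$ gives immediate unitary equivalence, and the atomic case reduces to matching traces of spectral projections. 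For the full matrix case $A=M_n$, all unital embeddings $M_n\hookrightarrow\mathcal M$ are unitarily equivalent already without any trace hypothesis, since the system of matrix units can always be transported by a unitary inside the factor.

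To combine these into the homogeneous case $A=C(X,M_n)\cong C(X)\otimes M_n$, I would approximate elements of $A$ uniformly by finite sums $\sum_i f_i\otimes e_{ij}$ using a partition of unity on $X$ and a fixed system of matrix units, apply the commutative step to the ``diagonal'' abelian subalgebra $C(X)\otimes 1_n$ (whose trace data still agrees), and then use a second unitary to align the transported matrix units. For genuinely subhomogeneous $A$, I would stratify the primitive ideal space of $A$ by the dimension of the irreducible representation and glue the local unitaries by a partition of unity argument on $\widehat A$, which obliges one to settle for an \emph{approximate} rather than exact conjugacy. The main technical obstacle is precisely this gluing step: the local unitaries on overlapping strata must be made compatible with the boundary identifications inside $A$, and one needs fine enough control on traces of small spectral projections in $\mathcal M$ to guarantee the sizes of the matrix-unit systems one is patching together actually agree.

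Finally, to pass from subhomogeneous to ASH, write $A=\varinjlim A_m$ with each $A_m$ unital subhomogeneous. For any finite $F\subset A$ and $\varepsilon>0$, choose $m$ and $F'\subset A_m$ with every $a\in F$ within $\varepsilon/3$ of some element of $F'$; since $\tau_{\mathcal M}\circ\phi|_{A_m}=\tau_{\mathcal M}\circ\psi|_{A_m}$, the subhomogeneous case supplies a unitary $u\in\mathcal M$ with $\|u\phi(a')u^*-\psi(a')\|<\varepsilon/3$ for $a'\in F'$, and the triangle inequality delivers the required estimate on $F$. A standard diagonalisation over a dense sequence in $A$ then produces the approximately unitarily equivalent sequence. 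The heart of the argument is the subhomogeneous step; once that is in hand the ASH passage is essentially formal.
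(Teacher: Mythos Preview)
First, note that the paper does not give its own proof of this theorem: it is quoted as prior work of Ding--Hadwin and Hadwin--Li--Liu, with only a one-line sketch of Ding--Hadwin's idea for $C([0,1])$ --- extend the maps to the Borel algebra $B([0,1])$, where step functions approximate continuous functions \emph{in norm}, and then match finitely many spectral projections by trace.

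Your overall architecture (commutative and matrix building blocks $\Rightarrow$ homogeneous $\Rightarrow$ subhomogeneous $\Rightarrow$ ASH via inductive limits) is sound, and the ASH bootstrap is correct. However, your commutative step contains a genuine error: it is \emph{false} that two unital $^*$-homomorphisms $C(X)\to\mathcal M$ inducing the same trace are exactly unitarily equivalent, or that one can ``conjugate any two diffuse commutative subalgebras with the same spectral measure.'' For a concrete counterexample, take a Cartan masa $A$ and a singular masa $B$ in the hyperfinite II$_1$ factor $\mathcal R$, each identified with $L^\infty[0,1]$; the generators corresponding to $t\mapsto t$ have identical (Lebesgue) spectral distribution but cannot be unitarily conjugate, since $A=W^*(a)$ and $B=W^*(b)$ are not. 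Only \emph{approximate} unitary equivalence holds, and obtaining it in the \emph{norm} (rather than $\|\cdot\|_2$) topology is precisely where the Ding--Hadwin idea enters: one must pass to the Borel algebra, so that continuous functions can be norm-approximated by simple functions, and only then match the finitely many projections using the trace. Your outline implicitly invokes spectral projections but misidentifies what they deliver. The subhomogeneous gluing step you flag as ``the main technical obstacle'' is indeed where the real content of Hadwin--Li--Liu lies, and your sketch does not address it beyond naming it.
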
 

The differences between this result and our Theorem \ref{II1_factor:intro} parallel the developments on the stably finite side of Elliott's classification programme for simple separable nuclear $C^*$-algebras over the last 30 years. Both the ASH hypothesis and the UCT can be viewed as approximation conditions, the former by means of concrete internal structure, while the latter lives at the level of $KK$-theory. 
Indeed, a $C^*$-algebra satisfies the UCT precisely when it is equivalent in the $KK$-category to a commutative $C^*$-algebra.  It is a major open question going back to \cite{RS:UCT} whether all separable and nuclear $C^*$-algebras satisfy the UCT, and it is also open whether all separable and nuclear $C^*$-algebras $A$ with the property that all ideal-quotients have a densely defined trace are ASH.  However, Tu's theorem that $C^*$-algebras associated to amenable \'etale groupoids satisfy the UCT (building on Higson and Kasparov's work on the Baum--Connes conjecture in \cite{HK:Invent}) provides a powerful tool for obtaining the UCT in examples (which has subsequently been extended to twisted groupoids, and hence nuclear $C^*$-algebras containing Cartan masas in \cite{BarlakLi}).  So in concrete examples it is usually possible to verify the UCT in practice. Indeed to our knowledge there are no known explicit examples of separable nuclear $C^*$-algebras for which the UCT is not known to hold.

The other difference is that Theorem \ref{II1_factor:intro} requires injective $^*$-homomorphisms and Hadwin--Li--Liu's ASH theorem does not. Injectivity is crucial in the proof of Theorem \ref{II1_factor:intro}, and we can not just pass to the quotient of $A$ by the common kernel of the $^*$-homomorphisms, since the UCT does not generally pass to quotients (whether it does for nuclear $C^*$-algebras is equivalent to the UCT problem). In contrast the class of algebras covered by the Hadwin--Li--Liu theorem is closed under taking quotients and so it can be recaptured from our Theorem \ref{II1_factor:intro} (see Corollary \ref{cor:HLL_reproof}). 

We next discuss what is known outside the $\mathcal B(H)$ and type II$_1$ factor setting, and the role of amenability in these uniqueness theorems. Given non-degenerate $^*$-homomorphisms $\phi,\psi\colon A\to \mathcal M$, from a $C^*$-algebra $A$ into a von Neumann algebra $\mathcal M$, say that $\phi$ and $\psi$ have \emph{the same $\mathcal M$-rank}, when for each $a\in A$, the range projections of $\phi(a)$ and $\psi(a)$ are Murray--von Neumann equivalent. In the case when $\mathcal M$ is a II$_1$-factor, $\phi$ and $\psi$ have the same $\mathcal M$-rank precisely when they agree on trace, i.e. $\tau_{\mathcal M}\circ\phi=\tau_{\mathcal M}\circ\psi$. Ding and Hadwin asked when having the same $\mathcal M$-rank gives rise to norm approximate unitary equivalence? 

For a separably acting type III factor $\mathcal M$, as all non-zero projections are equivalent, two maps $A\to\mathcal M$ have the same $\mathcal M$-rank precisely when they have the same kernel.  In this setting one can obtain a uniqueness theorem using a sledgehammer: every type III factor is a simple purely infinite $C^*$-algebra and Kirchberg's classification theorems apply! We will briefly explain how to obtain the following result in Appendix \ref{Appendix_A}.

\begin{theorem*}[A consequence of Gabe's treatment of Kirchberg's work]
Let $A$ be a separable, exact and unital $C^*$-algebra and let $\mathcal M$ be a type III factor with separable predual.  Then any two unital and nuclear $^*$-homomorphisms $A\to\mathcal M$ are approximately unitarily equivalent if and only if they have the same kernel.
\end{theorem*}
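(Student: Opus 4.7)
The only-if direction is routine, so I focus on the if direction. The plan is to reduce to the case of injective maps and then to apply Gabe's reformulation of the Kirchberg--Phillips classification for nuclear $^*$-homomorphisms, exploiting the fact that a type III factor with separable predual is about as ``infinite'' as a $C^*$-algebra can reasonably be.

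First, I would set $I := \ker\phi = \ker\psi$. The quotient $A/I$ is again separable, exact and unital, and the induced maps $\bar\phi, \bar\psi \colon A/I \to \mathcal M$ are unital, injective, and retain nuclearity (a standard fact about nuclear $^*$-homomorphisms descending to quotients by their kernel). Replacing $A$ by $A/I$, we may therefore assume $\phi, \psi$ are injective from the start.

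Next I would verify the $\mathcal{O}_2$-absorption property of $\mathcal M$ needed to invoke Gabe's uniqueness theorem. As a $C^*$-algebra, $\mathcal M$ is unital, simple and purely infinite, with every non-zero projection Murray--von Neumann equivalent to $1_{\mathcal M}$. More importantly, for every separable $C^*$-subalgebra $C \subseteq \mathcal M$ the relative commutant $\mathcal M_\omega \cap C'$ in the $C^*$-ultrapower contains a unital copy of $\mathcal{O}_2$. This follows by transporting the standard central-sequence construction for type III factors (two mutually orthogonal Murray--von Neumann equivalent projections summing to $1$ that asymptotically commute with $C$) from the von Neumann ultrapower $\mathcal M^{\omega}$ to the $C^*$-algebraic ultrapower $\mathcal M_\omega$. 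With this absorption in hand, Gabe's uniqueness theorem applies: two unital injective nuclear $^*$-homomorphisms from a separable, exact, unital $C^*$-algebra into such a target are automatically approximately unitarily equivalent, since the relevant $KK_{\mathrm{nuc}}$-obstruction vanishes in the $\mathcal O_2$-absorbing setting.

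The main obstacle is the verification of the $\mathcal{O}_2$-absorption at the level of the $C^*$-algebraic ultrapower rather than the more familiar von Neumann ultrapower, which requires some care in passing between norm and ultraweak approximations. Once that bridge is crossed, the rest is a clean application of Gabe's machinery, with no UCT hypothesis needed on $A$ because the target does all the work.
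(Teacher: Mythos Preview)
Your reduction to the injective case is fine and is exactly what the paper does. The divergence comes at the next step, and there your argument has a real gap.

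You assert that for every separable $C\subseteq\mathcal M$ the norm relative commutant $\mathcal M_\omega\cap C'$ contains a unital copy of $\mathcal O_2$, and you justify this by ``transporting'' central sequences from the von Neumann ultrapower $\mathcal M^\omega$ to the $C^*$-ultrapower $\mathcal M_\omega$. That transport does not work as stated: $\mathcal M^\omega$ is a \emph{quotient} of $\mathcal M_\omega$, and elements of the von Neumann relative commutant only commute asymptotically in the strong$^*$ topology, not in norm. Lifting such elements back to $\mathcal M_\omega$ gives you no control over the norm of the commutators, so you do not land in $\mathcal M_\omega\cap C'$. (Also, two orthogonal projections summing to $1$ and equivalent to each other only yields $M_2$; to get $\mathcal O_2$ you need the partial isometries witnessing equivalence to $1$ to lie in the commutant as well, which is an additional unaddressed issue.) You flag this as the main obstacle but do not actually cross it, so the proof is incomplete at precisely the crucial point.

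The paper avoids this entirely. It invokes Kirchberg's classification (in Gabe's form): unital injective nuclear maps from a separable exact $A$ into a unital simple purely infinite $B$ are asymptotically unitarily equivalent iff they agree in $KK_{\mathrm{nuc}}(A,B)$. Then it shows $[\phi]_{KK_{\mathrm{nuc}}(A,\mathcal M)}=0$ by an Eilenberg swindle: choose isometries $(v_n)_n$ in $\mathcal M$ with pairwise orthogonal ranges and $\sum_n v_nv_n^*=1$ (strongly), form the infinite repeat $\phi^{(\infty)}(a)=\sum_n v_n\phi(a)v_n^*$, and observe $[\phi^{(\infty)}]+[\phi]=[\phi^{(\infty)}]$ in $KK_{\mathrm{nuc}}$, hence $[\phi]=0$. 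This $\infty+1=\infty$ trick is elementary, uses only that $\mathcal M$ is properly infinite as a von Neumann algebra, and sidesteps any need for $\mathcal O_2$-absorption at the $C^*$-level.
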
 

Analogous to the usual statement of Voiculescu's theorem in terms of essential representations, there are also results for \emph{full} maps from nuclear $C^*$-algebras to type II$_\infty$ factors.\footnote{A map is full, if the image of every non-zero element of the domain, generates the codomain as an ideal. A separably acting semifinite factor $\mathcal M$ has a unique non-trivial ideal -- the \emph{Brauer ideal} -- generated by all the projections of finite trace. In the case of $\mathcal B(\mathcal H)$, this ideal is just the compacts. Accordingly a full map $A\to\mathcal B(\mathcal H)$ is an injective essential representation.} The following theorem follows from results of Li, Shen and Shi (\cite[Theorem~5.2.2]{Li_Shen_Shi}); see also the more explicit formulation of Hadwin, Li and Liu (\cite[Theorem~3]{LHL:OM}). 

\begin{theorem*}[Li--Shen--Shi; Uniqueness for full maps into II$_{\infty}$-factors] 
Let $A$ be a unital, separable and nuclear $C^*$-algebra and $\calM$ be a II$_{\infty}$-factor with separable predual. If $\phi, \psi$ are unital and full $^*$-homomorphisms, then they are approximately unitarily equivalent. 
\end{theorem*}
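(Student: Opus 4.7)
The approach is to adapt Voiculescu's non-commutative Weyl--von Neumann theorem to the semifinite setting, exploiting the structure of the Breuer ideal. Write $\calM \cong \calN\,\overline{\otimes}\,\calB(\calH)$ for some II$_1$ factor $\calN$, and let $J \cong \calN\otimes\calK(\calH)$ denote the Breuer ideal, which is the unique non-trivial norm-closed two-sided ideal of $\calM$. A unital $*$-homomorphism $\phi\colon A\to \calM$ is full precisely when the induced map $\tilde\phi\colon A\to\calM/J$ is injective. Moreover, since $1_\calM$ is an infinite projection, $\calM\otimes\calB(\calH)\cong\calM$, supplying the ``infinite room'' required for Voiculescu-style absorption.

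The plan has two stages. First, pass to the quotient: $\calM/J$ is a unital, simple, purely infinite $C^*$-algebra, and $\tilde\phi,\tilde\psi\colon A\to\calM/J$ are unital, injective, nuclear maps. By the Kirchberg-type uniqueness theorem for such maps into $\calO_\infty$-absorbing simple purely infinite $C^*$-algebras (the same machinery invoked in the type III theorem recalled above), $\tilde\phi$ and $\tilde\psi$ are approximately unitarily equivalent in $\calM/J$. Second, lift this approximate unitary equivalence back to $\calM$. Using a quasicentral approximate unit $(e_n)_n$ for $J$ in $\calM$ (Arveson), together with nuclearity of $A$ to supply completely positive approximations of $\phi$ and $\psi$ factoring through matrix algebras, I would run a Hadwin-style Cuntz sum argument inside an orthogonal decomposition $1_\calM = p_1 + p_2$ with each $p_i$ equivalent to $1_\calM$. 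This should yield an absorption lemma of the form $\phi \sim_u \phi\oplus \sigma$ for any unital nuclear $\sigma\colon A\to p\calM p$ with $p$ a finite projection, and symmetrically for $\psi$; chained together one obtains $\phi \sim_u \phi\oplus\psi \sim_u \psi$.

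The main obstacle is the lifting step. Unitaries in $\calM/J$ do not in general lift to unitaries in $\calM$: in the six-term exact sequence for $0\to J\to\calM\to\calM/J\to 0$, the index map $K_1(\calM/J)\to K_0(J)\cong K_0(\calN)$ need not vanish (the target is identified with $\R$ via the trace on $\calN$, so is far from trivial). The remedy is to exploit the infinite multiplicity isomorphism $\calM\cong\calM\otimes\calB(\calH)$ to absorb this defect: by stabilising the implementing unitary one sweeps any $K_1$-defect into a trivial direct summand. Nuclearity of $A$ is essential throughout, driving both the completely positive approximations behind the quasicentral estimates and the Kirchberg-type uniqueness in the quotient; this is exactly the feature that distinguishes the II$_\infty$ statement, where nuclearity of $A$ is needed, from Voiculescu's classical $\calB(\calH)$ result, where it is not.
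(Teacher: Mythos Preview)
The paper does not contain a proof of this theorem. It is stated in the introduction purely as background, attributed to Li, Shen and Shi and cited as following from \cite[Theorem~5.2.2]{Li_Shen_Shi} (with a more explicit formulation in \cite[Theorem~3]{LHL:OM}). There is therefore no ``paper's own proof'' against which to compare your proposal.

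That said, your sketch has real gaps beyond the lifting issue you already flag. Your Stage~2 absorption lemma asserts $\phi\sim_u\phi\oplus\sigma$ for unital nuclear $\sigma\colon A\to p\calM p$ with $p$ a \emph{finite} projection, and then claims to chain this into $\phi\sim_u\phi\oplus\psi\sim_u\psi$. But $\psi$ is unital into $\calM$, so $\psi(1_A)=1_{\calM}$ is infinite; $\psi$ is not of the form $\sigma$ in your lemma, and the chaining step does not follow. You would need either an absorption statement for maps into infinite corners (which is essentially the theorem itself), or a decomposition of $\psi$ as an infinite direct sum of maps into mutually orthogonal finite corners together with a telescoping argument --- neither of which you supply. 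Similarly, your proposed fix for the $K_1$-lifting obstruction (``stabilise the implementing unitary to sweep the defect into a trivial summand'') is vague: the index map $K_1(\calM/J)\to K_0(J)\cong\R$ is genuinely nonzero, and you have not explained which concrete manoeuvre kills the obstruction for the specific unitaries arising from Stage~1.

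For context, the Li--Shen--Shi argument is a direct semifinite adaptation of Voiculescu's Weyl--von Neumann technique (approximate absorption relative to the Breuer ideal), rather than a two-stage quotient-then-lift strategy via Kirchberg-type classification. Your route through the corona is conceptually interesting and parallels the paper's use of Kirchberg's theorems for type~III factors in Appendix~\ref{Appendix_A}, but making it rigorous would require substantially more than what you have written.
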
 

Outside the setting of full maps, Hadwin, Li and Liu obtained a uniqueness theorem for ASH domains (\cite[Theorem 8]{LHL:OM}).

\begin{theorem*}[Hadwin--Li--Liu; Uniqueness for maps into from ASH-algebras to II$_{\infty}$-factors] 
Let $A$ be a separable and unital ASH-algebra, and let $\calM$ be a II$_{\infty}$-factor with separable predual. Then any two unital $^*$-homomorphisms $\phi, \psi: A\rightarrow \calM$ are approximately unitarily equivalent if and only if $\phi$ and $\psi$ have the same $\mathcal M$-rank.
\end{theorem*}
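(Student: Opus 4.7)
The ``only if'' direction is direct: a unitary conjugation preserves Murray--von Neumann equivalence of range projections, and these classes are stable under small self-adjoint norm perturbations, so $u_n\phi(a)u_n^*\to\psi(a)$ forces the range projections of $\phi(a)$ and $\psi(a)$ to be Murray--von Neumann equivalent in $\calM$.

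For the ``if'' direction, the strategy is to cleanly separate the finite-rank and infinite-rank behaviour of $\phi$ and $\psi$ using the Brauer ideal $\calK_\calM\subset\calM$. Set
\[
J\defeq\phi^{-1}(\calK_\calM).
\]
The same-rank hypothesis gives $J=\psi^{-1}(\calK_\calM)$, and $1_A\notin J$ because $\phi(1_A)=1_\calM\notin\calK_\calM$. The induced maps $\bar\phi,\bar\psi\colon A/J\to\calM$ are then unital and full, so the Li--Shen--Shi uniqueness theorem above provides $\bar\phi\sim_{\mathrm{au}}\bar\psi$. For the restrictions $\phi|_J$ and $\psi|_J$, a given finite subset of $J$ is, up to arbitrarily small error, supported (after applying $\phi$ or $\psi$) on a single finite projection $p\in\calM$; inside the II$_1$ corner $p\calM p$ the same-$\calM$-rank condition reduces to agreement of traces, and the Ding--Hadwin / Hadwin--Li--Liu ASH uniqueness theorem for II$_1$ factors applies.

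The main obstacle is welding these two partial approximate equivalences into one unitary in $\calM$ that witnesses both simultaneously on a prescribed finite set $F\subset A$ within tolerance $\varepsilon>0$. The plan is a two-step intertwining. First use the Li--Shen--Shi step to find $u_1\in\calU(\calM)$ together with elements $k_a\in\calK_\calM$ ($a\in F$) of bounded norm such that
\[
\|u_1\phi(a)u_1^*-\psi(a)-k_a\|<\varepsilon/2,\qquad a\in F.
\]
Then select a finite projection $p\in\calM$ large enough to essentially support all the $k_a$ as well as $\phi(F\cap J)$ and $\psi(F\cap J)$, and apply the II$_1$-factor uniqueness result inside $p\calM p$ to produce $v\in\calU(p\calM p)$ which, extended to the unitary $u_2\defeq v+(1-p)\in\calU(\calM)$, corrects the residual finite-rank error without spoiling the complementary part. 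The technical heart of the argument is to show that the Li--Shen--Shi step can be arranged with discrepancies $k_a$ of controlled compact support, compatible with the Ding--Hadwin step and uniform in $F$; without such control the two adjustments may interfere, and a single unitary realising both approximations simultaneously is not obtained.
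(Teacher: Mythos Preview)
The paper does not itself prove this theorem; it is quoted from \cite{LHL:OM}, with only the remark that it is ``obtained by combining the absorption theorems for full maps from \cite{Li_Shen_Shi} with uniqueness results for maps from ASH-algebras into the Brauer ideal, in the spirit of Voiculescu's original work,'' and that the approximate unitary equivalence can in fact be taken \emph{relative} to the Brauer ideal. That architecture is different from yours: in the Voiculescu pattern one proves an absorption result (any $\phi$ absorbs a full map modulo $\calK_\calM$) together with a uniqueness theorem for maps landing \emph{in} $\calK_\calM$, and the two pieces assemble as a direct sum rather than by welding two independently chosen unitaries.

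Your proposal has a genuine gap at exactly the point you flag. You split at the domain ideal $J=\phi^{-1}(\calK_\calM)$, handle $A/J$ by Li--Shen--Shi and $J$ via a II$_1$ corner, and then need to glue $u_1$ and $u_2=v+(1-p)$. You call this gluing ``the technical heart'' but do not carry it out: nothing ensures the Li--Shen--Shi unitary $u_1$ produces compact discrepancies $k_a$ of controllable support, nor that conjugation by $u_2$ leaves the approximations for $a\in F\setminus J$ intact (there is no reason the finite projection $p$ should approximately commute with $u_1\phi(a)u_1^*$ or $\psi(a)$ for such $a$). A secondary issue is that the II$_1$ theorem you invoke is stated for \emph{unital} maps from a unital ASH-algebra, whereas $\phi|_J$ is a non-unital map from a non-unital ideal, so an additional unitisation argument would be needed. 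The absorption-based route sidesteps both difficulties by arranging the decomposition as an honest direct sum from the outset.
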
  

This is obtained by combining the absorption theorems for full maps from \cite{Li_Shen_Shi} with uniqueness results  for maps from ASH-algebras into the Brauer ideal, in the spirit of Voiculescu's original work. In fact, just as with Voiculescu's theorem, they reach the stronger conclusion that the approximate unitary equivalence can be taken relative to the Brauer ideal.  We hope that our work in the II$_1$ setting will open the door to abstract results for maps into type II$_\infty$ factors in the future.

Returning to the setting of II$_1$ factors, Connes' celebrated equivalence of injectivity and hyperfiniteness (\cite{Connes:Annals}) gives the following uniqueness theorem with the weaker conclusion of approximate unitary equivalence in the $2$-norm coming from the trace.  

\begin{theorem*}[A folklore consequence of Connes' theorem]\label{Connes} 
Let $A$ be a separable, unital and nuclear $C^*$-algebra and let $\calM$ be a II$_{1}$-factor. Let $\phi, \psi: A\rightarrow \calM$ be unital $^*$-homomorphisms such that $\tau_{\calM} \circ \phi = \tau_{\calM} \circ \psi$, then there exists a sequence of unitaries $(u_{n})_n$ in $\calM$ such that 
\begin{equation} \label{Intro.Connes}
\|u_n \phi(a)u_n^*-\psi(a)\|_{2,\tau_{\calM}}\to 0, \quad a\in A. 
\end{equation} 
\end{theorem*}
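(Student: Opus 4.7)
The plan is to reduce the approximation to matching finite-dimensional $^*$-subalgebras of $\calM$ inside the images of $\phi$ and $\psi$, using nuclearity together with Connes' theorem. Since $\tau_{\calM} \circ \phi = \tau_{\calM} \circ \psi$ and $\tau_{\calM}$ is faithful, the assignment $\phi(a) \mapsto \psi(a)$ is a well-defined trace-preserving $^*$-homomorphism on $\phi(A)$ that extends uniquely to a normal trace-preserving $^*$-isomorphism $\alpha\colon \phi(A)'' \to \psi(A)''$ between the ultraweak closures taken inside $\calM$. Since $A$ is separable, both von Neumann algebras have separable predual, and since $A$ is nuclear they are injective. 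Hence by Connes' theorem they are hyperfinite, so in particular $\phi(A)''$ is the ultraweak closure of an increasing chain $F_1 \subseteq F_2 \subseteq \cdots$ of unital finite-dimensional $^*$-subalgebras whose union is dense in $\|\cdot\|_{2,\tau_{\calM}}$.

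For each $n$, the inclusion $F_n \hookrightarrow \calM$ and the restriction $\alpha|_{F_n}$ are two unital $^*$-embeddings of a finite-dimensional algebra inducing the same trace (since $\alpha$ preserves $\tau_{\calM}$). As $\calM$ is a II$_1$ factor, Murray--von Neumann comparison of projections---applied summand by summand to a system of matrix units for $F_n$---yields a unitary $u_n \in \calM$ with $u_n x u_n^* = \alpha(x)$ for every $x \in F_n$.

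Fix a countable dense subset $\{a_k\}_{k \geq 1}$ of $A$, and arrange the chain so that for each $k \leq n$ there exists $x_{k,n} \in F_n$ with $\|\phi(a_k) - x_{k,n}\|_{2,\tau_{\calM}} < 1/n$. Since $\alpha$ is $\|\cdot\|_{2,\tau_{\calM}}$-isometric and conjugation by $u_n$ preserves $\|\cdot\|_{2,\tau_{\calM}}$, for $n \geq k$ one has
$$\|u_n \phi(a_k) u_n^* - \psi(a_k)\|_{2,\tau_{\calM}} \leq \|u_n (\phi(a_k) - x_{k,n}) u_n^*\|_{2,\tau_{\calM}} + \|\alpha(x_{k,n} - \phi(a_k))\|_{2,\tau_{\calM}} < \frac{2}{n}.$$
Pointwise convergence on $\{a_k\}$, combined with $\|\cdot\|_{2,\tau_{\calM}} \leq \|\cdot\|$ and a standard $3\varepsilon$-estimate, extends the statement to all $a \in A$.

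The single essential ingredient is Connes' theorem supplying the hyperfinite (approximately finite-dimensional) structure; the rest is a diagonal argument. That this deep input is the only one needed is precisely what restricts the conclusion to the $2$-norm rather than the operator norm, and explains why Theorem~\ref{II1_factor:intro} requires an entirely different $KK$-theoretic strategy.
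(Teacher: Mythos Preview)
Your proof is correct and follows essentially the same strategy as the paper's sketch: use Connes' theorem to obtain hyperfiniteness, reduce to unital embeddings of finite-dimensional algebras into a II$_1$ factor, and conclude by the standard fact that two such embeddings inducing the same trace are unitarily conjugate. The only cosmetic difference is that the paper phrases the hyperfiniteness at the level of the finite part $A^{**}_{\text{fin}}$ of the bidual (through which both $\phi$ and $\psi$ factor normally), whereas you work inside $\calM$ with $\phi(A)''$ and transport to $\psi(A)''$ via the trace-preserving isomorphism $\alpha$; these are interchangeable implementations of the same idea.
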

The point is that the finite part $A^{**}_{\text{fin}}$ of the bidual of a nuclear $C^*$-algebra is injective, so hyperfinite.  This allows maps from a nuclear $C^*$-algebra into $\mathcal M$ to be approximated (in $\|\cdot\|_2$) by maps from finite dimensional algebras, where one immediately has uniqueness. Ding and Hadwin's idea for commutative $C^*$-algebras (and then AH-algebras) was of a similar vein: factor through a larger algebra where one can find operator norm finite-dimensional approximations. For example, Ding and Hadwin prove their uniqueness theorem for $A=C([0,1])$, by noting that a map  $C([0,1])\to\mathcal M$ extends to the $C^*$-algebra $B([0,1])$ of bounded Borel functions on $[0,1]$, and step functions can be used to approximate continuous functions in norm. By contrast, in our proof of Theorem \ref{II1_factor:intro}, the only internal approximations used are those coming from Connes' theorem.
 
Some amenability is necessary for uniqueness theorems.  To our knowledge, this was first noticed by Hadwin who used free entropy methods to give examples of pairs of $^*$-homomorphisms from certain non-nuclear separable and tracial $C^*$-algebras (those with a family of generators satisfying a certain free entropy dimension ratio condition, for example the reduced group $C^*$-algebras of free groups) into II$_1$ factors which agree on traces, but are not $2$-norm approximately unitarily equivalent (\cite[Corollary 3.5]{Hadwin:FreeEntropy}). This can be viewed as a precursor of Jung's result (\cite{Jung:MA}), that for a Connes embeddable II$_1$-factor $\mathcal N$, uniqueness of embeddings $\mathcal N\to\mathcal R^\omega$ up to unitary conjugacy implies that $\mathcal N$ is semidiscrete. Hadwin's work also contains one of the earliest versions of the folklore consequences of Connes' theorem we are aware of (see \cite[Theorem 2.1]{Hadwin:FreeEntropy}). More generally, Ciuperca, Giordano, Ng and Niu's work on these `weak$^*$-uniqueness' type problems further demonstrates the role of amenability.

\begin{theorem*}[Ciuperca, Giordano, Ng and Niu] Let $A$ be a $C^*$-algebra.
\begin{enumerate}
    \item
    Let $\mathcal M$ be a von Neumann algebra, and suppose that either $A$ is nuclear, or $\mathcal M$ is injective. Then any two non-degenerate $^*$-homomorphisms $\phi,\psi\colon A\to\mathcal M$ with the same $\mathcal M$-rank are weak$^*$-approximately unitarily equivalent in the sense of \cite[Definition 1.1(c)]{CGNN:Adv}.\footnote{In the infinite setting, this means that there are unitaries $(u_n)_{n}$ and $(v_n)_{n}$ with $u_n\phi(a)u_n^*\to \psi(a)$ and $v_n\psi(a)v_n^*\to \phi(a)$ in the weak$^*$ topology, for all $a\in A$. One can not guarantee that $v_n=u_n^*$ for infinite von Neumann algebras.} 
    \item Suppose that $A$ is separable and has the property that for all von Neumann algebras $\mathcal M$, any two non-degenerate $^*$-homomorphisms into $\mathcal M$ with the same $\mathcal M$-rank are weak$^*$-approximately unitarily equivalent.  Then $A$ is nuclear.
    \end{enumerate}
\end{theorem*}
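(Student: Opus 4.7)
The plan is to treat both parts via Connes' theorem, which equates injectivity (of a von Neumann algebra) with semidiscreteness, and nuclearity of a $C^*$-algebra with semidiscreteness of its bidual.

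For (1), I would begin by extending the non-degenerate $^*$-homomorphisms $\phi,\psi$ to normal $^*$-homomorphisms $\tilde\phi,\tilde\psi\colon A^{**}\to\calM$. The equal-$\calM$-rank hypothesis transfers to Murray--von Neumann equivalence of range projections $\tilde\phi(p)\sim\tilde\psi(p)$ for all open projections $p\in A^{**}$, and by normality for all projections; in particular the central support projections of $\tilde\phi$ and $\tilde\psi$ coincide. Under either hypothesis, one of $A^{**}$ (if $A$ is nuclear) or $\calM$ (if $\calM$ is injective) is semidiscrete, furnishing a net of unital completely positive maps $\theta_\lambda=\beta_\lambda\circ\alpha_\lambda$ factoring through matrix algebras $M_{k_\lambda}$ and converging point-weak$^*$ to the identity. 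On each finite-dimensional fibre, comparing the two compositions reduces to aligning two systems of matrix units in $\calM$ with matched rank data, which produces a unitary in $\calM$ conjugating one to the other. Assembling these unitaries via the semidiscrete approximation and reindexing yields the sequence $(u_n)$; the reverse sequence $(v_n)$ is obtained symmetrically, which is genuinely necessary because $u_nu_n^*\to 1$ weak$^*$ does not imply $u_n^*u_n\to 1$ in infinite von Neumann algebras.

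For (2), the argument is contrapositive. If $A$ is separable and non-nuclear then $A^{**}$ fails to be semidiscrete and, by Kirchberg's characterization, the identity on $A$ cannot be approximated in the point-strong topology by unital completely positive factorizations through matrix algebras. The task is to convert this failure into a witness: a pair of non-degenerate $^*$-homomorphisms into some von Neumann algebra with equal $\calM$-rank that are not weak$^*$-approximately unitarily equivalent. A natural route is to realize both maps as representations on the same standard-form Hilbert space $L^2(\calN)$ of a suitable tracial von Neumann algebra, with one arising from a canonical inclusion and the other from a non-trivial $A$--$A$ correspondence witnessing the gap between minimal and maximal tensor norms on an appropriate pairing. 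Weak$^*$-approximate unitary equivalence of this pair would then yield approximate intertwiners for the correspondences, and in turn produce the missing ucp matrix factorizations, contradicting non-nuclearity.

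The principal obstacle lies in part (2), where selecting the test pair so that the common rank data is transparent but the weak$^*$-approximate unitary equivalence genuinely encodes semidiscreteness requires care; this is significantly more delicate than in the norm-topology setting, where obstructions can be read off rigid $K$-theoretic invariants. In (1), the only subtle technical point is reconciling the two central support projections of $\tilde\phi$ and $\tilde\psi$: applying the rank hypothesis to projections under each of $p_\phi$ and $1-p_\phi$ forces $p_\phi=p_\psi$, after which the semidiscrete matrix-algebra approximations on the common block run without further complication.
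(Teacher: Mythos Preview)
The paper does not contain a proof of this statement: it is quoted in the introduction as a result of Ciuperca, Giordano, Ng and Niu, with a citation to \cite{CGNN:Adv}, and no argument is supplied. So there is nothing in the paper against which to compare your proposal; any genuine comparison would have to be made with the original source.

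That said, a brief comment on your sketch. For part (1), your outline is in the right spirit, but the step ``comparing the two compositions reduces to aligning two systems of matrix units'' is not quite right as written: the semidiscrete factorizations $\theta_\lambda=\beta_\lambda\circ\alpha_\lambda$ involve completely positive maps, not $^*$-homomorphisms, so the images of matrix units are not matrix units. The cleaner route in the nuclear case is to invoke hyperfiniteness of $A^{**}$ (via Connes) and restrict $\tilde\phi,\tilde\psi$ to genuine finite-dimensional $^*$-subalgebras, where the rank hypothesis directly yields unitary conjugacy. The injective-$\calM$ case needs a separate mechanism (e.g.\ passing to the hyperfinite commutant or using a conditional expectation), which your sketch does not really address.

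For part (2), your proposal is more of an aspiration than a plan: you identify that a witness pair must be constructed, and gesture at correspondences and tensor-norm discrepancies, but the construction of the specific $\calM$ and the two representations with matching rank data yet no weak$^*$-approximate unitary equivalence is the entire content of the argument, and you have not supplied it. You correctly flag this as the principal obstacle, but as written there is no proof here.
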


\subsection*{Uniqueness of quasidiagonal approximations}

While we stated Theorem \ref{II1_factor:intro} as a uniqueness theorem for $^*$-homomorphisms into II$_1$ factors, our main theorem applies more broadly to sequences of maps which become $^*$-homomorphisms in the limit. These are best encoded using ultraproducts (we will recall the definitions in Section \ref{Sect:sep}), and the version of our main theorem in this setting is as follows. The proof will be given in Section \ref{classification_chapter}. 

\begin{mainthm}\label{intro:ultraproductthm}
Let $A$ be a separable, unital and exact $C^*$-algebra satisfying the UCT. Let $(\mathcal M_n)_{n}$ be a sequence of finite von Neumann factors, and let $\mathcal M_\omega=\prod_\omega \mathcal M_n$ denote the $C^*$-algebra ultraproduct which has a unique trace $\tau_\omega$.  Given full, unital, nuclear $^*$-homomorphisms $\phi, \psi: A\rightarrow \mathcal M_\omega$ such that $\tau_{\omega} \circ \phi = \tau_{\omega} \circ \psi$, $K_0(\phi) = K_0(\psi)$, and $K_0(\phi;\mathbb Z/n\mathbb Z)=K_0(\psi;\mathbb Z/n\mathbb Z)$ for all $n\geq 2$, there exists a unitary $u \in \mathcal M_\omega$ such that $\phi = \text{Ad} (u) \circ \psi$. 
\end{mainthm}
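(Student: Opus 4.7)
The plan is to follow Schafhauser's strategy and view $\phi,\psi$ as two lifts along the trace-kernel extension
\begin{equation*}
0 \longrightarrow J_{\calM} \longrightarrow \calM_\omega \stackrel{q}\longrightarrow \calM^\omega \longrightarrow 0,
\end{equation*}
where $\calM^\omega = \prod^\omega\calM_n$ is the tracial (von Neumann) ultraproduct obtained by quotienting $\calM_\omega$ by the ideal $J_{\calM}$ of sequences with $\|\cdot\|_{2,\tau_\omega}\to 0$.  The classification then proceeds in two stages: first match $\phi$ and $\psi$ on the von Neumann quotient, then classify the lift.

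\emph{Step 1 (reduction to a common map into $\calM^\omega$).}  The compositions $q\circ\phi$ and $q\circ\psi$ are unital nuclear $^*$-homomorphisms from $A$ into the finite von Neumann algebra $\calM^\omega$ which agree on the trace.  The folklore consequence of Connes' theorem stated in the introduction gives $\|\cdot\|_2$-approximate unitary equivalence, and $\aleph_1$-saturation of the tracial ultraproduct promotes this to an honest unitary $\bar u \in \calM^\omega$ with $\bar u\,(q\circ\phi)\,\bar u^* = q\circ\psi$.  Lifting $\bar u$ to a unitary $u_0\in\calM_\omega$ (using that unitaries in $\calM^\omega$ lift through $q$) and replacing $\phi$ by $\mathrm{Ad}(u_0)\circ\phi$, we may assume $q\circ\phi = q\circ\psi$.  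Now $\phi$ and $\psi$ are two lifts of a common nuclear map $\bar\phi\colon A \to \calM^\omega$ along the trace-kernel extension.

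\emph{Step 2 ($KK$-classification of the lift).}  The equivalence classes of such lifts up to unitary equivalence in $\calM_\omega$ are controlled, following Schafhauser, by an invariant living in a $KK$-group associated to the trace-kernel extension (morally $KK(A,J_{\calM})$).  Because $A$ satisfies the UCT, this $KK$-invariant can be read off from total $K$-theory together with the trace information already fixed by $\bar\phi$.  For the finite-factor codomains here, the $K_1$-theoretic coordinates (and their $\mathbb Z/n\mathbb Z$ Bockstein companions) are pinned down by the trace/determinant data and the nuclearity of the maps, so the hypotheses $\tau_\omega\circ\phi=\tau_\omega\circ\psi$, $K_0(\phi)=K_0(\psi)$, and $K_0(\phi;\mathbb Z/n\mathbb Z)=K_0(\psi;\mathbb Z/n\mathbb Z)$ for every $n\geq 2$ are exactly what is needed to force the $KK$-invariants of $\phi$ and $\psi$ to coincide.

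\emph{Main obstacle.}  Schafhauser's lifting theorem from \cite{Schafhauser.Annals} requires tensorial absorption (e.g.\ $\mathcal Z$- or $\mathcal O_\infty$-stability) in the codomain, which $\calM_\omega$ does not enjoy.  The crux is therefore a $KK$-uniqueness theorem for lifts tailored to codomains without absorption, built along the lines of Loreaux--Ng--Sutradhar \cite{ess-codim}.  Making this $KK$-uniqueness theorem interact correctly with the fullness hypothesis on $\phi,\psi$, and matching its input with total $K$-theory data through the UCT, is where the real work of the paper must lie; once in hand, it immediately produces the unitary $u\in\calM_\omega$ with $\mathrm{Ad}(u)\circ\psi=\phi$.
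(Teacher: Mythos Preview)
Your outline is correct and follows the paper's own approach closely: reduce to agreement on the tracial quotient via Connes' theorem, then classify lifts along the trace--kernel extension using a new $KK$-uniqueness theorem replacing Schafhauser's $\mathcal Q$-stable one.

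Two small corrections are worth noting. First, your explanation of why only $K_0$-data matters is slightly off: it is not that ``$K_1$-theoretic coordinates are pinned down by trace/determinant data'', but rather that $K_1(\calM_\omega)$ and $K_1(\calM_\omega;\mathbb Z/n\mathbb Z)$ vanish outright (the latter because $K_0(\calM_\omega)$ is torsion-free and $K_1(\calM_\omega)=0$, via the Bockstein sequence). So there is nothing to match on the $K_1$ side. Second, you omit the separabilisation step: $J_{\calM}$ is neither separable nor $\sigma$-unital, so one cannot work with $KK(A,J_{\calM})$ directly. The paper passes to a carefully chosen separable subextension $0\to J\to E\to D\to 0$ in which $J$ inherits real rank zero, stable rank one, $K_1(J)=0$ and totally ordered $V(J)$, and the relevant $K$-groups of $D$ vanish. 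This is where the new $KL_{\mathrm{nuc}}$-uniqueness theorem (built on $K_1$-injectivity of the Paschke dual) is actually applied. Finally, there is a trivial edge case you miss: if $\omega$-many $\calM_n$ are equal to a fixed $M_r$, the ultraproduct is just $M_r$ and the result is elementary.
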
 

Here $K_0(\cdot;\mathbb Z/n\mathbb Z)$ is the $K_0$-group with coefficients in $\mathbb Z/n\mathbb Z$ as developed by Schochet in \cite{Schochet:PJM}.  When each $\mathcal M_n$ is a II$_1$ factor, we have $K_0(\mathcal M;\mathbb Z/n\mathbb Z)=0$ and we only need to consider $K_0$, and (as we explain in Section \ref{classification_chapter}) Theorem \ref{II1_factor:intro} is a special case of Theorem \ref{intro:ultraproductthm}. 

Our motivation for the generality of allowing the factors in Theorem \ref{intro:ultraproductthm} to vary is to obtain a uniqueness theorem for quasidiagonal approximations. Recall that according to a characterisation due to Voiculescu, a separable $C^*$-algebra $A$ is quasidiagonal if and only if there is a sequence of approximately multiplicative, and approximately isometric completely positive maps $\phi_n\colon A\to M_{k_n}$. Packaging such maps together, one obtains an embedding $\phi\colon A\rightarrow\prod_{\omega}M_{k_n}$ into the norm ultraproduct of these matrix algebras. 

For the existence of such embeddings, every separable and nuclear $C^*$-algebra satisfying the UCT and with a faithful tracial state is quasidiagonal (\cite{TWW:Annals}). This was extended to the setting of exact $C^*$-algebras with the UCT by Gabe in \cite{Gabe:JFA} and Schafhauser gave a new proof of both  results in \cite{new-TWW}. In fact the proofs give nuclear embeddings from exact $C^*$-algebras into $\mathcal Q_\omega$, the ultraproduct of the universal UHF algebra $\mathcal Q$ (which is equivalent to the existence of the embeddings into $\prod_{\omega}M_{k_n}$). Subsequently, in his breakthrough AF-embedding paper, Schafhauser gave a uniqueness counterpart to this form of quasidiagonality theorem.\footnote{Schafhauser's theorem allowed for somewhat more general codomains than $\mathcal Q_\omega$.} 

\begin{theorem*}[Schafhauser's uniqueness of embeddings into $\mathcal Q_\omega$]
    Let $A$ be a separable, unital, exact $C^*$-algebra satisfying the UCT. Then unital, full and nuclear embeddings $\phi,\psi\colon A\to \mathcal Q_\omega$ are approximately unitarily equivalent if and only if $K_0(\phi)=K_0(\psi)$ and \text{$\tau_{\calQ_\omega}\circ\phi=\tau_{\calQ_\omega}\circ \psi$}.
\end{theorem*}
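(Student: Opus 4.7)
The ``only if'' direction is automatic since norm approximate unitary equivalence preserves $K_0$ and traces. For the forward direction, my first move would be to apply the folklore consequence of Connes' theorem stated above. Since $\tau_{\mathcal Q_\omega}\circ\phi=\tau_{\mathcal Q_\omega}\circ\psi$, this provides unitaries $(w_n)$ in $\mathcal Q_\omega$ with $w_n\phi(\cdot)w_n^* - \psi(\cdot)$ tending to zero in $\|\cdot\|_{2,\tau_{\mathcal Q_\omega}}$. A standard reindexing argument using saturation of the tracial ultrapower $\mathcal M_\omega \defeq \prod_\omega(\mathcal Q,\tau_{\mathcal Q})$ then lets me pass to a new pair (still called $\phi,\psi$) for which the compositions $\pi\circ\phi = \pi\circ\psi$ agree on the nose, where $\pi\colon\mathcal Q_\omega\to\mathcal M_\omega$ is the weak$^*$ quotient. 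Both maps now become nuclear lifts of a single $\bar\phi\colon A\to\mathcal M_\omega$ along the trace--kernel extension
\begin{equation*}
0\to J\to\mathcal Q_\omega\xrightarrow{\pi}\mathcal M_\omega\to 0.
\end{equation*}

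The problem becomes to show that any two full nuclear lifts of $\bar\phi$ whose $K_0$-maps agree are norm approximately unitarily equivalent in $\mathcal Q_\omega$. Classically, the obstruction to two lifts of a common map being unitarily equivalent is captured by a class in $KK^1(A,J)$, constructible from the pair $(\phi,\psi)$ via a Busby-type invariant, or equivalently as a $^*$-homomorphism from $A$ into a pullback $\mathcal Q_\omega\oplus_{\mathcal M_\omega}\mathcal Q_\omega$. My next step would be to identify this invariant, which I expect to coincide up to a dimension shift with the difference $[\phi]-[\psi]\in KK(A,\mathcal Q_\omega)$.

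I would then invoke the UCT for $A$ to unpack $KK^1(A,J)$. The UCT sequence
\begin{equation*}
0\to\mathrm{Ext}^1_{\mathbb Z}(K_*(A),K_{*+1}(J))\to KK^*(A,J)\to\mathrm{Hom}(K_*(A),K_*(J))\to 0
\end{equation*}
expresses the class in terms of $K$-theoretic data. Because $K_0(\mathcal Q_\omega)=\mathbb Q$ is a divisible and torsion-free $\mathbb Q$-vector space, I expect the relevant groups $K_*(J)$ to be divisible, which forces the $\mathrm{Ext}^1_{\mathbb Z}$ term to vanish and makes the class entirely determined by the Hom piece. That Hom piece should reduce to $K_0(\phi)-K_0(\psi)$, which is zero by hypothesis. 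The same divisibility also makes $K_0(\cdot;\mathbb Z/n\mathbb Z)$ of $\mathcal Q_\omega$ trivial, which is why coefficient $K$-theory does not need to enter the hypothesis here (in contrast to the general Theorem~\ref{intro:ultraproductthm}).

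The main obstacle is the final step: converting vanishing of this abstract $KK$-class into genuine norm approximate unitary equivalence inside $\mathcal Q_\omega$. This requires a Weyl--von Neumann-type absorption theorem for the trace--kernel extension, saying that a trivial extension of $A$ by $J$ is absorbed by any full nuclear lift of $\bar\phi$. To run this argument I would combine fullness of $\phi,\psi$ (to create orthogonal room in $\mathcal Q_\omega$ for the absorbing extension), nuclearity (to execute a stable uniqueness argument through completely positive approximations at the level of finite subsets and tolerances), and the $\mathcal Q$-stability of $\mathcal Q_\omega$ (to tensor away any remaining finite-dimensional defect using $\mathcal Q\otimes\mathcal Q\cong\mathcal Q$). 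I expect essentially all of the substantive work to concentrate in this absorption step; the $KK$-theoretic bookkeeping in the previous two paragraphs becomes routine once such an absorption theorem is available, and it is the step that fails in the more general codomains considered in the rest of this paper.
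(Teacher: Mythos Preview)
Your overall strategy matches the paper's description of Schafhauser's argument: reduce to equal images in the tracial quotient via Connes' theorem and reindexing, form a $KK$-class from the resulting pair of lifts, kill it using the UCT and the $K_0$ hypothesis, and then convert this vanishing into approximate unitary equivalence via an absorption argument powered by fullness, nuclearity, and $\mathcal Q$-stability. You are also right that $\mathcal Q$-stability is precisely what makes coefficient $K$-theory unnecessary here.

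Two corrections are needed. First, the degree is off: the obstruction class lives in $KK^0(A,J)$, not $KK^1(A,J)$. Once $\pi\circ\phi=\pi\circ\psi$, the pair $(\phi,\psi)$ is an $(A,J)$-Cuntz pair in the sense of Definition~\ref{defn_KK}, giving $[\phi,\psi]\in KK(A,J)$ directly, and under $KK(A,j)$ this maps to $[\phi]-[\psi]\in KK(A,\mathcal Q_\omega)$ with no dimension shift. The Busby picture and the pullback you mention are red herrings; the Cuntz pair is the correct packaging. Second, you apply the UCT short exact sequence to $J$ without comment, but the trace--kernel ideal $J$ is neither separable nor $\sigma$-unital, so $KK(A,J)$ and the UCT are not available as stated. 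Schafhauser's key technical insight (from \cite{new-TWW}) is to pass to a suitable separable subextension of the trace--kernel extension before doing any $KK$-theory; the paper emphasises this point and carries it out carefully in Lemma~\ref{seplem} and the proof of Theorem~\ref{MainUniqueness}.

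Finally, your last paragraph correctly identifies where the real work lies, but the mechanism is more specific than a generic Weyl--von Neumann absorption. As the paper explains, vanishing of $[\phi,\psi]_{KK}$ only gives stable uniqueness (Dadarlat--Eilers): a proper asymptotic unitary equivalence between $\phi\oplus\theta$ and $\psi\oplus\theta$ for any unitally absorbing $\theta$. Elliott--Kucerovsky lets one take $\theta=\phi$ (and $\theta=\psi$), yielding $\phi\oplus\phi\sim\psi\oplus\psi$, and then $M_{2^\infty}$-absorption of $\mathcal Q$ is used to cancel the $2\times 2$ amplification. This last cancellation step is exactly what fails for general II$_1$-factor codomains and motivates the $KK$-uniqueness theorem developed in the body of the paper.
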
 

Schafhauser's theorem does not give uniqueness for maps from $A$ into an ultraproduct $\prod_{\omega} M_{k_n}$. But given two full maps $\phi,\psi:A\to\prod_{\omega} M_{k_n}$ with $\tau_{\omega}\circ\phi=\tau_{\omega}\circ\phi$ (where $\tau_{\omega}$ is the unique trace on this ultraproduct), and $K_0(\phi)=K_0(\psi)$, one can embed each $M_{k_n}$ into $\calQ$ and obtain a unitary conjugacy in $\mathcal Q_\omega$.  Approximating by matrices, one can  find a sequence $(r_n)_{n}$ such that enlarging $M_{k_n}$ to $M_{k_n}\otimes M_{r_n}$, the induced maps $\tilde\phi,\tilde\psi:A\to\prod_{\omega} (M_{k_n}\otimes M_{r_n})$ become unitarily equivalent, i.e. we do get uniqueness after enlarging the matrix sizes.

It is natural to ask for unitary equivalence without enlarging the size of the matrices. For this it is inevitable that we will need to keep track of $\mathbb Z/n\mathbb Z$-coefficients in $K_0$ (since $K_0(\mathcal Q_\omega;\mathbb Z/n\mathbb Z)=0$ for all $n$, this data is not required when the codomain is $\mathcal Q_\omega$). To our knowledge, uniqueness of quasidiagonality results of this form were first obtained by Lin as part of his work classifying $^*$-homomorphisms from unital AH-algebras (which are inductive limit of homogeneous $C^*$-algebras) into unital and simple $C^*$-algebras with good tracial internal approximations; see \cite[Theorem 5.8]{Lin_2017} for AH domains (this also allows for ultraproducts of simple $C^*$-algebras with tracial rank at most $1$ as the codomain, with ultraproducts of matrices being a special case).\footnote{Lin's results are stated for approximately multiplicative maps; an ultraproduct version of Lin's result for commutative domains (\cite[Theorem 2.10]{Lin_2017}, which is a special case of \cite[Theorem 4.6]{Lin07}) that is much more directly comparable with Theorem \ref{intro:ultraproductthm} and Corollary \ref{intro:qdcor} is set out as \cite[Theorem 2.4]{Nuclear_dim_extension}.} Taking the factors $\mathcal M_n$ in Theorem \ref{intro:ultraproductthm} to be matrix algebras, we get the uniqueness of quasidiagonal approximations in the presence of the UCT.  Again this amounts to moving a type I approximation hypothesis from an internal structural condition to the level of $KK$-theory. 

\begin{maincor}\label{quasidiagonality}\label{intro:qdcor}
Let $A$ be a separable, unital, exact $C^*$-algebra satisfying the UCT. Let $\phi, \psi: A\rightarrow \prod_{\omega} M_{k_{n}}$ be unital, full and nuclear $^*$-homomorphisms such that $\tau_{\omega} \circ \phi = \tau_{\omega} \circ \psi$, $K_0(\phi) = K_0(\psi)$, $K_0(\phi;\mathbb Z/n\mathbb Z)=K_0(\psi;\mathbb Z/n\mathbb Z)$ for all $n\geq 2$. Then there exists a unitary $u \in \prod_{\omega} M_{k_{n}}$ such that $\phi = \text{Ad} (u) \circ \psi$. 
\end{maincor}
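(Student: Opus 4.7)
The plan is to deduce Corollary \ref{intro:qdcor} as an immediate specialization of Theorem \ref{intro:ultraproductthm}. The key observation is that for each $n$, the matrix algebra $M_{k_n}$ is a finite von Neumann factor (of type $\mathrm{I}_{k_n}$), whose unique tracial state is the normalized matrix trace $\tfrac{1}{k_n}\mathrm{Tr}$. Thus the sequence $(M_{k_n})_n$ is precisely a sequence of finite von Neumann factors of the kind considered in Theorem~\ref{intro:ultraproductthm}, and $\prod_{\omega} M_{k_n}$ is exactly the $C^*$-algebraic ultraproduct $\mathcal M_\omega$ appearing there.

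First I would verify that the unique trace $\tau_\omega$ on $\prod_{\omega} M_{k_n}$ asserted by Theorem \ref{intro:ultraproductthm} is the ultralimit trace
\[
\tau_\omega\bigl([(x_n)_n]\bigr) = \lim_{n\to\omega} \tfrac{1}{k_n}\mathrm{Tr}(x_n),
\]
which is well-defined (the sequence is bounded by $\|x\|$) and tracial on the $\ell^\infty$-product, and descends to the ultraproduct. Its uniqueness among tracial states follows as in the general finite-factor ultraproduct setting: the restriction of any tracial state on $\prod_{\omega} M_{k_n}$ to each $M_{k_n}$ (sitting as a constant sequence) must coincide with the unique trace on $M_{k_n}$, and this already determines the trace on the dense subset of eventually constant sequences.

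With this in hand, the hypotheses of the corollary -- $A$ separable, unital, exact, UCT; $\phi,\psi$ unital, full, nuclear; $\tau_\omega\circ\phi=\tau_\omega\circ\psi$; $K_0(\phi)=K_0(\psi)$; and $K_0(\phi;\mathbb Z/n\mathbb Z)=K_0(\psi;\mathbb Z/n\mathbb Z)$ for all $n\geq 2$ -- match verbatim the hypotheses required to apply Theorem~\ref{intro:ultraproductthm} with $\mathcal M_n=M_{k_n}$. The theorem then produces a unitary $u\in \prod_\omega M_{k_n}$ with $\phi=\mathrm{Ad}(u)\circ\psi$, which is exactly the conclusion of the corollary.

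Since the corollary is a pure instance of Theorem~\ref{intro:ultraproductthm}, there is no genuine obstacle beyond the brief trace identification above; all the real work -- the $KK$-uniqueness theorem and the application of Schafhauser's classification of lifts along the trace--kernel extension -- has already been absorbed into the proof of the theorem.
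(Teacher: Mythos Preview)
Your approach is correct and matches the paper's exactly: the paper states that Corollary~\ref{intro:qdcor} is immediate from Theorem~\ref{intro:ultraproductthm} by taking $\mathcal M_n=M_{k_n}$. One small remark: your aside about verifying trace uniqueness is both unnecessary (Theorem~\ref{intro:ultraproductthm} already asserts $\tau_\omega$ is unique) and flawed as written, since the $M_{k_n}$ do not sit inside $\prod_\omega M_{k_n}$ as constant sequences when the $k_n$ vary; the uniqueness is instead established in Lemma~\ref{prop:B_omega} via Proposition~\ref{prop.codomain.strictcomp}.
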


\subsection*{Classification via lifting along the trace-kernel extension and $KK$-uniqueness theorems} 

Our approach to proving Theorem \ref{II1_factor:intro} follows Schafhauser's pioneering uniqueness of embeddings into $\mathcal Q_\omega$ from \cite{Schafhauser.Annals}.   Schafhauser divided this problem into two pieces by means of the trace-kernel extension:
\begin{equation}\label{intro:TK}
    0\longrightarrow J_{\mathcal Q}\longrightarrow \mathcal Q_\omega\stackrel{q}{\longrightarrow} \mathcal R^\omega\longrightarrow 0.
\end{equation}
Here, the canonical embedding of $\mathcal Q$ into the hyperfinite II$_1$-factor $\mathcal R$ induces a quotient map from $\mathcal Q_\omega$ onto the von Neumann ultraproduct $\mathcal R^\omega$, and the unique trace $\tau_{\calQ_{\omega}}$ descends to $\tau_{\calR_{\omega}}$, i.e. $\tau_{\calR_{\omega}}\circ q = \tau_{\calQ_{\omega}}$.  Given a nuclear $C^*$-algebra $A$ and two unital full $^*$-homomorphisms $\phi,\psi:A\to\mathcal Q_\omega$ with $\tau_{\mathcal Q_{\omega}}\circ\phi=\tau_{\mathcal Q_{\omega}}\circ\psi$, the folklore consequence of Connes' theorem ensures that $q\circ\phi,q\circ\psi:A\to\mathcal R^\omega$ are unitarily equivalent. Moreover, this unitary lifts to $\mathcal Q_\omega$ and so we can assume that $q\circ\phi=q\circ\psi$. Then $(\phi,\psi)$ forms a $(A,J_{\mathcal Q})$-\emph{Cuntz pair} and hence gives a class $[\phi,\psi]$ in $KK(A,J_\calQ)$. There are things to worry about here, notably that the kernel $J_\calQ$ is not $\sigma$-unital and also not stable. But one of Schafhauser's big insights (from his earlier work \cite{new-TWW} reproving the quasidiagonality theorem) was that one can nevertheless productively use $KK$-theory and extension theory by working with suitable separable subextensions of \eqref{intro:TK} (which we will suppress in the rest of this discussion). 

Assuming now that $K_0(\phi)=K_0(\psi)$, the UCT can be used to show that $[\phi,\psi]$ vanishes in $KK(A,J_\calQ)$.  We would like to deduce that there is a path of unitaries $(u_t)_{t\geq 0}$ in the minimal unitisation of $J_B$ with $u_t\phi(a)u_t^*\to \psi(a)$; this is known as a \emph{proper asymptotic unitary equivalence}. Once this happens $\phi$ and $\psi$ will be approximately unitarily equivalent in $\mathcal Q_\omega$, and hence (using reindexing) unitarily equivalent.  However, $[\phi,\psi]=0$ does not directly give a proper asymptotic unitary equivalence.  Instead, Dadarlat--Eilers' \emph{stable uniqueness theorem} (\cite{Dadarlet-Eilers}) gives such a path of unitaries asymptotically conjugating $\phi\oplus\theta$ and $\psi\oplus\theta$, after adding any unitally absorbing map $\theta$. Importantly using the Elliott--Kucerovsky's noncommutative Weyl--von Neumann Theorem from \cite{EK.PJM}, Schafhauser realised that as $\phi$ and $\psi$ are full, either of them can be used as $\theta$, giving a unitary equivalence between $\phi\oplus\phi$ and $\psi\oplus\psi$.\footnote{Nuclearity enters play here. The Elliott--Kucerovsky theorem characterises nuclearly absorbing maps, i.e. those maps absorbing all weakly nuclear maps. In order to be able to take $\theta$ to be $\phi$ and $\psi$ in turn these need to be weakly nuclear as maps into the multiplier of $J_{\mathcal Q}$, which is a consequence of nuclearity of $\phi$ and $\psi$ (which in this outline follows from nuclearity of $A$). In the main body of the paper (and just as in \cite{Schafhauser.Annals}), we work with Skandalis' $KK_{\mathrm{nuc}}(A,J)$ and its Hausdorff quotient $KL_{\mathrm{nuc}}(A,J)$ so as to handle nuclear maps from exact domains.}  Using the fact that $\mathcal Q$ absorbs the CAR-algebra $M_{2^\infty}$ tensorially, Schafhauser removed the unwanted $2\times 2$ matrix amplification to obtain the desired unitary equivalence of $\phi$ and $\psi$. 

The passage from $[\phi,\psi]=0$ in $KK$, together with the unital absorption of $\phi$ and $\psi$, to a proper asymptotic unitary equivalence between $\phi$ and $\psi$ was coined a \emph{$KK$-uniqueness theorem} in \cite{CGSTW}, and we informally say the pair $(A,J)$ has $KK$-uniqueness when this applies.\footnote{As a warning, in this paper we work with unital maps and the notion of unitally absorbing representations, whereas in \cite{Schafhauser.Annals} and \cite{CGSTW}, absorbing representations where used in these arguments. This enables us to avoid some unnecessary unitisation and deunitisation arguments. See Footnote \ref{unital_absorb-fn} and the comments before Theorem \ref{KK_KL_uniqueness}.} This part of Schafhauser's argument works generally for any $\mathcal Q$-stable and stable $KK$-codomain $J$ (see \cite[Proposition 2.7]{Schafhauser.Annals}). In the later abstract approach to the unital classification theorem, Schafhauser's $\mathcal Q$-stable $KK$-uniqueness theorem was replaced by a $\mathcal Z$-stable $KK$-uniqueness theorem (see \cite[Theorem 1.4]{CGSTW} and the discussion there). 

To prove Theorem \ref{II1_factor:intro}, we replace $\mathcal Q$ by a II$_1$-factor $\mathcal M$. The trace-kernel extension
\begin{equation}\label{Intro:TK2}
0\longrightarrow J_{\mathcal M}\longrightarrow \mathcal M_\omega\stackrel{q}{\longrightarrow} \mathcal M^\omega \longrightarrow 0
\end{equation}
arises from the natural quotient of the $C^*$-ultrapower $\mathcal M_\omega$ onto the tracial von Neumann algebra ultrapower $\mathcal M^\omega$. Schafhauser's approach readily adapts to this setting and the only missing step is a $KK$-uniqueness theorem. We can not use the $\Z$-stable $KK$-uniqueness theorem, as not only does $\mathcal M$ fail to be $\mathcal Z$-stable (no II$_1$ factor can be a tensor product of two infinite dimensional $C^*$-algebras; see \cite{Ghasemi}), but in general $\mathcal M$ will not even be separably $\mathcal Z$-stable\footnote{Separable $\Z$-stability is the appropriate version of $\Z$-stability for non-separable $C^*$-algebras: $D$ is separably $\Z$-stable if every separable $D_0\subseteq D$ is contained in a $\Z$-stable separable $C^*$-subalgebra $D_1$ of $D$.}  (for example if $\mathcal M$ fails to have property $\Gamma$). While there are II$_1$-factors which are separably $\mathcal Z$-stable (such as $\mathcal R^\omega$), to use $\Z$-stable $KK$-uniqueness to prove Theorem \ref{II1_factor:intro} for $\mathcal M=\mathcal R$, we would need $\mathcal R$ (or equivalently its norm ultraproduct $\mathcal R_\omega$) to be separably $\Z$-stable, and this is open (see \cite[Problem XCIV]{99Q}). Accordingly the main technical goal of the paper is to prove a $KK$ and $KL$-uniqueness theorem designed for use in Theorem \ref{II1_factor:intro} (where $J$ will be taken to be a suitable separable subalgebra of the trace-kernel ideal $J_{\mathcal M}$ from \eqref{Intro:TK2}).  Once we have the $KK$-uniqueness theorem in place, the rest of the sketch argument above from \cite{Schafhauser.Annals} works (the details are set out in Section \ref{classification_chapter}).  We state our $KK$-uniqueness result informally here for nuclear domains; in the main body we work with weakly nuclear maps and give $KK_{\mathrm{nuc}}$ and $KL_{\mathrm{nuc}}$-uniqueness theorems. Since our argument is $C^*$-algebraic, the rather eclectic set of hypotheses on the codomain $J$ is designed to collect the relevant $C^*$-algebraic properties of (suitable separabalisations) of $J_{\calM}$.  We will discuss these hypotheses and their consequences in Section \ref{Sect:sep}.

\begin{mainthm}[See Theorem \ref{KL-uniqueness}] 
\label{Intro:KKUnique}
Let $A$ be a separable, unital and nuclear $C^*$-algebra, and $J$ be a separable and stable $C^*$-algebra which has real rank zero, stable rank one, $K_1(J)=0$ and a totally ordered Murray and von Neumann semigroup. Then the pair $(A,J)$ has $KK$-uniqueness.
\end{mainthm}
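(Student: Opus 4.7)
The strategy is to follow Schafhauser's template for $KK$-uniqueness from \cite{Schafhauser.Annals} (see the abstract framework in \cite{CGSTW}), but to replace the tensorial self-absorption of the codomain ($\calQ$- or $\Z$-stability) used there with a direct exploitation of the structural hypotheses on $J$, in the spirit of Loreaux--Ng--Sutradhar \cite{ess-codim}. Given unital unitally nuclearly absorbing $^*$-homomorphisms $\phi,\psi\colon A\to M(J)$ with $[\phi,\psi]=0$ in $KK_{\mathrm{nuc}}(A,J)$, the goal is to produce a continuous path $(u_t)_{t\geq 0}$ of unitaries in $J^{\sim}$ with $\|u_t\phi(a)u_t^*-\psi(a)\|\to 0$ for every $a\in A$.

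The first step is to invoke the Dadarlat--Eilers stable uniqueness theorem. The hypothesis $[\phi,\psi]=0$ in $KK_{\mathrm{nuc}}$, together with weak nuclearity and fullness, yields a proper asymptotic unitary equivalence between $\phi\oplus\theta$ and $\psi\oplus\theta$ as maps into $M_2(M(J))\cong M(M_2(J))$, for any unitally absorbing weakly nuclear $\theta\colon A\to M(J)$; the implementing unitary path lies in $1+M_2(J)$. Using the unital absorption of $\phi$ and $\psi$ themselves (a hypothesis built into $KK$-uniqueness, and in practice ensured for full nuclear maps via Elliott--Kucerovsky once the $C^*$-algebraic conditions on $J$ are verified), $\theta$ can be taken to be $\phi$ and then $\psi$ in turn. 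Chaining the two resulting paths gives $\phi\oplus\phi\sim_{\mathrm{pau}}\psi\oplus\psi$ via a path of unitaries in $1+M_2(J)$.

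The decisive and most delicate step is to descend from this $2\times 2$ equivalence to a proper asymptotic unitary equivalence between $\phi$ and $\psi$ in $M(J)$ itself, without any $\calQ$-like tensorial absorption available. Here the structural hypotheses on $J$ play the crucial roles: the totally ordered Murray--von Neumann semigroup $V(J)$ ensures any two projections in matrix algebras over $J$ are Murray--von Neumann comparable, enabling the ``halves'' of the $2\times 2$ decomposition to be matched up; stable rank one provides cancellation of projections and well-behaved polar decompositions; real rank zero allows projection approximations of arbitrary self-adjoint elements of $J$ and its multiplier; and $K_1(J)=0$ combined with stable rank one gives $U(J^{\sim})/U_0(J^{\sim})\cong K_1(J)=0$, so that every unitary in $J^{\sim}$ is path-connected to the identity through continuous paths inside $U_0(J^{\sim})$.

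The main obstacle lies in this descent step. The plan is to use the total ordering on $V(J)$ to pair the two summands in the $2\times 2$ decomposition compatibly, apply stable rank one to cancel the redundant copy at the level of projections in $M(J)$, and then lift this cancellation to a continuous unitary path inside $U_0(J^{\sim})$ via the vanishing of $K_1(J)$. A careful concatenation and reparameterisation then merges this with the output of the stable uniqueness step to yield the desired proper asymptotic unitary equivalence between $\phi$ and $\psi$, proving $KK$-uniqueness. The corresponding $KL$-uniqueness statement is expected to follow by a limiting argument, tracking the Hausdorff topology on $KK_{\mathrm{nuc}}(A,J)$ inherited from the separable approximations of the pair $(A,J)$.
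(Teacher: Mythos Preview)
Your proposal has a genuine gap at the decisive step. After obtaining $\phi\oplus\phi\sim_{\mathrm{pau}}\psi\oplus\psi$ via a path in $1+M_2(J)$, you propose to ``descend'' to $\phi\sim_{\mathrm{pau}}\psi$ by using the total ordering of $V(J)$ to pair summands, cancel via stable rank one, and lift to $U_0(J^\sim)$ using $K_1(J)=0$. But the objects to be halved here are not projections: they are a continuous path of unitaries in $1+M_2(J)$ asymptotically conjugating two \emph{maps}. Comparison and cancellation in $V(J)$ say nothing about how such a path interacts with the diagonal embedding $\calM(J)\hookrightarrow M_2(\calM(J))$, and there is no mechanism here for extracting a path in $1+J$ from one in $1+M_2(J)$. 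In Schafhauser's argument this descent is achieved by an honest isomorphism $M_2(J)\cong J$ coming from $M_{2^\infty}$-absorption; absent such tensorial structure the halving problem is precisely the obstruction, and your structural hypotheses on $J$ do not resolve it.

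The paper avoids this $2\times 2$ amplification entirely. Instead it uses the general principle (Theorem~\ref{KK_KL_uniqueness}, going back to Dadarlat--Eilers and set out in \cite{K1-injective,CGSTW}) that $KK$-uniqueness follows directly---with no matrix amplification---once the Paschke dual algebra $\calC(J)\cap\overline{\phi}(A)'$ is $K_1$-injective. The substantial work is then to prove this $K_1$-injectivity (Theorem~\ref{property_thm}, Corollary~\ref{RR0-K1-inj}). This is done by first constructing a maximal ideal $\calI$ with $J\subseteq\calI\lhd\calM(J)$ such that $\calI\lhd\calM(J)$ is purely large relative to $J$ (Theorem~\ref{posi_ideal}), using the total ordering of $V(J)$ together with Goodearl's description of $V(\calM(J))$ via intervals. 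One then verifies the Loreaux--Ng--Sutradhar sufficient criterion (Lemma~\ref{sufficient_K1}): every unitary in the Paschke dual is homotopic to one $u$ for which $C^*(\overline{\phi}(A),u)\hookrightarrow\calC(J)$ is full. The construction of such $u$ (Lemma~\ref{main}) is the technical heart, building bidiagonal unitaries $\overline{v}_\alpha$ from a quasicentral approximate unit and exploiting relative pure largeness to force fullness. Your hypotheses on $J$ are used, but through this route rather than through any cancellation argument.
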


These $KK$-uniqueness theorems originate in the work of Dadarlat and Eilers, who in the language we used showed that for any unital separable $A$, the pair $(A,\mathcal K)$ satisfies $KK$-uniqueness where $\mathcal K$ is the compacts (\cite[Theorem 3.12]{Dadarlet-Eilers}). This is powered by asymptotic characterisations of absorption together with the analysis of asymptotically inner derivations and automorphisms going back to \cite{KadisonRingrose} (see \cite[Section 8]{Pedersen:Book}). There are two key ingredients:
\begin{itemize}
    \item 
Paschke duality $KK(A, \mathbb{C}) \cong K_{1}(\calC \cap \overline{\phi}(A)')$, where $\mathcal C$ is the Calkin algebra $\mathcal B(\mathcal H)/\mathcal K(\mathcal H)$ and $\overline{\phi}:A\to\mathcal C$ is induced by a unital and unitally absorbing map $\phi:A\to\mathcal B(\mathcal H)$.
\item $K_1$-injectivity of the Paschke dual algebra $\calC \cap \overline{\phi}(A)'$, i.e. unitaries in this algebra which are trivial in $K_1$ are homotopic to the identity without the need for further matrix amplification.
\end{itemize}

The argument of Dadarlat and Eilers extends to more general $J$, by means of Thomsen's generalisation of Paschke duality (\cite{Thomsen}), whenever the Paschke dual algebra is $K_1$-injective.  This was set out by Loreaux, Ng, and Sutradhar in \cite[Theorem 2.6]{K1-injective} (both in the framework we give here of unital maps which are unitally absorbing, and also for absorbing maps), and in the abstract approach to classification (\cite[Section 5]{CGSTW}), which explicitly raised the $KK$-uniqueness problem (see also \cite[Section 17]{99Q}). Following this overarching strategy, Loreaux, Ng, and Sutradhar obtained $KK$-uniqueness for pairs $(A,J)$ where $A$ is a unital, separable, simple and nuclear $C^*$-algebra and $J$ is separable, simple and stable satisfying strict comparison, with $T(J)$ having finitely many extreme points. Their  $KK$-uniqueness theorem relies on a delicate argument to establish $K_{1}$-injectivity of Paschke dual algebras.

We follow the approach of Loreaux, Ng, and Sutradhar to obtain the necessary $K_1$-injectivity, establishing the following  theorem, from which Theorem \ref{Intro:KKUnique} and then Theorem \ref{II1_factor:intro} follow. The major difference is that in our case, $J$ needs to be (a suitable separable subalgebra of) the trace-kernel ideal, and so is inevitably highly non-simple: with ideals coming from different rates at which elements can shrink to zero in trace. In contrast, \cite{K1-injective} handles only simple $J$ with reasonably small and well behaved trace spaces.  (though this is not the main difficulty), we also extend from simple and nuclear $A$ to handle general separable domains. 

\begin{mainthm}[See Corollary \ref{RR0-K1-inj}]  
\label{K1_inj_main_4.2_intro}  
    Let $A$ be a unital, separable $C^*$-algebra. Let $J$ be a separable and stable $C^*$-algebra which has real rank zero, stable rank one, $K_{1}(J) = 0$, which has a totally ordered Murray--von Neumann semigroup. Let ${\phi: A\rightarrow \calM(J)}$ be a unital, full and weakly nuclear $\ast$-homomorphism. Then the Paschke dual algebra $\calC(J)\cap \overline{\phi}(A)'$ is $K_{1}$-injective. 
\end{mainthm}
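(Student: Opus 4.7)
The plan is to follow the Loreaux--Ng--Sutradhar blueprint from \cite{K1-injective}, adapting it to handle the non-simple $J$ at hand. Write $\mathcal D := \mathcal C(J) \cap \overline{\phi}(A)'$ for the Paschke dual algebra. Given a unitary $u \in \mathcal D$ with $[u]_1 = 0 \in K_1(\mathcal D)$, the goal is to produce a norm-continuous path inside $U(\mathcal D)$ from $u$ to $1_{\mathcal D}$, without any matrix amplification.

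The first step is to lift $u$ to a unitary $V \in \mathcal M(J)$ satisfying $[V,\phi(a)] \in J$ for every $a \in A$. Real rank zero and stable rank one of $J$ guarantee that unitaries lift along the surjection $\mathcal M(J) \twoheadrightarrow \mathcal C(J)$, and allow the lift to be chosen so that the commutation relations with $\phi(A)$ are preserved. At this point the $K_1$-triviality of $u$ only provides a homotopy after stabilising by some $1_n$; however, the fullness, unitality, and weak nuclearity of $\phi$, combined with Elliott--Kucerovsky's characterisation of unital absorption from \cite{EK.PJM}, show that $\phi$ unitally absorbs weakly nuclear maps into $\mathcal M(J)$. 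This absorption allows one to reabsorb any matrix amplification back into $\mathcal D$ itself, reducing the problem to a statement about a single unitary in $\mathcal D$.

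The technical core of the argument is then to approximate $u$ in norm by a finite-spectrum unitary $u' = \sum_{i=1}^N \lambda_i p_i \in \mathcal D$ with pairwise orthogonal spectral projections $p_i \in \mathcal D$, and to write $u'$ as $\exp(ih)$ for some self-adjoint $h \in \mathcal D$. The approximation by finite-spectrum unitaries uses that real rank zero of $J$ propagates to supply enough spectral projections inside $\mathcal D$. Writing $u'$ as an exponential is reduced to the following combinatorial task: produce norm-continuous rotations in $U(\mathcal D)$ pairing up and conjugating the spectral projections $p_i$ in a manner consistent with $\phi(A)$. Here the totally ordered Murray--von Neumann semigroup of $J$, together with stable rank one, supplies implementing partial isometries in $\mathcal M(J)$ that asymptotically commute with $\phi(A)$, while the hypothesis $K_1(J) = 0$ trivialises the obstruction to assembling these local rotations into a single globally defined exponential.

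The main obstacle is precisely the non-simplicity of $J$. In the Loreaux--Ng--Sutradhar setting, $J$ is simple with only finitely many extreme traces, so projection comparison and the choice of implementing partial isometries are controlled by strict comparison on a single primitive quotient. Here $J$ arises from a trace--kernel ideal and carries a continuous family of ideals indexed by rates of trace-vanishing; implementing partial isometries must be selected inside the correct ideal. This is exactly what the totally ordered Murray--von Neumann semigroup hypothesis is designed to enable: it linearises the comparison theory of projections in $J$ and makes a non-simple analogue of the Loreaux--Ng--Sutradhar projection combinatorics tractable. Translating this abstract comparison into explicit, norm-continuous constructions inside $\mathcal D$ --- while maintaining the asymptotic commutation with $\phi(A)$ --- is the bulk of the work.
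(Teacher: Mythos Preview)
Your proposal has a genuine gap, and it also misidentifies what the Loreaux--Ng--Sutradhar blueprint actually is.

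The central problem is your ``technical core'': you assert that $u$ can be approximated in norm by a finite-spectrum unitary $u' = \sum_i \lambda_i p_i$ with spectral projections $p_i$ lying \emph{in the Paschke dual} $\mathcal D$, on the grounds that ``real rank zero of $J$ propagates to supply enough spectral projections inside $\mathcal D$.'' This is unjustified. Real rank zero of $J$ (and, via \cite{RR-LHX}, of $\mathcal M(J)$ and $\mathcal C(J)$) does not in any obvious way descend to the relative commutant $\mathcal C(J)\cap\overline\phi(A)'$; there is no general mechanism for producing spectral projections of a unitary in $\mathcal D$ that themselves commute with $\overline\phi(A)$. Likewise, your claim that one can choose implementing partial isometries in $\mathcal M(J)$ that ``asymptotically commute with $\phi(A)$'' is exactly the hard part and is not supplied by the total ordering of $V(J)$ alone. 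Without these two ingredients, the rotation-and-exponential scheme collapses.

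What the paper (and \cite{K1-injective}) actually does is quite different. The reduction is not to writing $u$ as an exponential, but to Lemma~\ref{sufficient_K1}: it suffices to show that every unitary in $\mathcal D$ is homotopic \emph{within} $\mathcal D$ to some $\overline v\,\overline u$ for which the inclusion $C^*(\overline\phi(A),\overline v\,\overline u)\hookrightarrow \mathcal C(J)$ is full (absorption then finishes the job). The correcting unitary $\overline v$ is built explicitly as a bidiagonal series $\overline v_\alpha = \pi\big(\sum_n \alpha_n(e_n-e_{n-1})\big)$ along a quasicentral almost-idempotent approximate unit $(e_n)_n$ for $J$ relative to $\phi(A)$; this construction automatically lands in $\mathcal D$ and in $\mathcal U^0(\mathcal D)$ for suitable sequences $(\alpha_n)_n$. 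The totally ordered $V(J)$ is not used for projection combinatorics in $\mathcal D$ at all; rather, together with real rank zero, stable rank one and $K_1(J)=0$, it is used in Section~\ref{pure_large_sec} to produce a maximal ideal $\mathcal I\lhd\mathcal M(J)$ with $\mathcal I\supset J$ such that every positive $b\in\mathcal M(J)\setminus\mathcal I$ Cuntz-dominates everything in $J$ (Theorem~\ref{posi_ideal}). This relative pure largeness is the engine that, block by block, lets one choose the phases $\alpha_n$ so that elements of the form $h_k(\overline v_\alpha\overline u)\overline\phi(a_k)$ are full in $\mathcal C(J)$ for a countable family exhausting $C^*(\overline\phi(A),\overline v_\alpha\overline u)$ (Lemma~\ref{main}). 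None of this route passes through real rank zero of $\mathcal D$ or through writing $u$ as an exponential.
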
 

We set up an abstract version of this result in Theorem \ref{property_thm} in terms of relatively purely large extensions, which will both capture Theorem \ref{K1_inj_main_4.2_intro} and the unique trace codomain case of \cite[Theorem 3.28]{K1-injective}.  The argument for this is quite involved, and so we defer a detailed outline of how it works until Section \ref{overall_plan_section}.

The abstract condition we use is potentially of independent interest. Given a stable and separable $C^*$-algebra $J$, there has been considerable interest going back to \cite{ideals_multiplier_finite} in finding ideals in the corona algebra of $J$, or equivalently ideals of the multiplier algebra $\calM(J)$ which contain $J$, and then analysing their structure. For example, a densely defined lower semicontinuous trace on $J$ can be extended to a lower semicontinuous (but not densely defined) trace on $\calM(J)$, which in turn produces a \emph{trace-class} ideal $\mathcal T$ generated by all elements of finite trace which lies between $J$ and $\calM(J)$. Work going back to R{\o}rdam (\cite{ideals_multiplier_finite}) shows that when $J$ has a unique densely defined lower semicontinuous trace (up to scaling), and has strict comparison with this trace, the trace class ideal $\mathcal T$ is the unique ideal between $J$ and $\calM(J)$, and furthermore $\calM(J)/\mathcal T$ is simple and purely infinite.  The existence of such an ideal is the abstract condition underpinning our proof of Theorem \ref{K1_inj_main_4.2_intro}, and we show that after passing to a suitable separable subalgebra of the trace-kernel extensions, these large ideals can always be found. In fact, these ideals satisfy the stronger condition of pure largeness. 

\begin{mainthm}[See Theorem \ref{posi_ideal}] 
\label{pure_large_intro} 
Let $J$ be a separable and stable $C^*$-algebra which has real rank zero, stable rank one, $K_{1}(J) = 0$ and has a totally ordered Murray--von Neumann semigroup. Then there exists a maximal proper ideal $\mathcal I$ of $\calM(J)$ containing $J$ with the property that $\calI\lhd \calM(J)$ is purely large, i.e. for any $a\in \calI_{+}$ and $b\in \calM(J)_{+} \setminus \calI$, we have $a\precsim b$ in the Cuntz semigroup. Moreover, $\calM(J)/\mathcal I$ is simple and purely infinite. 
\end{mainthm}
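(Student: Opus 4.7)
The approach is to exploit the totally ordered Murray--von Neumann semigroup $V(J)$, which is upward unbounded since $J$ is stable, to build a dimension-function-like invariant $d$ on $\calM(J)_+$; the ``bounded'' locus of $d$ will generate the desired ideal, and Zorn's lemma plus pure infiniteness of the corresponding quotient will do the rest.

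For $b \in \calM(J)_+$, set
\[
d(b) := \sup\bigl\{[p] : p \in \mathrm{Proj}(J),\ p \precsim b \text{ in } \mathrm{Cu}(\calM(J))\bigr\} \in V(J) \cup \{\infty\},
\]
the supremum being taken in the totally ordered set $V(J) \cup \{\infty\}$ (so $d(b)=\infty$ exactly when the set of MvN classes of projections in $J$ Cuntz-subequivalent to $b$ has no upper bound in $V(J)$). By the total order together with upward unboundedness of $V(J)$, this happens precisely when $b$ Cuntz-dominates every projection of $J$. Using the total order on $V(J)$ together with real rank zero and stable rank one of $J$ (which afford a Riesz-type decomposition in $\mathrm{Cu}(J)$ that, by stability, transfers to projections of $J$ subequivalent to multiplier-level orthogonal sums), one verifies $d(xbx^*) \leq d(b)$, $d(b^*b) = d(bb^*)$, $d(b_1 \oplus b_2) = d(b_1) + d(b_2)$, and $d(b_1 + b_2) \leq d(b_1) + d(b_2)$. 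Set
\[
\calI_{\mathrm{fin}}^0 := \{\, b \in \calM(J) : d(b^*b) < \infty\,\};
\]
these properties show $\calI_{\mathrm{fin}}^0$ is a self-adjoint $\calM(J)$-bimodule subspace, so its norm closure $\calI_{\mathrm{fin}}$ is a closed two-sided ideal of $\calM(J)$. It contains $J$ (real rank zero of $J$ produces, for each $a \in J_+$, a projection $p \in J$ with $a \precsim p$, giving $d(a^*a) \leq [p] < \infty$) and is proper (any $b$ within norm $1/2$ of $1_{\calM(J)}$ is invertible, forcing $1 \precsim b^*b$ and hence $d(b^*b) = \infty$). Apply Zorn's lemma in the poset of closed proper ideals of $\calM(J)$ containing $\calI_{\mathrm{fin}}$ to obtain a maximal such ideal $\calI$.

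To verify that $\calI$ is purely large relative to $J$, take $b \in \calM(J)_+ \setminus \calI$; then $b \notin \calI_{\mathrm{fin}}^0$, so $d(b) = \infty$ and $b$ Cuntz-dominates every projection of $J$. For $a \in J_+$ and $\epsilon > 0$, real rank zero of $J$ provides a projection $p_\epsilon \in J$ with $(a-\epsilon)_+ \precsim p_\epsilon \precsim b$; letting $\epsilon \to 0$ yields $a \precsim b$. Simplicity of $\calM(J)/\calI$ is immediate from maximality, and stability of $J$ supplies isometries $s_1, s_2 \in \calM(J)$ with $s_1 s_1^* + s_2 s_2^* = 1$, showing $1_{\calM(J)}$ is properly infinite. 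This descends to $\overline{1}$ in $\calM(J)/\calI$, and a simple unital $C^*$-algebra with properly infinite unit is purely infinite by the standard Kirchberg--R\o{}rdam characterisation. The principal technical obstacle is the verification of the Riesz-type additivity $d(b_1 \oplus b_2) = d(b_1) + d(b_2)$ in $\calM(J)$, where the structural conditions on $J$ have to be transferred to multiplier-level elements via careful Cuntz-semigroup manipulations; the total order on $V(J)$ is the decisive ingredient that makes the required decompositions available.
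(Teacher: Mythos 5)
Your approach is genuinely different from the paper's, but it has a gap precisely at the step you yourself identify as the ``principal technical obstacle'': the subadditivity $d(b_1+b_2)\leq d(b_1)+d(b_2)$ (equivalently, closure of $\calI^0_{\mathrm{fin}}$ under addition) is not established, and it is not at all clear that it follows from Riesz-type decomposition arguments in $\mathrm{Cu}(J)$. The difficulty is that a projection $p\in J$ with $p\precsim b_1\oplus b_2$ in $\calM(J)$ needs to be decomposed into two projections in $J$, one subequivalent to $b_1$ and the other to $b_2$; Riesz decomposition in $\mathrm{Cu}(J)$ or $V(J)$ does not directly supply this, because the ambient comparison happens in $\calM(J)$ and the elements $b_i$ live there, not in $J$. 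A strong hint that something is missing is that your argument never uses the hypothesis $K_1(J)=0$: the paper invokes this (via Lin's theorem, \cite{RR-LHX}) to conclude that $\calM(J)$ has real rank zero, and that is the crucial tool for extending a projection-level characterisation of $\calI$ to a statement about all positive elements of $\calM(J)$. Without real rank zero of $\calM(J)$, the passage from ``$d(b)=\infty$ on projections of $J$'' to the structural claims about $\calI^0_{\mathrm{fin}}$ being a closed two-sided ideal seems out of reach.

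For comparison, the paper avoids constructing a dimension function entirely. It instead invokes two pieces of existing machinery: Zhang's lattice isomorphism between closed ideals of $\calM(J)$ and order ideals in $V(\calM(J))$ (Proposition \ref{ideal_corr}), together with Goodearl's computation $V(\calM(J))\cong\Lambda_\sigma(V(J))$, the countably generated intervals in $V(J)$ (Lemma \ref{dr-cgi}). With these in place, the order ideal $\calL=V(\calM(J))\setminus\{[1_{\calM(J)}]\}$ is easily seen to be hereditary, and closure of $\calL$ under addition is obtained cleanly from the interval picture (Lemma \ref{non-dr-bound}: a proper interval in a totally ordered $V(J)$ with a projection approximate unit is trapped below some $I_{p_n}$). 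This is exactly the difficulty your subadditivity claim would have to overcome, but the interval machinery does the work; the resulting ideal $\calI$ automatically has the maximality and uniqueness properties, with no Zorn's lemma needed. Lin's real rank zero of $\calM(J)$ is then used at the very end to pass from projections to general positive elements. Your overall strategy (build an intermediate ideal of ``finite'' elements, show it misses a neighbourhood of $1$, then argue pure largeness) is a reasonable instinct, but it would need the Goodearl--Zhang apparatus, or something equivalent, to make the ideal-structure verification go through; as written, the argument is incomplete.
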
 

Given the role separabilisations of the trace-kernel ideal and the corresponding multiplier algebra play in abstract classification, we believe it will be valuable to analyse ideals in the multipliers of these separabilisations more generally, and not just with the restrictive hypotheses here covering trace-kernel ideals from ultraproducts of factors.

The material in this paper is based on the DPhil thesis of the first author \cite{Hua_thesis} supervised by the second author.  In \cite{Hua_thesis} the results are set up for maps out of nuclear $C^*$-algebras, whereas here we work in the more general framework of nuclear maps from exact domains. During the (fairly long) period in which this paper was written, Gabor Szab\'o has informed us that he has obtained very general $KK$ and $KL$-uniqueness theorems, which can be used to obtain our main results as well as other consequences for classification. 

\subsection*{Structure of the paper} 
The preliminary section provides a brief overview of $KK$-theory and unital absorption for maps, ending up with our adaptation of a sufficient condition for $K_{1}$-injectivity of the Paschke dual algebra. Section \ref{Sect:sep} is devoted to our codomain hypotheses: the abstract combination of $C^*$-algebraic properties satisfied by finite factors which power our later arguments.  This section also reviews ultrapowers, the trace-kernel extension, and separabilisation arguments. 

In Section \ref{pure_large_sec}, we introduce relative pure largeness for extensions and prove Theorem \ref{pure_large_intro}. Using this abstract property, Section \ref{K1_Paschke_chapter} establishes our main $K_{1}$-injectivity result (Theorem \ref{K1_inj_main_4.2_intro}), and the corresponding $KK$-uniqueness result (Theorem \ref{Intro:KKUnique}). An outline of the proof of Theorem \ref{K1_inj_main_4.2_intro} is provided in Section \ref{overall_plan_section} to illustrate the argument. 

Section \ref{classification_chapter} proves the main classification theorem (Theorem \ref{intro:ultraproductthm}) by inserting our $KK$-uniqueness theorem suitably in Schafhauser's argument.  We give the details for completeness and record Theorem \ref{II1_factor:intro} as a direct consequence. Finally, the appendix contains a short proof of uniqueness of maps into type III factors. 

\subsection*{Acknowledgements} We thank Jamie Gabe, Chris Schafhauser, Gábor Szabó and Aaron Tikuisis for helpful conversations regarding the material in this paper, and the referee for their careful reading of the paper and helpful suggestions. 

\section{Preliminaries} 
\subsection{Multiplier algebras of $C^*$-algebras with real rank zero}
\label{Multi_rr0_sec}
The $C^*$-algebras appearing as codomains in our work have \emph{real rank zero}: the invertible self-adjoint elements are dense in the self-adjoints, or equivalently, self-adjoint elements of finite spectrum are dense in the self-adjoint elements (\cite[Theorem 2.6]{BrownPedersen:JFA}). Such $C^*$-algebras have an abundance of projections, and thus have a well-understood structure theory. 

We first recall Zhang's description of the ideal lattice of a \emph{multiplier algebra} of a $C^*$-algebra with real rank zero via the Murray--von Neumann semigroup. Given a $C^*$-algebra $D$, the Murray--von Neumann semigroup $V(D)$ is the preordered monoid consisting of equivalence classes of projections in $D\otimes \calK$, with the addition given by $[p]+ [q] = [\mathrm{diag}(p, q)]$. The preorder (given by Murray--von Neumann subequivalence) is algebraic: $x\leq y$ if there exists $z\in V(D)$ with $x+z=y$. To state Zhang's characterisation, and results in Section \ref{pure_large_sec}, we recall the following definitions. 

\begin{defn} 
Let $S$ be a preordered abelian monoid. A nonempty subset $I \subseteq S$ is 
\begin{enumerate} [(i)]
\item \emph{hereditary} if for any $x, y\in S$, such that $x\in I$ and $y\leq x$, we have $y\in I$; 
\item an \emph{order ideal} if it is hereditary and closed under addition. 
\end{enumerate} 
Given nonempty subsets $I_{1}$ and $I_{2}$ of $S$, define 
\begin{equation} \label{addition_subsets} 
I_{1}+I_{2} \coloneqq \{x+y: x\in I_{1}, y\in I_{2}\}. 
\end{equation} 
If $I_{1}$ and $I_{2}$ are order ideals of $S$, then $I_{1}+I_{1}$ is closed under addition. Additionally, if $S$ has the Riesz decomposition property,\footnote{$S$ has the Riesz decomposition property if, whenever $x, y, z\in S$ have $x\leq y+z$, there exists $y', z'\in S$ such that $x = y'+ z'$, $y'\leq y$ and $z'\leq z$.} the sum of two order ideals is hereditary, and is therefore also an order ideal. In this case, the set of order ideals in $S$ forms a lattice with the set intersection and the addition defined as in \eqref{addition_subsets}. 
\end{defn} 

For a $C^*$-algebra of real rank zero, every ideal has real rank zero and so is generated by the projections it contains. This leads to the well-known result that the ideal lattice of a real rank zero $C^*$-algebra is given by the lattice of order ideals in its Murray--von Neumann semigroup. Zhang's characterisation obtains the same result for the multiplier algebra $\mathcal M(D)$ of a real rank zero $C^*$-algebra $D$.\footnote{As it happens, when we apply this in Section \ref{pure_large_sec}, the totality of the hypotheses on the algebras $D$ can be used to show $\mathcal M(D)$ also has real rank zero, but this would use results from (\cite{RR-LHX}) which came after Zhang's characterisation.} 

Note that Zhang's statement was given in terms of the \emph{dimension range} of $\mathcal M(D)$ (the subset of $V(\mathcal M(D))$ realised by classes of projections from $\mathcal M(D)$, with a partially defined addition). When $D$ is stable, $\calM(D)$ contains a copy of $\calB(\calH)$, and hence admits isometries $s_{1}, \cdots, s_{n}$ satisfying $s_{1}s_{1}^* + \cdots + s_{n}s_{n}^{*} = 1$ for every $n\in \mathbb{N}$. These isometries implement canonical isomorphisms $\mathcal M(D)\otimes M_n \cong \mathcal M(D)$ via $a\mapsto V_{n} a V_{n}^*$, where $V_{n} = (s_{1},\cdots, s_{n})\in M_{1, n}(\calM(D))$. As a consequence, every projection in $M_{n}(\calM(D))$ is Murray--von Neumann equivalent to a projection in $\calM(D)$, and hence $V(\mathcal M(D))$ agrees with the dimension range of $\mathcal M(D)$ used by Zhang. 

\begin{prop}[{\cite[Theorem 2.3]{Riesz-dec}}]
\label{ideal_corr} 
Let $D$ be a separable and stable $C^*$-algebra with real rank zero. Then $V(\calM(D))$ has the Riesz decomposition property. Moreover, the lattice of ideals in $\calM(D)$ is isomorphic to the lattice of order ideals in $V(\calM(D))$. The correspondence is given as follows: For an order ideal $\calL \subseteq V(\calM(D))$, 
\begin{equation} \label{ideal-def}
\calI_{\calL} = \overline{\mathrm{Ideal}} \{e\in \calM(D):e \text{ is a projection with } [e]\in \calL\}, 
\end{equation} 
is the corresponding closed ideal in $\calM(J)$ and conversely, $\calL$ is recovered from $\calI$ by 
\begin{equation} \label{order_ideal-def}
\calL_{\calI} = \{[e]\in V(\calM(D)): e\text{ is a projection in } \calI\}. 
\end{equation}  
\end{prop}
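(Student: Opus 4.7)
The plan is to first establish that $\calM(D)$ itself has real rank zero---a theorem of Zhang for separable stable $D$---and then exploit the resulting abundance of projections throughout. In particular every hereditary subalgebra of $\calM(D)$ admits an approximate unit of projections, and every positive element is a norm limit of finite real linear combinations of pairwise orthogonal projections in $\calM(D)$.

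For the Riesz decomposition property in $V(\calM(D))$, it is enough to show that given a projection $e \in \calM(D)$ with $e \leq q_1 + q_2$ for orthogonal projections $q_1, q_2 \in \calM(D)$, one can decompose $e = r_1 + r_2$ as an orthogonal sum of subprojections with $r_i \precsim q_i$. Since $eq_1e + eq_2e = e$ inside the (real rank zero) hereditary subalgebra $e\calM(D)e$, spectral theory provides a projection $r_1 \leq e$ close to the range projection of $eq_1e$ with $e-r_1$ close to the range projection of $eq_2e$; the polar decompositions of $q_1e$ and $q_2e$ inside $\calM(D)$---whose partial isometries are realised within $\calM(D)$ rather than only in the bidual thanks to real rank zero---then yield $r_1 \precsim q_1$ and $r_2 := e - r_1 \precsim q_2$ after a perturbation to ensure an exact orthogonal sum.

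For the lattice isomorphism I would first verify that $\calL_\calI$ is an order ideal for every closed ideal $\calI$: hereditariness follows because if $[f] \leq [e]$ with a projection $e \in \calI$, then a representative of $[f]$ sits in $\overline{e\calM(D)e} \subseteq \calI$, while closure under addition uses that the image of $\calI \otimes M_2$ under the isomorphism $\calM(D) \otimes M_2 \cong \calM(D)$ (coming from stability of $D$) is an ideal containing $\mathrm{diag}(e_1, e_2)$ for every $e_1, e_2 \in \calI$. I would then check that both compositions are identities. The composition $\calL \mapsto \calI_\calL \mapsto \calL_{\calI_\calL}$ returns $\calL$ by the Riesz decomposition property above, combined with stability of order ideals under Murray--von Neumann subequivalence. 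The composition $\calI \mapsto \calL_\calI \mapsto \calI_{\calL_\calI}$ returns $\calI$ provided that every closed ideal of $\calM(D)$ is generated (as a closed ideal) by the projections it contains---this is the direct payoff of real rank zero of $\calM(D)$, since any positive $a \in \calI$ lies in the real rank zero hereditary subalgebra $\overline{a\calM(D)a} \subseteq \calI$ and is therefore a norm limit of positive elements of finite spectrum whose spectral projections lie in $\calI$.

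The hard part will be ensuring that range projections and polar-decomposition partial isometries are realised inside $\calM(D)$ itself rather than only in the bidual $\calM(D)^{**}$. This is precisely where real rank zero of $\calM(D)$ is essential: the strict convergence of the spectral projections $\chi_{[1/n,1]}(x)$ to the range projection for positive $x \in \calM(D)$ is the technical engine, and without it neither the Riesz decomposition step nor the ``closed ideals are generated by their projections'' step would survive.
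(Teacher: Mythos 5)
Your opening move---asserting that $\calM(D)$ has real rank zero as "a theorem of Zhang for separable stable $D$"---is where the proof breaks. Under the hypotheses of the proposition ($D$ separable, stable, real rank zero), one cannot conclude that $\calM(D)$ has real rank zero. The known results in this direction require additional hypotheses: for instance Lin's theorem (invoked later in the paper as \cite[Theorem 10]{RR-LHX}) gives real rank zero of $\calM(D)$ under the further assumptions that $D$ has stable rank one and $K_1(D)=0$. In fact the paper flags this exact point in the footnote attached to Proposition \ref{ideal_corr}: when it is eventually applied in Section \ref{pure_large_sec} the extra hypotheses do force $\calM(D)$ to have real rank zero, ``but this would use results from \cite{RR-LHX} which came after Zhang's characterisation.'' In other words Zhang's 1990 theorem neither assumes nor yields real rank zero of the multiplier algebra, and the proposition as stated is strictly more general than your argument could prove.

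Everything downstream in your proposal hinges on this false premise: the spectral-theory step producing $r_1 \leq e$ inside $e\calM(D)e$, the claim that closed ideals of $\calM(D)$ are generated by their projections, and your closing remark calling real rank zero of $\calM(D)$ ``the technical engine.'' You even identify it correctly as the hard part---but it isn't a hard part of the proof, it's an unavailable hypothesis. Zhang's actual argument routes around this. As the paper notes just after the proposition, the central tool is \cite[Corollary 2.1]{Riesz-dec}: every projection in $\calM(D)$ is Murray--von Neumann equivalent to one in \emph{diagonal form} with respect to an approximate unit of projections in $D$ (such an approximate unit exists because $D$ has real rank zero and is $\sigma$-unital). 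Riesz decomposition for $V(\calM(D))$ and the ideal correspondence are then established by manipulating these block-diagonal representatives and using Riesz decomposition of $V(D)$ (which \emph{is} a consequence of real rank zero of $D$, per \cite[Theorem 1.1]{Riesz-dec}), rather than by appealing to spectral projections inside $\calM(D)$ itself. This diagonalisation technique is what lets Zhang avoid any hypothesis on the real rank of the multiplier algebra.

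The parts of your outline that don't depend on the false premise---the hereditariness check for $\calL_\calI$, the use of $\calM(D)\otimes M_2\cong\calM(D)$ for closure of $\calL_\calI$ under addition, the general shape of the two-composition verification---are fine, but they are the straightforward parts. To fix the proof you would need to replace the real-rank-zero-of-$\calM(D)$ machinery with the diagonalisation of multiplier projections relative to an approximate unit of projections in $D$.
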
 

In Zhang's argument, a central ingredient is that every projection in $\calM(D)$ is Murray--von Neumann equivalent to a projection in $\calM(D)$ which is in diagonal form with respect to an approximate unit of projections in $D$ (see \cite[Corollary 2.1]{Riesz-dec}). Representing or approximating elements of such special forms is frequently useful in studying structural properties of multiplier algebras (see \cite{Zhang_RR0_Calkin, RR-LHX, Zhang_1991}, for instance). 

In our setting,  to obtain commutativity with separable subalgebras in $\calM(D)$ in Section \ref{K1_Paschke_chapter}, we will need to work with a quasicentral approximate unit $(e_{n})_{n}$ of $D$ (see Proposition \ref{unitary_commutant} and the paragraph above it). Even in the real rank zero setting, we can not guarantee to find such $(e_{n})_{n}$ consisting of projections, but when $D$ is $\sigma$-unital, we can take $(e_{n})_{n}$ to be \emph{almost idempotent}, meaning that $e_{n+1}e_{n} = e_{n}$ for all $n\in \mathbb{N}$. To approximate elements in multiplier algebras, while maintaining commutativity, we make heavy use of the more general notion of bidiagonal series. For a sequence of elements $(a_{n})_{n}$ in $D$, we say that the series $\sum_{n=1}^{\infty} a_{n}$ is \emph{bidiagonal} in $\calM(D)$ if the series converges in the strict topology and $a_{n} a_{m} = 0$ for $|n-m|\geq 2$. The following standard lemma provides a method of forming bidiagonal series with respect to $(e_{n})_{n}$. The proof resembles that for diagonal series in Zhang's work (\cite[Proposition 1.7]{Zhang_1991}) and is relatively standard, and a detailed proof can be found in first author's thesis  as \cite[Lemma 2.9.11]{Hua_thesis}.

\begin{lem} \label{almost_ortho_approx}
Let $D$ be a non-unital $C^*$-algebra with an almost idempotent approximate unit $(e_{n})_{n}$. Let $(n_{k,1})_{k}$, $(n_{k,2})_{k}$ be strictly increasing sequences of natural numbers, and let $(a_{k})_{k}$ be a bounded sequence with $a_{k}\in \overline{(e_{n_{k, 1}} - e_{n_{k-1, 1}})D(e_{n_{k, 2}} - e_{n_{k-1, 2}})}$. Then $a = \sum_{k=1}^{\infty}a_{k}$ is a bidiagonal series in $\calM(D)$, and if $(a_{k})_{k}$ is bounded by some $M$, then $\|a\|\leq  2M$. 
\end{lem}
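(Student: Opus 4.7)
The proof rests on an initial algebraic analysis of the increments $p_k \defeq e_{n_{k,1}} - e_{n_{k-1,1}}$ and $q_k \defeq e_{n_{k,2}} - e_{n_{k-1,2}}$. Almost idempotency $e_{n+1}e_n = e_n$, together with self-adjointness, propagates by a one-line induction to $e_m e_n = e_n e_m = e_n$ for all $m > n$. Applied across four strictly increasing indices, this makes the expansion of $p_k p_l$ collapse to zero whenever $|k - l| \geq 2$, and likewise for $q_k q_l$. Since each $a_k$ lies in the norm closure of $p_k D q_k$, it is a norm-limit of elements $p_k d_\alpha q_k$ with $d_\alpha \in D$, and these orthogonalities pass to the limit to yield $a_k^* a_l = 0 = a_k a_l^*$ whenever $|k - l| \geq 2$. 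This row/column vanishing of non-adjacent terms is the substance of the bidiagonal block structure.

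With this in hand, I would next establish the uniform bound $\|\sum_{k \in F} a_k\| \leq 2M$ for any finite $F \subseteq \mathbb{N}$. Splitting $F$ into its odd-indexed and even-indexed parts $F_o, F_e$ (within either of which distinct indices differ by at least two), the row/column orthogonality gives
\begin{equation*}
    \Bigl(\sum_{k \in F_o} a_k\Bigr)^{\!*} \Bigl(\sum_{k \in F_o} a_k\Bigr) \;=\; \sum_{k \in F_o} a_k^* a_k,
\end{equation*}
and these summands are pairwise orthogonal positive elements, since $(a_k^* a_k)(a_l^* a_l) = a_k^*(a_k a_l^*) a_l = 0$ for distinct $k, l \in F_o$. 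Therefore $\|\sum_{k \in F_o} a_k\|^2 \leq \sup_{k \in F_o} \|a_k\|^2 \leq M^2$; the same bound holds for $F_e$, and the triangle inequality delivers $\|\sum_{k \in F} a_k\| \leq 2M$.

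Strict convergence of the partial sums $S_N = \sum_{k=1}^N a_k$ then follows by a short tail argument. Given $x \in D$ and $\varepsilon > 0$, choose $N_0$ with $\|x - e_{n_{N_0, 2}} x\| < \varepsilon$, and split $x = x_1 + x_2$ with $x_1 = e_{n_{N_0, 2}} x$ and $\|x_2\| < \varepsilon$. For any $k \geq N_0 + 2$ a direct computation yields $q_k e_{n_{N_0, 2}} = 0$ (since $n_{N_0, 2} < n_{k-1, 2} < n_{k, 2}$ collapses both products $e_{n_{k,2}} e_{n_{N_0,2}}$ and $e_{n_{k-1,2}} e_{n_{N_0,2}}$ to $e_{n_{N_0, 2}}$), and hence $a_k x_1 = 0$. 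Therefore for $N' > N \geq N_0 + 1$ the tail $(S_{N'} - S_N) x = \sum_{k=N+1}^{N'} a_k x_2$ has norm at most $2M \|x_2\| \leq 2M\varepsilon$ by the uniform partial-sum bound; a symmetric argument handles $x(S_{N'} - S_N)$, so $(S_N)_N$ is strictly Cauchy and converges strictly to some $a \in \calM(D)$ with $\|a\| \leq 2M$.

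The main obstacle is essentially notational: because the $e_n$ are not projections, the clean corner relation $a_k = p_k a_k q_k$ holds only in the norm closure, so every algebraic identity above must be verified on $\varepsilon$-approximants $p_k d_\alpha q_k$ before being passed to the limit. The orthogonalities of $p_k p_l$ and $q_k q_l$ for $|k - l| \geq 2$ survive this limiting process without incident, so no conceptual difficulty arises; but keeping track of the two approximate-unit sequences $(n_{k,1})$ and $(n_{k,2})$ and their associated approximants is where the written proof spends most of its effort.
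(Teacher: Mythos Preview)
Your argument is correct and is precisely the standard approach the paper defers to (the paper omits the proof, citing Zhang's diagonal-series argument and the first author's thesis for details). One minor point worth noting: the paper's literal definition of \emph{bidiagonal} asks for $a_k a_l = 0$, but with two unrelated index sequences $(n_{k,1})_k$ and $(n_{k,2})_k$ the product $q_k p_l$ need not vanish, so what actually follows from the hypotheses is the row/column condition $a_k^* a_l = 0 = a_k a_l^*$ you establish --- and this is exactly what drives the $2M$ bound and strict convergence, which are the only consequences of the lemma ever invoked downstream.
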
 

\subsection{Cuntz equivalence} 
We will occasionally use results regarding Cuntz comparison of positive elements and the Cuntz semigroup. Let $D_{+}$ be the set of positive elements in a $C^*$-algebra $D$. For $a, b\in D_{+}$, $a$ is said to be \emph{Cuntz subequivalent} to $b$ in $D$, denoted by $a\precsim b$, if there exists a sequence $(x_{n})_{n}$ in $D$ such that $x_{n}bx_{n}\rightarrow a$. Then $a, b$ are \emph{Cuntz equivalent} if $a\precsim b$ and $b\precsim a$.  The \emph{Cuntz semigroup} $\mathrm{Cu}(D)$ consists of Cuntz equivalence classes of positive elements in $D\otimes \calK$, and for any $a\in (D\otimes \calK)_{+}$, we denote its Cuntz equivalence class by $\langle a \rangle$. Then $\mathrm{Cu}(D)$ is an abelian partially ordered monoid, with the addition $\langle a \rangle + \langle b \rangle = \langle \mathrm{diag}(a, b)\rangle$ and the partial order given by Cuntz subequivalence. 

For projections $p, q$ in $D\otimes \calK$, Murray--von Neumann subequivalence and Cuntz subequivalence coincide, and for this reason, we use the notation $p\precsim q$ for both subequivalences. But in general, Murray--von Neumann equivalence and Cuntz equivalence are not the same,\footnote{For every $C^*$-algebra $D$, there is a natural monoid map $V(D)\rightarrow \mathrm{Cu}(D)$ given by $[p]\mapsto \langle p\rangle$ for any projection $p\in D\otimes \calK$. It is an order embedding if $D$ is stably finite or has cancellation of projections, but this fails in general, as $p\precsim q$ and $q\precsim p$ does not imply $p,q$ are Murray--von Neumann equivalent.} and thus we use different notation: $[p]$ for the Murray--von Neumannn equivalence class in $V(D)$ and $\langle p\rangle$ for the Cuntz equivalence class in $\mathrm{Cu}(D)$. 

Every increasing sequence in $\mathrm{Cu}(D)$ has a supremum (c.f. \cite[Theorem 3.8]{modern-cuntz}). The reason the Cuntz semigroup plays a limited role in this paper is that we are primarily concerned with the real rank zero setting, when information about the Cuntz semigroup is entirely carried by projections.  The following theorem was first obtained as \cite[Theorem 2.8]{Perera_RR0_SR1} when $D$ was additionally assumed to have stable rank one, and the general case is \cite[Theorem 5.7]{RR0_Cuntz}. 

\begin{prop}[{\cite[Theorem 5.7]{RR0_Cuntz}}] 
\label{RR0_Cu} 
Let $D$ be a $\sigma$-unital $C^*$-algebra with real rank zero. For any $a\in (D\otimes \calK)_{+}$, there exists a sequence of projections $(p_{n})_{n}$ in $D\otimes \calK$ such that 
\begin{equation} 
\langle a\rangle = \sup_{n} \langle p_{n} \rangle \in \mathrm{Cu}(D). 
\end{equation}  
\end{prop}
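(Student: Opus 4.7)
The plan is to exhibit the projections $(p_n)_n$ explicitly as an approximate unit of projections for the hereditary subalgebra generated by $a$.

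First I would set $B \coloneqq \overline{a(D\otimes\calK)a}$, which is a $\sigma$-unital hereditary $C^*$-subalgebra of $D\otimes\calK$ with strictly positive element $a$. Since real rank zero passes to tensor products with $\calK$ and to hereditary subalgebras, $B$ itself has real rank zero. Combined with $\sigma$-unitality, Brown--Pedersen's characterisation (\cite{BrownPedersen:JFA}) yields an approximate unit $(p_n)_n$ of $B$ consisting of projections, with $p_{n+1} p_n = p_n$ if we wish. These projections lie in $B \subseteq D\otimes\calK$, so each $\langle p_n\rangle$ is a legitimate element of $\mathrm{Cu}(D)$.

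Next I would verify the two inequalities. For the easy direction, each $p_n$ lies in the hereditary subalgebra generated by $a$, so $p_n \precsim a$, giving $\sup_n \langle p_n\rangle \leq \langle a\rangle$. For the reverse direction I would invoke the standard fact that $\langle a\rangle = \sup_{\varepsilon > 0}\langle (a-\varepsilon)_+\rangle$ in $\mathrm{Cu}(D)$. Fix $\varepsilon > 0$. Since $(p_n)_n$ is an approximate unit for $B$, one has $p_n a p_n \to a$ in norm, so for $n$ sufficiently large $\|a - p_n a p_n\| < \varepsilon$. Rørdam's lemma (if $\|x-y\|<\varepsilon$ with $x,y\geq 0$, then $(x-\varepsilon)_+ \precsim y$) then gives
\begin{equation*}
(a-\varepsilon)_+ \precsim p_n a p_n \precsim p_n,
\end{equation*}
where the second subequivalence holds because $p_n a p_n \in p_n(D\otimes\calK)p_n$. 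Passing to the supremum over $\varepsilon$ yields $\langle a\rangle \leq \sup_n\langle p_n\rangle$, completing the equality.

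I do not anticipate a serious obstacle: the main ingredients -- stability of real rank zero under tensoring with $\calK$ and passage to hereditary subalgebras, existence of projection approximate units in $\sigma$-unital real rank zero $C^*$-algebras, and Rørdam's lemma together with the cofinality $\langle a\rangle = \sup_\varepsilon\langle (a-\varepsilon)_+\rangle$ -- are all well-established. The only minor point requiring care is the transition from $B$ being $\sigma$-unital to actually extracting an approximate unit of projections, but this is exactly the content of the Brown--Pedersen characterisation of real rank zero.
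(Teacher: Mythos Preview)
Your argument is correct. The paper does not give its own proof of this proposition; it is quoted directly from \cite[Theorem~5.7]{RR0_Cuntz} (with the stable rank one case attributed earlier to \cite{Perera_RR0_SR1}), so there is nothing to compare against. Your approach via an approximate unit of projections in the hereditary subalgebra $\overline{a(D\otimes\calK)a}$, combined with R{\o}rdam's lemma and the cofinality $\langle a\rangle=\sup_{\varepsilon>0}\langle(a-\varepsilon)_+\rangle$, is the standard and expected route, and all the ingredients you invoke (permanence of real rank zero under $\otimes\,\calK$ and hereditary subalgebras, the Brown--Pedersen characterisation guaranteeing projection approximate units) are correctly cited and applied.
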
 

\subsection{$KK$-theory} 
As is standard for applications to $C^*$-algebra classification, we work with the Cuntz pair picture of Kasparov's $KK$-theory which goes back to \cite{Cuntz:GeneralizedHM}. Here we give a short overview to set out notation and discuss Skandalis' $KK_{\mathrm{nuc}}$, and the Hausdorff quotients $KL$ and $KL_{\mathrm{nuc}}$. The standard references \cite{Elements_KK, Blackadar_K} contain full details (the latter shows how the Cuntz pair picture reduces to the standard Fredholm module picture, and the former both compares to other pictures and also expands \cite{Thomsen:Homotopy} to develop bifunctoriality and Kasparov product directly in the Cuntz pair picture).

\begin{defn} 
\label{defn_KK} 
Let $A$ be a separable $C^*$-algebra, and let $J$ be a $\sigma$-unital and stable $C^*$-algebra.  An \emph{$(A, J)$-Cuntz pair} is a pair of $^*$-homomorphisms $\phi, \psi: A\rightarrow E$, for some $C^*$-algebra $E$ containing $J$ as an ideal, such that $\phi(a) - \psi(a)\in J$ for all $a\in A$. We will denote such a Cuntz pair by $(\phi, \psi): A\rightrightarrows E\rhd J$.   Note that as $J\lhd E$, there is a canonical map $E\to\mathcal M(J)$ (which preserves $J$), and so from a Cuntz pair taking values in $E$, we can naturally construct one taking values in $\mathcal M(J)$. 

Cuntz pairs $(\phi_{0}, \psi_{0}), (\phi_{1}, \psi_{1}): A\rightrightarrows \calM(J)\rhd J$ are \emph{homotopic} if there exists a Cuntz pair 
\begin{equation} 
(\Phi, \Psi): A\rightrightarrows \calM(C([0,1],J)) \cong C_{\sigma} ([0,1], \calM(J)) \rhd C([0,1], J),\footnote{Here $C_{\sigma} ([0,1], \calM(J))$ is the $C^*$-algebra of norm bounded and strictly continuous functions from $[0,1]$ to $\mathcal M(J)$.} 
\end{equation}  
such that the evaluations at $0$ and $1$ produce $(\phi_{0}, \psi_{0})$ and $(\phi_{1}, \psi_{1})$ respectively. Then $KK(A, J)$ is defined to be the homotopy equivalence classes of $(A, J)$-Cuntz pairs, where we denote by $[\phi, \psi]_{KK(A, J)}$ the equivalence class of the Cuntz pair $(\phi, \psi): A\rightrightarrows \calM(J)\rhd J$ in $KK(A, J)$. Moreover, given a Cuntz pair $(\phi, \psi): A\rightrightarrows E\rhd J$, we write $[\phi,\psi]_{KK(A,J)}$ for the class obtained by following each of $\phi$ and $\psi$ by the canonical map $E\to \mathcal M(J)$. Every $^*$-homomorphism  $\phi: A\rightarrow J$ gives a class in $KK(A,J)$ corresponding to the Cuntz pair $(\phi, 0)$; we denote this class by $[\phi]_{KK(A, J)}$. 
\end{defn} 

The addition in $KK$ is defined using the usual diagonal direct sum after canonically identifying $\mathcal M(J)$ with $M_2(\mathcal M(J))$ (using stability of $J$). Precisely, fix isometries $s_{1}, s_{2} \in \calM(J)$ such that $s_{1}s_{1}^{*} + s_{2}s_{2}^{*} = 1$. Given $^*$-homomorphisms $\phi_{1}, \phi_{2}: A\rightarrow \calM(J)$, define the \emph{Cuntz sum} of $\phi_{1}$ and $\phi_{2}$ by 
\begin{equation} 
\label{Cuntz sum} 
(\phi_{1} \oplus \phi_{2})(a) = s_{1} \phi_{1}(a) s_{1}^{*} + s_{2} \phi_{2}(a) s_{2}^{*}, \quad \forall a\in A. 
\end{equation}
This induces a well defined addition on $KK(A,J)$ given by\footnote{The $KK$-equivalence class for the sum is 
independent of the choice of isometries (as any two different Cuntz sums are unitarily conjugate, and the unitary group of $\calM(J\otimes \calK)$ is strictly path connected).} 
\begin{equation} 
[\phi_{1}, \psi_{1}]_{KK(A, J)} + [\phi_{2}, \psi_{2}]_{KK(A, J)} = [\phi_{1}\oplus \phi_{2}, \psi_{1}\oplus \psi_{2}]_{KK(A, J)}. 
\end{equation} 
Then $KK(A,J)$ is an abelian group with the addition given by the Cuntz sum and the zero element is represented by $[\phi, \phi]$ for any $^*$-homomorphism $\phi: A \rightarrow \calM(J\otimes \calK)$. 

While $KK$-theory provides a powerful framework to study $^*$-homomorphisms up to homotopy, approximately unitarily $^*$-homomorphisms need not have the same $KK$-class. This is remedied by working with the quotient $KL(A,J)$ of $KK(A,J)$ by the closure of $\{0\}$ in a suitable topology, as set out in \cite{Dadarlat_topo_KK}.

\begin{defn} 
Let $A$ and $J$ be $C^*$-algebra with $A$ separable and $J$ $\sigma$-unital. Define 
\begin{equation} 
Z_{KK(A, J)} = \left\{ KK(A, \text{ev}_{\infty}) (\kappa): 
\begin{array}{l}
\kappa \in KK(A, C(\overline{\mathbb{N}}, J)) \text{ and }\\
KK(A, \text{ev}_{n}) (\kappa) = 0, \forall n\in \mathbb{N}
\end{array}
\right\}, 
\end{equation} 
where $\overline{\mathbb{N}} = \mathbb{N}\cup \{\infty\}$ is the one-point compactification of $\mathbb{N}$ and $\text{ev}_{n}\colon C(\overline{N},J)\to J$ is the evaluation function at $n\in \overline{\mathbb{N}}$. Then one defines 
\begin{equation} 
KL(A,J) = KK(A, J) / Z_{KK(A,J)}. 
\end{equation} 
\end{defn} 

Just as in \cite{Schafhauser.Annals}, when we work with nuclear maps out of exact domains, it is more appropriate to use Skandalis' nuclear version of $KK$-theory, $KK_{\mathrm{nuc}}(A,J)$, from \cite{Skandalis_88} and the corresponding quotient  $KL_{\mathrm{nuc}}(A,J)$ from \cite{CFP_KN}, defined by requiring all maps and homotopies to be weakly nuclear. A $^*$-homomorphism $\varphi: A\rightarrow \calM(J)$ is \emph{weakly nuclear} if for every $b\in J$, the completely positive map $A\rightarrow J$ given by $a\mapsto b\varphi(a) b^{*}$ is nuclear.

\begin{defn} 
Let $A$ be a separable $C^*$-algebra, and let $J$ be a $\sigma$-unital and stable $C^*$-algebra. The abelian group $KK_{\mathrm{nuc}}(A, J)$ is defined as the classes of weakly nuclear Cuntz pairs $A\rightrightarrows \mathcal M(J)\rhd J$ up to weakly nuclear homotopies. Likewise, define 
\begin{equation} 
Z_{KK_{\mathrm{nuc}}(A, J)} = \left\{ KK_{\mathrm{nuc}}(A, \text{ev}_{\infty}) (\kappa): 
\begin{array}{l}
\kappa \in KK_{\mathrm{nuc}}(A, C(\overline{\mathbb{N}}, J)) \text{ and }\\
KK_{\mathrm{nuc}}(A, \text{ev}_{n}) (\kappa) = 0, \forall n\in \mathbb{N}
\end{array}
\right\}, 
\end{equation} 
and $KL_{\mathrm{nuc}}(A,J) = KK_{\mathrm{nuc}}(A, J) / Z_{KK_{\mathrm{nuc}}(A,J)}$. 
\end{defn} 

In the case when $A$ is separable and satisfies the UCT, it is $KK$-equivalent to a commutative $C^*$-algebra (\cite{RS:UCT}), and thus the canonical homomorphism $KK_{\mathrm{nuc}}(A, J)\rightarrow KK(A, J)$ is an isomorphism (\cite{Skandalis_88}) and so induces an isomorphism $KL_{\mathrm{nuc}}(A, J)\cong KL(A, J)$ as explained before \cite[Theorem 8.11]{Gabe:MAMS}. Although the UCT is a standing assumption in our final theorems, we choose to work with $KL_{\mathrm{nuc}}$ as this is the precise invariant which determines uniqueness of lifts from the trace-kernel quotient (from \cite{Schafhauser.Annals}). The UCT only enters to compute this invariant. 

\subsection{Total $K$-theory and the universal coefficient theorem} 
\label{Intro:totalKtheory} 
Our uniqueness theorems use $K_0$ with coefficients in $\mathbb Z/n\mathbb Z$ as well as the usual $K_0$-groups.  These were introduced by Schochet in \cite{Schochet:PJM}, as
\begin{equation}
\label{Defn_total_K} 
K_i(A; \mathbb{Z} / n\mathbb{Z})\coloneqq K_{i}(A\otimes C_n),
\end{equation}
where $C_n$ is any separable nuclear $C^*$-algebra satisfying the UCT with $K_{*}(C_{n}) = (\mathbb{Z} / n\mathbb{Z}, 0)$ (so for example one can take $C_n=\mathcal O_{n+1}$). The \emph{total $K$}-theory functor $\underline{K}$ associates to each $C^*$-algebra $A$ the collection of groups $K_0(A)$, $K_1(A)$, $K_0(A;\mathbb Z/n\mathbb Z)$ and $K_1(A;\mathbb Z/n\mathbb Z)$ for all $n\geq 2$, together with natural connecting maps between these groups, called  \emph{Bockstein maps} (the exact form of these maps is not important to this paper). There are various exact sequences satisfied by the components of total $K$-theory. Of most relevance to us is the exact sequence
\begin{equation} 
\label{eq:bockstein-new}
0 \longrightarrow K_i(A)\otimes  \mathbb{Z} / n\mathbb{Z}\rightarrow  K_i(A; \mathbb{Z} / n\mathbb{Z}) \rightarrow \mathrm{Tor}(K_{1-i}(A), \mathbb{Z} / n\mathbb{Z}) \longrightarrow 0, 
\end{equation} 
for $i=0,1$, where $\mathrm{Tor}(K_{1-i}(A),\mathbb{Z} / n\mathbb{Z})$ consists of those elements $x\in K_{1-i}(A)$ with $nx=0$, from \cite[Proposition 1.8]{Schochet:PJM}. In fact this sequence splits unnaturally (see \cite[Proposition 2.4]{Schochet:PJM}). In particular, in the case of interest to us, \eqref{eq:bockstein-new} shows that total $K$-theory is concentrated in $K_0$ with coefficients, as recorded in the following proposition.
\begin{prop} 
\label{trivial_K1_coef} 
Let $B$ be a $C^*$-algebra such that $K_1(B)=0$ and $K_0(B)$ is torsion free.  Then $K_1(B;\mathbb Z/n\mathbb Z)=0$ for all $n\geq 2$.
\end{prop}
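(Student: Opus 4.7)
The plan is to apply the universal coefficient exact sequence \eqref{eq:bockstein-new} recorded just before the statement, specialised to $i=1$ and $A=B$. This yields the short exact sequence
\begin{equation}
0 \longrightarrow K_1(B)\otimes \mathbb{Z}/n\mathbb{Z} \longrightarrow K_1(B;\mathbb{Z}/n\mathbb{Z}) \longrightarrow \mathrm{Tor}(K_0(B),\mathbb{Z}/n\mathbb{Z}) \longrightarrow 0.
\end{equation}
First, the hypothesis $K_1(B)=0$ immediately forces the left-hand term $K_1(B)\otimes \mathbb{Z}/n\mathbb{Z}$ to vanish. Second, by the description of the Tor term given after \eqref{eq:bockstein-new} as the $n$-torsion subgroup of $K_0(B)$, the assumption that $K_0(B)$ is torsion free forces the right-hand term $\mathrm{Tor}(K_0(B),\mathbb{Z}/n\mathbb{Z})$ to vanish as well. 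The middle group $K_1(B;\mathbb{Z}/n\mathbb{Z})$ is therefore squeezed between two zero groups and must itself be trivial, giving the conclusion.

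There is no real obstacle here: once the Bockstein-type exact sequence of \cite{Schochet:PJM} is available (which is already quoted in the excerpt), the statement is an immediate diagram-chase. The only subtlety worth noting is the identification of $\mathrm{Tor}(G,\mathbb{Z}/n\mathbb{Z})$ with the $n$-torsion subgroup of $G$, which is the standard computation of Tor over $\mathbb{Z}$ using the free resolution $0\to\mathbb{Z}\xrightarrow{n}\mathbb{Z}\to\mathbb{Z}/n\mathbb{Z}\to 0$, and which is precisely the way the paper has chosen to describe this functor in the paragraph containing \eqref{eq:bockstein-new}.
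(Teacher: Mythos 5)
Your proof is correct and follows exactly the route the paper intends: specialise the Bockstein exact sequence \eqref{eq:bockstein-new} to $i=1$, kill the tensor term using $K_1(B)=0$, and kill the Tor term using the identification (made explicit in the paper) of $\mathrm{Tor}(K_0(B),\mathbb{Z}/n\mathbb{Z})$ with the $n$-torsion subgroup of $K_0(B)$, which vanishes by hypothesis. There is nothing further to add.
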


The way the UCT is used in the main uniqueness result of the paper is through Dadarlat and Loring's universal multicoefficient theorem, which computes $KL$ in terms of its behaviour on total $K$-theory.
\begin{thm}[Dadarlat and Loring {\cite{Dadarlet-Loring}}]\label{UMCT}
    Let $A$ be a separable $C^*$-algebra satisfying the UCT. The canonical map\footnote{This descends from the map $KK(A,B)\to\mathrm{Hom}_{\Lambda}(\underline{K}(A),\underline{K}(B))$ given by the Kasparov product} $KL(A,B)\to \mathrm{Hom}_{\Lambda}(\underline{K}(A),\underline{K}(B))$ is an isomorphism for all $\sigma$-unital $C^*$-algebras $B$.
\end{thm}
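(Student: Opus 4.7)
The plan is to bridge $KL(A,B)$ and $\mathrm{Hom}_\Lambda(\underline K(A),\underline K(B))$ via the Rosenberg--Schochet UCT short exact sequence
\begin{equation*}
0\longrightarrow \mathrm{Ext}^1_{\mathbb Z}(K_*(A),K_{*+1}(B))\longrightarrow KK(A,B)\longrightarrow \mathrm{Hom}_{\mathbb Z}(K_*(A),K_*(B))\longrightarrow 0,
\end{equation*}
which the UCT hypothesis on $A$ makes available. I would show that the canonical map $KK(A,B)\to \mathrm{Hom}_\Lambda(\underline K(A),\underline K(B))$ is surjective with kernel equal to $Z_{KK(A,B)}$; it then descends to the claimed isomorphism. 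The common intermediate object is the subgroup of pure extensions $\mathrm{Pext}^1_{\mathbb Z}(K_*(A),K_{*+1}(B))\subseteq \mathrm{Ext}^1_{\mathbb Z}(K_*(A),K_{*+1}(B))$, consisting of those extensions which split after restriction to every finitely generated subgroup of $K_*(A)$, viewed inside $KK(A,B)$ via the UCT embedding.

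The first step is to identify $Z_{KK(A,B)}$ with the image of $\mathrm{Pext}^1$ in $KK(A,B)$. Starting from the definition, an element of $Z_{KK(A,B)}$ lifts to a class in $KK(A,C(\overline{\mathbb N},B))$ vanishing at every finite $n$; a Milnor-type $\varprojlim^1$ argument applied to the tower of truncations characterises such phantom classes as those which restrict trivially to every separable sub-$C^*$-algebra of $A$ having finitely generated $K$-theory. Under the UCT splitting this phantom condition lands in the $\mathrm{Ext}^1$ term and translates precisely into purity. The second step is to analyse $KK(A,B)\to \mathrm{Hom}_\Lambda(\underline K(A),\underline K(B))$ directly: surjectivity follows by first lifting a $\Lambda$-morphism on underlying $K_*$-groups using the UCT, then correcting by an Ext class to achieve compatibility with every Bockstein, using the coefficient algebras $C_n$ and the exact sequence \eqref{eq:bockstein-new}. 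For the kernel, a class in $\mathrm{Ext}^1$ gives the zero $\Lambda$-morphism iff its image in $\mathrm{Ext}^1$ with $\mathbb Z/n\mathbb Z$-coefficients vanishes for every $n$, and an algebraic identity of Dadarlat--Loring characterises this as purity.

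Combining the two steps yields $KL(A,B)=KK(A,B)/Z_{KK(A,B)}\cong KK(A,B)/\mathrm{image}(\mathrm{Pext}^1)\cong \mathrm{Hom}_\Lambda(\underline K(A),\underline K(B))$, proving the theorem. The main obstacle is the twofold characterisation of $\mathrm{Pext}^1$: as the topological closure of zero in $KK$, requiring the Milnor sequence analysis of the coefficient tower over $\overline{\mathbb N}$; and as the kernel of the $\mathrm{Hom}_\Lambda$ map, requiring the simultaneous use of all Bockstein coefficient sequences to reconstruct the purity condition from its mod-$n$ shadows. The extension from separable to $\sigma$-unital $B$ is then standard, using that separable $A$ only tests finitely many elements at a time and that both sides of the claimed isomorphism are compatible with inductive limits in the second variable.
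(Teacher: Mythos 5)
This is a cited theorem: the paper does not provide a proof, attributing the result to Dadarlat--Loring \cite{Dadarlet-Loring} and leaving it there. So there is no in-paper argument to compare your proposal against; I can only assess it against the known literature.

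Your outline has the right architecture. It is indeed the composite of two independent results: first, that the UCT forces the kernel of the Kasparov-product map $KK(A,B)\to\mathrm{Hom}_\Lambda(\underline K(A),\underline K(B))$ to be exactly the image of $\mathrm{Pext}^1_{\mathbb Z}(K_*(A),K_{*+1}(B))$ under the $\mathrm{Ext}$-inclusion of the UCT sequence, with the map onto $\mathrm{Hom}_\Lambda$ surjective (Dadarlat--Loring \cite{Dadarlet-Loring}); and second, that $Z_{KK(A,B)}$, the closure of zero defined via $C(\overline{\mathbb N},B)$ as in the paper, coincides with this same $\mathrm{Pext}^1$ subgroup (Dadarlat \cite{Dadarlat_topo_KK}, building on Schochet). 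You correctly identify $\mathrm{Pext}^1$ as the common intermediate object. However, both of your "steps" are essentially black-boxes: the Milnor $\varprojlim^1$ analysis of the truncation tower and the algebraic purity characterisation are precisely the substance of the two cited papers, and your sketch does not actually carry them out — it names where the difficulties lie without resolving them. Your claim that $Z_{KK(A,B)}$ elements are exactly those that restrict trivially to every separable subalgebra with finitely generated $K$-theory is also slightly off as stated; the relevant characterisation is that they vanish on all finitely generated subgroups of $K_*(A)$ (or equivalently that they are quasidiagonal in the extension picture), which is the definition of purity, and passing between this algebraic condition and the topological closure is what the Milnor sequence machinery is for.

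One further caution: your last sentence treats the extension from separable to $\sigma$-unital $B$ as "standard... using that both sides... are compatible with inductive limits in the second variable." This deserves more care. $KK$ is not a continuous functor in the second variable in general, and Dadarlat's result on $Z_{KK}=\mathrm{Pext}$ was established under countability hypotheses. If you intend to run a separable-exhaustion argument in $B$, you need to justify why the isomorphism is natural and why both sides commute with the relevant colimit; this is where a rigorous write-up would need genuine work rather than an appeal to standard facts.
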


\subsection{Absorption and uniqueness theorems for $KK$-theory} 
\label{KL_section} 
For a $C^*$-algebra $J$, we denote the corona algebra $\calM(J)/J$ by $\calC(J)$, and write $\pi: \calM(J)\rightarrow \calC(J)$ for the canonical quotient map. For $a\in \calM(J)$, we often denote $\pi(a)$ by $\overline{a}$, and similarly for a $^*$-homomorphism $\phi: A\rightarrow \calM(J)$, we use $\overline{\phi}$ for $\pi \circ \phi$. When $J$ is stable, we can form the direct sum of $\phi_{1}, \phi_{2}: A\rightarrow \calM(J)$ as in \eqref{Cuntz sum}, and likewise, the direct sum of $\overline{\phi}_{1}, \overline{\phi}_{2}: A\rightarrow \calC(J)$ is also defined using isometries from $\calM(J)$.\footnote{Again, the direct sum is independent of the choice of isometries up to unitary equivalence, witnessed by unitaries from $\calM(J)$.} 

The concept of absorption can be traced back to Voiculescu's non-commutative Weyl–von Neumann Theorem (\cite{Voiculescu}) and is critical in the theory of extensions. The notion was later generalized and abstractly studied by Kasparov (\cite{Kasparov_1980}) and Thomsen (\cite{Thomsen}). For the purpose of this paper, we are only concerned with unital and weakly nuclear $^*$-homomorphisms $\phi: A\rightarrow \calM(J)$ and the associated maps $\overline{\phi}:A\to\mathcal C(J)$. For this reason the right version of absorption for us consists of the \emph{unitally nuclearly absorbing maps} as formalised by Elliott and Kurcerovsky (\cite[Definition 5]{EK.PJM}).\footnote{\label{unital_absorb-fn}Comparing to Definition \ref{unital_nuclear_absorb}, a map is (nuclearly) absorbing when it absorbs all (weakly nuclear) $^*$-homomorphisms. Note that a unital $^*$-homomorphism can never be (nuclearly) absorbing, as it does not absorb the zero map. This is why we need unitally absorbing maps.} 

\begin{defn} 
\label{unital_nuclear_absorb} 
Let $A$ be a unital and separable $C^*$-algebra, and let $J$ be a $\sigma$-unital and stable $C^*$-algebra. 
\begin{enumerate} 
\item A unital $^*$-homomorphism $\theta: A\rightarrow \calC(J)$ is \emph{unitally nuclearly absorbing} if for every unital and weakly nuclear $^*$-homomorphism $\psi: A\rightarrow \calM(J)$, there exists a unitary $u\in \calM(J)$ such that $\mathrm{Ad}(\overline{u}) (\theta\oplus \overline{\psi}) = \theta$; 
\item A unital $^*$-homomorphism $\phi: A\rightarrow \calM(J)$ is \emph{unitally nuclearly absorbing} if $\overline{\phi}$ is unitally nuclearly absorbing.\footnote{Note that this is not the standard definition of unital (nuclear) absorption, which is usually formulated in terms of absorbing every unital (respectively, weakly nuclear) $^*$-homomorphisms up to approximate unitary equivalence modulo $J$. These notions agree (see for example  \cite[Proposition 5.9]{CGSTW}).} 
\end{enumerate} 
\end{defn} 

In the fundamental case $J = \calK$, every unital $^*$-homomorphism $A\rightarrow \calM(\calK)$ is weakly nuclear since $\calK$ is nuclear. Consequently, Voiculescu’s non-commutative Weyl–von Neumann Theorem implies that a unital extension $\theta: A\rightarrow \calC(\calK)$ is unitally nuclearly absorbing if and only if it is faithful (\cite{Voiculescu}; see also \cite{Arveson:DJM}). For general separable and stable $J$, Elliott and Kucerovsky's abstract version of Voiculescu's theorem characterises unitally nuclear absorbing maps, in terms of a condition (\emph{pure largeness}) which in hindsight turns out to live at the level of Cuntz comparison of positive elements as we will discuss in Section \ref{pure_large_sec}. The precise definition is not important to us at this point, as whenever we use the Elloitt--Kucerovsky theorem, we will always do so in the presence of Kucerovsky and Ng's corona factorisation property, a mild regularity condition which allows pure largeness to be conveniently verified.  A  $\sigma$-unital $C^*$-algebra $J$ is said to have the \emph{corona factorisation property} if every full projection in $\calM(J\otimes \calK)$ is properly infinite (see \cite[Definition 2.1]{CFP_KN}). In our setting, the corona factorisation property will follow from the combination of real rank zero and the total ordering of the Murray--von Neumann semigroup  (see Lemma \ref{separableideals}).  In the presence of the corona factorisation property, the Elliott--Kucerovsky theorem takes the following form (recall that a $^*$-homomorphism $\varphi: A\rightarrow B$ is \emph{full} if $\varphi(a)$ is full in $B$ for any nonzero positive element $a\in A$).

\begin{thm}[Elliott--Kucerovsky--Ng, {\cite{EK.PJM, CFP_KN}}] 
\label{nuc_absorb_equiv} 
Let $A$ be a unital and separable $C^*$-algebra, and let $J$ be a $\sigma$-unital and stable $C^*$-algebra satisfying the corona factorisation property. A unital $^*$-homomorphism $\theta: A\rightarrow \calC(J)$ (or $\phi: A\rightarrow \calM(J)$) is unitally nuclearly absorbing if and only if it is full. 
\end{thm}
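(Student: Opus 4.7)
My strategy is to deduce the equivalence by chaining two established results: the Elliott--Kucerovsky characterization of unital nuclearly absorbing extensions as precisely the \emph{purely large} ones \cite{EK.PJM}, and the Kucerovsky--Ng theorem from \cite{CFP_KN} which, under the corona factorisation property on $J$, equates pure largeness with fullness. This handles the non-trivial direction. The reverse implication, that unital nuclear absorption forces fullness, is a direct ideal-theoretic argument that does not require CFP.

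For \emph{absorbing implies full}, suppose $0 \neq a \in A_{+}$ with $\theta(a) \in I$ for some proper ideal $I \lhd \calC(J)$. For every unital weakly nuclear $\psi : A \to \calM(J)$, the absorption equation $\mathrm{Ad}(\overline u)(\theta \oplus \overline{\psi}) = \theta$, combined with the unitary invariance of $I$ and the fact that each summand of a Cuntz sum can be recovered by compressing with the defining isometries $s_{1}, s_{2} \in \calM(J)$, forces $\overline{\psi}(a) \in I$. It then suffices to build a single unital weakly nuclear $\psi$ with $\overline{\psi}(a) \notin I$; this can be done by composing a faithful unital representation $A \to \calB(\calH)$ on a separable Hilbert space with a unital embedding $\calB(\calH) \hookrightarrow \calM(J)$ obtained from isometries in $\calM(J)$ (available by stability of $J$), with weak nuclearity passing through the compacts $\calK(\calH)$.

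For the converse \emph{full implies absorbing}, the goal is to verify pure largeness: for every $x \in \calM(J)$ with $\overline{x} \neq 0$, the hereditary subalgebra $\overline{x^{*} J x} \subseteq J$ must contain a stable $C^{*}$-subalgebra full in $J$. Fullness of $\theta$ gives that $\overline{x^{*} x}$ is full in $\calC(J)$, which using real-rank-zero type behaviour of $J$ can be lifted to the existence of a full projection $p \in \overline{x^{*} J x}$. The CFP then forces $p$ to be properly infinite in $\calM(J \otimes \calK) \cong \calM(J)$, and a standard halving-plus-diagonalisation argument produces an infinite sequence of mutually orthogonal Murray--von Neumann equivalent subprojections of $p$ assembling into the desired stable full subalgebra. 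Elliott--Kucerovsky's absorption theorem then yields unital nuclear absorption.

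The main obstacle is the pure largeness step. Algebraic fullness in $\calC(J)$ must be promoted to a geometric statement about stability of hereditary subalgebras inside $J$, and CFP is precisely the regularity condition that makes this upgrade possible; without it, one only gets fullness of $p$, not proper infiniteness, and the construction of a stable full subalgebra breaks down.
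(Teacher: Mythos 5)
Your high-level plan is the same as the paper's: cite Elliott--Kucerovsky's theorem identifying (unital) nuclear absorption with pure largeness, cite Kucerovsky--Ng's theorem equating full and purely large extensions under the corona factorisation property, and do the converse by hand. Your argument for the direction \emph{absorbing implies full} is also essentially the paper's, which takes a unital essential representation $\Phi\colon A\to\calB(\calH)$, turns it into a full weakly nuclear $\psi\colon A\to\calM(J)$, and observes that $\theta$ is unitarily conjugate to the full map $\theta\oplus\overline\psi$.

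Where you go astray is in the attempted verification that fullness implies pure largeness under the CFP. You write that fullness of $\overline{x^*x}$ in $\calC(J)$ ``using real-rank-zero type behaviour of $J$ can be lifted to the existence of a full projection $p\in\overline{x^*Jx}$''. Real rank zero is not a hypothesis of the theorem (neither in the statement nor in \cite{CFP_KN}), and without it a hereditary subalgebra of $J$ need not contain any projections, let alone a full one. Even granting such a $p\in J$ full in $J$, the next step misapplies the CFP: the corona factorisation property concerns full projections in $\calM(J\otimes\calK)$, and a projection that is full in $J$ is in general not full in $\calM(J\otimes\calK)$. For instance if $J=\calK$ then a rank-one projection is full in $\calK$ but generates only $\calK$ in $\calB(\calH)=\calM(\calK)$, and it is certainly not properly infinite. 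The actual argument in the proof of \cite[Theorem~3.5]{CFP_KN} does not proceed through projections inside $J$ and avoids these issues; the paper simply invokes that result rather than reproving it. Since you already cite Kucerovsky--Ng for exactly this implication, the cleanest fix is to rely on their theorem (or on \cite[Theorem~6]{EK.PJM} applied to the purely large extension they provide) rather than attempting to rederive it with assumptions the theorem does not make.
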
  

\begin{proof} 
When $J$ has the corona factorisation property, it is shown in the proof of \cite[Theorem 3.5]{CFP_KN} that every full extension is purely large. Thus, $\theta$ (or $\overline{\phi}$) is purely large. Unital nuclear absorption of $\theta$ (or $\overline{\phi}$, and thus $\phi$) follows from \cite[Theorem 6]{EK.PJM}. 

The other implication is straightforward, and we include a short proof for completeness. Let $\Phi: A\rightarrow \calB(\calH)$ be a unital essential representation. We obtain a unital and full map 
\begin{equation} 
\label{full_absorb_map_prototype} 
\psi: A\rightarrow B(\calH)\otimes \calM(J) \subseteq \calM(J), \quad a\mapsto \Phi(a)\otimes 1_{\calM(J)}, 
\end{equation} 
which is weakly nuclear (as set out in the proof of \cite[Lemma 12]{EK.PJM}).  Since $\theta$ (or $\phi$) is unitally nuclearly absorbing, then $\theta$ (or $\overline{\phi}$) is unitary conjugate to the full map $\theta \oplus \overline{\psi}$ (or $\overline{\phi}\oplus \overline{\psi}$), which implies the fullness of $\theta$ (or $\phi$). 
\end{proof} 

As discussed in the introduction, the work of Dadarlat and Eilers in \cite{Dadarlet-Eilers} gives rise to $KK$-uniqueness theorems in the presence of $K_1$-injectivity (as set out in \cite{ess-codim, K1-injective, CGSTW}). In the weakly nuclear and unitally absorbing setting, the main result is Theorem \ref{KK_KL_uniqueness} below. This is identical to the corresponding results for $KK$ and $KL$ in which $\phi$ and $\psi$ are unitally absorbing (i.e. they absorb all unital representations, not just the weakly nuclear ones), and are not assumed weakly nuclear. The $KK$-version of this can be found as \cite[Theorem 2.6]{K1-injective}, an account is also given as \cite[Theorem 4.3.5]{Hua_thesis}. See also the discussion of these results between Questions 5.17 and 5.18 of \cite{CGSTW}, and in \cite[Section 17]{99Q}. Recall that a unital $C^*$-algebra $D$ is \emph{$K_{1}$-injective} if the canonical
group homomorphism $\pi_{0}(\calU(D))\rightarrow K_{1}(D)$, $[u]\mapsto [u]_{1}$ is injective. 

\begin{thm} 
\label{KK_KL_uniqueness} 
Let $A$ be a separable and unital $C^*$-algebra, and $J$ be a $\sigma$-unital and stable $C^*$-algebra. Let $(\phi, \psi) \colon A\rightrightarrows \calM(J)\rhd J$ be a Cuntz pair where both $\phi$ and $\psi$ are weakly nuclear and unitally nuclearly absorbing. Suppose that $\calC(J) \cap \overline{\phi} (A)'$ is $K_{1}$-injective. 
\begin{enumerate} [(i)]
\item If $[\phi, \psi]_{KK_{\mathrm{nuc}}} = 0$, there exists a norm-continuous path $(u_{t})_{t\geq 0}$ of unitaries in $J^\dagger$ such that 
\begin{equation} 
\|u_t(\phi(a))u_t^*-\psi(a)\|\to 0,\quad a\in A. 
\end{equation} 
\item If $[\phi, \psi]_{KL_{\mathrm{nuc}}} = 0$, there exists a sequence $(u_{n})_{n=1}^{\infty}$ of unitaries in $J^\dagger$ such that 
\begin{equation} 
\|u_n(\phi(a))u_n^*-\psi(a)\|\to 0,\quad a\in A. 
\end{equation} 
\end{enumerate} 
\end{thm}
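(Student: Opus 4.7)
The plan is to combine Thomsen's generalised Paschke duality with the assumed $K_1$-injectivity to convert the vanishing of the $KK_{\mathrm{nuc}}$ (respectively $KL_{\mathrm{nuc}}$) class into a continuous path (respectively approximating sequence) of unitaries in the Paschke dual algebra, which can then be lifted and perturbed into unitaries in $J^\dagger$ witnessing the desired asymptotic (respectively approximate) unitary equivalence. This is the strategy of Dadarlat--Eilers \cite{Dadarlet-Eilers} adapted to the $\sigma$-unital, weakly nuclear, unitally absorbing setting exactly as in \cite[Theorem 2.6]{K1-injective} and \cite[Section 5]{CGSTW}, with the obvious modifications to accommodate unital rather than non-unital absorbing maps.

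First, I would invoke the weakly nuclear version of Thomsen's generalisation of Paschke duality to produce a natural isomorphism
\begin{equation*}
KK_{\mathrm{nuc}}(A, J) \;\cong\; K_1\bigl(\calC(J) \cap \overline{\phi}(A)'\bigr),
\end{equation*}
realised concretely through the unitally nuclearly absorbing map $\phi$. Under this identification, the class $[\phi, \psi]_{KK_{\mathrm{nuc}}}$ corresponds to $[u]_1$ for a unitary $u \in \calC(J) \cap \overline{\phi}(A)'$ constructed as follows: since $\overline{\phi} = \overline{\psi}$ (this is the Cuntz pair condition) and both $\phi$ and $\psi$ are weakly nuclear and unitally nuclearly absorbing, applying Definition \ref{unital_nuclear_absorb} to each of $\phi$ and $\psi$ with the other as the absorbed map yields unitaries $v, w \in \calM(J)$ with $\mathrm{Ad}(\overline{v})(\overline{\phi}\oplus\overline{\psi}) = \overline{\phi}$ and $\mathrm{Ad}(\overline{w})(\overline{\phi}\oplus\overline{\psi}) = \overline{\psi} = \overline{\phi}$, so that $u \defeq \overline{w}\,\overline{v}^{\,*}$ commutes with $\overline{\phi}(A)$ in $\calC(J)$, and an elementary diagram chase shows $[u]_1$ represents $[\phi, \psi]_{KK_{\mathrm{nuc}}}$.

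For part (i), if $[\phi,\psi]_{KK_{\mathrm{nuc}}}=0$ then $[u]_1 = 0$ in $K_1(\calC(J) \cap \overline{\phi}(A)')$. By the hypothesised $K_1$-injectivity of the Paschke dual algebra, $u$ is norm-homotopic to $1$ through a continuous path $(w_t)_{t\geq 0}$ of unitaries in $\calC(J) \cap \overline{\phi}(A)'$. Standard lifting and perturbation techniques in the spirit of \cite{KadisonRingrose} and \cite[Section 8]{Pedersen:Book}, as carried out in \cite{Dadarlet-Eilers}, then produce a norm-continuous path $(u_t)_{t \geq 0}$ of unitaries in $J^\dagger$ with $\|u_t \phi(a) u_t^* - \psi(a)\| \to 0$ for all $a \in A$: one lifts $(w_t)_{t\geq 0}$ to a strictly continuous path of unitaries in $\calM(J)$, uses it to conjugate $\phi$ to a map which agrees with $\psi$ up to arbitrarily small error modulo $J$, and corrects the $\calM(J)$-valued unitaries into $J^\dagger$-valued ones via a standard telescoping argument that does not affect the limit behaviour.

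Part (ii) is parallel, using the $KL_{\mathrm{nuc}}$-version of Paschke duality: vanishing of $[\phi,\psi]_{KL_{\mathrm{nuc}}}$ corresponds to $[u]_1$ lying in the analogue of $Z_{KK_{\mathrm{nuc}}(A,J)}$ inside $K_1(\calC(J) \cap \overline{\phi}(A)')$, which together with $K_1$-injectivity upgrades to approximate homotopies of $u$ to $1$ — hence only a sequence of unitaries rather than a continuous path. The same lifting/perturbation machinery then produces the sequence $(u_n)_n$ in $J^\dagger$ with $\|u_n \phi(a) u_n^* - \psi(a)\| \to 0$. The technical heart of the argument, and the only genuine obstacle, is the Paschke duality identification together with the concrete realisation of the Kasparov class as $[u]_1$; since this has already been carried out in \cite{Dadarlet-Eilers, K1-injective, CGSTW} for the framework we use, the work here is essentially bookkeeping to verify that the unital/weakly nuclear/unitally absorbing setup matches their hypotheses.
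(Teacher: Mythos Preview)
Your proposal is correct and takes essentially the same approach as the paper: both defer to the Dadarlat--Eilers strategy via Thomsen's Paschke duality as laid out in \cite[Theorem 2.6]{K1-injective} and \cite[Section 5]{CGSTW}, with the only real work being to check that weak nuclearity is preserved at each step (the paper explicitly says the proof follows \cite[Theorem 4.3.5]{Hua_thesis}, which in turn is modelled on \cite[Section 5.4]{CGSTW} adapted to unitally absorbing maps). Your sketch is in fact more detailed than the paper's own treatment, which is essentially a pointer to the literature; the one place to be a bit careful is your explicit construction of the unitary $u$ representing $[\phi,\psi]$ under Paschke duality, where the ``elementary diagram chase'' you mention is genuinely the content of Thomsen's duality isomorphism and should be cited rather than re-derived.
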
 

The proof follows the corresponding results for $KK$ and $KL$ set out in \cite[Theorem 4.3.5]{Hua_thesis} checking that at all points in the argument, everything that needs to be weakly nuclear is indeed weakly nuclear. These checks are essentially akin to how the $\mathcal Z$-stable $KK_{\mathrm{nuc}}$ and $KL_{\mathrm{nuc}}$-uniqueness theorems in \cite[Section 5.5]{CGSTW} follow the same proofs as the $\mathcal Z$-stable $KK$ and $KL$ uniqueness theorems from \cite[Section 5.4]{CGSTW}, and indeed the argument of \cite[Theorem 4.3.5]{Hua_thesis} is based on the presentation in \cite[Section 5.4]{CGSTW} (but working with unitally absorbing maps, rather than absorbing maps). For this reason \cite{Hua_thesis} avoids the forced unitisation, and deunitisation, which appears in \cite[Section 5.4]{CGSTW}. 

\subsection{Sufficient condition for $K_{1}$-injectivity}  Results on $K_{1}$-injectivity for relative commutants of the form $\calC(J) \cap \overline{\phi}(A)'$ trace back to Paschke’s original work in the case $J = \calK$ (see \cite[Lemma 3]{Paschke}). More recently, Loreaux, Ng, and Sutradhar generalized Paschke’s argument in \cite[Theorem 2.15]{K1-injective}, making crucial use of the Elliott--Kucerovsky theorem, to give a sufficient condition for $K_{1}$-injectivity of the relative commutant $\overline{\phi}(A)'\cap\mathcal C(J)$ of unital and unitally absorbing maps $\phi$ from simple nuclear $C^*$-algebras $A$. Such relative commutant is called the \emph{Paschke dual algebra} for a unital and unitally absorbing map $\phi$, as there is a canonical isomorphism $KK(A, J) \cong K_{1}(\overline{\phi}(A)'\cap\mathcal C(J))$ (see \cite{Paschke} for $J = \calK$, \cite{Thomsen} for Paschke duality for absorbing maps and \cite[Proposition 2.5]{K1-injective} for unitally absorbing maps).  

We extend the result of Loreaux, Ng, and Sutradhar  to drop the simplicity requirement, and transfer the nuclearity hypothesis to the map.  The proof of the following lemma is very similar to the original from \cite[Theorem 2.15]{K1-injective}, but we give the details for completeness. We denote the set of unitaries in $M_{2}(D)$ by $\calU_{2}(D)$. 

\begin{lem} 
\label{sufficient_K1} 
Let $A$ be a unital and separable $C^*$-algebra, and let $J$ be a separable and stable $C^*$-algebra satisfying the corona factorisation property. Let $\phi: A\rightarrow \calM(J)$ be a unital, full and weakly nuclear $^*$-homomorphism. Suppose that every unitary in $\calC(J)\cap \overline{\phi}(A)'$ is homotopic through unitaries in $\calC(J)\cap \overline{\phi}(A)'$ to a unitary $u$ such that the inclusion map $\iota: C^*(\overline{\phi}(A), u) \rightarrow \calC(J)$ is full. Then $\calC(J) \cap \overline{\phi}(A)'$ is $K_{1}$-injective. 
\end{lem}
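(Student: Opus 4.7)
The plan is to take a unitary $u \in \calU(\calC(J) \cap \overline{\phi}(A)')$ with $[u]_1 = 0$ in $K_1(\calC(J) \cap \overline{\phi}(A)')$ and construct a homotopy to $1$ inside the Paschke dual. Using the hypothesis, I first replace $u$ by a homotopic unitary so that $\iota\colon C^*(\overline{\phi}(A), u) \to \calC(J)$ is full; then, since $J$ has the corona factorisation property, Theorem \ref{nuc_absorb_equiv} ensures that $\iota$ is unitally nuclearly absorbing.

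Alongside $\iota$, I would use a trivialising companion $\beta\colon C^*(\overline{\phi}(A), u) \to \calC(J)$ that restricts to the inclusion on $\overline{\phi}(A)$ and sends $u$ to $1$. It is obtained by composing the evaluation quotient $C^*(\overline{\phi}(A), u) \twoheadrightarrow \overline{\phi}(A)$ (induced by $u \mapsto 1$) with the inclusion $\overline{\phi}(A) \hookrightarrow \calC(J)$, and it admits a weakly nuclear lift $\tilde\beta \colon C^*(\overline{\phi}(A), u) \to \calM(J)$ obtained by following the same quotient with $\phi$ (weak nuclearity is inherited from $\phi$). Fixing $N \geq 1$ and Cuntz isometries $t_1, \ldots, t_{N+1} \in \calM(J)$ (available by stability of $J$), unital nuclear absorption of $\iota$ applied to $\tilde\beta^{(N)}$ then produces a unitary $v \in \calM(J)$ such that $\overline{v}\,(\iota \oplus \beta^{(N)})(x)\,\overline{v}^{\,*} = \iota(x)$ for every $x \in C^*(\overline{\phi}(A), u)$.

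Evaluating this identity on $u$ gives $\overline{v}\,u'\,\overline{v}^{\,*} = u$, where $u' := t_1 u t_1^* + (1 - t_1 t_1^*)$, and on $\overline{\phi}(a)$ gives $\overline{v}\,\big(\sum_i t_i \overline{\phi}(a) t_i^*\big)\,\overline{v}^{\,*} = \overline{\phi}(a)$ for all $a \in A$. Choosing $N$ large enough that $[u]_1 = 0$ produces a homotopy from $u \oplus 1_N$ to $1_{N+1}$ in $M_{N+1}(\calC(J) \cap \overline{\phi}(A)')$, and translating via the canonical isomorphism $M_{N+1}(\calC(J)) \cong \calC(J)$ afforded by the Cuntz isometries, this becomes a path from $u'$ to $1$ through unitaries in $\calC(J)$ commuting with $\sum_i t_i \overline{\phi}(a) t_i^*$. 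Conjugating the path by $\overline{v}$ transports it to a homotopy from $u$ to $1$ through unitaries in $\calC(J) \cap \overline{\phi}(A)'$, establishing $K_1$-injectivity.

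The main delicate point I expect is the construction of the trivialising map $\beta$: in general $C^*(\overline{\phi}(A), u)$ is a quotient of $\overline{\phi}(A) \otimes C(\mathbb{T})$ rather than the full tensor product, so one must check that sending $u \mapsto 1$ respects whatever additional relations cut out this quotient; this may require some care, perhaps via a further homotopy arranging $1 \in \sigma(u)$ and the expected tensor-product structure, together with use of the injectivity that fullness of $\iota$ forces on $C^*(\overline{\phi}(A), u)$. Once $\beta$ and its weakly nuclear lift $\tilde\beta$ are in place, the remainder is a clean combination of the Elliott--Kucerovsky--Ng absorption theorem with the standard matrix/Cuntz-sum correspondence.
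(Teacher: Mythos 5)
Your overall architecture matches the paper's: replace $u$ by a homotopic unitary with $\iota$ full, build a weakly nuclear ``trivialising'' companion to $\iota$, apply Elliott--Kucerovsky--Ng absorption, and push a matrix-level homotopy through the resulting conjugation. The gap you flag at the end, however, is genuine and your proposed fixes do not close it.

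You need a $^*$-homomorphism $\beta\colon C^*(\overline{\phi}(A),u)\to\calC(J)$ that restricts to the identity on $\overline{\phi}(A)$ and has $\beta(u)=1$ exactly. Such a map need not exist. Concretely: if $u$ happens to be a nontrivial central unitary of $\overline{\phi}(A)$ (say $u=p-(1-p)$ with $p$ a nontrivial central projection), then $u\in\overline{\phi}(A)$, and any $^*$-homomorphism fixing $\overline{\phi}(A)$ is forced to send $u\mapsto u\neq 1$. Here $C^*(\overline{\phi}(A),u)=\overline{\phi}(A)$, so $\iota=\overline{\phi}$ is full by hypothesis and your ``injectivity from fullness'' does not exclude the example; arranging $1\in\sigma(u)$ does not help either since $\sigma(u)=\{1,-1\}$ already contains $1$. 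The problem is not the spectrum of $u$ alone but the joint relations between $u$ and $\overline{\phi}(A)$.

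The paper works with a strictly weaker replacement for $\beta$: a weakly nuclear $\psi\colon C^*(\overline{\phi}(A),u)\to\calM(J)$ with $\overline{\psi}|_{\overline{\phi}(A)}=\mathrm{id}$ but with $\overline{\psi}(u)$ only \emph{homotopic} to $1$ in $\calU(\calC(J)\cap\overline{\phi}(A)')$, not equal to it. This always exists: take a unital essential representation $\Psi\colon C^*(\overline{\phi}(A),u)\to\calB(\calH)$, set $\psi_0(x)=\Psi(x)\otimes 1_{\calM(J)}$, and observe that $\Psi(u)$ is a unitary in the von Neumann algebra $\Psi(\overline{\phi}(A))'$, hence automatically connected to $1$ through unitaries in that commutant. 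One then conjugates $\psi_0$ by a unitary (produced by absorbing $\psi_0|_{\overline{\phi}(A)}$ against the inclusion) to make its corona restriction to $\overline{\phi}(A)$ the identity. With this substitute, the rest of your conjugation-and-homotopy bookkeeping goes through after inserting an extra initial homotopy that contracts the $\overline{\psi}(u)$ blocks to $1$. A secondary, stylistic difference: the paper first shows $\calC(J)\cap\overline{\phi}(A)'$ is properly infinite (a consequence of $\phi$ being unitally nuclearly absorbing) and invokes a $K_1$-injectivity criterion for properly infinite algebras, so that only the $2\times 2$ case needs to be handled, rather than choosing $N$ depending on $u$.
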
 

\begin{proof} 
By Theorem \ref{nuc_absorb_equiv}, the unital and full map $\phi$ is unitally nuclearly absorbing. It is then standard that $\calC(J) \cap \overline{\phi}(A)'$ is properly infinite (see \cite[Proposition 6.1.1]{Hua_thesis} for a proof in the case that $\phi$ is unitally absorbing, and again the proof works in just the same way when $\phi$ is weakly nuclear and unitally nuclearly absorbing). As such, by \cite[Proposition 5.1]{properly_infinite_C(X)}, we only need to show that for a fixed unitary $u_{0}\in \calC(J) \cap \overline{\phi}(A)'$ satisfying 
\begin{equation} 
\label{2_homotopy_3} 
\begin{pmatrix}
u_{0} & 0 \\
0 & 1
\end{pmatrix} 
\sim_{h}
\begin{pmatrix}
1 & 0 \\
0 & 1
\end{pmatrix} 
\end{equation} 
in $\calU_{2} (\calC(J) \cap \overline{\phi}(A)')$, we have $u_{0}\sim_{h}1$ in $\calU(\calC(J) \cap \overline{\phi}(A)')$. By hypothesis, $u_{0}$ is homotopic in $\calC(J) \cap \overline{\phi}(A)'$ to a unitary $u$ with the property that the inclusion $\iota: C^{\ast} (\overline{\phi}(A), u) \rightarrow \calC(J)$ is full. In particular, $u$ satisfies \eqref{2_homotopy_3} and it suffices to show that $u\sim_{h} 1$ in $\calU(\calC(J) \cap \overline{\phi}(A)')$. 

We claim that there exists a unital and weakly nuclear $^*$-homomorphism $\psi: C^{*}(\overline{\phi}(A), u)\rightarrow \calM(J)$ such that $\overline{\psi}(\overline{\phi}(a)) = \overline{\phi}(a)$ for any $a\in A$ and $\overline{\psi}(u)\sim_{h}1$ in $\calU(\calC(J) \cap \overline{\phi}(A)')$.  To see this, take any unital essential representation $\Psi: C^{*}(\overline{\phi}(A), u)\rightarrow \calB(\calH)$, which gives arise to a unital and full $^*$-homomorphism $\psi_{0}: C^{*}(\overline{\phi}(A), u)\rightarrow \calM(J)$ as defined in \eqref{full_absorb_map_prototype}. Then (as noted in the proof of \cite[Lemma 12]{EK.PJM}), $\psi_{0}$ is weakly nuclear. Since $u$ commutes with $\overline{\phi}(A)$, it follows that $\Psi(u)$ belongs to the unitary group of the von Neumann algebra $\Psi(\overline{\phi}(A))' \subseteq \calB(\calH)$, which implies that $\Psi(u) \sim_{h}1$ in $\calU(\Psi(\overline{\phi}(A))')$. As $\psi_{0}$ is defined as in \eqref{full_absorb_map_prototype}, we have $\overline{\psi_{0}}(u)\sim_{h}1$ in $\calU(\calC(J)\cap \overline{\psi_{0}}(\overline{\phi}(A))')$.  We will now use absorption to modify $\psi_0$ to obtain a map which fixes $\overline{\phi}(A)$.

Since fullness of $\psi_{0}$ passes to its restriction to $\overline{\phi}(A)$, it follows that $\psi_{0}|_{\overline{\phi}(A)}: \overline{\phi}(A)\rightarrow \calM(J)$ is full and thus unitally nuclearly absorbing by Theorem \ref{nuc_absorb_equiv}. Weak nuclearity of $\psi_{0}$ is also inherited by $\psi_{0}|_{\overline{\phi}(A)}$. By hypothesis $\phi$ is full, so $\phi(A)\cap J = \{0\}$, and so the quotient map $\mathcal M(J)\to\mathcal C(J)$ is injective when restricted to $\phi(A)$. In this way, we obtain a map 
\begin{equation} 
\theta: \overline{\phi}(A) \rightarrow \calM(J), \quad \overline{\phi}(a)\mapsto \phi(a), 
\end{equation}
which factors through $\phi\colon A\to\mathcal M(J)$ so is weakly nuclear and full, and hence nuclearly unitally absorbing  by Theorem \ref{nuc_absorb_equiv}. Then $\psi_{0}|_{\overline{\phi}(A)}$ and $\theta$ are unitarily equivalent modulo $J$, witnessed by some unitary $w\in \calM(J)$. Take $\psi \coloneqq \mathrm{Ad}(w) \psi_{0}$, which is unital, weakly nuclear, and satisfies
\begin{equation} 
\overline{\psi}(\overline{\phi}(a)) = \mathrm{Ad}(\overline{w}) \overline{\psi_{0}}(\overline{\phi}(a)) = \overline{\theta} (\overline{\phi}(a)) = \overline{\phi}(a),\quad a\in A. 
\end{equation} 
Then, applying $\mathrm{Ad}(w)$ to the homotopy $\overline{\psi_{0}}(u)\sim_{h}1$ in $\calU(\calC(J)\cap \overline{\psi_{0}}(\overline{\phi}(A))')$, we get $\overline{\psi}(u)\sim_{h}1$ in $\calU(\calC(J)\cap \overline{\psi}(\overline{\phi}(A)) ') = \calU(\calC(J)\cap \overline{\phi}(A)')$, which proves the claim. 

As $u$ satisfies \eqref{2_homotopy_3} and $\overline{\psi}(u)\sim_{h} 1$ in $\calU(\overline{\phi}(A)'\cap \calC(J))$, we have
\begin{equation} \label{2_homotopy} 
\begin{pmatrix}
u & 0 \\
0 & \overline{\psi}(u) 
\end{pmatrix} 
\sim_{h} 
\begin{pmatrix} 
u & 0 \\
0 & 1
\end{pmatrix} 
\sim_{h}
\begin{pmatrix} 
1 & 0 \\
0 & 1
\end{pmatrix} 
\end{equation} 
in $\calU_{2}(\overline{\phi}(A)'\cap \calC(J))$. The inclusion map $\iota: C^*(\overline{\phi}(A), u) \rightarrow \calC(J)$ is full and thus unitally nuclearly absorbing by Theorem \ref{nuc_absorb_equiv}. Since $\psi$ is weakly nuclear, there exists some unitary $v\in \calM(J)$ such that $\iota = \text{Ad}(\overline{v})\circ (\iota\oplus \overline{\psi})$. Take a $^*$-isomorphism $\Phi: M_{2}\otimes \calC(J) \rightarrow \calC(J)$ with $\Phi(\text{diag}(a, b)) = a\oplus b$ for any $a, b \in \calM(J)$. Then 
\begin{equation} 
\text{Ad}(\overline{v}) \circ \Phi \bigg(
\begin{pmatrix}
u & 0 \\
0 & \overline{\psi}(u) 
\end{pmatrix}
\bigg) = \text{Ad}(\overline{v}) (u \oplus \overline{\psi}(u)) = u, \quad \text{and}
\end{equation} 
\begin{equation} \label{uni_conj_3} 
\text{Ad}(\overline{v}) \circ \Phi \bigg(
\begin{pmatrix}
\overline{\phi}(a) & 0 \\
0 & \overline{\phi}(a)  
\end{pmatrix}
\bigg) = \text{Ad}(\overline{v}) (\overline{\phi}(a)\oplus \overline{\phi}(a)) = \overline{\phi}(a), 
\end{equation} 
where \eqref{uni_conj_3} is true since $\overline{\psi} (\overline{\phi}(a)) =\overline{\phi}(a)$ for any $a\in A$. By applying the map $\text{Ad}(\overline{v}) \circ \Phi$ to the homotopy in \eqref{2_homotopy}, we have $u\sim_{h} 1$ in $\mathcal U(\calC(J))$ as $\phi$ is unital. It remains to verify that $\text{Ad}(\overline{v}) \circ \Phi (w) \in \overline{\phi}(A)'\cap \calC(\calK)$, for any $w\in\calU_{2}(\overline{\phi}(A)'\cap \calC(\calK))$. This follows from the following computation: 
\begin{align} 
&(\text{Ad}(\overline{v}) \circ \Phi (w)) \overline{\phi}(a) \\
\stackrel{\eqref{uni_conj_3}}{=} &\overline{v} \Phi (w)\Phi \bigg(
\begin{pmatrix}
\overline{\phi}(a) & 0 \\
0 & \overline{\phi}(a)
\end{pmatrix}
\bigg)\overline{v}^{*} \\
= \ \ & \overline{v} \Phi \bigg(
\begin{pmatrix}
\overline{\phi}(a) & 0 \\
0 & \overline{\phi}(a)
\end{pmatrix}
\bigg)\Phi (w)\overline{v}^{*} \\
\stackrel{\eqref{uni_conj_3}}{=} &\overline{\phi}(a)(\text{Ad}(\overline{v}) \circ \Phi (w)). \qedhere 
\end{align} 
\end{proof} 

\section{Codomain hypotheses, the trace-kernel extension and separabilisation arguments} 
\label{Sect:sep}  

In this section we set out the abstract framework we will use to handle $C^*$-algebra ultraproducts of finite factors, and examine the  resulting trace-kernel extensions.  The relevant $C^*$-algebraic properties of a finite von Neumann factor for our argument are:
\begin{enumerate}
    \item real rank zero;\label{codomain1}
    \item stable rank one;\label{codomain2}\footnote{Recall that stable rank one gives rise to cancellation of projections by \cite{Rieffel-stablerank} (see also \cite[Proposition V.3.1.24]{Blackadar}, and hence that $V(B_{n})$ is partially ordered (rather than preordered). Moreover in the presence of real rank zero, cancellation of projections is equivalent to stable rank one (see \cite[Proposition V.3.2.15]{Blackadar}).  } 
    \item there exists a tracial state;\label{codomain3}
    \item trivial $K_1$-group;\label{codomain4}
    \item the Murray--von Neumann semigroup is totally ordered.\label{codomain5}
\end{enumerate}
We will refer to these collection of conditions as our \emph{codomain hypotheses}. The abstract version of our classification theorem will apply to ultraproducts of $C^*$-algebras $(B_n)_{n}$ satisfying these hypotheses as the codomain (see Theorem \ref{MainUniqueness}), subject to a condition ensuring that the tracial ultraproduct is type II$_1$ (discussed just after Proposition \ref{prop:TKquotient}).

Comparing our codomain hypotheses with those used by Schafhauser in \cite{Schafhauser.Annals}, the essential difference is that we do not require tensorial absorption of the universal UHF-algebra, as finite von Neumann factors fail to satisfy this. Instead we impose the condition (\ref{codomain5}). While covering the main case of interest, this is of course a very stringent hypothesis from a $C^*$-algebraic point of view.  The results in this section are all minor modifications of the relevant technical results from \cite{new-TWW,Schafhauser.Annals} to replace the UHF-stability hypothesis with condition (\ref{codomain5}).

So that we can conveniently appeal to results in the literature, we note that the combination of properties \eqref{codomain1}, \eqref{codomain3} and \eqref{codomain5} above yield strict comparison and that the tracial state is the unique quasitrace (both well known properties of finite von Neumann factors). Recall that for a unital $C^*$-algebra $B$ and a tracial state $\tau$ on $B$, define 
\begin{equation}
d_{\tau} (b) = \lim_{n\rightarrow \infty} \tau (b^{1/n}), \quad b\in (B\otimes \calK)_{+}, 
\end{equation} 
where $\tau$ is canonically extended to $(B\otimes \calK)_{+}$. Then $d_{\tau}$ induces a \emph{state} on $\mathrm{Cu}(B)$, which is an order-preserving, additive and lower-semicontinuous map $\mathrm{Cu}(B)\rightarrow [0,\infty]$, sending $\langle 1_{B} \rangle$ to $1$. 

\begin{prop}\label{prop.codomain.strictcomp}
    Let $B$ be a unital $C^*$-algebra of real rank zero, which has a tracial state $\tau$. Suppose that $V(B)$ is totally ordered. Then $B$ has strict comparison by bounded traces and $\tau$ is the unique quasitracial state on $B$. 
\end{prop}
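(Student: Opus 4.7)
The plan is to use real rank zero to reduce all questions about quasitraces and Cuntz comparison to projections, and then let the total ordering of $V(B)$ pin down the rest. The crucial fact is that any additive, order-preserving map $d\colon V(B)\to[0,\infty]$ normalised by $d([1_B])=1$ is entirely determined by the order structure of $V(B)$: for each $x\in V(B)$ and $n,m\in\N$, total ordering forces either $nx\leq m[1_B]$ or $m[1_B]\leq nx$, which sandwiches $d(x)$ between rationals $m/n$, yielding the formula $d(x)=\sup\{m/n:m[1_B]\leq nx\}$.

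For uniqueness of the quasitracial state I would argue as follows. Any quasitracial state $\sigma$ is linear on commutative $C^*$-subalgebras, respects Murray--von Neumann equivalence (via $\sigma(vv^*)=\sigma(v^*v)$), and is norm continuous; real rank zero lets one norm-approximate any self-adjoint of $B$ by a real linear combination of orthogonal projections (via finite-spectrum functional calculus). Hence $\sigma$ is determined by its values on projections of $B$. The induced map $[p]\mapsto\sigma(p)$ on $V(B)$ is additive and order-preserving with $[1_B]\mapsto 1$, so it satisfies the formula above and therefore agrees with the analogous map built from $\tau$. Hence $\sigma=\tau$.

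For strict comparison, the uniqueness just established reduces the hypothesis to $d_\tau(a)<d_\tau(b)$. I would invoke Proposition \ref{RR0_Cu} to write $\langle a\rangle=\sup_n\langle p_n\rangle$ and $\langle b\rangle=\sup_m\langle q_m\rangle$ for projections $p_n,q_m\in B\otimes\calK$; since $d_\tau$ is a state on $\mathrm{Cu}(B)$ preserving suprema of increasing sequences, this gives $d_\tau(a)=\sup_n\tau(p_n)$ and $d_\tau(b)=\sup_m\tau(q_m)$. For fixed $n$, the strict inequality $\tau(p_n)\leq d_\tau(a)<\sup_m\tau(q_m)$ (together with $\tau(p_n)\leq d_\tau(a)<\infty$) lets one pick $m$ with $\tau(q_m)>\tau(p_n)$; total ordering of $V(B)$ then forces $p_n\precsim q_m$, since the alternative $q_m\precsim p_n$ would yield $\tau(q_m)\leq\tau(p_n)$, a contradiction. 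Hence $\langle p_n\rangle\leq\langle b\rangle$ for every $n$, and taking suprema gives $a\precsim b$.

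The hard part, such as it is, lies in the bookkeeping for quasitraces rather than ordinary traces in the uniqueness step: one must rely on linearity of quasitraces on commutative $C^*$-subalgebras together with norm continuity, so that real rank zero genuinely reduces their values on $B$ to values on projections. Once this is in place, the ratio argument on $V(B)$ and the application of Proposition \ref{RR0_Cu} are direct consequences of the hypotheses.
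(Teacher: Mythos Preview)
Your proposal is correct and follows essentially the same approach as the paper: both use the total ordering of $V(B)$ for a ratio/sandwich argument to pin down any state on $V(B)$, and both invoke Proposition~\ref{RR0_Cu} to reduce Cuntz comparison of general positive elements to projections. The only cosmetic differences are that the paper proves strict comparison first and uniqueness second, and for uniqueness it routes through the known bijection between quasitraces and states on $\mathrm{Cu}(B)$ (again via Proposition~\ref{RR0_Cu}) rather than approximating self-adjoints by finite-spectrum elements directly; your more hands-on route works just as well, provided one remembers that quasitracial states extend canonically to matrix amplifications so that the induced map on $V(B)$ is genuinely defined.
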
 
\begin{proof} 
We claim that $B$ has strict comparison with respect to $\tau$. Firstly, given projections $p,q\in B\otimes \mathcal K$ with $\tau(p)<\tau(q)$, as $V(B)$ is totally ordered we must have $p\prec q$ (since $q\precsim p$ is evidentally impossible). Now take positive elements $a, b \in B\otimes \calK$ with $d_{\tau}(a) < d_{\tau}(b)$. Since $B$ has real rank zero, by Proposition \ref{RR0_Cu}, it follows that $\langle a\rangle = \sup_{n}\langle p_{n}\rangle$ and $\langle b\rangle = \sup_{n}\langle q_{n}\rangle$ for some sequences of projections $(p_{n})_{n}$ and $(q_{n})_{n}$ in $B\otimes \calK$ which give increasing sequences in the Cuntz semigroup. By lower-semicontinuity of $d_{\tau}$, there exists $n_{0}$ such that for any $n\geq n_{0}$, 
\begin{equation} 
\tau(p_n)=d_{\tau} (p_{n}) \leq d_{\tau} (a) < d_{\tau} (q_{n})=\tau(q_n) \leq d_{\tau} (b).
\end{equation} 
Thus, $p_{n}\precsim q_{n}\precsim b$ for any $n\geq n_{0}$ and this implies $a\precsim b$ in $B\otimes \calK$.

Lastly, we note that the total ordering hypothesis ensures that there is a unique state on $V(B)$. Indeed, given states $\phi_1,\phi_2$ on $V(B)$ and $x\in V(B)$ with $\phi_1(x)<\phi_2(x)$. Then there exists $m, n\in \mathbb{N}$ such that $n\phi_{1}(x) < m < n\phi_{2}(x)$. By the total ordering of $V(B)$, we have either $m[1_B]\leq nx$ (which contradicts $n\phi_1(x)<m$) or $nx\leq m[1_B]$ (which contradicts $m<n\phi_2(x)$). Since $B$ has real rank zero, Proposition \ref{RR0_Cu} then shows that $\mathrm{Cu}(B)$ has a unique state, and hence $B$ has a unique quasitrace as states on the Cuntz semigroup correspond to quasitraces (see \cite[Theorem 6.9]{modern-cuntz}, for example).  
\end{proof}

We now turn to ultraproducts and the trace-kernel extension. Throughout the paper, $\omega$ denotes a fixed free ultrafilter on the natural numbers.  Given a sequence $(B_n)_n$ of $C^*$-algebras, one has the product $C^*$-algebra,  $\prod_{n=1}^\infty B_n$ consisting of all bounded sequences $(b_n)_n$ with $b_n\in B_n$. The ultraproduct $\prod_{n\to\omega} B_n$ is defined by 
\begin{equation}
\prod_{n\to\omega}B_n \coloneqq \prod_{n=1}^\infty B_n\bigg/\left\{(x_n)_{n}\in\prod_{n=1}^\infty B_n:\lim_{n\to\omega}\|x_n\|=0\right\}.
\end{equation} 
To ease notation, we will often write $B_\omega$ as shorthand for the ultraproduct $\prod_{n\to\omega}B_n$. As is standard we will typically denote elements in $B_\omega$ using representing sequences $(b_n)_n$ from $\prod_{n=1}^\infty B_n$.  A sequence $(\tau_n)_n$ of tracial states on each $B_n$, induces a \emph{limit trace} $\tau$ on $\prod_{n\to\omega}B_n$, defined on representative sequences by $\tau((x_n)_{n})\coloneqq\lim_{n\to\omega}\tau_n(x_n)$.  The following standard lemma records that most of our codomain hypotheses pass to ultraproducts: the only thing missing is that ultraproducts of simple $C^*$-algebras need not be simple (for example in our setting they contain the trace-kernel ideal as an ideal).

\begin{lem}\label{prop:B_omega} Let $(B_n)_{n}$ be a sequence of unital $C^*$-algebras each of which has real rank zero, stable rank one, trivial $K_1$ group and totally ordered Murray--von Neumann semigroup. Then the ultraproduct $B_\omega\coloneqq \prod_{n\to\omega}B_n$ also has real rank zero, stable rank one, totally ordered Murray--von Neumann semigroup, trivial $K_1$ and trivial $K_{1}$ with coefficients. When each $B_n$ has a tracial state $\tau_n$ (necessarily unique by Proposition \ref{prop.codomain.strictcomp}), then the induced limit trace is the unique quasitracial state on $B_\omega$, and this ultraproduct has strict comparison by bounded traces.
\end{lem}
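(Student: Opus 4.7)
The plan is to lift each assertion from $B_\omega$ to the factors $B_n$, apply the hypothesis pointwise, and recombine via standard ultraproduct arguments. Real rank zero, stable rank one, and total ordering of $V$ come first, and the remaining statements then follow formally; for the trace statements one reruns the proof of Proposition \ref{prop.codomain.strictcomp} inside $B_\omega$.

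For real rank zero, given a self-adjoint $a\in B_\omega$ represented by $(a_n)$, I would approximate each $a_n$ within $\epsilon/2$ by a finite-spectrum self-adjoint $c_n\in B_n$, then truncate spectral values lying in $(-\epsilon,\epsilon)$ to $\pm\epsilon$ by functional calculus. The resulting $(c'_n)$ satisfies $\|c'_n-a_n\|<3\epsilon/2$ and $\|(c'_n)^{-1}\|\leq 1/\epsilon$ uniformly in $n$, so it represents an invertible self-adjoint close to $a$. Stable rank one is analogous. For the total ordering of $V(B_\omega)$, I lift projections in matrices over $B_\omega$ to projection sequences in matrices over $B_n$ (using the spectral gap at $1/2$ of nearby self-adjoints) and choose a consistent direction of subequivalence over an ultrafilter-large set of indices; the associated partial isometries in each $B_n$ then assemble into one in $B_\omega$.

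For $K_1(B_\omega)=0$, stable rank one of $B_\omega$ reduces the task to showing $U(M_k(B_\omega))=U_0(M_k(B_\omega))$ for every $k$. Lift a unitary $u\in U(M_k(B_\omega))$ to unitaries $v_n\in U(M_k(B_n))$. Since $M_k(B_n)$ has real rank zero, $v_n$ is a norm limit of unitaries with finite spectrum, and $K_1(M_k(B_n))=0$ lets any such unitary be written as $e^{ih}$ with $h$ self-adjoint of norm at most $\pi$. Choosing an approximation of $v_n$ within $1/n$ of this form gives a uniformly bounded sequence $(h_n)$ representing some $h\in M_k(B_\omega)_{sa}$ with $e^{ih}=u$. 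For $K_1$-triviality with coefficients I would then apply Proposition \ref{trivial_K1_coef} after checking that $K_0(B_\omega)$ is torsion-free: stable rank one yields cancellation so $V(B_\omega)$ embeds in $K_0(B_\omega)$, and if $n([p]-[q])=0$ in $K_0$ then (by total ordering) one may assume $[q]\leq[p]$ and write the difference as $[r]\in V(B_\omega)$ with $n[r]=0$, which forces $[r]=0$ by conicality of $V$.

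Finally, when each $B_n$ carries a tracial state $\tau_n$, the induced limit trace $\tau$ is a tracial state on $B_\omega$, and uniqueness of the quasitrace together with strict comparison by bounded traces follow by rerunning Proposition \ref{prop.codomain.strictcomp} for $B_\omega$: total ordering of $V(B_\omega)$ forces a unique state on $V(B_\omega)$, which extends via real rank zero and Proposition \ref{RR0_Cu} to a unique state on $\mathrm{Cu}(B_\omega)$ and hence to uniqueness of the quasitrace, and strict comparison follows by the same projection approximation. The main obstacle I anticipate is the $K_1$ step, where uniformly controlling the scale of exponential lifts across $n$ is essential; real rank zero and $K_1(B_n)=0$ combine precisely to give this bound through the finite-spectrum decomposition of unitaries.
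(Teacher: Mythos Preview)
Your proposal is correct and follows essentially the same route as the paper: standard permanence of real rank zero and stable rank one under ultraproducts, lifting projections to verify the total ordering of $V(B_\omega)$, rerunning Proposition~\ref{prop.codomain.strictcomp} for the trace statements, and deducing triviality of $K_1$ with coefficients from torsion-freeness of $K_0(B_\omega)$ via Proposition~\ref{trivial_K1_coef}.

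One small correction to the $K_1$ step, since you flagged it as the main obstacle: real rank zero alone does \emph{not} guarantee that an arbitrary unitary is a limit of finite-spectrum unitaries---finite-spectrum unitaries are exponentials and hence lie in $U_0$, so this fails whenever $K_1\neq 0$. The right order is to first use stable rank one together with $K_1(B_n)=0$ to conclude $v_n\in U_0(M_k(B_n))$, and only then invoke real rank zero (via Lin's result that real rank zero forces exponential rank at most $1+\epsilon$) to approximate $v_n$ by a single exponential $e^{ih_n}$ with $\|h_n\|\leq\pi$. A finite-spectrum unitary is automatically such an exponential regardless of $K_1$, so no further appeal to $K_1$ is needed at that stage. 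With this reordering your argument goes through and matches the paper's, which defers the details to \cite[Proposition~3.2(2)]{TWW:Annals}.
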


\begin{proof}
It is standard that taking ultraproducts preserves the properties of real rank zero and stable rank one.\footnote{For real rank zero this is easily seen from the equivalence of (i) and (vi) in \cite[Theorem 2.6]{BrownPedersen:JFA} (as noted in \cite[Example 2.4.2]{ModelMemoir}). Preservation of stable rank one for ultraproducts uses \cite[Theorem 5]{Pedersen:JOT}, and is given as \cite[Lemma 19.2.2(1)]{Loring:Book}.} The third paragraph of the proof of \cite[Proposition 3.2 (2)]{TWW:Annals} then shows that $K_{1}(B_{\omega}) = 0$. 

To see that $V(B_\omega)$ is totally ordered, take projections $p, q\in M_{m}(B_{\omega})$ for some $m\in \mathbb{N}$, and choose representing  sequences of projections $(p_{n})_n$ and $(q_{n})_n$, where each $p_{n}, q_{n}\in M_{m}(B_{n})$. Since $V(B_{n})$ is totally ordered and $\omega$ is an ultrafilter, either $\{n\in\mathbb{N}: [p_{n}]\leq [q_{n}]\text{ in } V(B_{n})\}\in \omega$ or $\{n\in\mathbb{N}: [q_{n}]\leq [p_{n}]\text{ in } V(B_{n})\}\in \omega$. Thus either $[p]\leq [q]$ or $[q]\leq [p]$ in $V(B_{\omega})$. 

Under the hypothesis that each $B_n$ has a tracial state $\tau_n$, then the induced limit trace $\tau$ is a tracial state on $B_\omega$. Proposition \ref{prop.codomain.strictcomp} then shows\footnote{Using the proposition is unnecessary; one can alternatively use the fact that strict comparison by bounded traces passes to ultrapowers (see \cite[Lemma 1.23]{BBSTWW}, for example).} that $B_\omega$ has strict comparison by bounded traces and $\tau$ is the unique quasitrace on the ultraproduct. 

Finally, we prove that $K_{1}(B_{\omega}; \mathbb{Z}/n\mathbb{Z}) = 0$ for any $n\geq 2$. By Proposition \ref{trivial_K1_coef}, it suffices to show that $K_{0}(B_{\omega})$ has no torsion. For this note that any element $x\in K_0(B_\omega)$ can be written as a difference $x=[p]_0-[q]_0$ for projections $p,q$ in matrices over $B_\omega$. As $V(B_\omega)$ is totally ordered, either $[p]\leq [q]$ or $[q]\leq [p]$ in $V(B_\omega)$ and hence there is a projection $r$ in matrices over $B_\omega$ with either $x=[r]_0$ or $-x=[r]_0$. As $B_\omega$ is stably finite (since it has stable rank one), there are no non-zero projections $s$ in matrices over $B_\omega$ with $[s]_0=0$ in $K_0(B_\omega)$. In particular $nx\neq 0$ for all $n\geq 1$, i.e. $K_0(B_\omega)$ is indeed torsion free.
\end{proof} 

Henceforth, we assume that each $B_n$ has a unique tracial state $\tau_n$, since this follows from our codomain hypotheses. We denote the induced limit trace on the ultraproduct $B_\omega$ by $\tau_{B_\omega}$, and write $\|x\|_{2,\omega}=\tau_{B_{\omega}}(x^*x)^{1/2}$, for $x\in B_\omega$ for the associated seminorm. Then the \emph{trace-kernel ideal} of $B_\omega$ is given by 
\begin{equation}
J_B\coloneqq \{x\in B_\omega:\tau_{B_{\omega}}(x^*x)=0\}, 
\end{equation} 
giving rise to the \emph{trace-kernel extension} in this unique trace setting: 
\begin{equation} \label{Trace_kernel}
0\longrightarrow J_B\stackrel{j_B}{\longrightarrow} B_\omega\stackrel{q_B}{\longrightarrow} B^\omega \longrightarrow 0.
\end{equation} 
Here the \emph{trace-kernel quotient} is $B^\omega\coloneqq B_\omega/J_B$, and $\tau_{B_\omega}$ descends to a faithful trace $\tau_{B^\omega}$ on $B^\omega$, i.e., $\tau_{B_{\omega}} = \tau_{B^{\omega}}\circ q_{B}$.  Central sequence versions of these trace-kernel extensions where first developed by Matui and Sato in \cite{Matui-Sato}. Since we are in the unique trace setting, each $\pi_{\tau_n}(B_n)''$ is a finite von Neumann factor (see \cite[Theorem 6.7.4]{Dixmier}), and (as an application of Kaplanskzy density's theorem) $B^\omega$ canonically identifies with the tracial von Neumann algebra ultraproduct $\prod_{n\to\omega}\pi_{\tau_n}(B_n)''$, which is a finite factor (\cite[Remark 4.7]{KR_SI}). 

\begin{prop}\label{prop:TKquotient} 
    Let $(B_n)_{n}$ be a sequence of unital $C^*$-algebras, each with a unique trace $\tau_n$. Then the trace-kernel quotient $B^\omega$ is a finite von Neumann factor. It is type $I_m$ precisely when there is some $m\in\mathbb N$ such that $\{n:\pi_{\tau_n}(B_n)''\text{ is type I}_m\}\in\omega$. 
\end{prop}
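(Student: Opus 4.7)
The plan is to use the identification $B^\omega\cong\prod_{n\to\omega}N_n$ as a tracial von Neumann algebra ultraproduct, where I write $N_n\coloneqq\pi_{\tau_n}(B_n)''$; this identification is already recorded immediately before the statement, and together with the result of Kirchberg--R{\o}rdam cited there also gives that $B^\omega$ is a finite von Neumann factor. So only the type I$_m$ characterisation remains to be proved.

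The direction $(\Leftarrow)$ is straightforward: if $\{n:N_n\text{ is type I}_m\}\in\omega$, then each such $N_n\cong M_m$ with its normalised trace, and the tracial ultraproduct of copies of a fixed finite dimensional factor is that factor (fix a matrix-unit basis and take $\omega$-limits of scalar coefficients), so $B^\omega\cong M_m$.

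For $(\Rightarrow)$, I suppose $B^\omega\cong M_m$ and derive a contradiction from $\{n:N_n\not\cong M_m\}\in\omega$. Since $\omega$ is an ultrafilter, one of three mutually exclusive cases must occur on an $\omega$-large set: (a) $N_n$ is type II$_1$; (b) $N_n\cong M_{k_n}$ with $k_n$ bounded along $\omega$, in which case pigeonhole produces some $k_0\neq m$ with $\{n:k_n=k_0\}\in\omega$; or (c) $N_n\cong M_{k_n}$ with $k_n$ unbounded along $\omega$. Case (b) reduces immediately to the $(\Leftarrow)$ direction applied to $k_0$, yielding $B^\omega\cong M_{k_0}\ne M_m$, a contradiction.

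Cases (a) and (c) are handled uniformly by constructing, in each relevant $N_n$, a tuple $p_{1,n},\dots,p_{m+1,n}$ of pairwise orthogonal, mutually Murray--von Neumann equivalent projections whose traces tend to $1/(m+1)$ along $\omega$: in (a) I take projections of trace exactly $1/(m+1)$, which exist since $N_n$ is type II$_1$; in (c) I take each $p_{i,n}$ to be the sum of $\lfloor k_n/(m+1)\rfloor$ minimal projections, witnessing the equivalences via matrix units. The representing sequences, together with the witnessing partial isometries, then yield $m+1$ pairwise orthogonal, mutually equivalent projections in $B^\omega\cong M_m$, each of positive trace $1/(m+1)$ and hence nonzero; but $M_m$ admits at most $m$ such projections, the desired contradiction. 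The only thing requiring a moment's verification is that in case (c) the traces $\lfloor k_n/(m+1)\rfloor/k_n$ stay bounded away from $0$ along $\omega$, which is immediate since $k_n\ge m+1$ on an $\omega$-large subset of (c).
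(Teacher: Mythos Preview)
Your proof is correct. Note, however, that the paper does not actually give a proof of this proposition: it is stated without a proof environment. The sentence immediately preceding the proposition justifies the first claim (that $B^\omega$ is a finite factor) via the identification $B^\omega\cong\prod_{n\to\omega}\pi_{\tau_n}(B_n)''$ and the citations to \cite[Theorem 6.7.4]{Dixmier} and \cite[Remark 4.7]{KR_SI}. The type I$_m$ characterisation is left unproven, presumably as a standard fact about tracial ultraproducts of finite factors.

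Your argument supplies exactly those omitted details in a clean and natural way. The case split via the ultrafilter (type II$_1$ versus type I, and within type I, bounded versus unbounded dimension along $\omega$) is the expected approach, and your construction of $m+1$ orthogonal equivalent nonzero projections in cases (a) and (c) is the standard way to show the ultraproduct is not $M_m$. One tiny cosmetic point: in case (c) you could alternatively note that the tracial ultraproduct of matrix algebras of unbounded size is type II$_1$ (this is a well-known fact, essentially because $\tau_{B^\omega}$ takes all values in $[0,1]$ on projections), which subsumes both (a) and (c) into the single observation that a II$_1$ factor is not $M_m$. But your more hands-on version is perfectly fine and arguably more self-contained.
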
 

It will be important in the arguments of Section \ref{classification_chapter} that the trace-kernel quotient is type II$_1$. For this reason, in our main abstract result (Theorem \ref{MainUniqueness}), we additionally require that $\dim_{n\to\omega}\pi_{\tau_n}(B_n)''=\infty$. This hypothesis will first appear in Lemma \ref{seplem}. 

Next we collect the relevant properties of the trace-kernel ideals. In \cite{new-TWW,Schafhauser.Annals}, Schafhauser used the notion of an \emph{admissible kernel} for a $C^*$-algebra $J$ with real rank zero, stable rank one, $K_1(J)=0$, $K_0(J)$ is divisible, $V(J)$ is almost unperforated, and every projection in $J\otimes\mathcal K$ is Murray--von Neumann equivalent to a projection in $J$. Although our trace-kernel ideal $J_{B}$ need not have divisible $K_0$-groups,\footnote{and will not, in the case corresponding to an ultraproduct of matrices of growing sizes, where the trace-kernel ideal has a minimal projection.} all of the other properties of the admissible kernels hold in just the same way as in \cite{Schafhauser.Annals}. 

\begin{lem}
 Let $(B_n)_{n}$ be a sequence of unital $C^*$-algebras each of which has real rank zero, stable rank one, trivial $K_1$ group, and totally ordered Murray--von Neumann semigroup. Suppose each $B_n$ has a unique tracial state $\tau_n$. Then the trace-kernel quotient $J_B$ has real rank zero, stable rank one, $K_1(J_B)=0$, totally ordered Murray--von Neumann semigroup, and every projection in $J_B\otimes\mathcal K$ is Murray--von Neumann equivalent to a projection in $J_B$. 
\end{lem}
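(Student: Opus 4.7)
The plan is to derive each of the five asserted properties by reducing it to the corresponding statement for the ambient ultraproduct $B_\omega$ (already handled in Lemma~\ref{prop:B_omega}), with the key tools being Brown--Pedersen's real rank zero techniques, Rieffel's stable rank inequalities, the six-term exact sequence in $K$-theory applied to the trace--kernel extension, and the total ordering of $V(B_k)$ coupled with the unique trace regulation from Proposition~\ref{prop.codomain.strictcomp}.

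First, the real rank zero and stable rank one properties pass to closed two-sided ideals: real rank zero does so by Brown--Pedersen (\cite{BrownPedersen:JFA}), and stable rank one by Rieffel's bound $\mathrm{sr}(I)\leq\mathrm{sr}(A)$.  Applied to $J_B\lhd B_\omega$ using Lemma~\ref{prop:B_omega}, these give properties (i) and (ii).  For the total ordering of $V(J_B)$, the plan is to exhibit $V(J_B)\to V(B_\omega)$ as an order-embedding so that total order descends.  If $v\in B_\omega\otimes\calK$ is a partial isometry with $v^*v\in J_B\otimes\calK$, then $v=v(v^*v)\in J_B\otimes\calK$ by the ideal property, so a Murray--von Neumann equivalence between projections of $J_B\otimes\calK$ witnessed in $B_\omega\otimes\calK$ automatically lives in $J_B\otimes\calK$; this is both the injectivity and the order-compatibility.

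For $K_1(J_B)=0$, apply the six-term exact sequence in $K$-theory to the trace--kernel extension~\eqref{Trace_kernel}.  Since $K_1(B_\omega)=0$, it suffices to show the index map $K_0(B^\omega)\to K_1(J_B)$ vanishes, equivalently that $K_0(B_\omega)\to K_0(B^\omega)$ is surjective.  For each $n$, the matrix algebra $M_n(B_\omega)$ still has real rank zero, so Brown--Pedersen's projection lifting theorem ensures every projection in $M_n(B^\omega)=M_n(B_\omega)/M_n(J_B)$ lifts to a projection in $M_n(B_\omega)$; since $K_0(B^\omega)$ is generated by differences of such classes, surjectivity follows.

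The projection representability statement is where the total ordering of $V(B_k)$ does real work.  After first reducing to a projection $P\in M_n(J_B)$ (any projection in $J_B\otimes\calK$ is equivalent to one in some $M_n(J_B)$, via the standard approximation using the approximate unit of $J_B\otimes\calK$ consisting of units of $M_n(J_B)$), lift $P$ to a representing sequence of projections $P_k\in M_n(B_k)$ using real rank zero of $M_n(B_k)$.  The membership $P\in M_n(J_B)$ translates to $(\tau_k\otimes\mathrm{Tr})(P_k)\to 0$ along $\omega$, so eventually $(\tau_k\otimes\mathrm{Tr})(P_k)<1=\tau_k(1_{B_k})$; combining total ordering of $V(B_k)$ with the unique-trace regulation of the order from Proposition~\ref{prop.codomain.strictcomp} forces $[P_k]\leq[1_{B_k}]$, so $P_k\sim p_k$ for some projection $p_k\in B_k$.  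Packaging these yields a projection $p=(p_k)_k\in B_\omega$ with $\tau_{B_\omega}(p)=0$, hence $p\in J_B$, and the implementing partial isometries assemble into one in $M_n(B_\omega)$ which by the second paragraph automatically lives in $M_n(J_B)$.  The main technical obstacle is coordinating the lifting step with the trace and ordering data to ensure the sequence-level equivalences descend to a genuine equivalence in $M_n(J_B)$; everything else is routine inheritance from $B_\omega$.
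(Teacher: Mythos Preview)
Your proof is correct and follows the same overall architecture as the paper (inherit real rank zero, stable rank one, and the total ordering of $V$ from $B_\omega$; get $K_1(J_B)=0$ from the six-term exact sequence via surjectivity of $K_0(q_B)$), but you take a more elementary route at two points.

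For the surjectivity of $K_0(q_B)$, the paper argues by choosing a positive contractive lift of a projection $p\in B^\omega$, approximating it in the Cuntz semigroup by projections via real rank zero, using strict comparison to match traces arbitrarily well, and then invoking Kirchberg's $\epsilon$-test (reindexing) plus the fact that projections in the finite factor $B^\omega$ are determined by their trace. Your use of the Brown--Pedersen projection-lifting theorem for real rank zero extensions is a cleaner shortcut: since $M_n(B_\omega)$ has real rank zero, projections in $M_n(B^\omega)$ lift directly, and that is all that is needed.

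For the statement that every projection in $J_B\otimes\calK$ is equivalent to one in $J_B$, the paper simply cites the argument from \cite[Proposition~3.2(3)]{Schafhauser.Annals}, which uses strict comparison of $B_\omega$. Your sequence-level argument---lift $P\in M_n(J_B)$ to projections $P_k\in M_n(B_k)$, observe $(\tau_k\otimes\mathrm{Tr})(P_k)\to 0$ along $\omega$, and use the total ordering of $V(B_k)$ to force $[P_k]\leq[1_{B_k}]$---is a direct and self-contained alternative that avoids appealing to the strict comparison of the ultraproduct. Both approaches are valid; yours trades a black-box citation for an explicit coordinate-wise computation.
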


\begin{proof}
As the properties of having real rank zero (\cite[Corollary 2.8]{BrownPedersen:JFA}), stable rank one (\cite[Theorem 4.3]{Rieffel-stablerank}) pass to ideals, $J_B$ inherits all these properties from the ultraproduct $B_\omega$ through Lemma \ref{prop:B_omega}. Likewise, since $V(B_\omega)$ is totally ordered, so too is $V(J_B)$.\footnote{Recall that if $p,q$ are projections in matrices over $J_B$ with $p\precsim q$ in matrices over $B_\omega$, then the partial isometry witnessing such a subequivalence lies in matrices over $J_B$.}  The argument from the first paragraph of the proof of \cite[Proposition 3.2(3)]{Schafhauser.Annals} applies to show that projections in $J_B\otimes\mathcal K$ are Murray--von Neumann equivalent to projections in $J_B$, as this only uses strict comparison of $B_\omega$ (noted in Lemma \ref{prop:B_omega}). 

The proof that $K_{1}(J_{B}) = 0$ also goes in the same fashion as the relevant part of \cite[Proposition 3.2(3)]{Schafhauser.Annals}. Consider the following fragment of the $6$-term exact sequence induced by the trace-kernel extension:
\begin{equation} 
\begin{tikzcd} K_0(B_\omega) \arrow{r}{K_0(q_B)} & K_0(B^\omega) \arrow{r}{\partial_0} & K_1(J_B) \arrow{r}{K_1(j_B)} & K_1(B_\omega). 
\end{tikzcd}
\end{equation} 
As noted in Lemma \ref{prop:B_omega}, $K_{1}(B_{\omega}) = 0$, so by exactness it suffices to check that $K_{0}(q_{B})$ is surjective. Given a projection $p\in B^\omega$, with $p\neq 0,1$, choose a contractive positive lift $x\in B_\omega$ of $p$ so that $d_{\tau_{B_\omega}}(x)=\tau_{B^\omega}(p)$. Since $B_\omega$ has strict comparison and real rank zero (as noted in Lemma \ref{prop:B_omega}), there is an increasing sequence of projections $(q_n)_{n}$ in $B_\omega$ such that $\sup_{n}\langle q_n\rangle=\langle x\rangle$ in $\mathrm{Cu}(B_\omega)$ by Proposition \ref{RR0_Cu} (note strict comparison allows these projections to be taken in $B_\omega$ as $\tau_{B_\omega}(q_n)\leq \tau_{B^{\omega}}(p)<1$).  Since $d_{\tau_{B_\omega}}$ preserves suprema, we have $\tau_{B_\omega}(q_n)\to d_{\tau_{B_\omega}}(x) =\tau_{B^\omega}(p)$.  Hence for any $\epsilon>0$, there is a projection $q\in B_\omega$ with $|\tau_{B_\omega}(q)-\tau_{B^\omega}(p)|<\epsilon$. Using Kirchberg's $\epsilon$-test, or reindexing, there is a projection $q\in B_\omega$ with $\tau_{B_\omega}(q)=\tau_{B^\omega}(p)$.  Since projections in the finite factor $B^\omega$ are determined up to unitary conjugacy by the trace $\tau_{B^{\omega}}$, it follows that a unitary conjugacy of $q$ lifts $p$. As $K_0(B^\omega)$ is generated by the projections in $B^\omega$, $K_0(q_B)$ is indeed surjective, as required.  
\end{proof}

We now turn to describe the `separabilisation' results we need in Section \ref{classification_chapter}.  The point behind these is that the trace-kernel ideal is not separable, nor even $\sigma$-unital.  This causes technical difficulties working with $J_B$ as a codomain in $KK$-theory, and it must be replaced  by a suitable separable subalgebra. 

We first recall that separable $C^*$-algebras satisfying the properties of our trace-kernel ideals are stable and have the corona factorisation property (see the discussion in Section \ref{KL_section}). This lemma is essentially contained in \cite[Theorem 2.2]{new-TWW} and \cite[Proposition 3.3(3)]{Schafhauser.Annals}, as the condition that admissible kernels have divisible $K_0$-groups is not used in the proof.

\begin{lem}\label{separableideals}
Let $J$ be a separable $C^*$-algebra which has real rank zero, stable rank one, totally ordered Murray--von Neumann semigroup and every projection in $J\otimes\mathcal K$ is Murray--von Neumann equivalent to a projection in $J$. Then $J$ is stable and has the corona factorisation property.
\end{lem}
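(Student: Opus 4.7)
\bigskip

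\noindent\textbf{Proof plan.}

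The plan is to prove stability and the corona factorisation property separately, following the strategy sketched in \cite{new-TWW, Schafhauser.Annals} and verifying that divisibility of $K_0(J)$ is never invoked.

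For stability, I would use the Hjelmborg--R{\o}rdam characterisation: a separable $C^*$-algebra of real rank zero is stable if and only if every projection $p\in J$ admits a projection $p'\in J$ with $p\sim p'$ and $pp'=0$. Given such a $p$, the idea is to first produce two orthogonal equivalent copies of $p$ inside $J$ and then rotate one of them so that it is orthogonal to $p$ itself. Concretely, $p\otimes e_{11}+p\otimes e_{22}$ is a projection in $J\otimes\calK$, and the projection--lifting hypothesis supplies $q\in J$ with $[q]=2[p]$; conjugating $p\otimes e_{ii}$ by a partial isometry $w$ implementing $q\sim p\otimes(e_{11}+e_{22})$ produces orthogonal projections $q_1,q_2\in J$ with $q_1\sim q_2\sim p$ (each equivalence implemented by a partial isometry in $J$, because the relevant corner of $J\otimes\calK$ is contained in $J$). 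Stable rank one then lifts the Murray--von Neumann equivalence $p\sim q_1$ to a unitary equivalence $upu^*=q_1$ for some unitary $u$ in the minimal unitisation $J^{\sim}$. Setting $p':=u^*q_2 u$ gives a projection in $q_2+J=J$ with $p'\sim q_2\sim p$, and
\begin{equation}
  p p' = (u^*q_1 u) p' = u^*q_1 q_2 u = 0,
\end{equation}
as required.

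For the corona factorisation property, once $J$ is stable I would invoke the standard fact (in the line of Kucerovsky--Ng and Ortega--Perera--R{\o}rdam, cf.\ \cite[Theorem~2.2]{new-TWW}) that a $\sigma$-unital $C^*$-algebra of stable rank one with almost unperforated Cuntz semigroup has the corona factorisation property. It therefore suffices to verify almost unperforation of $\mathrm{Cu}(J)$. The total ordering of $V(J)$ immediately gives almost unperforation at the level of projections (any inequality $(n+1)x\leq ny$ in a totally ordered cancellative semigroup forces $x\leq y$, since $y<x$ would yield $ny<(n+1)x$). Proposition \ref{RR0_Cu} then propagates this to all of $\mathrm{Cu}(J)$: every Cuntz class is a supremum of projection classes, and almost unperforation is preserved under suprema of increasing sequences.

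The main obstacle I anticipate is the orthogonality step in the stability argument: it is tempting to try to get $p'$ orthogonal to $p$ directly from the decomposition $q=q_1+q_2$, but the crucial ingredient is the unitary lift provided by stable rank one, which converts the abstract equivalence $p\sim q_1$ into a concrete conjugation that also transports $q_2$ into the orthogonal complement of $p$. Beyond this point both steps are essentially bookkeeping; in particular the $K_0$-divisibility of admissible kernels used in \cite{Schafhauser.Annals} is nowhere required.
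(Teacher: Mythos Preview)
Your proposal is correct and follows essentially the same route as the paper. The paper is more terse: for stability it directly invokes R{\o}rdam's survey \cite[Proposition~3.4(i)$\Leftrightarrow$(ii)]{Rordam:Stable}, which states that for a separable $C^*$-algebra with real rank zero and cancellation of projections, stability is equivalent to the projection-lifting condition you assume; your argument via the Hjelmborg--R{\o}rdam criterion is effectively a hands-on reproof of the needed direction of that equivalence. For the corona factorisation property, the paper cites \cite[Corollary~5.9]{OPR:TAMS}, which deduces it from real rank zero together with almost unperforation of $V(J)$ (the latter following from total ordering and cancellation exactly as you observe), whereas you pass through almost unperforation of $\mathrm{Cu}(J)$ and a stable-rank-one criterion. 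Both routes are standard and neither requires divisibility of $K_0(J)$; the paper's version simply trades your explicit constructions for sharper citations.
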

\begin{proof}
For a separable $C^*$-algebra with stable rank one (giving cancellation of projections), and real rank zero (giving an approximate unit of projections), stability is equivalent to the condition that every projection in $J\otimes\mathcal K$ is Murray--von Neumann equivalent to a projection in $J$. This is \cite[Proposition 3.4(i)$\Leftrightarrow$(ii)]{Rordam:Stable} from Rørdam's survey of his work on stability, and in particular his characterisations of stability together with Hjelmborg (\cite{Stability_RR0}).
    
The corona factorisation property follows from real rank zero and the fact that the Murray--von Neumann semigroup of $J$ is almost unperforated (as a consequence of being totally ordered and having cancellation of projections) by \cite[Corollary 5.9]{OPR:TAMS}.\footnote{Recall that an ordered semigroup $S$ is \emph{almost unperforated} if for any $x, y\in S$ satisfying $(n+1)x\leq ny$ for some $n\in \mathbb{N}$, we have $x\leq y$ in $S$. In their later work \cite{OPR:IMRN} Ortega, Perera and Rørdam characterised the corona factorisation property at the level of the Cuntz semigroup, deducing that it follows from having almost unperforated Cuntz semigroup. } 
\end{proof} 

The reduction to the separable setting is performed using Blackadar's notion of separable inheritability. 

\begin{defn}[{\cite[Section II.8.5]{Blackadar}}] 
\label{defn_si} 
A property $(P)$ of $C^*$-algebras is called \emph{separably inheritable} if the following statements are true, 
\begin{enumerate} [(i)] 
\item if $A$ is a $C^*$-algebra satisfying $(P)$ and $A_0$ is a separable $C^*$-subalgebra of $A$, there exists a separable $C^*$-subalgebra $B$ of $A$ which satisfies $(P)$ and $A_0\subseteq B $;
\item if $A_1 \hookrightarrow A_2 \hookrightarrow A_3 \hookrightarrow \cdots$ is an inductive limit of separable $C^*$-algebras with injective connecting maps, if each $A_n$ satisfies $(P)$, then $\underset{\longrightarrow}{\lim} \, A_n$ satisfies $(P)$.
\end{enumerate} 
\end{defn} 

The separably inheritable properties we use are listed in the lemma below.  All are proved in a standard way (illustrated in the proof in \cite[II.8.5.4]{Blackadar} that stable rank one is separably inheritable).\footnote{Blackadar notes that Properties (\ref{rr0_si}) and (\ref{k1=0_si}) are separably inheritable in \cite[II.8.5.5] {Blackadar}. Property (\ref{projMvN_si}) is noted to be separably inheritable in the second sentence of the proof of \cite[Proposition 4.1]{new-TWW}. A proof that Property (\ref{totord_si}) is separably inheritable is given by the first author in her DPhil thesis (\cite[Lemma 7.1.2]{Hua_thesis}). The `standard proof' works fine too for Property (\ref{trivial_k_si}), or alternatively one can deduce this from separable inheritability of Property (\ref{k1=0_si}) using the fact that for $C^*$-algebras $C,D$ with $C$ separable, any separable $C^*$-subalgebra of $D\otimes C$ is contained in a separable $C^*$-subalgebra of the form $D_0\otimes C$.  Using the model $K_0(A;\mathbb Z/n\mathbb Z)\cong K_1(A\otimes I_n)$, where $I_n\coloneqq\{f\in C([0,1],M_n):f(0)\in \mathbb C1_n,\ f(1)=0\}$, handles $K_0$ with coefficients, and either suspending, or using the model $K_1(A;\mathbb Z/n\mathbb Z)\cong K_1(A\otimes\mathcal O_{n+1})$ handles $K_1$ with coefficients.}  

\begin{lem} 
\label{sep_inh_properties} 
The following properties of a $C^*$-algebra $D$ are separably inheritable: 
\begin{enumerate} [(i)] 
\item \label{rr0_si} $D$ has real rank zero; 
\item \label{sr0_si} $D$ has stable rank one; 
\item $K_1(D)=0$;\label{k1=0_si}
\item \label{trivial_k_si} $K_{i} (D; \mathbb{Z} / n\mathbb{Z}) = 0$ for some $i\in \{0, 1\}$ and $n\in \mathbb{N}$; 
\item \label{MvN_si} $V(D)$ is totally ordered; \label{totord_si}
\item every projection in $D\otimes\mathcal K$ is Murray--von Neumann equivalent to a projection in $D$.\label{projMvN_si}
\end{enumerate} 
\end{lem}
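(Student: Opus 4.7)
The plan is to follow Blackadar's standard separable exhaustion template (as in \cite[II.8.5.4]{Blackadar}), verifying both clauses of Definition \ref{defn_si} for each of the six properties. The closure under countable inductive limits with injective connecting maps is essentially automatic in every case: for (i) and (ii), density of (self-adjoint) invertibles is preserved under norm closure of unions; for (iii) and (iv), $K$-theory (with coefficients, via the model $K_i(D;\mathbb Z/n\mathbb Z)=K_i(D\otimes C_n)$) is continuous under sequential inductive limits, since $(A_n\otimes C_n)_n$ remains an inductive system with injective connecting maps; for (v), any pair of projections in the limit lies in some $A_n$ up to arbitrarily small perturbation, so that a partial isometry in $A_n$ witnesses their comparability in the limit; and for (vi), any projection in the stabilisation of the limit is Murray--von Neumann equivalent to one already in some $A_n\otimes\mathcal K$, where the hypothesis supplies a Murray--von Neumann equivalent projection in $A_n$ itself.

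The other clause requires, given separable $A_0\subseteq A$ with $A$ satisfying $(P)$, the construction of a separable intermediate $B$ that inherits $(P)$. This is built inductively: at stage $n+1$, we enlarge $B_n$ by adjoining countably many witnesses coming from the validity of $(P)$ in $A$, applied to a fixed countable dense subset of the relevant objects in $B_n$, and then close under $C^*$-algebra operations. Setting $B=\overline{\bigcup_n B_n}$ produces the required separable subalgebra. The witnesses are chosen as follows: for (i) and (ii), invertible (self-adjoint) elements in $A$ approximating each sampled element; for (iii), a finite product of exponentials $\exp(ih_1)\cdots \exp(ih_m)$ in matrices over $A^\dagger$ equal to each sampled unitary, with the self-adjoints $h_j$ added to $B_{n+1}$; for (iv), the analogous construction after tensoring with a fixed model $C_n$ for $K$-theory with coefficients; for (v), for each pair of sampled projections a partial isometry in $A$ implementing whichever of $[p]\leq[q]$ or $[q]\leq[p]$ holds; and for (vi), for each sampled projection in $B_n\otimes\mathcal K$, a partial isometry in $A\otimes\mathcal K$ together with the target projection in $A$ witnessing the required Murray--von Neumann equivalence.

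Verifying that $(P)$ holds in $B$ is then a standard density argument: any element, projection, unitary, or pair in $B$ is approximated by ones sampled at some stage, and the witnesses produced at stage $n+1$ remain valid in any $C^*$-superalgebra containing them, since invertibility (via equality of spectra in $C^*$-subalgebras), homotopy through explicit exponentials, and Murray--von Neumann equivalence via a partial isometry are all preserved under $C^*$-superalgebra inclusion. The subtlest case is (v), where one additionally must check that the dichotomy $[p]\leq[q]$ versus $[q]\leq[p]$ chosen in $A$ remains consistent across stages; this is automatic because a partial isometry continues to witness the same subequivalence in any superalgebra containing it, and so comparability of sampled pairs propagates by continuity. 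The first author's thesis gives the detailed argument for (v) in \cite[Lemma 7.1.2]{Hua_thesis}, and the remaining five cases follow the same template.
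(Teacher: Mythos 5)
Your proposal follows essentially the same approach as the paper: Blackadar's separable-exhaustion template (\cite[II.8.5.4]{Blackadar}) applied item by item, with the paper simply deferring to references (\cite[II.8.5.5]{Blackadar} for (i) and (iii), \cite[Proposition 4.1]{new-TWW} for (vi), \cite[Lemma 7.1.2]{Hua_thesis} for (v), and for (iv) either the same template directly or a reduction to (iii) via a tensoring argument) rather than writing out the details. Two small imprecisions in your sketch do not affect correctness but are worth noting: in (iii) the exponential product produced by triviality of $[u]$ in $K_1$ realizes a matrix amplification $u\oplus 1_m$ rather than $u$ itself, and the worry about ``dichotomy consistency'' in (v) is moot, since total ordering of $V(B)$ requires only that at least one of $[p]\leq[q]$, $[q]\leq[p]$ hold, so having a partial isometry witnessing either direction suffices.
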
 

Recall too that the countable intersection of separably inheritable properties is again separably inheritable (\cite[II.8.5.3]{Blackadar}), so we can apply separable inheritability to any combination of the properties in Lemma \ref{sep_inh_properties}.  The following lemma sets out the separabilisation we will use in the uniqueness theorem (Theorem \ref{MainUniqueness}). As we want to use this to set up the relevant $K$-theoretic properties of $D$ which follow from $B^\omega$ being a II$_1$ factor, it is here where we first include the requirement that $\lim_{n\to\omega}\dim\pi_{\tau_n}(B_n)''=\infty$.

\begin{lem}\label{seplem}
Let $(B_n)_{n}$ be a sequence of unital $C^*$-algebras which have real rank zero, stable rank one, a tracial state $\tau_n$, totally ordered $V(B_{n})$ and $K_1(B_n)=0$. Write $B_{\omega}$ for $\prod_{\omega}B_{n}$, and $\tau_{B_\omega}$ for the (necessarily unique) trace on $B_\omega$ arising as the limit trace of the sequence $(\tau_n)_n$ inducing the trace-kernel extension \eqref{Trace_kernel}. Assume that $\lim_{n\to\omega}\dim\pi_{\tau_n}(B_n)''=\infty$. 

Let $A$ be a separable and unital $C^{\ast}$-algebra and $\phi,\psi\colon A\to B_\omega$ be nuclear, full and unital $^*$-homomorphisms with $\underline{K}(\phi)=\underline{K}(\psi)$.  Then there exists a separable subextension
\begin{equation}\label{sepsubext}
\begin{tikzcd}
0\arrow[r]&J\arrow[r,"j"]\arrow[d]&E\arrow[r,"q"]\arrow[d]&D\arrow[r]\arrow[d]&0\\
0\arrow[r]&J_B\arrow[r,"j_B"]&B_{\omega}\arrow[r,"q_B"]&B^\omega\arrow[r]&0
\end{tikzcd}
\end{equation} 
of the trace-kernel extension such that:
\begin{enumerate}
\item \label{condition_sep_full} $\phi(A)\cup \psi(A)\subseteq E$, and the corestrictions $\phi|^E,\psi|^E\colon A\to E$ of $\phi$ and $\psi$ to $E$ are unital, full, nuclear and satisfy $\underline{K}(\phi|^E)=\underline{K}(\psi|^E)$; \label{seplem1}
\item $K_{1}(J)$, $K_{1}(D)$ and $K_{i}(D; \mathbb{Z} / n\mathbb{Z})$ vanish for $i\in \{0,1\}$ and $n\geq 2$; \label{seplem2}
\item \label{condition_sep_J}$J$ has real rank zero and stable rank one, $V(J)$ is totally ordered and every projection in $J\otimes\mathcal K$ is Murray--von Neumann equivalent to a projection in $J$. \label{seplem3}
\end{enumerate}
\end{lem}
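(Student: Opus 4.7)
The plan is to construct $E$ as the closure of an increasing chain $E_0 \subseteq E_1 \subseteq \cdots$ of separable $C^*$-subalgebras of $B_\omega$ via a standard back-and-forth iteration in the spirit of the corresponding separabilisations in \cite{new-TWW, Schafhauser.Annals}. Start with $E_0 \coloneqq C^*(\phi(A) \cup \psi(A))$, which is separable. At each stage, write $J_n \coloneqq E_n \cap J_B$ and $D_n \coloneqq q_B(E_n)$, and alternate between (a) enlarging $E_n$ inside $B_\omega$ to include new witnesses of fullness, of approximate completely positive factorisations through matrices (for nuclearity), and of the equalities $\underline{K}(\phi)(x_k) = \underline{K}(\psi)(x_k)$ for a fixed enumeration $(x_k)_k$ of $\underline{K}(A)$; and (b) enlarging $J_n$ inside $J_B$ and $D_n$ inside $B^\omega$ using separable inheritability (Lemma \ref{sep_inh_properties}) and then lifting back to $E_n$. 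Taking $E \coloneqq \overline{\bigcup_n E_n}$, $J \coloneqq E \cap J_B$ and $D \coloneqq q_B(E)$, condition (\ref{seplem1}) follows from the witness steps in (a), while (\ref{seplem2})--(\ref{seplem3}) follow from (b) together with the inductive-limit clause of separable inheritability (Definition \ref{defn_si}(ii)).

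For the ambient ingredients, recall (from the unlabelled lemma preceding Lemma \ref{separableideals}) that the trace-kernel ideal $J_B$ has real rank zero, stable rank one, trivial $K_1$, totally ordered $V(J_B)$, and every projection in $J_B \otimes \calK$ is Murray--von Neumann equivalent to one in $J_B$; each of these is separably inheritable by Lemma \ref{sep_inh_properties}. The hypothesis $\lim_{n \to \omega} \dim \pi_{\tau_n}(B_n)'' = \infty$ together with Proposition \ref{prop:TKquotient} forces $B^\omega$ to be a $\mathrm{II}_1$ factor, whence $K_0(B^\omega) \cong \R$ (which is divisible and torsion-free) and $K_1(B^\omega) = 0$; the coefficient sequence \eqref{eq:bockstein-new} then yields $K_i(B^\omega; \mathbb{Z}/n\mathbb{Z}) = 0$ for $i \in \{0,1\}$ and $n \geq 2$, and these vanishings together with $K_1(B^\omega) = 0$ are again separably inheritable by Lemma \ref{sep_inh_properties}. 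Thus separable subalgebras of $J_B$ and $B^\omega$ realising the properties in (\ref{seplem2})--(\ref{seplem3}) can always be found.

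The witness-inclusion steps are routine. Using fullness of $\phi$ in $B_\omega$, each element of a growing dense subset of $E_n$ can be written as an approximate sum $\sum_i b_i \phi(a_k) c_i$ for $a_k$ in a fixed dense sequence of $A$; the finitely many $b_i, c_i \in B_\omega$ are adjoined to $E_{n+1}$, and analogously for $\psi$. For nuclearity we adjoin the (finite) range of an approximate completely positive factorisation $A \to M_k \to B_\omega$ of $\phi$ (respectively $\psi$) on the first $n$ elements of a dense subset of $A$ to within tolerance $1/n$. For the $K$-theoretic equalities, each $\underline{K}(\phi)(x_k) = \underline{K}(\psi)(x_k)$ in $\underline{K}(B_\omega)$ is witnessed by finitely many elements (partial isometries, homotopies, etc.) in a suitable matrix algebra over $B_\omega$, or over $B_\omega \otimes C_n$ for classes with $\mathbb{Z}/n\mathbb{Z}$-coefficients; these are likewise adjoined to $E_{n+1}$.

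The main obstacle is bookkeeping: at each separable-inheritability step, the enlargement of $J_n$ (respectively $D_n$) inside $J_B$ (respectively $B^\omega$) must be reflected inside $E_n$ itself so that the compatibility $J_n = E_n \cap J_B$ and $D_n = q_B(E_n)$ is maintained throughout. This is handled at stage $n{+}1$ by taking $E_{n+1}$ to be the separable $C^*$-algebra generated by $E_n$, the witnesses added at stage $n{+}1$, and chosen preimages in $B_\omega$ of generators of the enlarged $J$- and $D$-subalgebras. Since countable intersections of separably inheritable properties remain separably inheritable (noted after Lemma \ref{sep_inh_properties}), a standard diagonal enumeration interleaves all of the tasks above and ensures that in the limit $E$, $J$ and $D$ satisfy every claimed condition.
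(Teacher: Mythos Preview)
Your proposal is correct and follows essentially the same strategy as the paper. The only difference is presentational: the paper outsources the iterative bookkeeping to two black-box results from \cite{Schafhauser.Annals}, namely \cite[Proposition~1.9]{Schafhauser.Annals} (which produces a separable $E_0$ containing $\phi(A)\cup\psi(A)$ with the corestrictions full and nuclear) and \cite[Proposition~1.6]{Schafhauser.Annals} (which produces a separable subextension of \eqref{Trace_kernel} with prescribed separably inheritable properties on the ideal and quotient), whereas you spell out the back-and-forth construction by hand. Both routes use the same ambient facts (the II$_1$-factor computation of $K_*(B^\omega;\mathbb Z/n\mathbb Z)$ via Proposition~\ref{prop:TKquotient} and \eqref{eq:bockstein-new}, the separable inheritability from Lemma~\ref{sep_inh_properties}, and countability of $\underline{K}(A)$), so your version is an unpacking of the cited machinery rather than a genuinely different argument.
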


\begin{proof}
We start by using \cite[Proposition 1.9]{Schafhauser.Annals} to find a separable $C^*$-subalgebra $E_0\subset B_\omega$ containing $\phi(A)\cup \psi(A)$ and such that the corestrictions $\phi|^{E_0},\psi|^{E_0}$ are full, unital and nuclear maps. Then the same will hold when $E_0$ is replaced by any larger $C^*$-subalgebra of $B_\omega$. Since each of $K_0(A)$, $K_1(A)$, $K_0(A;\mathbb Z/n\mathbb Z)$ and $K_1(A;\mathbb Z/n\mathbb Z)$ for all $n\geq 2$ is countable, and $\underline{K}(\phi)=\underline{K}(\psi)$, we can find some larger separable $C^*$-subalgebra $E_1\supseteq E_0$ of $B_\omega$ such that the corestrictions to $E_1$ have $\underline{K}(\phi|^{E_1})=\underline{K}(\psi|^{E_1})$.\footnote{This follows from the same sort of `standard proof' used to show that having $K_*(\cdot;\mathbb Z/n\mathbb Z)=0$ is separably inheritable.} 

Proposition \ref{prop:TKquotient}, together with the assumption that the dimensions of $\pi_\tau(B_n)''$ do not remain bounded as $n\to\omega$, it follows that $B^\omega$ is a II$_1$ factor. Accordingly, we have $K_{1} (B^{\omega}) = 0$,  $K_{0} (B^{\omega}) \cong \mathbb{R}$, and $K_{i} (B^{\omega}; \mathbb{Z} / n\mathbb{Z}) = 0$ for $i\in \{0,1\}$ and $n\geq 2$ (following \eqref{eq:bockstein-new}). 

Since all the properties listed in items (\ref{seplem2}) and (\ref{seplem3}) above are separably inheritable (as noted in Lemma \ref{sep_inh_properties}), we can then apply  \cite[Proposition 1.6]{Schafhauser.Annals} to obtain a separable subextension \eqref{sepsubext} with $E_1\subseteq E$ satisfying (\ref{seplem2}) and (\ref{seplem3}). Since $E_1\subseteq E$, condition (\ref{seplem1}) also holds. 
\end{proof}

\section{A relatively purely large ideal in $\mathcal M(J)$} 
\label{pure_large_sec} 
In this section, we show that the separable subextensions $0\rightarrow J\rightarrow E\rightarrow D\rightarrow 0$ we built from the the trace-kernel extension in Section \ref{Sect:sep}, give rise to a purely large ideal $\mathcal I$ in the multiplier algebra $\mathcal M(J)$ (Theorem \ref{pure_large_intro}, or Theorem \ref{posi_ideal} below). 

As discussed in Section \ref{KL_section}, pure largeness was introduced by Elliott and Kucerovsky in \cite{EK.PJM} to abstractly characterise nuclearly absorbing extensions. As part of their ongoing abstract approach to classification theorems, Carri\'on, Gabe, Schafhauser, Tikuisis and the second named author simplified the definition of pure largeness for extensions in the case when the ideal is stable (which is a necessary hypothesis in the Elliott--Kucerovsky theorem), describing it in terms of Cuntz comparison: when $I$ is stable, $I\lhd E$ is \emph{purely large} if and only if for all $a\in I_+$ and $b\in E_+\setminus I$, we have $a\precsim b$. A proof of this fact (due to eventually appear in \cite{CGSTW2}) has been given by Bouwen and Gabe as \cite[Proposition 4.14]{Bouwen-Gabe-2024}.  This formulation of pure largeness motivates the following definition.\footnote{The original definition of Elliott and Kucerovsky is that $I\lhd E$ is purely large if for every $x\in E\setminus I$, the $C^*$-algebra $\overline{xIx^*}$ has a stable subalgebra which is full in $I$.} 

\begin{defn} 
\label{defn_PL}  
Let $J$ be a stable ideal in a $C^*$-algebra $I$. We call an extension 
\begin{equation} \label{extension-PL} 
0\longrightarrow I \stackrel{}{\longrightarrow} E\stackrel{q}{\longrightarrow} D \longrightarrow 0 
\end{equation} 
 \textit{purely large relative to} $J$ if for each $a\in J_{+}$ and $b\in E_{+} \setminus I$, we have $a\precsim b$ in $E$. 
\end{defn} 

Given a Cuntz subequivalence $a\precsim b$ it is not generally possible to control the norm of a sequence $(x_n)_{n}$ of elements with $x_n^*bx_n\to a$. However, control is possible when the Cuntz equivalence comes as the output of a (relatively) purely large extension. The following lemma is the relative version of \cite[Proposition 4.10]{Bouwen-Gabe-2024}, and the functional calculus argument which underpins it goes back at least to (\cite[Lemma 7]{EK.PJM}). As the proof is just a notational modification of \cite[Proposition 4.10]{Bouwen-Gabe-2024}, we omit it here; it is given in full in the first named author's DPhil thesis (\cite[Lemma 5.1.2]{Hua_thesis}).

\begin{lem}  \label{relative_purely_large_equi}
Let $J$ be a stable ideal in a $C^*$-algebra $I$ and suppose that an extension as in \eqref{extension-PL} is purely large relative to $J$.  Then for all $a\in J_+$ and $b\in E_+\setminus I$, there exists a sequence $(x_n)_n$ in $E$ with $\|x_n\|\leq (\|a\|/\|q(b)\|)^{1/2}$, and $x_n^*bx_n\to a$.
\end{lem}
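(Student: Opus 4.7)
The approach adapts the functional calculus manoeuvre of Elliott--Kucerovsky \cite[Lemma 7]{EK.PJM}, following the presentation of Bouwen--Gabe \cite[Proposition 4.10]{Bouwen-Gabe-2024}. The relative version requires only that the Cuntz subequivalence driving the argument comes from relative pure largeness (applied with $a\in J_+$) rather than ordinary pure largeness (applied with $a\in I_+$); all other steps carry over.

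By homogeneous rescaling of $a$ and $b$, I first reduce to $\|a\|=\|q(b)\|=1$, so that the target bound becomes $\|x_n\|\leq 1$. Replace $b$ by its essential part $\tilde{b}:=f(b)$, where $f(t)=\min(t,1)$; then $\|\tilde{b}\|\leq 1$ and $q(\tilde{b})=q(b)\neq 0$, so $\tilde{b}\in E_+\setminus I$. Relative pure largeness applied to $a\in J_+$ and $\tilde{b}\in E_+\setminus I$ gives $a\precsim\tilde{b}$ in $E$.

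The heart of the argument is a refined R\o{}rdam-type lemma in the presence of a stable ideal: for each $\eta>0$ there exists $r_\eta\in E$ with $r_\eta^*\tilde{b}\,r_\eta=(a-\eta)_+$ and the sharp bound $\|r_\eta\|^2\leq\|(a-\eta)_+\|/\|\tilde{b}\|+\eta\leq 1+\eta$. Granting this, define the continuous function $g\colon[0,\infty)\to[0,1]$ by $g(t)=\sqrt{\min(t,1)/t}$ for $t>0$ and $g(0)=0$: it satisfies $g(b)^2 b=\min(b,1)=\tilde{b}$ and $\|g(b)\|\leq 1$. Setting $x_\eta:=g(b)r_\eta$ gives
\[ x_\eta^*bx_\eta \;=\; r_\eta^*g(b)^2 b\,r_\eta \;=\; r_\eta^*\tilde{b}\,r_\eta \;=\; (a-\eta)_+, \]
with $\|x_\eta\|\leq\|g(b)\|\|r_\eta\|\leq(1+\eta)^{1/2}$. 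Rescaling $\hat{x}_\eta:=(1+\eta)^{-1/2}x_\eta$ then satisfies $\|\hat{x}_\eta\|\leq 1$ and $\hat{x}_\eta^*b\hat{x}_\eta=(1+\eta)^{-1}(a-\eta)_+\to a$ in norm as $\eta\to 0$. Taking $x_n:=\hat{x}_{1/n}$ realises the conclusion.

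The refined R\o{}rdam lemma is the only non-routine step, and is where I expect the main difficulty to lie. Its standard form produces an implementing element with no norm bound, and the naive inequality $\|r_\eta\|^2\geq\|(a-\eta)_+\|/\|\tilde{b}\|$ gives only a lower bound. The matching upper bound exploits the stability of $J$ essentially: using isometries in $\calM(J)$ arising from $J\cong J\otimes\calK$, the comparison can be implemented after first spreading $a$ over orthogonal copies inside $J$, which is what allows the norm of the implementing element to come down to the sharp value controlled by $\|\tilde{b}\|=\|q(b)\|$ rather than by $\|b\|$. This lemma is the technical crux of the Bouwen--Gabe argument, and its careful proof is precisely what justifies describing our version as a notational modification of theirs.
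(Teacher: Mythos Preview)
Your framework is sound, but the proof has a genuine gap at its self-identified ``technical crux''. The refined R\o{}rdam lemma you invoke --- producing $r_\eta$ with $r_\eta^*\tilde b\,r_\eta=(a-\eta)_+$ and $\|r_\eta\|^2\leq \|(a-\eta)_+\|/\|\tilde b\|+\eta$ --- is not a standard result, and your justification via stability of $J$ is not correct. Stability of $J$ plays no role here (indeed the argument in \cite{Bouwen-Gabe-2024} does not use it for this step), and ``spreading $a$ over orthogonal copies'' does not produce the norm control you claim: the obstruction is that $\tilde b=\min(b,1)$ may have spectrum accumulating at $0$, and nothing about $J$ being stable forces the implementing element to avoid those small spectral values.

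What actually makes the norm bound work is a different cut-down. After normalising $\|a\|=\|q(b)\|=1$, the key point is that for each $0<\epsilon<1$ the element $c_\epsilon:=(b-(1-\epsilon))_+$ still lies in $E_+\setminus I$, because $q(c_\epsilon)=(q(b)-(1-\epsilon))_+\neq 0$. Relative pure largeness then gives $a\precsim c_\epsilon$, and the standard R\o{}rdam--Kirchberg lemma produces $d\in E$ with $d^*d=(a-\eta)_+$ and $dd^*\in\overline{c_\epsilon E c_\epsilon}$; the bound $\|d\|\leq 1$ is now automatic from $\|d\|^2=\|(a-\eta)_+\|\leq 1$, with no refinement needed. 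Choosing a continuous $g_\epsilon$ with $g_\epsilon(t)^2 t=1$ for $t\geq 1-\epsilon$ (so $\|g_\epsilon\|_\infty\leq (1-\epsilon)^{-1/2}$), the element $x:=g_\epsilon(b)d$ satisfies $x^*bx=d^*d=(a-\eta)_+$ and $\|x\|\leq(1-\epsilon)^{-1/2}$, and one finishes by rescaling as you do. This is the Elliott--Kucerovsky manoeuvre, and it is exactly what the paper means by ``notational modification'' of \cite[Proposition~4.10]{Bouwen-Gabe-2024}. Your detour through $\tilde b=\min(b,1)$ is not wrong, but it hides the essential idea inside an unproven black box whose proof would in any case require this same cut-down $(b-(1-\epsilon))_+$. (As a minor aside, your function $g$ should take the value $1$ at $0$, not $0$, for continuity on $[0,\infty)$.)
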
 

The motivating example of relatively purely large extensions for us comes from the `trace-class' ideals in multiplier algebras.  Let $J$ be the stabilisation of a simple separable unital $C^*$-algebra with unique trace and strict comparison. R\o{}rdam has shown in \cite{ideals_multiplier_finite} that the `trace-class' ideal $\mathcal T$ in $\mathcal M(J)$ below is the unique ideal between $J$ and $\calM(J)$. His work (and the generalisation to the case where $J$ is potentially stably projectionless in \cite{multiplier_strict_comparison}) shows that $\mathcal T$ is relatively purely large in $\mathcal M(J)$ with respect to $J$.  From our viewpoint (as discussed further in Section \ref{K1_Paschke_chapter}), relatively pure largeness forms an important component in Loreaux, Ng, and Sutradhar's $KK$-uniqueness theorem for pairs $(A,J)$ when $A$ is simple unital separable and nuclear, and $J$ is simple and has strict comparison with respect to the unique densely defined lower semicontinuous trace. 

\begin{prop} 
\label{unique_trace_PL} 
Let $J$ be a separable, simple and stable $C^*$-algebra with a unique (up to scaling) densely defined lower semicontinuous trace $\tau_J$  and strict comparison. Extending $\tau_J$ to a lower semicontinuous (but not densely finite) tracial weight on $\calM(J)_+$,\footnote{The extension is given by $\tau_J(x)=\lim_n\tau_J(e_nxe_n)$ for $x\in\mathcal M(J)_+$ and $(e_n)_n$ any approximate unit for $J$. It is independent of the choice of approximate unit.} one obtains a `trace-class' ideal in $\calM(J)$ given by 
\begin{equation} \label{unique_trace_PL:1}
\calT = \overline{\{x\in \calM(J): \tau_{J}(x^{\ast}x) < \infty\}}.
\end{equation} 
Then the corresponding extension $0\to \calT\to \calM(J)\to \calM(J)/\calT\to 0$ is purely large relative to $J$. 
\end{prop}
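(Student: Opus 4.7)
The plan is to combine R{\o}rdam's structural theorem---that $\calM(J)/\calT$ is simple and purely infinite with $\calT$ the unique proper ideal of $\calM(J)$ strictly containing $J$---with the strict comparison hypothesis on $J$. By Lemma~\ref{relative_purely_large_equi} and Definition~\ref{defn_PL}, it suffices to show that whenever $a \in J_{+}$ and $b \in \calM(J)_{+} \setminus \calT$, one has $a \precsim b$ in $\calM(J)$.

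First I would reduce to the case $d_{\tau_{J}}(a) < \infty$. Using the identity $\langle a \rangle = \sup_{\delta > 0} \langle (a-\delta)_{+} \rangle$ in $\mathrm{Cu}(J)$, it is enough to show $(a-\delta)_{+} \precsim b$ for each $\delta > 0$. Each cutoff $(a-\delta)_{+}$ has spectrum bounded away from zero, hence lies in the Pedersen ideal of the simple $C^{*}$-algebra $J$ where the densely-defined trace is finite. Combined with the standard spectral estimate $\chi_{(\delta,\infty)}(a) \leq \tfrac{2}{\delta}(a - \delta/2)_{+}$ in $J^{**}$, this forces $d_{\tau_{J}}((a-\delta)_{+}) < \infty$.

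Next, I would extract from $b \notin \calT$ that $\tau_{J}(b) = \infty$, when $\tau_{J}$ is viewed as the extended lower-semicontinuous tracial weight on $\calM(J)_{+}$. Indeed, if $\tau_{J}(b) < \infty$, then $b^{1/2}$ lies in the left ideal $\{x \in \calM(J) : \tau_{J}(x^{*}x) < \infty\}$, so $b = b^{1/2} \cdot b^{1/2} \in \calT$, contradicting the hypothesis. Then, picking an increasing approximate unit $(e_{n})_{n}$ of $J$ with $e_{n} \nearrow 1$ strictly, I would set $c_{n} \defeq e_{n}^{1/2} b e_{n}^{1/2} \in J_{+}$; these satisfy $c_{n} \precsim b$ in $\calM(J)$ trivially. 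The tracial identity gives $\tau_{J}(c_{n}) = \tau_{J}(b^{1/2} e_{n} b^{1/2})$, and since $(b^{1/2} e_{n} b^{1/2})_{n}$ increases strictly to $b$, lower semicontinuity of the extended weight yields $\tau_{J}(c_{n}) \to \tau_{J}(b) = \infty$. Since $\|c_{n}\| \leq \|b\|$, this implies
\begin{equation}
d_{\tau_{J}}(c_{n}) \geq \frac{\tau_{J}(c_{n})}{\|b\|} \longrightarrow \infty.
\end{equation}

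To finish, I would pick $n$ large enough that $d_{\tau_{J}}(c_{n}) > d_{\tau_{J}}((a-\delta)_{+})$, whereupon strict comparison in $J$ gives $(a-\delta)_{+} \precsim c_{n}$ in $J$, and hence in $\calM(J)$. Composing with $c_{n} \precsim b$ finishes the argument. The main obstacle I anticipate is the careful handling of the extended tracial weight: specifically verifying $b \notin \calT \Rightarrow \tau_{J}(b) = \infty$ through the defining formula $\calT = \overline{\{x : \tau_{J}(x^{*}x) < \infty\}}$, and matching this with the strictly-continuous extension from the footnote to the statement. The remaining manipulations with the Cuntz semigroup and strict comparison are routine.
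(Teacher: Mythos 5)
Your proof is correct, but it takes a genuinely different and more elementary route than the paper. The paper invokes Kaftal--Ng--Zhang's theorem (\cite[Theorem 5.3]{multiplier_strict_comparison}) as a black box: this result says that strict comparison with respect to (a finite-dimensional cone of) densely defined lower semicontinuous traces on $J$ upgrades to strict comparison in $\calM(J)$ with respect to the extended traces. The paper then just checks the two hypotheses of that theorem (ideal containment is automatic by simplicity, and $d_{\tau_J}(b)=\infty$ makes the tracial inequality vacuous). By contrast, you compress $b$ back into $J$ via $c_n = e_n^{1/2}be_n^{1/2}$, show $d_{\tau_J}(c_n)\to\infty$ by a direct computation with the extension formula, and then apply strict comparison \emph{in $J$ only} to conclude $(a-\delta)_+\precsim c_n\precsim b$. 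This avoids any appeal to strict comparison in multiplier algebras, so your argument is self-contained modulo standard Cuntz-semigroup facts. Two small remarks. First, although you open by saying you will ``combine R{\o}rdam's structural theorem,'' the execution never uses it; what you actually use is R{\o}rdam's lemma that $a\precsim b$ iff $(a-\delta)_+\precsim b$ for all $\delta>0$. Second, the phrase ``$(a-\delta)_+$ has spectrum bounded away from zero'' is not quite right ($0$ is always in its spectrum unless $J$ is unital), but the conclusion you need --- that $(a-\delta)_+$ lies in the Pedersen ideal, and hence the spectral estimate $\chi_{(\delta,\infty)}(a)\leq \tfrac{2}{\delta}(a-\delta/2)_+$ gives $d_{\tau_J}((a-\delta)_+)<\infty$ --- is correct. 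Finally, to make the last step airtight when $d_{\tau_J}(c_n)$ happens to be infinite, note that one can replace $c_n$ by $(c_n-\epsilon)_+$ for small $\epsilon>0$ (which still has finite $d_{\tau_J}$ arbitrarily large, and is still Cuntz subequivalent to $b$) before applying strict comparison in $J$; this keeps both sides finite.
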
 

\begin{proof} 
In \cite[Theorem 5.3]{multiplier_strict_comparison}, Kaftal, Ng and Zhang show that if $D$ is a $\sigma$-unital stable $C^*$-algebra with strict comparison with respect to a cone of densely defined lower semicontinuous traces with a finite dimensional base, then the multiplier algebra $\mathcal M(D)$ has the condition of \emph{strict comparison with respect to the traces extended from $D$}. Precisely, this means that for any nonzero $a, b\in \calM(D)_{+}$, we have $a\precsim b$ whenever: 
\begin{enumerate}[(i)] 
\item $a$ is in the ideal generated by $b$ in $\calM(D)$;\label{unique_trace_PL.1}
\item $d_{\tau} (a) < d_{\tau} (b)$ for all tracial weights $\tau$ on $\calM(D)$, which arise from extending a densely finite lower semicontinuous tracial weight $\tau$ on $D_+$, with $d_{\tau} (b) < \infty$. \label{unique_trace_PL.2}
\end{enumerate} 
This result applies to the $C^*$-algebra $J$ as in the statement of the proposition. 

Certainly the ideal $\mathcal T$ from \eqref{unique_trace_PL:1} contains $J$ (as $\tau_J$ is densely defined on $J$). By \cite[Lemma 2.6]{sc_KNZ}, a positive element $d$ in $\calM(J)$ is in $\calT$ if and only if $d_{\tau_{J}}((d-\delta)_{+})< \infty$ for any $\delta>0$. Therefore $1_{\calM(J)}\notin \calT$ and so $\calT$ is properly contained in $\calM(J)$. Take nonzero elements $b\in \calM(J)_{+}\setminus \calT$ and $a\in J_{+}$. Note that as $J$ is simple and $J\lhd \calM(J)$ is essential, it follows that every ideal in $\calM(J)$ contains $J$, and thus condition (\ref{unique_trace_PL.1}) above is automatic. Moreover, by the definition of $\calT$, we have $d_{\tau_{J}}(b) \geq \tau_{J}(b) = \infty$, which means that condition (\ref{unique_trace_PL.2}) above is trivially satisfied. Thus $a\precsim b$ by strict comparison with respect to extended traces, and hence the extension $0\to \calT\to \calM(J)\to \calM(J)/\calT\to 0$ is purely large relative to $J$. 
\end{proof} 





Our main objective in this section is the following, which will be used to obtain relative pure largeness needed for our $K_1$-injectivity theorem (Theorem \ref{K1_inj_main_4.2_intro}).  

\begin{thm} \label{posi_ideal} 
Let $J$ be a separable and stable $C^*$-algebra with real rank zero, stable rank one, $K_{1}(J) = 0$, and totally ordered $V(J)$. Then there exists a maximal proper ideal $\calI$ of $\calM(J)$ containing $J$ such that for any $b\in \calM(J)_{+}$, then $b\notin \calI$ if and only if $b\sim 1_{\calM(J)}$. Moreover, $\calI$ is the unique ideal with this property. In particular, the extension given by $\calI\lhd \calM(J)$ is purely large and $\calM(J) / \calI$ is simple and purely infinite. 
\end{thm} 

We will find the ideal $\mathcal I$ in Theorem \ref{posi_ideal} though Zhang's Proposition \ref{ideal_corr}, so we need to obtain a suitable order ideal in $V(\calM(J))$. This is achieved by means of the correspondence in Lemma \ref{dr-cgi} due to Goodearl, between equivalence classes in $V(\calM(J))$ with countably generated intervals in $V(J)$.  We recall the relevant definitions. 

\begin{defn} 
\label{defn_interval} 
Let $S$ be a partially ordered abelian monoid. A nonempty subset $I\subseteq S$ is
\begin{enumerate} [(i)] 
\item \textit{upward-directed} if for any $x, y\in I$, there exists some $z\in I$ such that $x, y\leq z$; 
\item an \textit{interval} if it is hereditary and upward-directed. 
\end{enumerate} 
An interval $I$ is \emph{countably generated} if there exists a countable \emph{cofinal} subset of $I$, i.e. there exists a sequence $(x_{n})_{n}\subseteq I$ such that for any $x\in I$, we have $x\leq x_{n}$ for some $n$. The set of countably generated intervals in $S$ is denoted by $\Lambda_{\sigma}(S)$. Given countably generated intervals $I_{1}$ and $I_{2}$ in $\Lambda_{\sigma}(S)$, then the sum $I_{1}+I_{2}$ as defined in \eqref{addition_subsets} by 
\begin{equation} 
I_{1}+I_{2} = \{x+y: x\in I_{1}, y\in I_{2}\} 
\end{equation} 
is a non-empty upward-directed subset of $S$.  When $S$ has the Riesz decomposition property, this sum of intervals is hereditary, and thus a countably generated interval. In this case, we give $\Lambda_{\sigma}(S)$ the \emph{algebraic order}:  $I_{1}\leq_{\text{alg}} I_{2}$ if there exists $I'\in \Lambda_{\sigma}(S)$ such that $I_{2} = I_{1}+I'$. In this way the countably generated intervals is a partially ordered abelian monoid. 
\end{defn}

In \cite[Section 1]{Goodearl}, Goodearl was primarily concerned with computing the $K_0$-group of multiplier of $C^*$-algebras of real rank zero and stable rank one -- which he achieves in \cite[Theorem 1.10]{Goodearl} -- generalising earlier work on $K$-theory for multipliers of AF-algebras by Elliott and Handelman (\cite{ElliottHandelman:PJM}).  We extract the computation of the Murray--von Neumann semigroup $V(\mathcal M(J))$ in terms of intervals in $V(J)$ from the lemmas used to prove \cite[Theorem 1.10]{Goodearl}.\footnote{As a warning, the Cuntz semigroup of a separable real rank zero $C^*$-algebra is also expressed in terms of countably generated intervals in its Murray--von Neumann semigroup -- but the order used to describe the Cuntz semigroup in terms of intervals is given by inclusion which is not the algebraic order; see \cite[Theorem 5.7]{RR0_Cuntz}. Accordingly $\mathrm{Cu}(J)$ and $V(\calM(J))$ are not generally isomorphic as ordered monoids.} Note that since in our case $J$ is stable, all elements in the Murray--von Neumann semigroups of $J$ and $\calM(J)$ are respectively given by Murray--von Neumann equivalence classes of projections in these algebras. No matrix amplifications are needed.\footnote{It is useful to note that Goodearl is not working with stable algebras (as then $K_0(\mathcal M(J))$ would be equal to $0$), and so states his intermediate results with the restriction that the intervals involved are algebraically complemented in a finite multiple of the dimension range. In our setting when $J$ is stable, this hypothesis vacuously holds for all intervals.} Since $J$ has real rank zero, Zhang's \cite[Theorem 1.1]{Riesz-dec}\footnote{This too is stated using the `dimension range'; since our $J$ is stable, the dimension range of $J$ is equal to the Murray--von Neumann semigroup $V(J)$.} ensures that $V(J)$ has the Riesz decomposition property and so $\Lambda_{\sigma}(V(J))$ is a partially ordered abelian monoid. 

\begin{lem}[Goodearl]
\label{dr-cgi} 
Let $J$ be a separable and stable $C^*$-algebra with real rank zero and stable rank one. There exists a well-defined ordered monoid isomorphism $\theta: V(\calM(J))\rightarrow \Lambda_{\sigma}(V(J))$ given by 
\begin{equation} \label{theta}
[e]\mapsto I_{e} \coloneqq \{[p]\in V(J): p\in eJe\}, 
\end{equation} 
for any projection $e\in \calM(J)$. 
\end{lem}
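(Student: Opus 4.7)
The plan is to verify the five main ingredients in order: (a) each $I_e$ is a countably generated interval depending only on $[e]$; (b) $\theta$ is a monoid homomorphism; (c) $\theta$ is surjective; (d) $\theta$ is injective; and (e) the ordering is preserved in both directions.

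For (a), given a projection $e \in \calM(J)$, hereditariness of $I_e$ follows by compressing any partial isometry witnessing $[q]\leq [p]$ into the hereditary subalgebra $eJe$; upward-directedness and countable generation both follow because $eJe$ is a separable hereditary subalgebra of the real rank zero $C^{*}$-algebra $J$, and so has a countable increasing approximate unit consisting of projections. Independence of the chosen representative for $[e]$ is clear by conjugating by a partial isometry implementing $e\sim f$ in $\calM(J)$.

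For (b), I would take orthogonal representatives $e,f$ of the two classes, so that $I_e + I_f \subseteq I_{e+f}$ is automatic. The reverse inclusion uses the fact that $V(J)$ embeds into $V(\calM(J))$ as an order ideal (any subprojection of a projection in $J$ witnessed by a partial isometry in $\calM(J)$ again lies in $J$), combined with the Riesz decomposition property of $V(\calM(J))$ recorded in Proposition \ref{ideal_corr}: writing $[p] \leq [e]+[f]$ as $[p] = x_1 + x_2$ with $x_i$ dominated by $[e],[f]$ respectively forces each $x_i\in V(J)$. For surjectivity (c), I would take a cofinal sequence in a given interval and, using cancellation from stable rank one, straighten it to an increasing sequence of projections $p_n \in J$ in the same classes; the strict limit $e\in\calM(J)$ of $(p_n)$ satisfies $I_e = I$ because $(p_n)$ is an approximate unit for $eJe$, so every projection of $eJe$ is Cuntz-dominated by some $p_n$.

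The main obstacle is injectivity (d): from $I_e = I_f$ I need to produce a partial isometry $v \in \calM(J)$ with $v^*v = e$ and $vv^* = f$. Following an Elliott-intertwining strategy, I choose increasing approximate units of projections $(p_n)\subseteq eJe$ and $(q_n)\subseteq fJf$ with strict limits $e$ and $f$; the hypothesis lets me pass to subsequences arranged as $[p_n] \leq [q_n] \leq [p_{n+1}]$. Using real rank zero together with cancellation of projections from stable rank one, I can realise these subequivalences by compatible partial isometries $v_n \in J$ whose initial projections exhaust $e$ through the orthogonal increments $p_n - p_{n-1}$ and whose final projections land inside matching increments $q_n - q_{n-1}$. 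The $(v_n)$ then form a bidiagonal family in the sense of Lemma \ref{almost_ortho_approx}, and their strict sum $v = \sum_n v_n$ is the desired partial isometry in $\calM(J)$. Finally, (e) is formal: the preorder on $V(\calM(J))$ is always the algebraic preorder determined by its monoid structure, and the order on $\Lambda_\sigma(V(J))$ was defined to be algebraic, so a monoid isomorphism is automatically an ordered monoid isomorphism.
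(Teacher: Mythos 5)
Your proof takes a genuinely different route from the paper's: the paper disposes of this lemma entirely by citation to Goodearl (Lemma~1.6 and Proposition~1.7 for well-definedness, the homomorphism property, and injectivity; Lemma~1.8 and Remark~1.9 for surjectivity, together with the observation that in the stable case every interval satisfies $I+V(J)=V(J)$ so the complementation hypothesis is vacuous), whereas you reconstruct Goodearl's argument from scratch, specialised to the stable case. Your decomposition (a)--(e) correctly mirrors what Goodearl's cited results deliver, and the individual steps are in the right spirit: (a) and (b) are fine, with the Riesz-decomposition argument in (b) being a clean way to get $I_{e+f}\subseteq I_e+I_f$ using that $V(J)$ sits inside $V(\calM(J))$ as an order ideal; and (e) matches the paper's closing remark exactly. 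The trade-off is that self-containment forces you to shoulder the technical burden that the citation hides.

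The two places where that burden is not fully discharged are (c) and (d). In (c) you pass to an increasing sequence of projections $p_n\in J$ with $[p_n]$ cofinal in $I$ and then assert that the strict limit $e$ exists; an arbitrary increasing sequence of projections in $J$ need not converge strictly in $\calM(J)$, and the way Goodearl handles this (and the way Lemma~\ref{almost_ortho_approx} is designed to be used in this paper) is to re-choose each $p_n$ within its Murray--von Neumann class so that the increments $p_n-p_{n-1}$ nest inside the blocks $(e_{m_n}-e_{m_{n-1}})J(e_{m_n}-e_{m_{n-1}})$ of a fixed almost idempotent approximate unit of $J$, producing a genuine bidiagonal series. You should say this explicitly. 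The same issue reappears more acutely in (d): for the intertwining sum $v=\sum_n v_n$ to converge strictly via Lemma~\ref{almost_ortho_approx} you need both the initial and final projections of the $v_n$ to be controlled by a single approximate unit of $J$, which again requires re-choosing representatives rather than taking whichever approximate units $(p_n)\subseteq eJe$ and $(q_n)\subseteq fJf$ you started with. Moreover the interleaving $[p_1]\leq[q_1]\leq[p_2]\leq\cdots$ forces the Elliott back-and-forth decomposition of the increments (writing $[q_n]-[p_n]$ and $[p_{n+1}]-[q_n]$ separately and realising them as orthogonal projections inside $e$ and inside $f$), not a naive matching of $p_n-p_{n-1}$ with $q_n-q_{n-1}$; as written your description suggests the latter, which does not work because these increments generally lie in different classes. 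None of this is a fatal flaw -- it is exactly the content of Goodearl's Lemma~1.8 -- but a reader of your proposal would not be able to reconstruct (c) and (d) without rediscovering these arrangements.
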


\begin{proof} 
It is shown in \cite[Lemma 1.6, Proposition 1.7]{Goodearl} that $\theta$ is an injective monoid homomorphism. Surjectivity is the combination of  \cite[Lemma 1.8 and Remark 1.9]{Goodearl}: given an interval $I\in \Lambda_\sigma(V(J))$, we have $I+V(J)=V(J)=\theta([1_{\mathcal M(J)}])$, so applying Remark 1.9 of \cite{Goodearl}, there exists a projection $e\in \calM(J)$ with $\theta([e])=I$.

Since the order on $V(\calM(J))$ is the algebraic order,\footnote{For projections $p,q\in M_\infty(\calM(J))$, one has $[p]\leq [q]$ if and only if there exists $x\in V(\calM(J))$ with $[p]+x=[q]$. Indeed, after finding a projection $p'\sim p$ with $p'\leq q$, one takes $x=[q-p']$.} the map $\theta$ is an order isomorphism when $\Lambda_\sigma(V(J))$ is also given the algebraic order.
\end{proof} 

Using Proposition \ref{ideal_corr} and Proposition \ref{dr-cgi}, we specify the ideal we need in $\calM(J)$ by giving an order ideal in $\Lambda_\sigma(V(J))$. This is done in Lemma \ref{proj-ideal} below, using the next lemma to obtain closure under addition.

\begin{lem} \label{non-dr-bound} 
Let $J$ be a separable and stable $C^*$-algebra with an approximate unit $(p_{n})_n$ of increasing projections and totally ordered $V(J)$. Then for any $I\in \Lambda_{\sigma}(V(J))$ with $I \neq V(J)$, there exists $n_{0}$ such that $I \subseteq I_{p_{n_{0}}}$. 
\end{lem}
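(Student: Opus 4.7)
The plan is to argue by contradiction and exploit the total ordering of $V(J)$ together with the fact that an approximate unit of projections in $J$ eventually dominates every individual projection in $J$. The isomorphism $\theta$ of Lemma \ref{dr-cgi} identifies $I_{p_n}$ with the set of classes $[q]\in V(J)$ such that $q$ is equivalent to a subprojection of $p_n$, i.e.\ $[q]\leq [p_n]$ in $V(J)$.

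First I would suppose, for contradiction, that no such $n_0$ exists; that is, for every $n$ there exists $x_n\in I$ with $x_n\notin I_{p_n}$, meaning $x_n\not\leq [p_n]$ in $V(J)$. Here the total ordering of $V(J)$ is the key tool: failure of $x_n\leq [p_n]$ forces $[p_n]\leq x_n$ for every $n$. Since $I$ is hereditary and $x_n\in I$, this yields $[p_n]\in I$ for all $n$.

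Next I would show that the first step already forces $I=V(J)$, giving the desired contradiction. Given an arbitrary $y\in V(J)$, represent it (using stability of $J$) as $y=[q]$ for a projection $q\in J$. Since $(p_n)_n$ is an approximate unit, $p_nqp_n\to q$ in norm. A standard polar decomposition argument applied to $p_nq$ then produces, for sufficiently large $n$, a subprojection $q'\leq p_n$ in $J$ with $q'\sim q$, hence $y=[q]\leq [p_n]$ in $V(J)$. As $[p_n]\in I$ by the first step and $I$ is hereditary, we conclude $y\in I$. Since $y$ was arbitrary, $I=V(J)$, contradicting the hypothesis.

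I do not anticipate a substantial obstacle: the argument is essentially a bookkeeping exercise combining the total order on $V(J)$, heredity of intervals, and the standard fact that an approximate unit of increasing projections in a $C^*$-algebra eventually dominates every individual projection up to Murray--von Neumann equivalence. The only point where some care is needed is the passage from the norm convergence $p_nqp_n\to q$ to the subequivalence $[q]\leq [p_n]$, but this is entirely routine.
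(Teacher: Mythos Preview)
Your proof is correct and follows essentially the same approach as the paper's own argument: both proceed by contradiction, use the total order to obtain $[p_n]\in I$ for all $n$, and then combine stability with the approximate-unit property to conclude $I=V(J)$. Your version is slightly more explicit about the passage from $p_nqp_n\to q$ to $[q]\leq[p_n]$, but the structure and key ideas are identical.
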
 

\begin{proof}
Suppose the conclusion does not hold, so that for each $n\in\mathbb N$, there exists $y_n\in I$ with $y_n\notin I_{p_n}$. By the total ordering of $V(J)$, we have $[p_n]\leq y_n$ and hence each $p_n\in I$. Since $(p_n)_n$ is an approximate unit of $J$ consisting of projections, it follows that every projection $p\in J$ has $[p]\leq [p_n]$ for large enough $n$, and hence $[p]\in I$. Finally, stability ensures that every element of $V(J)$ is represented by a projection from $J$, and hence $I=V(J)$.
\end{proof} 

\begin{lem} 
\label{proj-ideal} 
Let $J$ be a separable and stable $C^*$-algebra with real rank zero, stable rank one and totally ordered $V(J)$. Then $\calL\coloneqq V(\mathcal M(J))\setminus\{[1_{\calM(J)}]\}$ is an order ideal in $V(\calM(J))$. The ideal $\calI$ of $\calM(J)$ corresponding to $\calL$ given by Proposition \ref{ideal_corr} has the property that for any projection $e$ in $\calM(J)$, we have that $e\notin \calI$ if and only $[e] = [1_{\calM(J)}]$ in $V(\calM(J))$. Furthermore, $\calI$ is unique with this property and is a maximal proper closed ideal of $\calM(J)$. 
\end{lem}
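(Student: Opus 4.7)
The plan is to pass through the ordered monoid isomorphism $\theta: V(\calM(J)) \to \Lambda_\sigma(V(J))$ from Lemma \ref{dr-cgi}, which sends $[1_{\calM(J)}]$ to the top interval $V(J)$. Via $\theta$, the set $\calL$ corresponds to the collection of countably generated intervals strictly contained in $V(J)$, and since $\theta$ preserves sums and order, it suffices to verify that this collection is an order ideal of $\Lambda_\sigma(V(J))$.

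Hereditarity reduces to showing $V(J)$ is maximal for the algebraic order on $\Lambda_\sigma(V(J))$: if $\theta(y) = V(J)$ and $y \leq x$, writing $x = y + z$ gives $\theta(x) = V(J) + \theta(z)$, and this equals $V(J)$ because every interval contains $0$ (as a hereditary nonempty subset of a monoid ordered algebraically). Injectivity of $\theta$ forces $x = [1_{\calM(J)}]$, and the contrapositive gives hereditarity of $\calL$. The main step is closure under addition, which I expect to be the substantive part and where the totality of $V(J)$ together with stable rank one is essential. Suppose $x, y \in \calL$, so that $\theta(x), \theta(y) \subsetneq V(J)$. Since $J$ has real rank zero, it admits an increasing approximate unit of projections $(p_n)_n$, and Lemma \ref{non-dr-bound} then produces $n_1, n_2$ with $\theta(x) \subseteq I_{p_{n_1}}$ and $\theta(y) \subseteq I_{p_{n_2}}$. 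By the description of $I_{p_n}$ as the down-set of $[p_n]$, every element of $\theta(x) + \theta(y)$ is dominated by $[p_{n_1}] + [p_{n_2}]$. However, $2([p_{n_1}] + [p_{n_2}])$ sits in $V(J)$ and, by cancellation from stable rank one, lies strictly above $[p_{n_1}] + [p_{n_2}]$ (otherwise $[p_{n_1}] + [p_{n_2}] \leq 0$, forcing each $p_{n_i}$ to vanish). Thus $\theta(x+y) = \theta(x) + \theta(y) \subsetneq V(J)$, so $x + y \in \calL$.

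The ideal $\calI$ is then defined to be the one corresponding to $\calL$ under the bijection of Proposition \ref{ideal_corr}, and the projection characterisation is immediate from the formula $\calL_\calI = \calL$: for a projection $e \in \calM(J)$, we have $e \in \calI$ iff $[e] \in \calL$, iff $[e] \neq [1_{\calM(J)}]$. Uniqueness is obtained from the same bijection, since any closed ideal with this property would produce the order ideal $\calL$ and hence coincide with $\calI$. Finally, properness of $\calI$ holds because $[1_{\calM(J)}] \notin \calL$, and maximality can be checked at the level of order ideals: any order ideal strictly containing $\calL$ must contain $[1_{\calM(J)}]$, whence by hereditarity it contains every element $\leq [1_{\calM(J)}]$, which is all of $V(\calM(J))$, and the corresponding ideal of $\calM(J)$ is $\calM(J)$ itself.
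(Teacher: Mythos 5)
Your proof is correct and follows the same route as the paper: pass through Goodearl's correspondence $\theta$, use Lemma~\ref{non-dr-bound} for closure under addition, and read off the remaining assertions from Proposition~\ref{ideal_corr}. Two remarks on the closure-under-addition step, which is indeed the substantive part. First, your witness $2s \not\leq s$ with $s = [p_{n_1}] + [p_{n_2}]$ tacitly requires $s \neq 0$; this is easily disposed of — either note that $s = 0$ forces $\theta(x)+\theta(y) = \{0\}$, which is proper in $V(J)$ since $J \neq 0$ has real rank zero and hence a nonzero projection, or observe that Lemma~\ref{non-dr-bound} continues to hold with any larger index, so $n_1, n_2$ can be taken large enough that $p_{n_i} \neq 0$. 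Second, your argument is in fact tighter than the one in the paper, which claims $I_{p_{n_1}} + I_{p_{n_2}} = I_{p_{n_0}}$ for $n_0 = \max(n_1,n_2)$; that identity fails whenever both projections are nonzero, since by cancellation $[p_{n_1}]+[p_{n_2}] > [p_{n_0}]$, yet the left-hand side contains $[p_{n_1}]+[p_{n_2}]$. What is true (and suffices) is that $I_{p_{n_1}} + I_{p_{n_2}}$ equals the down-set of $s$ — this is exactly the bound you exploit, and your $2s$ provides the missing element directly, without needing to re-express the sum as an $I_r$.
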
 

\begin{proof} 
To see that $\calL$ is hereditary, take projections $e, f$ in $\calM(J)$ such that $[e]\leq [f]$ and $[f]\in \calL$. Note that under the ordered monoid isomorphism of Lemma \ref{dr-cgi}, we have 
\begin{equation} 
I_{e} \leq_{\text{alg}} I_{f}\neq I_{1_{\mathcal M(J)}} = V(J). 
\end{equation} 
Accordingly $I_{e}\neq V(J)$ and thus $[e]\neq [1_{\calM(J)}]$. Thus, $\calL$ is a proper hereditary subset of $V(\calM(J))$ containing all equivalence classes of projections in $J$. 

To show that $\calL$ is closed under addition, take projections $e,f$ in $\calM(J)$ with $[e], [f]\in \calL$. Applying the ordered monoid isomorphism from Lemma \ref{dr-cgi}, again both $I_{e}, I_{f} \neq V(J)$. Let $(p_n)_n$ be an approximate unit of increasing projections for $J$ (which exists as $J$ has real rank zero). By the total ordering of $V(J)$, Lemma \ref{non-dr-bound} gives $n_{1}, n_{2}\in\mathbb N$ such that $I_{e}\subseteq I_{p_{n_{1}}}$ and $I_{f}\subseteq I_{p_{n_{2}}}$. This implies that $I_{e}+I_{f} \subseteq I_{p_{n_{1}}} + I_{p_{n_{2}}} = I_{p_{n_{0}}}$, where $n_{0}=\max(n_1,n_2)$. Then $I_e+I_f$ is a proper subset of $V(J)$, and hence applying $\theta^{-1}$, $[e]+[f]\neq [1_{\calM(J)}]$ in $V(\calM(J))$. 

By the definition in \eqref{ideal-def}, the closed ideal $\calI$ is given by 
\begin{equation} \label{I_0_defn}  
\calI \coloneqq \overline{\text{Ideal}} \{e\in \calM(J): e \text{ is a projection with } [e]\in \calL\}. 
\end{equation} 
As $\calI$ contains all projections in $J$ and $J$ has real rank zero, we have $J\subseteq \calI$. Proposition \ref{ideal_corr} ensures $\calI$ is uniquely determined by the projections it contains. As $\calL$ is the maximal proper order ideal of $V(\calM(J))$, it follows (again from Proposition \ref{ideal_corr}), that $\calI$ is a maximal proper ideal in $\calM(J)$. 
\end{proof} 

We complete the proof of Theorem \ref{posi_ideal} by extending the characterisation of those projections not in $\calI$ to all positive elements. Here we need to use the assumption that  $K_1(J)=0$, so that $\calM(J)$ also has real rank zero by a result of Lin (\cite{RR-LHX}).

\begin{proof}[Proof of Theorem \ref{posi_ideal}] 
Let $\calL$ and $\calI$ be as defined in Lemma \ref{proj-ideal} and take any $b\in\calM(J)_+$.  If $b\sim 1_{\calM(J)}$, then $1_{\calM(J)}\notin \calI$ implies $b\notin \calI$. For the other direction, suppose $b\notin \calI$. Since $J$ is separable with real rank zero, stable rank one and $K_{1}(J) = 0$, by \cite[Theorem 10]{RR-LHX}, the multiplier algebra $\calM(J)$ also has real rank zero. By \cite[Lemma 2.3]{proj-decomp}, for any  $\epsilon >0$, there exists mutually orthogonal projections $f_{1}, \cdots, f_{l}\in\calM(J)$ and $\alpha_{1}, \cdots, \alpha_{l}>0$, such that 
\begin{equation}
0\leq \sum_{i=1}^{l}\alpha_{i}f_{i} \leq b \quad \text{and} \quad \left\|\sum_{i=1}^{l}\alpha_{i}f_{i}-b\right\| < \epsilon. 
\end{equation}
Since $b\notin \calI$, there exists $\epsilon > 0$ such that the corresponding sum $b'=\sum_{i=1}^{l}\alpha_{i}f_{i}$ is not in $\calI$. Then the projection $e = \sum_{i=1}^{l} f_{i}$ is Cuntz equivalent to $b'$ and so is not in $\calI$. By Lemma \ref{proj-ideal}, we have $[e] = [1_{\calM(J)}]$ in $V(\calM(J))$ and thus 
\begin{equation}
\langle 1_{\calM(J)}\rangle = \langle e \rangle = \langle b' \rangle\leq \langle b \rangle \quad \text{in } \mathrm{Cu}(\calM(J)). 
\end{equation} 
In conclusion, every positive element in $\calM(J) \setminus  \calI$ is Cuntz equivalent to $1_{\calM(J)}$, which implies that the extension $\calI\lhd \calM(J)$ is purely large (relative to $J$). 

Uniqueness of such ideal follows from Lemma \ref{proj-ideal}.  Lastly, any non-zero positive element $\overline{x}$ in the quotient $\calM(J) / \calI$ has a positive lift $x\in \calM(J)\setminus \calI$. Since $x\sim 1_{\calM(J)}$, it follows that $\overline{x}\sim 1_{\calM(J) / \calI}$. Therefore, $\calM(J) / \calI$ is simple and purely infinite.
\end{proof}

\section{$K_{1}$-injectivity of relative commutants}  
\label{K1_Paschke_chapter} 
In this section we prove the main $K_1$-injectivity theorem. Recall that for a $C^*$-algebra $J$, we denote the canonical quotient map by $\pi: \calM(J)\rightarrow \calC(J)$. For an element $a\in \calM(J)$, we denote $\pi(a)$ by $\overline{a}$. Similarly, if $D$ is a $C^*$-subalgebra of $\calM(J)$, we denote $\pi(D)$ by $\overline{D}$, and for a $^*$-homomorphism $\phi\colon A\to \calM(J)$, we denote $\pi \circ \phi$ by $\overline{\phi}$. 

\begin{thm} \label{property_thm} 
Let $A$ be a unital and separable $C^*$-algebra, and let $J$ be a separable and stable $C^*$-algebra with the corona factorisation property. Moreover, suppose there exists a closed proper ideal $\calI \lhd\calM(J)$ containing $J$ such that the extension of $\calI$ in $\calM(J)$ is purely large relative to $J$ (see Definition \ref{defn_PL}). Let $\phi: A\rightarrow \calM(J)$ be a unital, full and weakly nuclear $^*$-homomorphism. Then $\calC(J) \cap \overline{\phi}(A)'$ is $K_{1}$-injective. 
\end{thm}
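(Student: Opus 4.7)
The strategy is to apply Lemma \ref{sufficient_K1}: it suffices to show that every unitary $u$ in $\calC(J) \cap \overline{\phi}(A)'$ is homotopic through such unitaries to a unitary $u'$ for which the inclusion $C^{*}(\overline{\phi}(A), u') \hookrightarrow \calC(J)$ is full. The unital fullness of $\phi$ already ensures that $\overline{\phi}(a)$ is full in $\calC(J)$ for every nonzero positive $a \in A$ (since $\phi(a) \notin \calI$), so the difficulty lies in arranging that the \emph{whole} generated subalgebra $C^{*}(\overline{\phi}(A), u')$ consists of full or zero elements, i.e.\ avoids the proper ideals of $\calC(J)$, of which the image of $\calI$ is the principal obstruction.

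First I would lift $u$ to a convenient element in $\calM(J)$ and correct its commutation with $\phi(A)$. Fix an almost-idempotent approximate unit $(e_{n})_{n}$ of $J$ that is quasicentral with respect to the separable $C^{*}$-subalgebra $\phi(A)$; this is available because $J$ is $\sigma$-unital. A unitary lift $v \in \calM(J)$ of $u$ exists by standard $K_{1}$-considerations, but $v$ only commutes with $\phi(A)$ modulo $J$. I would then apply Lemma \ref{almost_ortho_approx}, decomposing $v$ (after a self-adjoint logarithmic passage in local pieces) as a bidiagonal series whose summands are supported between consecutive $e_{n}-e_{n-1}$, each approximately commuting with $\phi(A)$ by quasicentrality. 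A small perturbation within $J$ then produces a genuine unitary $v' \in \calM(J) \cap \phi(A)'$ with $\overline{v'} = u$.

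Next, I would use the relative pure largeness of $\calI$ to deform $v'$, through a norm-continuous path of unitaries in $\calM(J) \cap \phi(A)'$, to a unitary $w$ whose corona image $u' = \overline{w}$ has the property that no nonzero positive element of $C^{*}(\overline{\phi}(A), u')$ lies in $\pi(\calI)$. The central tool is Lemma \ref{relative_purely_large_equi}: the norm-controlled Cuntz witnesses $a \precsim b$ for $a \in J_{+}$ and $b \in \calM(J)_{+} \setminus \calI$ permit the construction of partial isometries inside $\phi(A)'$ which, combined bidiagonally via Lemma \ref{almost_ortho_approx}, assemble into the desired path in the commutant. This is where relative pure largeness replaces, in spirit, the strict-comparison input used in the simple-$J$ setting of Loreaux, Ng and Sutradhar.

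The principal obstacle is the final fullness verification. Given a nonzero positive $x \in C^{*}(\overline{\phi}(A), u')$, one lifts to $\tilde{x} \in (\calM(J) \cap \phi(A)')_{+}$; the construction of $w$ must force $\tilde{x} \notin \calI$ for every such lift, after which relative pure largeness, together with the presence of the full element $\phi(1_{A}) = 1_{\calM(J)}$ inside $C^{*}(\phi(A), w)$, allows one to conclude that $\tilde{x}$ generates $\calM(J)$ as an ideal, so that $x$ is full in $\calC(J)$. Controlling this avoidance of $\calI$ uniformly over all polynomials in $\overline{\phi}(A)$ and $u'$ is the genuinely delicate part; it is this combinatorial bookkeeping that dictates the careful bidiagonal structure imposed in the preceding step, and which I expect to require the bulk of the technical work in Section \ref{K1_Paschke_chapter}.
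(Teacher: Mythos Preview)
Your opening reduction via Lemma \ref{sufficient_K1} is correct and matches the paper. However, the plan you outline for verifying its hypothesis diverges from what actually works, and contains a step that does not go through.

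The main gap is your second paragraph. You propose to lift $\overline{u}$ to a unitary $v' \in \calM(J) \cap \phi(A)'$, obtained by perturbing a bidiagonal decomposition of some lift within $J$. There is no reason this can be done: quasicentrality of $(e_n)_n$ gives only \emph{asymptotic} commutation, and a small-$J$ perturbation cannot convert this into exact commutation with the fixed infinite-dimensional algebra $\phi(A)$. The paper never attempts to work in $\calM(J) \cap \phi(A)'$; it only takes a \emph{contractive} (non-unitary, non-commuting) lift $u$ of $\overline{u}$, and all commutation happens at the corona level. Moreover, even the existence of a unitary lift of $\overline{u}$ to $\calM(J)$ is not justified under the hypotheses of Theorem \ref{property_thm} (no $K_1(J)=0$ is assumed there).

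The second, more structural, difference is the mechanism for achieving fullness. Rather than deforming $\overline{u}$ itself, the paper \emph{multiplies} $\overline{u}$ by an auxiliary unitary $\overline{v}_\alpha \in \calU^0(\calC(J)\cap\overline{\phi}(A)')$ of the explicit form $v_\alpha = \sum_n \alpha_n(e_n - e_{n-1})$ for a carefully chosen sequence $\alpha = (\alpha_n)_n \subseteq \mathbb{T}$; the homotopy is then just $\overline{v}_\alpha \sim_h 1$. Fullness of $C^*(\overline{\phi}(A), \overline{v}_\alpha \overline{u})$ is not obtained by forcing \emph{every} element to avoid $\pi(\calI)$, as you suggest. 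Instead, Lemma \ref{reduce-countable} reduces to checking that countably many specific elements $h_k(\overline{v}_\alpha \overline{u})\overline{\phi}(a_k)$ are full. The key trick you are missing is a \emph{rotation argument}: for each $k$, since finitely many rotates of $h_k$ sum to something bounded below, and $\overline{\phi}(a_k)$ is full, at least one rotate $\lambda_k \in \mathbb{T}$ makes $h_k(\lambda_k\overline{u})\overline{\phi}(a_k)$ have a lift outside $\calI$. One then chooses $\alpha$ to be constant equal to $\lambda_k$ on infinitely many long ``blocks'' of indices (one block-family per $k$), with slow interpolation in the gaps to keep $\overline{v}_\alpha \in \calU^0$. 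On each such block, $\overline{v}_\alpha$ acts like the scalar $\lambda_k$, and relative pure largeness (via Lemma \ref{relative_purely_large_equi}) lets one approximate pieces of an approximate unit of $J$ by conjugates of $\hat{h}_k(v_\alpha u)f_{\delta_k}(\phi(a_k))$; summing these bidiagonally witnesses fullness. This block-and-rotation architecture is the genuine content of Lemma \ref{main} and Lemma \ref{finite_block_approx}, and it replaces both your lifting step and your deformation step.
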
 

Combining Theorem \ref{property_thm} with the existence of a
purely large, and thus relatively purely large ideal in Theorem \ref{posi_ideal}, gives the following Corollary (Theorem \ref{K1_inj_main_4.2_intro}).  

\begin{cor} \label{RR0-K1-inj}
Let $A$ be a unital and separable $C^*$-algebra. Let $J$ be a separable and stable $C^*$-algebra which has real rank zero, stable rank one, $K_{1}(J) = 0$ and totally ordered $V(J)$. Let ${\phi: A\rightarrow \calM(J)}$ be a unital, full and weakly nuclear $\ast$-homomorphism. Then $\calC(J)\cap \overline{\phi}(A)'$ is $K_{1}$-injective. 
\end{cor}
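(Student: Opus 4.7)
The plan is to assemble the corollary as a direct combination of the two major technical theorems established earlier in the paper, namely Theorem \ref{property_thm} and Theorem \ref{posi_ideal}. I will verify that the hypotheses on $J$ in the statement of the corollary imply all the hypotheses needed to apply both theorems with the same codomain, with $\phi$ playing the same role.

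First I would check that $J$ has the corona factorisation property, which is required by Theorem \ref{property_thm}. For this I invoke Lemma \ref{separableideals}: $J$ is separable with real rank zero, stable rank one, and totally ordered Murray--von Neumann semigroup. The final hypothesis of the lemma -- that every projection in $J\otimes\mathcal K$ is Murray--von Neumann equivalent to a projection in $J$ -- is automatic since $J$ is already assumed stable. Hence the lemma supplies the corona factorisation property for $J$ (which also confirms its stability, consistent with the hypothesis).

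Next I would produce the required relatively purely large ideal in $\mathcal M(J)$ by applying Theorem \ref{posi_ideal}. Its hypotheses -- separability, stability, real rank zero, stable rank one, $K_1(J)=0$, and total ordering of $V(J)$ -- match precisely the hypotheses of the corollary. The conclusion provides a maximal proper closed ideal $\mathcal I\lhd \mathcal M(J)$ containing $J$ such that the extension $0\to\mathcal I\to\mathcal M(J)\to \mathcal M(J)/\mathcal I\to 0$ is purely large relative to $J$ (as explicitly stated in the conclusion of Theorem \ref{posi_ideal}).

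Finally, I would apply Theorem \ref{property_thm} to the unital, full and weakly nuclear $^*$-homomorphism $\phi:A\to\mathcal M(J)$, using the ideal $\mathcal I$ just constructed. All hypotheses of that theorem are now in place: $A$ is unital and separable, $J$ is separable, stable and has the corona factorisation property, and $\mathcal I$ is a closed proper ideal of $\mathcal M(J)$ which contains $J$ and whose extension is purely large relative to $J$. Thus Theorem \ref{property_thm} yields $K_1$-injectivity of $\mathcal C(J)\cap \overline{\phi}(A)'$, which is exactly the conclusion of the corollary. There is no genuine obstacle here -- the real work of the section is already encoded in Theorems \ref{property_thm} and \ref{posi_ideal}, and the corollary is simply the bookkeeping that matches their hypotheses.
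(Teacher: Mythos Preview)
Your proposal is correct and follows exactly the approach the paper takes: the corollary is obtained by feeding the relatively purely large ideal produced by Theorem \ref{posi_ideal} into Theorem \ref{property_thm}, with the corona factorisation property for $J$ supplied by Lemma \ref{separableideals} (the stability hypothesis making the extra condition on projections automatic). The paper itself does not spell out the verification of the corona factorisation property in the statement of the corollary, but your bookkeeping is accurate and nothing is missing.
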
 

Theorem \ref{property_thm} also applies when $J$ is simple and has a unique trace with respect to which it has strict comparison as the ``trace class ideal'' is the intermediate relatively purely large ideal needed by the theorem (see Proposition \ref{unique_trace_PL}). In this way, Theorem \ref{property_thm} extends the unique trace case of the $K_1$-injectivity theorem of Loreaux, Ng and Sutradhar (\cite[Theorem 3.28]{K1-injective}) by removing the simplicity assumption on the domain $A$, and generalizing from the assumption of nuclear domains to weakly nuclear maps. 

\begin{cor}
Let $A$ be a unital and separable $C^*$-algebra. Let $J$ be a simple, separable and stable $C^*$-algebra with the corona factorisation property and a unique trace with respect to which $J$ has strict comparison. Let $\phi: A\rightarrow \calM(J)$ be a unital, full and weakly nuclear $\ast$-homomorphism. Then $\calC(J)\cap \overline{\phi}(A)'$ is $K_{1}$-injective. 
\end{cor}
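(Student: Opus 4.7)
The plan is to derive this corollary as a direct application of Theorem \ref{property_thm}, with Proposition \ref{unique_trace_PL} producing the required relatively purely large intermediate ideal. In essence, there is no new work to do: the hypotheses of the corollary are exactly those needed to run Proposition \ref{unique_trace_PL}, whose output is exactly the extra input Theorem \ref{property_thm} requires beyond what the corollary states.

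First I would extend the unique densely defined lower semicontinuous trace $\tau_J$ to a lower semicontinuous tracial weight on $\calM(J)_+$ and form the trace-class ideal
\begin{equation}
    \calT = \overline{\{x\in\calM(J):\tau_J(x^\ast x)<\infty\}}.
\end{equation}
Proposition \ref{unique_trace_PL}, whose hypotheses are satisfied verbatim (simple, separable, stable $J$ with unique densely defined lower semicontinuous trace and strict comparison), gives that $\calT$ is a closed ideal of $\calM(J)$ containing $J$, and that the extension $0\to\calT\to\calM(J)\to\calM(J)/\calT\to 0$ is purely large relative to $J$. The containment $J\subseteq\calT$ is immediate from the fact that $\tau_J$ is densely defined on $J$, and properness of $\calT$ follows because $\tau_J$ extends to $\calM(J)_+$ as a non-densely-finite weight (equivalently, $1_{\calM(J)}\notin\calT$).

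Having produced a relatively purely large proper ideal $\calI\coloneqq\calT$ of $\calM(J)$ containing $J$, the remaining hypotheses of Theorem \ref{property_thm} match the assumptions of the corollary: $A$ is unital and separable, $J$ is separable and stable with the corona factorisation property, and $\phi\colon A\to\calM(J)$ is unital, full and weakly nuclear. Applying Theorem \ref{property_thm} therefore yields that $\calC(J)\cap\overline\phi(A)'$ is $K_1$-injective, which is the statement of the corollary.

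There is no genuine obstacle in the proof itself, since the real work has been done upstream in Theorem \ref{property_thm} and Proposition \ref{unique_trace_PL}. The only minor point to verify carefully is that the notion of "unique trace with which $J$ has strict comparison" used in the corollary matches the hypothesis of Proposition \ref{unique_trace_PL} (unique up to scaling, densely defined, lower semicontinuous), which is the standard reading in this setting. With that identification made, the proof is a one-line invocation.
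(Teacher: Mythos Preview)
Your proposal is correct and matches the paper's own treatment: the paper states this corollary as an immediate application of Theorem \ref{property_thm}, using the trace-class ideal from Proposition \ref{unique_trace_PL} as the required relatively purely large intermediate ideal.
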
  

The strategy behind the proof of Theorem \ref{property_thm} is heavily inspired by the work of Loreaux, Ng and Sutradhar, using our abstract relative pure largeness hypothesis. As set out in Lemma \ref{sufficient_K1}, to prove Theorem \ref{property_thm},\footnote{Note that weak nuclearity is  only needed when invoking Lemma \ref{sufficient_K1}, to get unitary equivalence modulo $J$ between unitally nuclearly absorbing maps. The proof of Lemma \ref{main} does not involve (weak) nuclearity.} it suffices to prove the lemma below, and this is the goal of the remainder of the section. As the argument to establish the lemma is somewhat involved, we give an expository outline in Subsection \ref{overall_plan_section}, and then give the details of the proof in Subsection \ref{main_proof_section}. In what follows, for a unital $C^*$-algebra $D$, we denote by $\calU(D)$ the group of unitaries in $D$ and $\calU^{0}(D)$ the group of unitaries homotopic to $1_{D}$ in $\calU(D)$. 

\begin{lem}  \label{main} 
Let $A$ be a unital and separable $C^*$-algebra, and let $J$ be a separable and stable $C^*$-algebra. Suppose there is a proper ideal $\mathcal I\lhd \mathcal M(J)$ containing $J$ such that the extension of $\calI$ is purely large relative to $J$. Let $\phi: A\rightarrow \calM(J)$ be a unital and full $^*$-homomorphism. Then for any unitary $\overline{u}\in \calC(J) \cap \overline{\phi}(A)'$, there exists a unitary $\overline{v}\in \calU^{0}(\calC(J) \cap \overline{\phi}(A)')$ such that the inclusion $\iota: C^{*}(\overline{\phi}(A), \overline{v}\overline{u}) \rightarrow \calC(J)$ is full. 
\end{lem}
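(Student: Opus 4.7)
The plan is to construct $\overline{v}$ by an explicit perturbation of $\overline{u}$ that exploits (a) the proper infiniteness of the Paschke dual algebra $\calC(J)\cap\overline{\phi}(A)'$ and (b) the relative pure largeness of $\calI$, arranging matters so that $\overline{v}\overline{u}$ together with $\overline{\phi}(A)$ generates a subalgebra that is full in $\calC(J)$.

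First, since $\phi$ is unital and full and $J$ has the corona factorisation property, Theorem \ref{nuc_absorb_equiv} gives that $\phi$ is unitally nuclearly absorbing. A standard absorption argument (as used in the proof of Lemma \ref{sufficient_K1}) then shows that the Paschke dual algebra is properly infinite, so I fix isometries $\overline{s}_1,\overline{s}_2\in\calC(J)\cap\overline{\phi}(A)'$ with $\overline{s}_1\overline{s}_1^{*}+\overline{s}_2\overline{s}_2^{*}=1$. I would also enlarge $\calI$ to a maximal proper ideal of $\calM(J)$ containing $J$, which is harmless since relative pure largeness is inherited upward: any positive element outside the enlarged ideal lies \emph{a fortiori} outside $\calI$. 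This maximality will simplify the fullness verification below.

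Second, construct $\overline{v}$ via a Dadarlat--Eilers style rotation inside the Paschke dual. Lift $\overline{u}$ to a contraction $u\in\calM(J)$, and work relative to a quasicentral, almost idempotent approximate unit of $J$ for $\phi(A)$; Lemma \ref{almost_ortho_approx} then provides a bidiagonal representation of $u$ modulo $J$ which approximately commutes with $\phi(A)$. The standard path of $2\times 2$ matrix rotations connects $\mathrm{diag}(\overline{u},1)$ to $\mathrm{diag}(1,\overline{u})$ inside $M_2(\calC(J)\cap\overline{\phi}(A)')$, and transporting this path down to $\calC(J)\cap\overline{\phi}(A)'$ via the isometries $\overline{s}_1,\overline{s}_2$ produces a unitary $\overline{v}\in\calU^{0}(\calC(J)\cap\overline{\phi}(A)')$. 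The product $\overline{v}\overline{u}$ then takes a concrete two-block form built from $\overline{u}$, $\overline{s}_1$ and $\overline{s}_2$, whose spectral behaviour modulo $\calI/J$ can be analysed explicitly.

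Finally, verify fullness of $\iota$. Given a non-zero positive $x\in C^{*}(\overline{\phi}(A),\overline{v}\overline{u})$, lift to a positive $\tilde x\in\calM(J)$; fullness of $x$ in $\calC(J)$ is equivalent to fullness of $\tilde x$ in $\calM(J)$, and by maximality of $\calI$ this reduces to showing $\tilde x\notin\calI$. The block form of $\overline{v}\overline{u}$, together with fullness of $\overline{\phi}$, ensures that $\tilde x$ admits a non-zero component outside $\calI$, whereupon Lemma \ref{relative_purely_large_equi} supplies the quantitative Cuntz domination $a\precsim\tilde x$ for every $a\in J_{+}$. Thus the closed two-sided ideal generated by $\tilde x$ contains $J$ and is not contained in $\calI$, so by maximality of $\calI$ it equals $\calM(J)$, giving fullness.

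The main obstacle lies in the second step: the bidiagonal decomposition of $u$ and the rotation path defining $\overline{v}$ must be arranged finely enough that \emph{every} non-zero positive element of the generated $C^{*}$-subalgebra has a lift detecting the region outside $\calI$. Balancing the bidiagonal support structure from Lemma \ref{almost_ortho_approx} against the spectral theory of $\overline{u}$, in such a way that the relative pure largeness hypothesis can be activated uniformly over this subalgebra, is the technical heart of the argument and the reason for the expository outline in Section \ref{overall_plan_section}.
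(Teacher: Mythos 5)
Your proposal captures some surface features of the argument — a quasicentral almost idempotent approximate unit, bidiagonal series, and the use of relative pure largeness — but the mechanism you describe for constructing $\overline{v}$ does not work, and you acknowledge as much in your final paragraph, which concedes that the crux is unresolved. A $2\times 2$ rotation path transported through isometries $\overline{s}_1,\overline{s}_2$ produces some unitary in $\mathcal U^0(\calC(J)\cap\overline{\phi}(A)')$, but there is no reason why the resulting $\overline{v}\overline{u}$ should generate, together with $\overline{\phi}(A)$, a full subalgebra of $\calC(J)$: whether a given element escapes a proper ideal $\calI$ of $\calM(J)$ depends delicately on the spectral behaviour of $\overline u$ and the approximate unit, and your ``two-block form'' does nothing to control this. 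The paper's actual $\overline{v}=\overline{v}_\beta$ is of the bidiagonal form $\sum_n\beta_n(e_n-e_{n-1})$, where the phases $\beta_n$ are chosen inductively, block by block: on each block $\beta_n$ takes a constant value $\lambda_k$ designed so that a Laurent polynomial evaluation $\hat h_k(v_\beta u)\phi(a_k)$ admits a lift whose compression by a suitable $x_\ell\in J$ is close to $e_\ell-e_{\ell-1}$ (Lemma \ref{finite_block_approx}, via the quantitative form of relative pure largeness in Lemma \ref{relative_purely_large_equi}). Summing these over $\ell$ gives $x\hat h_k(v_\beta u)\phi(a_k)x^*\approx 1_{\calM(J)}$, hence fullness in $\calC(J)$, and the interpolation in the gaps between blocks can be done slowly enough that Proposition \ref{unit_osci} places $\overline{v}_\beta$ in $\mathcal U^0$. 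Reducing fullness of the inclusion to a countable family of test elements $h_k(\overline{v}_\beta\overline{u})\overline{\phi}(a_k)$ is Lemma \ref{reduce-countable}. None of this is visible in your sketch.

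Two further errors. First, you invoke Theorem \ref{nuc_absorb_equiv} to get unital nuclear absorption, but Lemma \ref{main} does not assume the corona factorisation property of $J$ nor weak nuclearity of $\phi$; those hypotheses appear only in Theorem \ref{property_thm}, and the paper's footnote explicitly notes that the proof of Lemma \ref{main} does not involve nuclearity. Second, your fullness verification is broken. Relative pure largeness of a single ideal $\calI$ only tells you that a positive lift $\tilde x\notin\calI$ Cuntz-dominates $J_+$; that gives an ideal in $\calM(J)$ containing $J$, not all of $\calM(J)$, and even enlarging $\calI$ to a maximal ideal does not help, since maximal ideals of $\calM(J)$ containing $J$ need not be unique under the hypotheses of Lemma \ref{main} (uniqueness is part of the extra conclusion of Theorem \ref{posi_ideal}, which is proved under much stronger assumptions). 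Your claimed equivalence ``fullness of $x$ in $\calC(J)$ is equivalent to fullness of $\tilde x$ in $\calM(J)$'' is also false in general: the ideal of $\calM(J)$ generated by $\tilde x$ need not contain $J$ even when $q(\tilde x)$ is full in $\calC(J)$. The paper sidesteps these issues entirely by exhibiting $x$ with $x\hat h_k(v_\beta u)\phi(a_k)x^*$ within distance $<1$ of the identity, which gives fullness directly.
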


\subsection{Outline of the proof of Lemma \ref{main}}  
\label{overall_plan_section} 

The following outline is based on our deconstruction of the arguments of \cite{K1-injective}. Throughout the rest of the section, we fix $A, J$ and $\phi\colon A\to\calM(J)$ all satisfying the assumptions of Lemma \ref{main}. We also fix a unitary $\overline{u}\in\calC(J)\cap \overline{\phi}(A)'$ with a contractive lift $u\in\calM(J)$. 

We start by discussing how to construct  unitaries in $\calU^{0} (\calC(J) \cap \overline{\phi}(A)')$. In the case of main interest to us, where $J$ has real rank zero, stable rank one and $K_1(J)$ vanishes, a consequence of Lin's theorem (\cite[Theorem 11]{RR-LHX}) gives a general form of unitaries in $\calU^0(\calC(J))$: for every unitary $\overline{v}\in\calU^{0}(\calC(J))$, there exists an increasing approximate unit $(p_{n})_{n}$ of $J$ consisting of projections (by convention we take $p_0=0$) and a sequence of scalars $(\alpha_{n})_{n}$ in $\mathbb{T}$ such that $\overline{v}$ is the image under $\pi$ of the unitary 
\begin{equation} 
\sum_{n=1}^{\infty} \alpha_{n} (p_{n} - p_{n-1}),
\end{equation} 
which strictly converges (see Lemma \ref{almost_ortho_approx}) in $\calM(J)$.  

In general, a unitary in $\calC(J)$ constructed of the form above need not commute with $\overline{\phi}(A)$, but it will (after passing to a subsequence) if the approximate unit of projections $(p_{n})_{n}$ are \emph{quasicentral} with respect to $\phi(A)$, meaning that $\|[\phi(a), p_{n}]\| \rightarrow 0$ as $n\rightarrow \infty$ for any $a\in A$. However, the existence of a quasicentral approximate unit of projections is a very strong assumption and rarely satisfied, and we instead work with \emph{almost idempotent quasicentral approximate units} $(e_{n})_{n}$ of $J$ with respect to $\phi(A)$ (see Subsection \ref{Multi_rr0_sec}). These always exist (see \cite[Theorem I.9.16]{Davidson}), and for any sequence $\alpha = (\alpha_{n})_{n}$ in $\mathbb{T}$, the element of the bidiagonal form (with the convention $e_{0}=0$) 
\begin{equation} 
v_{\alpha} \coloneqq \sum_{n=1}^{\infty} \alpha_{n} (e_{n} - e_{n-1}) 
\end{equation} 
converges in the strict topology by Lemma \ref{almost_ortho_approx}. While $v_{\alpha}$ is not necessarily a unitary in $\calM(J)$, its image $\overline{v}_{\alpha}\coloneqq \pi(v_\alpha)$ becomes a unitary in $\calC(J)$ provided the $\alpha_n$ get close together. The following proposition, which essentially combines the results of \cite[Lemma 3.2, Lemma 3.3]{K1-injective}, sets this out precisely.

\begin{prop}
\label{unitary_commutant} 
Let $J$ be a separable $C^*$-algebra, and let $S$ be a separable $C^*$-subalgebra of $\mathcal M(J)$. Then there exists an almost idempotent approximate unit $(e_n)_{n}$ for $J$ such that for any bounded sequence $(\alpha_n)_n$ in $\mathbb{C}$, the strictly convergent series
\begin{equation}
\label{v_alpha} 
v_\alpha\coloneqq\sum_{n=1}^\infty \alpha_n (e_n-e_{n-1}), 
\end{equation} 
is an element in $\calM(J)$, whose image $\overline{v}_{\alpha}$ in $\calC(J)$ lies in the relative commutant $\mathcal C(J)\cap\overline{S}'$.  Moreover, if $(\alpha_{n})_{n}$ is a sequence in $\mathbb{T}$ such that $\lim_{n\to\infty}|\alpha_n-\alpha_{n+1}|=0$, then $\overline{v}_\alpha$ is a unitary in $\calC(J)\cap\overline{S}'$.
\end{prop}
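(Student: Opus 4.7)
The plan has four steps, the main obstacle being the verification of unitarity. First, I would construct an almost idempotent approximate unit $(e_n)_n$ for $J$ which is simultaneously quasicentral with respect to $S$. Since $J$ is $\sigma$-unital and $S \subset \calM(J)$ is separable, \cite[Theorem I.9.16]{Davidson} yields a quasicentral approximate unit for $J$ relative to $S$. A standard functional calculus refinement (in the spirit of \cite{Zhang_1991}) then passes to a subsequence and takes appropriate continuous functional calculus images to arrange the almost idempotent condition $e_{n+1} e_n = e_n$, while preserving quasicentrality with respect to a fixed countable dense subset $\{s_k\}_k$ of $S$.

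Second, strict convergence of $v_\alpha$ in $\calM(J)$ follows immediately from Lemma \ref{almost_ortho_approx}. The almost idempotent condition inductively gives $e_m e_n = e_n$ for all $m \geq n$, whence $(e_n - e_{n-1})(e_m - e_{m-1}) = 0$ for $|n-m| \geq 2$. So \eqref{v_alpha} is a bidiagonal series with $\|\alpha_n(e_n - e_{n-1})\| \leq \|\alpha\|_\infty$, and Lemma \ref{almost_ortho_approx} yields $v_\alpha \in \calM(J)$ with $\|v_\alpha\| \leq 2\|\alpha\|_\infty$.

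Third, to verify that $\overline{v}_\alpha \in \calC(J) \cap \overline{S}'$, it suffices to show $[v_\alpha, s_k] \in J$ for each $k$. Splitting $[v_\alpha, s_k] = \sum_n \alpha_n [(e_n - e_{n-1}), s_k]$ into its odd- and even-indexed subseries -- within each of which the bidiagonal structure places the summands in mutually orthogonal corners of $\calM(J)$ -- the tail norm is dominated by $\sup_{n \geq N}(\|[e_n, s_k]\| + \|[e_{n-1}, s_k]\|)$, which tends to zero by quasicentrality. Since each partial sum lies in $J$, the norm-limit $[v_\alpha, s_k]$ lies in $J$.

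The main obstacle is the fourth step: verifying that $\overline{v}_\alpha$ is unitary when $|\alpha_n| = 1$ and $|\alpha_n - \alpha_{n+1}| \to 0$, i.e. that $v_\alpha^* v_\alpha - 1_{\calM(J)} \in J$ (and symmetrically for $v_\alpha v_\alpha^*$). Bidiagonality together with the identity $(e_n - e_{n-1})(e_{n+1} - e_n) = (e_{n+1} - e_n)(e_n - e_{n-1}) = e_n - e_n^2$ give
\begin{equation}
v_\alpha^* v_\alpha = \sum_n (e_n - e_{n-1})^2 + \sum_n 2\operatorname{Re}(\overline{\alpha}_n \alpha_{n+1})(e_n - e_n^2).
\end{equation}
The telescoping identity $\sum_{n=1}^N (e_n - e_{n-1})^2 = e_N^2 - 2\sum_{n=1}^{N-1}(e_n - e_n^2)$, combined with $e_N^2 \to 1_{\calM(J)}$ strictly, identifies the first series with $1_{\calM(J)} - 2 \sum_n (e_n - e_n^2)$, and the identity $2 - 2\operatorname{Re}(\overline{\alpha}_n\alpha_{n+1}) = |\alpha_n - \alpha_{n+1}|^2$ then produces $v_\alpha^* v_\alpha - 1_{\calM(J)} = -\sum_n |\alpha_n - \alpha_{n+1}|^2 (e_n - e_n^2)$. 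Since $0 \leq \sum_n (e_n - e_n^2) \leq \tfrac{1}{2} 1_{\calM(J)}$, the tail beyond index $N$ has norm at most $\tfrac{1}{2} \sup_{n \geq N} |\alpha_n - \alpha_{n+1}|^2$, which tends to zero, so the correction sum lies in $J$ as a norm limit of elements in $J$, completing the proof.
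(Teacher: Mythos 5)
Your overall plan (quasicentral almost idempotent approximate unit via \cite[Theorem I.9.16]{Davidson}, strict convergence via Lemma~\ref{almost_ortho_approx}, then the two $\mathrm{mod}\ J$ checks) follows the same route as the paper, which packs the same computations into a citation of Lemmas 3.2--3.3 of \cite{K1-injective}. Your Step 4 is correct and nicely self-contained: the identity $(e_n-e_{n-1})(e_{n+1}-e_n)=e_n-e_n^2$, the telescoping $\sum_{n=1}^N(e_n-e_{n-1})^2=e_N^2-2\sum_{n=1}^{N-1}(e_n-e_n^2)$, and $2-2\operatorname{Re}(\overline\alpha_n\alpha_{n+1})=|\alpha_n-\alpha_{n+1}|^2$ together give $v_\alpha^*v_\alpha-1=-\sum_n|\alpha_n-\alpha_{n+1}|^2(e_n-e_n^2)$, and the $e_n-e_n^2$ \emph{are} mutually orthogonal positive contractions, so the tail is genuinely controlled by a sup and lies in $J$.

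The gap is in Step 3. The claim that the odd-/even-indexed subseries of $\sum_n\alpha_n[(e_n-e_{n-1}),s_k]$ consist of summands ``in mutually orthogonal corners of $\calM(J)$'', giving a tail bound by $\sup_{n\geq N}(\|[e_n,s_k]\|+\|[e_{n-1},s_k]\|)$, does not hold. The $e_n-e_{n-1}$ (for $n$ odd, say) are mutually orthogonal, but $[(e_n-e_{n-1}),s_k]=(e_n-e_{n-1})s_k-s_k(e_n-e_{n-1})$ is localised only one-sidedly, and conjugating by $s_k$ destroys the orthogonality; for elements $a_n$ with only orthogonal left supports, $\|\sum_n a_n\|$ is \emph{not} controlled by $\sup_n\|a_n\|$. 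In fact, the odd tail is exactly $[g_N,s_k]$ with $g_N=\sum_{n\geq N,\,n\text{ odd}}(e_n-e_{n-1})$, and $\|g_N\|$ stays $\approx 1$, so no sup bound is available. (Abel summation confirms this: the tail is controlled by $\sum_{n\geq N}|\alpha_n-\alpha_{n+1}|\,\|[e_n,s_k]\|$ plus boundary terms, so for arbitrary bounded $\alpha$ one needs $\sum_n\|[e_n,s_k]\|<\infty$.)

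The fix, which is exactly what the paper does, is to strengthen Step 1: when passing to a subsequence of the quasicentral almost idempotent approximate unit, arrange $\|[e_n,s_k]\|<1/2^{n+k}$ for all $n,k$. Then in Step 3 you do not need any orthogonality at all: $\sum_n|\alpha_n|\,\|[(e_n-e_{n-1}),s_k]\|\leq\|\alpha\|_\infty\sum_n(\|[e_n,s_k]\|+\|[e_{n-1},s_k]\|)<\infty$, so the series converges absolutely in $J$ and the limit $[v_\alpha,s_k]$ lies in $J$. With this change everything else you wrote goes through.
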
 

\begin{proof} 
Fix $(x_{k})_{k}$ a countable dense subset of $S$. Take an almost idempotent approximate unit $(e_{n})_{n}$ of $J$ with $\|[e_{n}, x_{k}]\|<1/2^{n+k}$ for any $n, k\in \mathbb{N}$ (by taking a subsequence of an almost idempotent quasicentral approximate unit given by \cite[Theorem I.9.16]{Davidson}). Then direct compuations (see \cite[Lemma 3.3]{K1-injective} and \cite[Lemma 3.2]{K1-injective}) show that $\overline{v}_{\alpha}$ is a unitary in $\calC(J) \cap \overline{S}'$.
\end{proof} 

We now fix such an approximate unit $(e_n)_{n}$ as in Proposition \ref{unitary_commutant} for $S=\phi(A)$ for the rest of the argument,\footnote{We will be explicit about this in the statements of the lemmas which follow.} and use the notation $v_{\alpha}$ for the element given by \eqref{v_alpha} for a sequence $\alpha = (\alpha_{n})_{n}$ in $\mathbb{T}$ and $\overline{v}_\alpha\coloneqq \pi(v_\alpha)$. To prove Lemma \ref{main}, it suffices to find a sequence $\alpha = (\alpha_n)_n$ such that $\overline{v}_{\alpha}$ lies in $\calU^{0}(\mathcal C(J)\cap \overline\phi(A)')$ and the inclusion $\iota: C^*(\overline{\phi}(A),\overline{v}_{\alpha} \overline{u})\rightarrow \calC(J)$ is full. The first part of this is the easy condition to achieve: one ensures that the path obtained by connecting successive terms of the sequence $(\alpha_n)_{n}$ never wraps fully round the circle, so that it can be continuously deformed back to the constant sequence.\footnote{In \cite{K1-injective}, Loreaux, Ng and Sutradhar called such a sequence  $\alpha = (\alpha_{n})_{n}$ in $\mathbb{T}$ \emph{unit oscillating} when the arc connecting adjacent terms of the sequence never crosses the unit.} The following proposition captures the essential idea of this part of Loreaux, Ng and Sutradhar's argument. 

\begin{prop} 
\label{unit_osci} 
Let $A$ and $J$ be separable $C^*$-algebras with $A$ unital. Let $\phi: A\rightarrow \calM(J)$ be a unital $^*$-homomorphism and take $S = \phi(A)$. Let $(e_n)_{n}$ be the approximate unit given by Proposition \ref{unitary_commutant}. If $(\theta_n)_{n}$ is a sequence in $[0,2\pi)$ with $\lim_{n\rightarrow\infty}|\theta_{n+1}-\theta_{n}| = 0$, then $\overline{v}_{\alpha}$ is a unitary in $\calU^0(\calC(J)\cap\overline\phi(A)')$, where $\alpha = (e^{i\theta_{n}})_{n}$. 
\end{prop}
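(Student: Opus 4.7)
The plan is to construct an explicit norm-continuous path of unitaries joining $\overline{v}_\alpha$ to $1_{\calC(J)}$ inside $\calC(J) \cap \overline{\phi}(A)'$ by linearly contracting each phase $\theta_n$ to zero. Concretely, for $t \in [0,1]$ define $\alpha(t) = (\alpha_n(t))_n = (e^{i(1-t)\theta_n})_n$, so that $\alpha(0) = \alpha$ and $\alpha(1) = (1,1,\ldots)$. The crucial conceptual observation is that we work with $\theta_n \in [0,2\pi)$ (not modulo $2\pi$): because the $\theta_n$ are genuine real numbers, the contraction $\theta_n \mapsto (1-t)\theta_n$ keeps the uniform closeness of consecutive terms intact.

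First, I would verify that for every $t \in [0,1]$, Proposition \ref{unitary_commutant} applies to $\alpha(t)$. Since $|\alpha_{n+1}(t) - \alpha_n(t)| \leq (1-t)|\theta_{n+1} - \theta_n| \leq |\theta_{n+1} - \theta_n| \to 0$ as $n \to \infty$, the proposition gives that $v_{\alpha(t)}$ is a strictly convergent element of $\calM(J)$ whose image $\overline{v}_{\alpha(t)}$ is a unitary in $\calC(J) \cap \overline{\phi}(A)'$.

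The main technical step is norm continuity of $t \mapsto v_{\alpha(t)}$. For $t_1,t_2 \in [0,1]$, the bidiagonal structure gives
\begin{equation}
v_{\alpha(t_1)} - v_{\alpha(t_2)} = \sum_{n=1}^{\infty} \beta_n(t_1,t_2)(e_n - e_{n-1}), \qquad \beta_n(t_1,t_2) \coloneqq e^{i(1-t_1)\theta_n} - e^{i(1-t_2)\theta_n}.
\end{equation}
The inequality $|e^{is} - e^{it}| \leq |s-t|$ together with $\theta_n < 2\pi$ yields the uniform bound $|\beta_n(t_1,t_2)| \leq 2\pi|t_1-t_2|$. Since each $(e_n - e_{n-1})$ is a positive element of norm at most $1$ lying in $\overline{(e_n - e_{n-1}) J (e_n - e_{n-1})}$, Lemma \ref{almost_ortho_approx} (applied with $n_{k,1} = n_{k,2} = k$) gives $\|v_{\alpha(t_1)} - v_{\alpha(t_2)}\| \leq 4\pi|t_1-t_2|$. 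So $t \mapsto v_{\alpha(t)}$ is Lipschitz in norm, and hence $t \mapsto \overline{v}_{\alpha(t)}$ is a norm-continuous path.

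Finally, checking the endpoints: $v_{\alpha(0)} = v_\alpha$ is immediate from the definition, and for $v_{\alpha(1)}$ all coefficients equal $1$, so the partial sums telescope to $e_N$, which converges strictly to $1_{\calM(J)}$ as $(e_n)_n$ is an approximate unit for $J$; hence $\overline{v}_{\alpha(1)} = 1_{\calC(J)}$. Combining, we obtain a continuous path in $\calU(\calC(J) \cap \overline{\phi}(A)')$ from $\overline{v}_\alpha$ to $1$, so $\overline{v}_\alpha \in \calU^0(\calC(J) \cap \overline{\phi}(A)')$. There is no serious obstacle here — the whole argument rests on the observation that lifting the phases to $[0,2\pi)$ converts the circle-valued problem into a linear one, and Lemma \ref{almost_ortho_approx} takes care of bookkeeping in the bidiagonal strict topology.
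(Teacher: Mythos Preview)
Your proof is correct and essentially identical to the paper's: both linearly scale the real phases $\theta_n$ (you use $(1-t)\theta_n$, the paper uses $t\theta_n$, a trivial reparametrisation), invoke Proposition \ref{unitary_commutant} at each $t$, and appeal to Lemma \ref{almost_ortho_approx} with the uniform bound $|\alpha_n(s)-\alpha_n(t)|\leq 2\pi|s-t|$ for norm continuity.
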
 

\begin{proof} 
For each $t\in [0,1]$, define $\alpha_{n}(t) \coloneqq e^{it\theta_{n}}$ and take the sequence $\alpha(t) = (\alpha_{n}(t))_{n}$ in $\mathbb{T}$. Since $\lim_{n\rightarrow \infty}|\theta_{n+1}-\theta_{n}| = 0$, it follows that $\lim_{n\rightarrow \infty}|\alpha_{n+1}(t)-\alpha_{n}(t)| = 0$ for each $t\in [0,1]$. By the choice of $(e_{n})_{n}$ and Proposition \ref{unitary_commutant}, we have that $\overline{v}_{\alpha(t)}$ is a unitary in $\calC(J)\cap \overline{\phi}(A)'$ for each $t\in [0,1]$. Moreover, $|\alpha_{n}(s) - \alpha_{n}(t)| < 2\pi |s-t|$ for any $n\in \mathbb{N}$ and thus $t\mapsto \overline{v}_{\alpha(t)}$ is a continuous path of unitaries, by Lemma \ref{almost_ortho_approx}, connecting $1_{\calC(J)}$ and $\overline{v}_{\alpha(1)} = \overline{v}_{\alpha}$. 
\end{proof} 

The main challenge is to arrange for the inclusion map $\iota$ to be full. In \cite{K1-injective}, taking advantage of the simplicity assumption of $A$, and using a density argument, the authors showed that it is sufficient to verify fullness of $h_{k}(\overline{v}_{\alpha}\overline{u})$ in $\calC(J)$ for a dense collection $(h_k)_k$ of nonzero positive contractions in $C(\mathbb{T})$. Similarly, to prove that $\iota$ is full, we verify the fullness of $h_k(\overline{v}_\alpha \overline{u})\overline{\phi}(a_k)$ in $\calC(J)$ for countably many positive functions $(h_{k})_{k}$ in $C(\mathbb{T})$ and non-zero positive elements $(a_{k})_{k}$ in $A$, which are fixed for the rest of this subsection. This is a consequence of the following lemma, applied to $D$ being $\overline{\phi}(A)$, $C$ being $\calC(J)$ and the unitary being $\overline{v}_{\alpha} \overline{u}$ for the fixed unitary $\overline{u}$ and some chosen sequence $\alpha$ in $\mathbb{T}$. The lemma is proved using a standard density argument, in exactly the same fashion as the version of the lemma given in the second authors DPhil thesis \cite[Lemma 6.2.7]{Hua_thesis}.

\begin{lem} \label{reduce-countable} 
Let $D\subseteq C$ be an inclusion of $C^*$-algebras with $D$ separable, and let $u\in C\cap D'$ be a unitary.  There exist sequences $(d_k)_k$ of non-zero positive contractions in $D$, and $(h_k)_k$ of norm one positive functions in $C(\mathbb T)$, such that if $h_k(u)d_k$ is full for each $k$, then the inclusion map $\iota\colon C^*(D,u)\to C$ is full.
\end{lem}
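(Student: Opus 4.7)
The plan is to use a density argument exploiting that $u$ lies in the centre of $C^*(D,u)$, since it commutes with $D$ and with itself. In particular, $C^*(D,u)$ becomes a $C(\sigma(u))$-algebra with $\sigma(u)\subseteq\mathbb T$, and there is a canonical surjection $\mu\colon D\otimes C(\mathbb T)\twoheadrightarrow C^*(D,u)$ given by $d\otimes f\mapsto d\cdot f(u)$. In particular every element of $C^*(D,u)$ is a norm-limit of finite sums $\sum_i d_i\cdot f_i(u)$ with $d_i\in D$ and $f_i\in C(\mathbb T)$.

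Next, I would choose $(d_k)_k$ to enumerate a countable dense subset of the nonzero positive contractions in $D$ (possible by separability), and $(h_k)_k$ to be a countable family of norm-one positive functions in $C(\mathbb T)$ containing bump functions centred at a dense set of points of $\mathbb T$ with arbitrarily small supports. After re-indexing, one arranges $\{(d_k,h_k)\}_k$ to be dense in the relevant set of pairs.

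Assuming each $h_k(u)d_k$ is full in $C$, fullness of $\iota$ follows as follows. For a nonzero $b\in C^*(D,u)_+$, the $C(\sigma(u))$-algebra structure yields $z_0\in\sigma(u)$ at which $b$ has nonzero image in the fibre, which is a quotient of $D$ (possibly unitised) via evaluation of $u$ at the scalar $z_0$. Lifting this fibre value to a nonzero positive contraction $d\in D$ and choosing a norm-one bump $h\in C(\mathbb T)_+$ tightly supported at $z_0$ with $h(z_0)=1$, a continuity argument forces $\|h(u)b-\|\pi_{z_0}(b)\|\cdot h(u)d\|<\varepsilon$ for any preassigned $\varepsilon>0$. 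The standard Cuntz subequivalence fact ($\|a-b\|<\varepsilon\Rightarrow (a-\varepsilon)_+\precsim b$) then places a nonzero $(h(u)d-\varepsilon')_+$ inside the closed ideal of $b$ in $C$, for a suitable $\varepsilon'>0$. Finally, approximating $(h,d)$ by some $(h_k,d_k)$ from the dense sequence and using openness of fullness, the fullness of $h_k(u)d_k$ transfers across the chain of approximations to produce a full element in the ideal of $b$ in $C$, whence $b$ is full.

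The main obstacle is this final transfer of fullness. Fullness is an open condition in $C$, but the radius of the fullness neighbourhood of $h_k(u)d_k$ depends on its witnessing elements; hence the proximity of $(h_k,d_k)$ to $(h,d)$, the Cuntz subequivalence cutoff $\varepsilon'$, and the spatial approximation around $z_0$ must all be carefully chained so that the hypothesis produces a full element actually inside the ideal of $b$. This careful bookkeeping is the delicate core of the density argument.
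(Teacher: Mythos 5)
Your general strategy is sound: view $C^*(D,u)$ as a $C(\sigma(u))$-algebra, use upper semicontinuity of fibre norms to get $\|h(u)b - \lambda h(u)d\|$ small for a lift $d$ of the fibre value and a narrow bump $h$, and then invoke R\o rdam's lemma. That part is fine (modulo the usual care with unitisations when $D$ is non-unital; in the paper's application $D=\overline{\phi}(A)$ is unital so this is harmless).

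The genuine gap is in the final transfer step, and you have already half-diagnosed it yourself. What you actually obtain from R\o rdam's lemma is $(h(u)d-\varepsilon')_+\in\overline{\mathrm{Ideal}}_C(b)$. If you now choose $(h_k,d_k)$ from a pre-fixed dense family of pairs with $\|h_k(u)d_k-h(u)d\|<\gamma$, R\o rdam gives you $(h_k(u)d_k-\gamma)_+\precsim h(u)d$ and $(h(u)d-\gamma)_+\precsim h_k(u)d_k$ — but neither of these places the full element $h_k(u)d_k$ (as opposed to a cutdown of it) inside $\overline{\mathrm{Ideal}}_C(b)$, and fullness is not preserved under small cutdowns $(x-\gamma)_+$ (think $x(t)=t$ in $C[0,1]$). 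So ``openness of fullness'' does not close the argument: you never produce a full element that actually lies in the ideal generated by $b$.

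The fix is to let the countable family depend on $C^*(D,u)$ rather than taking a generic dense family of pairs. Concretely: fix a countable dense set $\{c_n\}$ of nonzero positive contractions of $C^*(D,u)$, and for each $n$ and each rational $q\in(0,1)$ with $(c_n-q)_+\neq 0$, use your fibre argument to choose in advance a norm-one $h_{n,q}\in C(\mathbb T)_+$ and a nonzero positive contraction $d_{n,q}\in D$ with $h_{n,q}(u)\,d_{n,q}\in\overline{\mathrm{Ideal}}_{C^*(D,u)}\bigl((c_n-q)_+\bigr)$; enumerate these pairs as $(h_k,d_k)$. Then for any nonzero positive $b\in C^*(D,u)$, density plus R\o rdam (applied now with $\precsim$ pointing in the helpful direction) gives some $(c_n-q)_+\precsim b$, hence $h_k(u)d_k\in\overline{\mathrm{Ideal}}_{C^*(D,u)}\bigl((c_n-q)_+\bigr)\subseteq\overline{\mathrm{Ideal}}_C(b)$ for the corresponding $k$, and fullness of $h_k(u)d_k$ forces $b$ to be full. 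The density argument is thus carried out over a countable cofinal family of positive elements in $C^*(D,u)$ (exploiting separability of $C^*(D,u)$), not over a dense family of $(h,d)$-pairs; that is exactly what makes the cutdowns land on the right side.
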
 

It remains to construct $\alpha$ such that $h_{k}(\overline{v}_{\alpha}\overline{u})\overline{\phi}(a_{k})$ is full in $\calC(J)$, for each $k$. We would like to do this by considering lifts of these elements in $\calM(J)$, since it is easier to manipulate elements in $\calM(J)$ as compared with $\calC(J)$. However, while $v_{\alpha} u$ is a lift of the unitary $\overline{v}_\alpha \overline{u}$, it is not necessarily a unitary itself, so one cannot make sense of the functional calculus $h(v_\alpha u)$ for $h\in C(\mathbb{T})$. For this reason, it is useful to work with \emph{Laurent polynomials} $\hat{h}$: 
\begin{equation}
\hat{h}(z) = \sum_{s=0}^{S_{1}} \beta_{s} z^{s} + \sum_{s=1}^{S_{2}}\beta_{-s}\bar{z}^{s},
\end{equation} 
where $S_{1}, S_{2}\in \mathbb{N}$ and $\beta_{s}\in \mathbb{C}$, as there is no ambiguity regarding the definition of 
\begin{equation}
\hat{h}(x) = \sum_{s=0}^{S_{1}}\beta_{s}x^{s} + \sum_{s=1}^{S_{2}}\beta_{-s}(x^{\ast})^{s} 
\end{equation} 
for an element $x$ in any $C^*$-algebra. Since Laurent polynomials, when restricted to $\mathbb{T}$,
are uniformly dense in $C(\mathbb{T})$, we approximate each of the $h_{k}$ given by Lemma \ref{reduce-countable} by a Laurent polynomial $\hat{h}_{k}$. Taking care with the approximations, it 
will suffice to construct $\alpha$ such that $ \hat{h}_{k}(v_{\alpha} u) \phi(a_{k})$ is full in $\calM(J)$ for each $k\in \mathbb{N}$. 

Temporarily fix $k$. Then as $h_k$ is a non-zero positive function, there is a finite collection $\mu_1,\dots,\mu_{s}\in\mathbb T$ and $\kappa>0$ so that the sum of the translates $\sum_{i=1}^{s}h_k(\mu_{i}t)>\kappa$ for all $t\in\mathbb T$. Since $a_{k}\neq 0$ and $\phi$ is full, then $\overline{\phi}(a_{k})$ is full in $\calC(J)$ and thus $\sum_{i=1}^{s}h_k(\mu_{i}\overline{u}) \overline{\phi}(a_{k})\geq \kappa \overline{\phi}(a_{k})$ is full in $\calC(J)$. Hence at least one of the lifts of $h_k(\mu_{i}\overline{u}) \overline{\phi}(a_{k})$ does not lie in the ideal $\calI$ specified by Lemma \ref{main}. Take $\lambda_{k}$ to be a coefficient among $\mu_{1}, \cdots, \mu_{s}$ such that the lift of $h_k(\lambda_{k}\overline{u}) \overline{\phi}(a_{k})$ does not lie in $\calI$. 

Next, we use the abstract hypothesis on $\calI$ that the extension of $\calI$ in $\calM(J)$ is purely large relative to $J$. This ensures that any positive lift of $h_k(\lambda_{k}\overline{u}) \overline{\phi}(a_{k})$ Cuntz dominates all the elements in $J$. Using the fact that $\hat{h}_{k}(\lambda_{k} u)\phi(a_{k})$ approximates a positive lift of $h_k(\lambda_{k}\overline{u}) \overline{\phi}(a_{k})$,\footnote{Note that $\hat{h}_{k}(\lambda_{k} u)\phi(a_{k})$ need not be positive itself.} for  starting indices $n$ and $m$, and any $b\in J_{+}$, we can find a contraction $x\in \overline{bJ (e_{m'} - e_{m})}$ for some $m' > m$ and some $n'>n$, with
\begin{equation} 
\label{find_approx} 
x \hat{h}_{k}(v_\alpha u) \phi(a_{k})x^* \approx b,\footnote{This means that two elements are close in norm, and quantifiers will be specified in later proofs.} 
\end{equation} 
for any sequence $\alpha = (\alpha_n)_{n}$ which takes the constant value $\lambda_k$ on the block of indices from $n$ to $n'$. See Lemma \ref{finite_block_approx} for the details of this part of the proof. 

We will apply the procedure above to each $k$ infinitely many times. This will produce $n_1<n_1'<n_2<n_2'<\dots$, dividing the index set into infinitely many \emph{blocks} $B_r=\{n_r,n_r+1,\dots,n_r'\}$ separated by \emph{gaps} of length $n_{r+1} - n_{r}'$, which can be made as long as we like.  We will define the values $\alpha_n$ on the blocks to be the constants $\lambda_k$ (with each value $\lambda_k$ being assigned to infinitely many distinct blocks), and fill in the values of $\alpha_{n}$ in the gaps between the blocks so that the condition of Proposition \ref{unit_osci} is satisfied (possible as the gap length can be made arbitrarily long).  Accordingly $\overline{v}_\alpha$ is indeed a unitary in $\calU^{0}(\calC(J)\cap\overline\phi(A)')$.  For a fixed $k$, we can find elements $(x_\ell)_{\ell=1}^\infty$ with orthogonal supports\footnote{This is possible by taking the $b$ in \eqref{find_approx} to be $e_{\ell} - e_{\ell-1}$, and ensuring the starting index $m$ used at each stage of the induction is at least one more than the `finishing index' $m'$ of the previous stage.} so that 
\begin{equation}
    x_{\ell}\hat h_k(v_\alpha u)\phi(a_k)x_{\ell}^*\approx e_{\ell}-e_{\ell-1}.
\end{equation} 
The orthogonality of the supports of the $x_{\ell}$, and the almost idempotent nature of $(e_n)_{n}$ ensure that $x=\sum_{\ell} x_{\ell}$ is strictly convergent, and (taking suitable care with the estimates),
\begin{equation}
    x\hat{h}_k(v_\alpha u)\phi(a_k)x^*\approx 1_{\mathcal M(J)}.
\end{equation}
This ensures that $h_{k}(\overline{v}_{\alpha} u)\overline{\phi}(a_{k})$ is full in $\calC(J)$. 

By ensuring every index $k$ is assigned to infinitely many distinct blocks (and all the corresponding $e_{m'}-e_m$ are mutually orthogonal), the procedure above can be done for all indices simultaneously. This allows us to find a single sequence $\alpha$.

\subsection{Proof of Lemma \ref{main}}  \label{main_proof_section} 

The following technical but straightforward\footnote{It amounts to standard calculations with approximate units, and is readily verified when $\hat{h}(z)$ is a monomial: either $z^s$ or $\bar{z}^s$.} lemma for working with Laurent polynomials is stated as \cite[Lemma 3.15]{K1-injective}. As remarked there, a version of the lemma is proved as \cite[Lemma 4.2, Lemma 4.3]{ess-codim} in the case that $(e_n)_n$ consists of projections, and the general proof works in a very similar fashion. For elements $a, b\in A$ and $\epsilon > 0$, we denote $\|a-b\| < \epsilon$ by $a\approx_{\epsilon} b$. 

\begin{lem}[{\cite[Lemma 3.15]{K1-injective}}]
\label{finite_block_Laurent} 
Let $J$ be a $C^*$-algebra with an almost idempotent approximate unit $(e_{n})_{n}$ and a contraction $u$ in $\calM(J)$. For any Laurent polynomial $\hat{h}$, $\epsilon > 0$, integers $n_{0}, m_{0}>0$, $b\in J$, $a\in \calM(J)$, the following properties hold, 
\begin{enumerate} [(i)] 
\item There exists an integer $m_{1}> m_{0}$ such that 
\begin{equation} 
(1-e_{m_{1}})  \hat{h}(v_{\alpha}u) \approx_{\epsilon} (1-e_{m_{1}})\hat{h}(v_{\beta}u), 
\end{equation}  
for any sequences $\alpha = (\alpha_{n}),_{n} \beta = (\beta_{n})_{n}\subseteq \mathbb{T}$ with $\alpha_{n} = \beta_{n}$ for $n\geq n_{0}$. 
\item There exists an integer $n_{1} > n_{0}$ such that 
\begin{equation} 
\hat{h}(v_{\alpha}u)b \approx_{\epsilon} \hat{h}(v_{\beta}u)b, 
\end{equation}  
for any sequences $\alpha = (\alpha_{n})_{n}, \beta = (\beta_{n})_{n}\subseteq \mathbb{T}$ with $\alpha_{n} = \beta_{n}$ for $n\leq n_{1}$. 
\item There exists an integer $m_{2} > m_{0}$ such that for any sequences $\alpha = (\alpha_{n})_{n}\subseteq \mathbb{T}$, we have 
\begin{equation} 
\|e_{m_{0}}\hat{h}(v_{\alpha}u) a (1-e_{m_{2}}) \| < \epsilon \text{ and } \|(1-e_{m_{2}})\hat{h}(v_{\alpha}u) a e_{m_{0}}\| < \epsilon. 
\end{equation}  
\end{enumerate} 
\end{lem}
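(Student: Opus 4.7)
The plan is to reduce to monomials and then exploit the bidiagonal structure of $v_\alpha$ relative to the almost idempotent unit $(e_n)_n$. By linearity, after absorbing the coefficients of $\hat{h}$ into $\epsilon$, it suffices to prove each statement for $\hat{h}(z) = z^s$ and $\hat{h}(z) = \bar{z}^s$; the latter follows from the former by replacing $u$ with $u^*$ and taking adjoints, so I focus on $\hat{h}(v_\alpha u) = (v_\alpha u)^s$. The crucial algebraic identities, which follow by induction from the almost idempotent property $e_{n+1}e_n = e_n$, are $e_m(e_n - e_{n-1}) = (e_n - e_{n-1})e_m$, which equals $e_n - e_{n-1}$ if $m \geq n$ and $0$ if $m < n$. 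In particular $e_m v_\alpha = v_\alpha e_m \in e_m J e_m$ is uniformly bounded by $2$, while $(1 - e_m)v_\alpha = v_\alpha(1 - e_m)$.

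For part (ii), since $b \in J$ one can choose $\tilde{n}_1$ with $\|b - e_{\tilde{n}_1}b\|$ arbitrarily small, and because $\|(v_\alpha u)^s\| \leq 2^s$ uniformly in $\alpha$, it suffices to show that $(v_\alpha u)^s e_{\tilde{n}_1}$ is determined up to a controlled error by $\alpha_1,\ldots,\alpha_{n_1}$ for some $n_1$. I would proceed inductively from the right: given that the rightmost factor has the form $u e_{N_j}$ with $e_{N_j} \in J$, approximate $u e_{N_j}$ by $e_{N_{j+1}} u e_{N_j}$ for some $N_{j+1}$ (uniformly in $\alpha$), then invoke $e_{N_{j+1}} v_\alpha = v_\alpha e_{N_{j+1}}$ to absorb the projection into $v_\alpha$. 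After $s$ such passes, the element depends only on $v_\alpha e_{N_s}$, i.e.\ on $\alpha_n$ for $n \leq N_s$. Setting $n_1 \coloneqq \max(n_0 + 1, N_s)$ delivers the statement.

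For part (i), we have $v_\alpha - v_\beta = \sum_{n < n_0}(\alpha_n - \beta_n)(e_n - e_{n-1}) \in e_{n_0 - 1} J e_{n_0 - 1}$. The telescoping expansion writes $(v_\alpha u)^s - (v_\beta u)^s$ as a sum of terms of the form $(v_\alpha u)^k(v_\alpha - v_\beta)u(v_\beta u)^{s-1-k}$ for $0 \leq k \leq s - 1$. For $m_1 \geq n_0 - 1$ the term with $k = 0$ is annihilated since $(1 - e_{m_1})e_{n_0 - 1} = 0$. For $k \geq 1$, the crux is the uniform estimate $\|(1 - e_{m_1})(v_\alpha u)^k e_{n_0 - 1}\| \to 0$ as $m_1 \to \infty$. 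The base case $k = 1$ follows from the bidiagonal expansion $v_\alpha u e_{n_0 - 1} = \sum_n \alpha_n(e_n - e_{n-1}) u e_{n_0 - 1}$: since $u e_{n_0 - 1} \in J$, the term norms $\|(e_n - e_{n-1}) u e_{n_0 - 1}\|$ tend to zero, and Lemma \ref{almost_ortho_approx} bounds the tail for $n > m_1$ by $2\sup_{n > m_1}\|(e_n - e_{n-1}) u e_{n_0 - 1}\|$, uniformly in $\alpha$. Iterating handles larger $k$.

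For part (iii), the first estimate is proved by symmetrically pushing $e_{m_0}$ rightwards through $(v_\alpha u)^s$ using $e_m v_\alpha = v_\alpha e_m$ and the uniform approximation $e_{N_j} u \approx e_{N_j} u e_{N_{j+1}}$ at each stage. After $s$ passes the rightmost factor becomes $e_{N_s} a (1 - e_{m_2})$, and choosing $m_2$ large makes this small since $e_{N_s} a \in J$. The second estimate is obtained by taking adjoints (applied to $\hat{h}^*(z) \coloneqq \overline{\hat{h}(\bar z)}$ and $a^*$). The main technical obstacle throughout will be uniformity in $\alpha$ and $\beta$: every auxiliary index $m_1, m_2, n_1$, or $N_j$ must depend only on the fixed data $(u, a, b, n_0, m_0, \hat{h}, \epsilon)$. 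This is guaranteed by the uniform norm bound $\|v_\alpha\| \leq 2$ and by the fact that the approximation error at each stage involves only fixed elements of $J$ such as $u e_N$ and $e_M u$.
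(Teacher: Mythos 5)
The paper does not prove this lemma: it is explicitly deferred to \cite[Lemma 3.15]{K1-injective}, with a footnote remarking only that the argument is a standard computation with approximate units that ``is readily verified when $\hat h(z)$ is a monomial.'' Your proposal is therefore a reasonable expansion of the intended argument, and the overall strategy --- reduce to $z^s$ by linearity and adjoints, then push $e_n$'s through $(v_\alpha u)^s$ using the commutativity $e_m v_\alpha = v_\alpha e_m$ and the bidiagonal bound from Lemma \ref{almost_ortho_approx} --- is the right one.

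However, there is a genuine algebraic error in your stated ``crucial identity.'' For an almost idempotent approximate unit $(e_n)_n$ (which need not consist of projections), $e_{n+1}e_n = e_n$ gives $e_m(e_n - e_{n-1}) = e_n - e_{n-1}$ for $m \geq n$ and $e_m(e_n - e_{n-1}) = 0$ only for $m \leq n-2$; the boundary case is $e_{n-1}(e_n - e_{n-1}) = e_{n-1}(1-e_{n-1})$, which is nonzero in general (its norm can be as large as $1/4$). The same off-by-one infects your claim $e_m v_\alpha \in e_m J e_m$: from the display above one finds $e_m v_\alpha = \sum_{n\leq m}\alpha_n(e_n - e_{n-1}) + \alpha_{m+1}(e_m - e_m^2)$, which lies in $e_{m+1}Je_{m+1}$, not $e_m J e_m$. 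These are bookkeeping slips rather than fatal gaps --- shift all indices by one and the argument survives --- but the identities as written are false and would mislead a reader checking the base cases.

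A second, softer issue is in part (i), where you invoke Lemma \ref{almost_ortho_approx} to bound the tail $\sum_{n>m_1}\alpha_n(e_n - e_{n-1})u\,e_{n_0-1}$. These summands all share the fixed right factor $u\,e_{n_0-1}$ rather than having supports in disjoint intervals $(e_{n_{k,2}} - e_{n_{k-1,2}})$, so the hypotheses of that lemma are not literally satisfied. The cleaner route (which gives exactly the uniform bound you want) is to use commutativity directly: since $(1-e_{m_1})v_\alpha = v_\alpha(1-e_{m_1})$ one has $\|(1-e_{m_1})v_\alpha u\,e_{n_0-1}\| \leq \|v_\alpha\|\cdot\|(1-e_{m_1})u\,e_{n_0-1}\| \leq 2\|(1-e_{m_1})u\,e_{n_0-1}\|$, and the last quantity tends to $0$ uniformly in $\alpha$ since $u\,e_{n_0-1}\in J$. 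Your inductive scheme for $k\geq 2$ (and for parts (ii) and (iii)) is sound once these two corrections are made.
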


We now turn to each of the inductive steps in the proof of Lemma \ref{main}. For $\epsilon>0$ and a positive contraction $a$, we write $(a-\epsilon)_+$ for the element $g_\epsilon(a)$, where $g_\epsilon(t)=\max(t-\epsilon,0)$. We let $f_\epsilon$ be the positive continuous function on $[0, \infty)$ given by
\begin{equation} 
\label{stair_function} 
f_{\epsilon}(t) \coloneqq
\begin{cases} 
t/\epsilon, & 0\leq t\leq \epsilon; \\
1, & t\geq \epsilon,
\end{cases} 
\end{equation}
so that $f_\epsilon(a)(a-\epsilon)_+=(a-\epsilon)_+$ for all positive contractions $a$. For completeness, we (re)state all the assumptions involved in the lemma. 

\begin{lem} \label{finite_block_approx}  
Let $A$ be a unital and separable $C^*$-algebra and let $J$ be a stable $C^*$-algebra with an almost idempotent approximate unit $(e_{n})_{n}$. Suppose there exists a proper ideal $\calI\subseteq\calM(J)$ containing $J$ such that the extension of $\calI$ in $\calM(J)$ is relatively purely large with respect to $J$. Let $\phi: A \rightarrow\calM(J)$ be a unital and full $^*$-homomorphism. Let $\overline{u}$ be a unitary in $\calC(J)\cap \overline{\phi}(A)'$ with a contractive lifting $u\in \calM(J)$. 

Let $a\in A_{+}$ be a nonzero positive contraction and let $h\in C(\mathbb T)_+$ have norm one. There exists $\delta>0$ and $\lambda\in\mathbb T$ such that for any $m_{0}, n_{0}\in \mathbb{N}$, any positive contraction $b_{0}\in J$, $\epsilon>0$ and Laurent polynomial $\hat{h}$ with $\|f_{1/3}\circ h-\hat{h}|_{\mathbb{T}}\|_{\infty} < {\epsilon}$, there exist $m_{2}>m_{1}>m_{0}$, $n_{1}>n_{0}$ and a contraction $x\in \overline{b_{0}J(e_{m_{2}}-e_{m_{1}})}$ such that 
\begin{equation} 
x\hat{h}(v_{\alpha}u) f_{\delta}(\phi(a)) x^{\ast}\approx_{ \epsilon}  b_{0},  
\end{equation} 
whenever $\alpha = (\alpha_{n})_{n}$ is a sequence in $\mathbb{T}$ with $\alpha_{n} = \lambda$ for $n_{0}\leq n \leq n_{1}$. 
\end{lem}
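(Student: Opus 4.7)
I would split the proof into two stages: first choose $\lambda\in\mathbb T$ and $\delta>0$ using only $h$ and $a$, then, given the remaining parameters $m_0,n_0,b_0,\epsilon,\hat h$, construct the indices $m_1<m_2$, $n_1>n_0$ and the contraction $x$.

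For $\lambda$, I would exploit that any norm-one positive $h\in C(\mathbb T)$ admits finitely many rotations $\mu_1,\dots,\mu_s\in\mathbb T$ and $\kappa>0$ with $\sum_i h(\mu_i z)\geq\kappa$ for all $z\in\mathbb T$. Then $\sum_i h(\mu_i\overline u)\overline\phi(a)\geq\kappa\overline\phi(a)$ is full in $\calC(J)$ by fullness of $\phi$ and $a\neq 0$. As $\calI$ is a proper ideal of $\calM(J)$ containing $J$, the induced quotient $q\colon\calC(J)\to\calM(J)/\calI$ is non-zero, so $q(\overline\phi(a))\neq 0$, hence $q(h(\mu_i\overline u)\overline\phi(a))\neq 0$ for at least one $i$; set $\lambda\coloneqq\mu_i$. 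Since $\overline u\in\calC(J)\cap\overline\phi(A)'$, the elements $(f_{1/3}\circ h)(\lambda\overline u)$, $\overline\phi(a)$, $f_\delta(\overline\phi(a))$ and $(\overline\phi(a)-\delta)_+$ all lie in a common commutative $C^*$-subalgebra of $\calC(J)$. The pointwise inequality $h\leq f_{1/3}\circ h$ on $[0,1]$ then gives $q((f_{1/3}\circ h)(\lambda\overline u)\overline\phi(a))\neq 0$, and norm continuity $(\overline\phi(a)-\delta)_+\to\overline\phi(a)$ as $\delta\to 0^+$ yields, for $\delta>0$ small enough, $q((f_{1/3}\circ h)(\lambda\overline u)(\overline\phi(a)-\delta)_+)\neq 0$. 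Combining with the commutative inequality $(\overline\phi(a)-\delta)_+\leq f_\delta(\overline\phi(a))\overline\phi(a)\leq f_\delta(\overline\phi(a))$ yields $q(Y)\neq 0$ for $Y\coloneqq(f_{1/3}\circ h)(\lambda\overline u)f_\delta(\overline\phi(a))$.

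Now fix a positive lift $Y'\in\calM(J)_+$ of $Y$; then $Y'\notin\calI$. Applying Lemma \ref{relative_purely_large_equi} with $b_0\in J_+$ and $Y'\in\calM(J)_+\setminus\calI$ (using relative pure largeness of $\calI\lhd\calM(J)$ with respect to $J$) gives a norm-bounded $r\in\calM(J)$ with $r^*Y'r$ close to $b_0$; a Rørdam spectral cutoff arranges $r^*Y'r=(b_0-\eta)_+$ for any prescribed small $\eta$. Set $W\coloneqq\hat h(\lambda u)f_\delta(\phi(a))\in\calM(J)$; from $\|\hat h|_{\mathbb T}-f_{1/3}\circ h\|_\infty<\epsilon$ and contractivity of $\overline u,\overline\phi(a)$ we obtain $\|\pi(W)-Y\|<\epsilon$, so there is $j\in J$ with $\|W-Y'-j\|<\epsilon$. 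The candidate $x$ is built by truncating $r$: take $x\coloneqq(b_0 e_{m_2} r^*(e_{m_2}-e_{m_1}))\cdot \zeta$ for a functional-calculus factor $\zeta$ that makes $x$ into a contraction lying in $\overline{b_0 J(e_{m_2}-e_{m_1})}$ (using that $r^*e_{m_2}\in J$ and $e_{m_2}(e_{m_2}-e_{m_1})=e_{m_2}-e_{m_1}$ by the almost idempotent relations). Lemma \ref{finite_block_Laurent}(i)--(iii) are then used (with the input $x^*\in J$ and $m_0,n_0$) to fix $n_1>n_0$ large so that, for any $\alpha$ equal to $\lambda$ on $[n_0,n_1]$, the element $\hat h(v_\alpha u)$ agrees with $\hat h(\lambda u)$ when applied on the right to $x^*$ up to $\epsilon$. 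The central estimate is the telescoping chain
\begin{equation*}
x\hat h(v_\alpha u)f_\delta(\phi(a))x^*\;\approx\;xWx^*\;\approx\;xY'x^*+xjx^*\;\approx\;b_0+xjx^*,
\end{equation*}
where the first approximation is Lemma \ref{finite_block_Laurent}, the second uses $\|W-Y'-j\|<\epsilon$ together with $\|x\|\leq 1$, and the third uses $r^*Y'r\approx b_0$. The unwanted term $xjx^*$ is absorbed by choosing $m_1$ (and consequently $m_2>m_1$) large enough that $\|(e_{m_2}-e_{m_1})j(e_{m_2}-e_{m_1})\|<\epsilon$ for all $m_2>m_1$, which is possible since $(e_n)_n$ is an approximate unit of $J$ and $j\in J$.

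The principal obstacle will be coordinating the choices. The element $j\in J$ arising from the corona approximation can have arbitrarily large norm, so the only lever for suppressing $xjx^*$ is to push the right support of $x$ deep into the tail of $(e_n)_n$; this must be done consistently with the window constraint $\alpha\equiv\lambda$ on $[n_0,n_1]$ imposed by Lemma \ref{finite_block_Laurent} (which dictates how big $n_1$ has to be once $x^*$ is known, creating a mild circular dependency resolved by first fixing $m_1$ using $j$ and then fixing $n_1$ using $x^*$), and with realizing $x$ concretely in $\overline{b_0 J(e_{m_2}-e_{m_1})}$ as a contraction while preserving $xY'x^*\approx b_0$. Threading this needle requires careful Rørdam-style spectral cutoffs together with the identity $e_{n+1}e_n=e_n$ to reconcile the various supports.
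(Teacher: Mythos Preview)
Your overall architecture is right, but there is a genuine gap in the norm control at the pure-largeness step, and a related ordering issue that the paper handles differently.

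You apply Lemma~\ref{relative_purely_large_equi} to $Y'\in\calM(J)_+\setminus\calI$ knowing only that $q(Y')\neq 0$. That lemma produces $r$ with $\|r\|\leq(\|b_0\|/\|q(Y')\|)^{1/2}$, so if $\|q(Y')\|$ is small the bound blows up. Your ``functional-calculus factor $\zeta$'' cannot rescue this: any rescaling that forces $\|x\|\leq 1$ will destroy $xY'x^*\approx b_0$, and without $\|x\|\leq 1$ the middle step of your telescoping chain (which uses $\|x\|^2\cdot\|W-Y'-j\|<\epsilon$) fails to give an $\epsilon$-estimate. The paper secures $\|q(\overline c)\|=1$ by an algebraic trick you are missing: rather than rotate $h$ itself, they rotate $(h-1/3)_+$ and choose $\delta$ so that $(\phi(a)-\delta)_+$ is full. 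This yields $\overline d\coloneqq(h-1/3)_+(\lambda\overline u)(\overline\phi(a)-\delta)_+\notin\pi(\calI)$, and since $f_{1/3}\circ h\equiv 1$ on $\mathrm{supp}(h-1/3)_+$ and $f_\delta\equiv 1$ on $\mathrm{supp}(t-\delta)_+$, the element $\overline c\coloneqq(f_{1/3}\circ h)(\lambda\overline u)f_\delta(\overline\phi(a))$ satisfies $\overline c\,\overline d=\overline d$. Hence $\overline c$ acts as a unit on a nonzero element of $\calM(J)/\calI$, forcing $\|\overline c+\calI\|=1$; now Lemma~\ref{relative_purely_large_equi} returns a genuine contraction.

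The paper also reverses your order of operations: it \emph{first} uses Lemma~\ref{finite_block_Laurent}(i) to pass to $(1-e_{m_1})c(1-e_{m_1})$ (still a lift of $\overline c$, so still quotient-norm $1$), and only \emph{then} invokes pure largeness to get a contraction $y\in J$ with $y(1-e_{m_1})c(1-e_{m_1})y^*\approx b_0$. This means the element to which pure largeness is applied already carries the tail support, so the subsequent replacement $y(1-e_{m_1})\rightsquigarrow b_0^{1/\ell}y(e_{m_2}-e_{m_1})$ is a harmless approximate-unit perturbation in $J$. In your scheme you fix $r$ first and then try to force its support into $(e_{m_2}-e_{m_1})$; there is no reason the truncation $r(e_{m_2}-e_{m_1})$ should still satisfy $r^*Y'r\approx b_0$, since $r$ was chosen for $Y'$, not for a compressed version of it. Fixing both issues essentially reproduces the paper's proof.
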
   

\begin{proof} 
Fix $a\in A$ to be a non-zero positive contraction and $h\in C(\mathbb T)_+$ with norm one. Since $a\neq 0$ and $\phi$ is full, there exists $\delta>0$ such that $(\phi(a) - \delta)_{+} = \phi((a-\delta)_{+})$ is full in $\calM(J)$. As $h\in C(\mathbb{T})$ is positive and has norm one, by choosing a neighborhood in $\mathbb{T}$ where the value of $h$ is strictly bigger than $1/3$ and rotating it by sufficiently many points on $\mathbb{T}$, we can find $\mu_{1}, \cdots, \mu_{s}\in \mathbb{T}$ such that 
\begin{equation} 
0 < \kappa \leq \sum_{i=1}^{s} (h-1/3)_{+}(\mu_{i} t), \ \ t\in \mathbb{T}, 
\end{equation}  
for some constant $\kappa>0$. Since ${\overline{u}}\in \calC(J)$ commutes with $\overline{\phi}(A)$, then 
\begin{equation} 
\label{bdd_below} 
0 \leq \kappa \cdot (\overline{\phi} (a) - \delta)_{+} \leq \sum_{i=1}^{s} (h-1/3)_{+}(\mu_{i} \overline{u}) (\overline{\phi} (a) - \delta)_{+}. 
\end{equation} 
Since $(\phi(a) - \delta)_{+}\in \calM(J)$ is a full element, the sum on the right-hand side of (\ref{bdd_below}) is full in $\calC(J)$, and so does not lie in the proper ideal $\pi (\calI)\lhd \mathcal C(J)$, where $\pi: \calM(J)\rightarrow \calC(J)$ is the quotient map. Thus there exists $i_{0}\in \{1, \dots, s\}$ such that 
\begin{equation} 
\overline{d} \coloneqq (h-1/3)_{+}(\mu_{i_{0}} \overline{u}) (\overline{\phi} (a) - \delta)_{+}\notin \pi (\calI). 
\end{equation} 
Fix $\lambda\coloneqq \mu_{i_0}$, and write 
\begin{equation}
\overline{c} \coloneqq (f_{1/3}\circ h)(\lambda \overline{u})f_\delta(\overline{\phi}(a)), 
\end{equation} 
where $f_{1/3}$ and $f_{\delta}$ are the positive functions of norm one defined in \eqref{stair_function}. Then $\overline{c}$ and $\overline{d}$ are contractions in $\calC(J)_{+}$ satisfying $\overline{c}\overline{d}=\overline{d}$ since $\overline{u}\in \calC(J)\cap \overline{\phi}(A)'$. Moreover, applying the canonical map $\mathcal C(J)\to\mathcal M(J)/\mathcal I$, the image of $\overline{c}$ in $\mathcal M(J)/\mathcal I$ acts as the unit on the image of $\overline{d}$ (which is non-zero as $\overline{d}\notin\pi(\mathcal I))$.  Accordingly 
\begin{equation}
\label{induction.normcontrol}
\|\overline{c}+\mathcal I\|=1\text{ in }\mathcal M(J)/\mathcal I.
\end{equation} 

Let $m_0,n_0\in\mathbb N$, $\epsilon>0$, $b_0\in J$ be a positive contraction and $\hat{h}$ be a Laurent polynomial with $\|f_{1/3}\circ h-\hat{h}|_{\mathbb T}\|_\infty<\epsilon$. Let $c\in \calM(J)$ be a contractive positive lift of $\overline{c}$. For any sequence $\alpha = (\alpha_{n})_{n}$ with $\alpha_n = \lambda$ for all $n\geq n_{0}$, we have $\overline{v}_{\alpha}=\lambda 1_{\mathcal C(J)}$ and thus
\begin{equation} \label{Corona2} 
\pi \big( \hat{h} (v_{\alpha} u)\big) = \hat{h} (\overline{v}_{\alpha} \overline{u} ) = \hat{h} (\lambda \overline{u}) = \pi \big(\hat{h} (\lambda u)\big). 
\end{equation} 
Take $0 < \mu < \epsilon - \|f_{1/3}\circ h-\hat{h}|_{\mathbb T}\|_\infty$, then by the choice of $\mu$ and $\hat{h}$, 
\begin{equation} \label{corona3}
\pi(c) = \overline{c} \approx_{\epsilon-\mu} \hat{h}(\lambda \overline{u}) f_{\delta}(\overline{\phi}(a)) \stackrel{\eqref{Corona2}}{=} \pi(\hat{h}(v_\alpha u)f_\delta(\phi(a))). 
\end{equation} 
As $(e_{n})_{n}$ is an almost idempotent approximate unit of $J$, we can approximately lift \eqref{corona3} to elements in the multiplier algebra. By applying Lemma \ref{finite_block_Laurent} (i), there exists $m_{1}>m_{0}$ such that 
\begin{equation} \label{corona4}
(1-e_{m_{1}})c(1-e_{m_{1}})\approx_{\epsilon - \mu} (1-e_{m_{1}})\hat{h}(v_\alpha u)f_\delta(\phi(a))(1-e_{m_{1}}), 
\end{equation} 
for any sequence $\alpha = (\alpha_{n})_{n}$ with $\alpha_n = \lambda$ for all $n\geq n_{0}$. 

Note that as $(1-e_{m_1})c(1-e_{m_1})$ is a lift of $\overline{c}$ in $\calM(J)$, \eqref{induction.normcontrol} implies that
\begin{equation}
\|q((1-e_{m_1})c(1-e_{m_1}))\| = \|\overline{c} + \calI \|=1\text{ in }\mathcal M(J)/\mathcal I, 
\end{equation} 
where $q: \calM(J) \rightarrow \calM(J)/\mathcal I$ is the quotient map. As $\mathcal I$ is relatively purely large in $\calM(J)$ with respect to $J$, Lemma \ref{relative_purely_large_equi} provides a contraction $y\in J$ with 
\begin{equation} 
y(1-e_{m_1})c(1-e_{m_1})y^*\approx_{\mu/2} b_0.
\end{equation} 
Combining this with \eqref{corona4}, we have
\begin{equation} \label{multiplier1} 
    y(1-e_{m_{1}}) \hat{h}( v_{\alpha} u) f_{\delta}(\phi(a)) (1-e_{m_{1}})y^*\approx_{\epsilon-\mu/2} b_0,
\end{equation}
for all sequences $(\alpha_n)_{n}$ with $\alpha_n=\lambda$ when $n\geq n_{0}$. Since $b_{0}^{1/\ell} b_{0} \rightarrow b_{0}$ when $\ell \rightarrow \infty$ and $(e_{n})_{n}$ is an approximate unit of $J$, there exists $\ell_{0}\in \mathbb{N}$ and $m_{2}>m_{1}$ such that when replacing $y(1-e_{m_{1}})$ in \eqref{multiplier1} by the contraction 
\begin{equation} 
x \coloneqq b_{0}^{1/\ell_{0}} y (e_{m_{2}}-e_{m_{1}})\in \overline{b_{0}B(e_{m_{2}}-e_{m_{1}})}, 
\end{equation} 
the approximation \eqref{multiplier1} still holds, namely, 
\begin{equation}  \label{multiplier2} 
x \hat{h}(v_{\alpha} u) f_{\delta}(\phi(a)) x^{\ast} \approx_{\epsilon - \mu /2} b_{0}, 
\end{equation} 
for all sequences $(\alpha_n)_{n}$ with $\alpha_n=\lambda$ when $n\geq n_{0}$. Lastly, since $x\in J$, by Lemma \ref{finite_block_Laurent} (ii), there exists $n_{1}>n_{0}$ such that 
\begin{equation}
x\hat{h}(v_\alpha u)\approx_{\mu/2} x\hat{h}(v_\beta u), 
\end{equation} 
whenever the sequences $(\alpha_n)_n$ and $(\beta_n)_n$ in $\mathbb T$ have $\alpha_n=\beta_n$ for all $n\leq n_{1}$. 
Combining this with \eqref{multiplier2}, we obtain 
\begin{equation}
    x \hat{h}(v_{\alpha} u) f_{\delta} (\phi(a)) x^*\approx_{\epsilon} b_0, 
\end{equation} 
for any $\alpha = (\alpha_{n})_{n}$ in $\mathbb{T}$ with $\alpha_{n} = \lambda$ for $n_{0}\leq n\leq n_{1}$. 
\end{proof} 

We now inductively apply Lemma \ref{finite_block_approx} to prove the main technical lemma (Lemma \ref{main}). 

\begin{proof}[Proof of Lemma \ref{main}]
Fix a unitary $\overline{u}\in \calC(J)\cap \overline{\phi}(A)'$ with a contractive lift $u$ in $\calM(J)$. Let $(e_n)_{n}$ be an almost idempotent approximate unit of $J$, sufficiently quasicentral with respect to $\phi(A)$ as given by Proposition \ref{unitary_commutant}, and write $e_0=0$. Fix $0< \epsilon < 1/7$. 

Let $(h_k)_{k}$ and $(a_k)_{k}$ be sequences of norm one positive functions in $C(\mathbb T)$ and non-zero positive contractions in $A$ respectively given by Lemma \ref{reduce-countable}. For each $k$, we fix a Laurent polynomial $\hat{h}_{k}$ so that $\|h_{k} - \hat{h}_{k}\|_{\infty} < \epsilon$, and take $\delta_{k}>0$ and $\lambda_{k}\in \mathbb{T}$ satisfying the conditions mentioned in Lemma \ref{finite_block_approx}. Finally, fix a bijection $\theta\colon\mathbb N\times \mathbb{N}\to\mathbb N$ such that for any $k\in \mathbb{N}$ and $\ell_{1}<\ell_{2}$, then $\theta(k, \ell_{1}) < \theta(k, \ell_{2})$. In the following proof, we perform an induction on $r$ and $\theta(k, \ell) = r$ means that the $r$-th inductive step is the $\ell$-th time we use the pair $(h_{k}, a_{k})$, and $e_{\ell} - e_{\ell-1}$ is approximated, see \eqref{MainProofC4} for details.

Combining Lemma \ref{finite_block_Laurent} and Lemma \ref{finite_block_approx}, it is possible to inductively construct
\begin{itemize} 
\item contractions $x_{r} \in J$ for every $r\in \mathbb{N}$; 
\item increasing sequences of natural numbers $n_1<n_1'<n_2<n_2'<\dots$ and $m_1<m_1'<m_2<m_2'<\dots$ such that $n_{r}\geq n_{r-1}' + r$, 
\end{itemize}
with the following properties for each $r\in\mathbb{N}$ and $k,\ell\in\mathbb{N}$ satisfying $\theta(k, \ell)= r$: 
\begin{enumerate}[(i)] 
\item\label{MainProofC3} $x_r\in \overline{(e_{\ell}-e_{\ell-1})J(e_{m_{r}'}-e_{m_r})}$; 
\item\label{MainProofC4} for any sequence $\alpha = (\alpha_n)_{n}$ in $\mathbb T$ with $\alpha_n=\lambda_k$ for $n_r\leq n\leq n_r'$, one has
\begin{equation}
x_r \hat{h}_{k}(v_{\alpha}u) f_{\delta_{k}}(\phi(a_{k})) x_r^*\approx_{\epsilon} e_{\ell}-e_{\ell-1};
\end{equation}
\item\label{MainProofC2} for any sequence $\alpha = (\alpha_n)_{n}$ in $\mathbb T$, the element $\hat{h}_{k}(v_{\alpha}u) f_{\delta_{k}}(\phi(a_{k}))$ is approximately block diagonal and its ``off-diagonal'' terms are norm small, in the sense that
\begin{align} \label{cross-approx1} 
\Big\| e_{m_{r-1}'+1} \hat{h}_{k}(v_{\alpha}u) f_{\delta_{k}}(\phi(a_{k})) (1-e_{m_r-1}) \Big\| &< {\frac{\epsilon}{\ell\cdot 2^{\ell+1}}} \text{ and} \\ 
\label{cross-approx2} 
\Big\| (1-e_{m_r-1}) \hat{h}_{k}(v_{\alpha}u) f_{\delta_{k}}(\phi(a_{k})) e_{m_{r-1}'+1} \Big\| &< \frac{\epsilon}{\ell\cdot 2^{\ell+1}}. 
\end{align} 
\end{enumerate} 
Indeed, assume these objects have been constructed for the first $r-1$ inductive steps (in the case $r=1$, we can formally commence with $n_0'=m_0'=0$). Suppose that $k, \ell$ are natural numbers such that $\theta(k, \ell) = r$. Then Lemma \ref{finite_block_Laurent} (iii) enables us to find some $\tilde{m}_r > m_{r-1}'$ such that for any $\alpha = (\alpha_{n})_{n}$ in $\mathbb{T}$, 
\begin{align}
\label{cross_approx3}
\Big\| e_{m_{r-1}'+1} \hat{h}_{k} (v_{\alpha} u) f_{\delta_{k}} (\phi(a_{k})) (1-e_{\tilde{m}_r}) \Big\| &< \frac{\epsilon}{\ell\cdot 2^{\ell+1}},\\
\label{cross_approx4} 
\Big\| (1-e_{\tilde{m}_r}) \hat{h}_{k} (v_{\alpha} u)f_{\delta_{k}}(\phi(a_{k})) e_{m_{r-1}'+1} \Big\| &< \frac{\epsilon}{\ell\cdot 2^{\ell+1}}. 
\end{align} 
The inequalities above also hold if we replace $\tilde{m}_r$ by any integer $m > \tilde{m}_r$, since $(1-e_{\tilde{m}_r})(1-e_{m}) = 1-e_{m}$ by the assumption that $(e_{n})_{n}$ is almost idempotent. Now we can apply Lemma \ref{finite_block_approx} to $\tilde{m}_r$ and some $n_{r}\coloneqq n_{r-1}'+r+1$ as starting indices, and find $m_r' > m_{r} > \tilde{m}_r$ (so that \eqref{MainProofC2} holds by the choice of $\tilde{m}_{r}$), together with $n_{r}' > n_{r}$ and $x_r$ such that \eqref{MainProofC3} and \eqref{MainProofC4} hold. 

For each $r\in \mathbb{N}$, suppose that $k, \ell$ are the natural numbers that $\theta(k, \ell) = r$. Then define $\beta_n=\lambda_{k}$ for those $n_r\leq n\leq n_r'$ lying in the $r$-th block. Since the gap length $n_{r} - n_{r-1}'>r$, we can fill in the values of $\beta_{n}$ in the gap so that they slowly move from the value in the $(r-1)$-th block to the value in the $r$-th block without winding around the circle. More precisely, suppose the constant value in the $(r-1)$-th block and $r$-th block are $\mu$ and $\nu$ respectively. Take $\theta_{n_{r-1}'} = \log \mu$ and $\theta_{n_{r}} = \log \nu$ in $[0,2\pi)$. For $n_{r-1}' \leq n\leq n_{r}$, define 
\begin{equation} 
\theta_{n} = \log \mu + \frac{n-n_{r-1}'}{n_{r} - n_{r-1}'} (\log \nu - \log \mu), 
\end{equation} 
and $\beta_{n} = e^{\theta_{n}}$. Since $n_{r} - n_{r-1}' > r$, it follows that $|\beta_{n} - \beta_{n+1}| < 2\pi / r$ for $n_{r-1}'\leq n\leq n_{r}$. By Proposition \ref{unit_osci}, we have constructed a sequence $\beta = (\beta_{n})_{n}$ in $\mathbb{T}$ such that $\overline{v}_\beta$ is a unitary in $\mathcal U^0(\mathcal C(J)\cap \overline\phi(A)')$.

It remains to show that the inclusion $\iota: C^{*}(\overline{\phi}(A), \overline{v}_{\beta}\overline{u})\rightarrow \calC(J)$ is full. By our choices of $(h_{k})_{k}$ and $(a_{k})_{k}$ from Lemma \ref{reduce-countable}, it suffices to show that for each $k\in \mathbb{N}$, the element $h_{k}(\overline{v}_{\beta} \overline{u}) \overline{\phi}(a_{k})$ is full in $\calC(J)$. Fix some $k\in \mathbb{N}$. Since $x_{r}$ is contractive for any $r\in\mathbb{N}$ and satisfies \eqref{MainProofC3}, the bidiagonal sum
\begin{equation} 
x \coloneqq \sum_{\ell=1}^{\infty} x_{\theta(k, \ell)} 
\end{equation} 
is strictly convergent in $\mathcal M(J)$ with $\|x\|\leq 2$ by Lemma \ref{almost_ortho_approx}. We will estimate 
\begin{equation} \label{big_sum} 
x\hat{h}_{k}(v_\beta u)f_{\delta_k}(\phi(a_k))x^*=\sum_{\ell_1,\ell_2=1}^\infty x_{\theta(k, \ell_{1})}\hat{h}_{k}(v_\beta u)f_{\delta_{k}} (\phi(a_{k}))x_{\theta(k, \ell_{2})}^*. 
\end{equation} 
Firstly, we show that the sum of the ``off-diagonal'' terms, namely the terms with $\ell_{1}\neq \ell_{2}$, is norm convergent with small norm. Suppose $\ell_{1}<\ell_{2}$, then $r_{1} \coloneqq \theta(k, \ell_{1}) < r_{2} \coloneqq\theta(k, \ell_{2})$ and by construction, 
\begin{equation} 
m_{r_{1}} < m_{r_{1}}' < m_{r_{2}-1}'+1 \leq m_{r_{2}} < m_{r_{2}}'. 
\end{equation} 
Since $x_{r_{1}}$ satisfies \eqref{MainProofC3} and $(e_{n})_{n}$ is almost idempotent, then $x_{r_{1}} e_{m_{r_{2}-1}'+1} = x_{r_{1}}$. Likewise, $x_{r_{2}}$ satisfies \eqref{MainProofC3} and $(e_{m_{r_{2}}'}-e_{m_{r_{2}}})(1-e_{m_{r_{2}}-1}) = e_{m_{r_{2}}'}-e_{m_{r_{2}}}$, it follows that $x_{r_{2}}(1-e_{m_{r_{2}}-1}) = x_{r_{2}}$. Putting these together, we have 
\begin{align} 
&&{}& x_{\theta(k, \ell_{1})}\hat{h}_k(v_\beta u)f_{\delta_k}(\phi(a_{k}))x_{\theta(k, \ell_{2})}^{*} \\
&&={}& x_{r_{1}}\hat{h}_k(v_\beta u)f_{\delta_k}(\phi(a_k))x_{r_{2}}^*\\ 
&&={}&
x_{r_{1}}e_{m_{r_{2}-1}'+1}\hat{h}_k(v_\beta u)f_{\delta_k}(\phi(a_k))(1-e_{m_{r_{2}}-1})x_{r_{2}}^*.
\end{align} 
Then by property \eqref{MainProofC2}, we have 
\begin{equation} 
\big\|x_{\theta(k, \ell_{1})}\hat{h}_k(v_\beta u)f_{\delta_k}(\phi(a_{k}))x_{\theta(k, \ell_{2})}^{*}\big\| < 
\frac{\epsilon}{\ell_{2}\cdot 2^{\ell_{2}+1}}.  
\end{equation} 
Similar calculations work in an identical fashion when $\ell_{1} > \ell_{2}$ and we get 
\begin{equation} 
\big\|x_{r_{1}}e_{m_{r_{2}-1}'+1}\hat{h}_k(v_\beta u)f_{\delta_k}(\phi(a_k))(1-e_{m_{r_{2}}-1})x_{r_{2}}^* \big\| < 
\frac{\epsilon}{\ell_{1}\cdot 2^{\ell_{1}+1}}. 
\end{equation} 
Summing up all of the ``off-diagonal'' terms gives us 
\begin{equation}\label{cross_approx_final} 
\bigg\|\sum_{\ell_1\neq \ell_2}x_{\theta(k, \ell_{1})}\hat{h}_{k}(v_\beta u)f_{\delta_{k}}(\phi(a_{k}))x_{\theta(k, \ell_{2})}^*\bigg\| < 2\sum_{\ell_2=1}^{\infty}\sum_{\ell_1<\ell_2}\frac{\epsilon}{\ell_2\cdot 2^{\ell_2+1}}<\epsilon. 
\end{equation}
For the ``diagonal'' terms in \eqref{big_sum}, condition \eqref{MainProofC4} and the definition of $\beta$ gives
\begin{equation} 
x_{\theta(k, \ell)} \hat{h}_{k} (v_{\beta} u) f_{\delta_{k}}(\phi(a_{k}))  x_{\theta(k, \ell)} ^{\ast}\approx_{\epsilon} e_{\ell}-e_{\ell-1}, 
\end{equation}  
for every $\ell\in \mathbb{N}$. Then Lemma \ref{almost_ortho_approx} gives, 
\begin{equation} 
\label{diagonal_approx_final} 
\sum_{\ell=1}^{\infty} x_{\theta(k, \ell)} \hat{h}_{k} (v_{\beta} u) f_{\delta_{k}}(\phi(a_{k}))  x_{\theta(k, \ell)} ^{\ast}\approx_{2\epsilon} \sum_{\ell=1}^{\infty} (e_{\ell}-e_{\ell-1}) = 1_{\calM(J)}, 
\end{equation} 
where the series converges strictly since it is bidiagonal. Combining the estimates \eqref{cross_approx_final} for the ``off-diagonal'' terms and \eqref{diagonal_approx_final} for these ``diagonal'' terms gives, 
\begin{equation}  
\label{multi_est_final} 
x \hat{h}_{k} (v_{\beta} u)f_{\delta_{k}}(\phi(a_k)) x^{\ast} \stackrel{\eqref{cross_approx_final}}{\approx_{\epsilon}} \sum_{\ell=1}^{\infty} x_{\theta(k, \ell)} \hat{h}_{k} (v_{\beta} u) f_{\delta_{k}}(\phi(a_{k}))  x_{\theta(k, \ell)} ^{\ast} \stackrel{\eqref{diagonal_approx_final}}{\approx_{2\epsilon}} 1_{\calM(J)}.
\end{equation}   

Apply the quotient map $\pi: \calM(J) \rightarrow \calC(J)$ to \eqref{multi_est_final}, and we obtain
\begin{equation}   
\overline{x} \hat{h}_{k} (\overline{v}_{\beta} \overline{u})f_{\delta_{k}}(\overline\phi(a_k)) \overline{x}^{\ast} \approx_{3\epsilon} 1_{\calC(J)}. \label{approx_final_6} 
\end{equation}   
Since $\|h_{k} - \hat{h}_{k}\|_{\infty} < \epsilon$ and $\|\overline{x}\| \leq \|x\| \leq 2$, we have 
\begin{equation}   
\overline{x} h_{k} (\overline{v}_{\beta} \overline{u})f_{\delta_{k}}(\overline\phi(a_k)) \overline{x}^{\ast} \approx_{7\epsilon} 1_{\calC(J)}, 
\end{equation}   
which implies that $h_{k} (\overline{v}_{\beta} \overline{u})f_{\delta_{k}}(\overline{\phi}(a_k))$ is full in $\calC(J)$ as we chose $\epsilon < 1/7$. Notice that $f_{\delta_{k}}(\overline{\phi}(a_{k}))$ is Cuntz equivalent to $\overline{\phi}(a_{k})$ in $\overline{\phi}(A)$. Because $\overline{v}_{\beta} \overline{u} \in \calC(J)\cap \overline{\phi}(A)'$, it commutes with elements witnessing $f_{\delta_{k}}(\overline{\phi}(a_{k}))\sim \overline{\phi}(a_{k})$ in $\overline{\phi}(A)$, and thus $h_{k} (\overline{v}_{\beta} \overline{u}) f_{\delta_{k}}(\overline{\phi}(a_{k}))$ is Cuntz equivalent to $h_{k}(\overline{v}_{\beta} \overline{u}) \overline{\phi}(a_{k})$ in $\calC(J)$. As a result, $h_{k}(\overline{v}_{\beta} \overline{u}) \overline{\phi}(a_{k})$ is full in $\calC(J)$. 
\end{proof} 

Combining Theorem \ref{KK_KL_uniqueness} and Corollary \ref{RR0-K1-inj}, gives our $KK_{\mathrm{nuc}}$ and $KL_{\mathrm{nuc}}$-uniqueness theorems (Theorem \ref{Intro:KKUnique}). 

\begin{thm}  
\label{KL-uniqueness} 
Let $A$ be a unital and separable $C^*$-algebra. Let $J$ be a separable and stable $C^*$-algebra which has real rank zero, stable rank one, $K_{1}(J) = 0$ and totally ordered $V(J)$. Let $(\phi, \psi) \colon A\rightrightarrows \calM(J)\rhd J$ be a Cuntz pair where both $\phi$ and $\psi$ are unital, weakly nuclear and unitally nuclearly absorbing. 
\begin{enumerate} [(i)]
\item If $[\phi, \psi]_{KK_{\mathrm{nuc}}} = 0$, there exists a norm-continuous path $(u_{t})_{t\geq 0}$ of unitaries in $J^\dagger$ such that 
\begin{equation} 
\|u_t(\phi(a))u_t^*-\psi(a)\|\to 0,\quad a\in A. 
\end{equation} 
\item If $[\phi, \psi]_{KL_{\mathrm{nuc}}} = 0$, there exists a sequence $(u_{n})_n$ of unitaries in $J^\dagger$ such that 
\begin{equation} 
\|u_n(\phi(a))u_n^*-\psi(a)\|\to 0,\quad a\in A. 
\end{equation} 
\end{enumerate} 
\end{thm}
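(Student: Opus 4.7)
The theorem is essentially a direct combination of the $K_1$-injectivity result (Corollary \ref{RR0-K1-inj}) with the general $KK$-uniqueness machinery (Theorem \ref{KK_KL_uniqueness}), so my plan is to stitch these together rather than do any genuinely new work.

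First I would verify that the hypotheses on $J$ in the statement of the theorem imply the hypotheses needed in both Theorem \ref{KK_KL_uniqueness} and Corollary \ref{RR0-K1-inj}. The codomain $J$ is assumed to be separable (hence $\sigma$-unital) and stable, with real rank zero, stable rank one, $K_1(J) = 0$ and totally ordered $V(J)$. Since $J$ has real rank zero and stable rank one, it has an approximate unit of projections and cancellation of projections. Combined with stability, this ensures every projection in $J \otimes \mathcal{K}$ is Murray--von Neumann equivalent to a projection in $J$, so Lemma \ref{separableideals} applies to show that $J$ has the corona factorisation property. In particular, this is the hypothesis required to invoke Theorem \ref{nuc_absorb_equiv}.

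Next I would extract fullness of $\phi$ from the hypothesis that $\phi$ is unital and unitally nuclearly absorbing. By Theorem \ref{nuc_absorb_equiv}, applied in the presence of the corona factorisation property just established, unital nuclear absorption is equivalent to fullness. Thus $\phi$ is a unital, full and weakly nuclear $^*$-homomorphism $A \to \mathcal{M}(J)$. These are exactly the hypotheses of Corollary \ref{RR0-K1-inj}, which therefore yields that the Paschke dual algebra $\mathcal{C}(J) \cap \overline{\phi}(A)'$ is $K_1$-injective.

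Finally, with $K_1$-injectivity of $\mathcal{C}(J) \cap \overline{\phi}(A)'$ in hand, the theorem follows by a direct application of Theorem \ref{KK_KL_uniqueness} to the Cuntz pair $(\phi, \psi)$: part (i) of that theorem produces the norm-continuous path of unitaries in $J^\dagger$ asymptotically conjugating $\phi$ to $\psi$ whenever $[\phi, \psi]_{KK_{\mathrm{nuc}}} = 0$, and part (ii) produces the sequence of unitaries whenever $[\phi, \psi]_{KL_{\mathrm{nuc}}} = 0$. Since the argument is purely assembling pre-established results, there is no genuine obstacle; the content of the theorem lives in Corollary \ref{RR0-K1-inj}, whose hard work was done in Sections \ref{pure_large_sec} and \ref{K1_Paschke_chapter}, particularly in establishing relative pure largeness of the maximal proper ideal $\mathcal{I} \lhd \mathcal{M}(J)$ (Theorem \ref{posi_ideal}) and then in the delicate bidiagonal-series construction underlying Lemma \ref{main}.
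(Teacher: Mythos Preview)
Your proposal is correct and matches the paper's approach exactly: the paper simply states that the theorem follows by ``Combining Theorem \ref{KK_KL_uniqueness} and Corollary \ref{RR0-K1-inj}'', and you have spelled out the glue (extracting the corona factorisation property via Lemma \ref{separableideals}, then fullness of $\phi$ via Theorem \ref{nuc_absorb_equiv}) in slightly more detail than the paper does.
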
  

In the next section, Theorem \ref{KL-uniqueness} will be used to obtain uniqueness results for maps (Theorem \ref{II1_factor:intro} and Theorem \ref{quasidiagonality}), through the abstract classification framework. 

\section{Uniqueness theorems} 
\label{classification_chapter} 
In the first subsection, we prove the technical version of our main uniqueness theorem for $^*$-homomorphisms into ultraproducts of allowed codomains (see Section \ref{Sect:sep} for a discussion of our codomain hypotheses), and in the second we deduce the main results for uniqueness of embeddings into ultraproducts of finite factors as given in the introduction. 

\subsection{Main uniqueness theorem}

Here is the technical version of our main theorem.
\begin{thm} 
\label{MainUniqueness} 
Let $A$ be a separable, unital and exact $C^{\ast}$-algebra satisfying the UCT. Let $(B_n)_{n}$ be a sequence of unital $C^*$-algebras which have real rank zero, stable rank one, a tracial state $\tau_n$, totally ordered $V(B_{n})$ and $K_1(B_n)=0$. Write $B_{\omega}$ for $\prod_{\omega}B_{n}$, and $\tau_{B_\omega}$ for the (necessarily unique) trace on $B_\omega$ arising as the limit trace of the sequence $(\tau_n)_{n}$.  Assume that $\lim_{n\to\omega}\dim\pi_{\tau_n}(B_n)''=\infty$. 

Given unital, full and nuclear $^*$-homomorphisms $\phi,\psi\colon A\to B_\omega$ with $\tau_{B_\omega}\circ\phi=\tau_{B_\omega}\circ\psi$, $K_0(\phi)=K_0(\psi)$ and $K_0(\phi;\mathbb Z/n\mathbb Z)=K_0(\psi;\mathbb Z/n\mathbb Z)$ for all $n\geq 2$, there exists a unitary $u\in B_\omega$ with $\psi=\mathrm{Ad}\,u\circ\phi$.
\end{thm}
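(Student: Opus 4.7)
My plan is to follow Schafhauser's strategy from \cite{Schafhauser.Annals}, inserting Theorem \ref{KL-uniqueness} in place of his $\calQ$-stable $KK$-uniqueness theorem. The assumption $\lim_{n\to\omega}\dim\pi_{\tau_n}(B_n)''=\infty$ makes $B^\omega$ a II$_1$ factor (Proposition \ref{prop:TKquotient}), so the folklore consequence of Connes' theorem yields $2$-norm approximate unitary equivalence between $q_B\circ\phi$ and $q_B\circ\psi$. Reindexing in the tracial ultrapower $B^\omega$ produces an honest unitary $\bar u\in B^\omega$ intertwining them, and since the unitary group of $B^\omega$ is norm-connected (each unitary admits a self-adjoint logarithm and hence lifts through the exponential), I can lift $\bar u$ to a unitary $u\in B_\omega$. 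Replacing $\phi$ by $\mathrm{Ad}\,u\circ\phi$ reduces to the case $q_B\circ\phi=q_B\circ\psi$, so $(\phi,\psi)$ becomes a Cuntz pair over the trace-kernel ideal $J_B$.

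Next I apply the separabilisation Lemma \ref{seplem} to extract a separable subextension $0\to J\to E\to D\to 0$ of the trace-kernel extension containing $\phi(A)\cup\psi(A)$, with corestrictions $\phi|^E,\psi|^E$ unital, full, nuclear, and satisfying $\underline K(\phi|^E)=\underline K(\psi|^E)$, and with $J$ meeting all hypotheses of Theorem \ref{KL-uniqueness}. Composing with the canonical $E\hookrightarrow\calM(J)$, both maps remain unital, weakly nuclear, and full, hence are unitally nuclearly absorbing by Theorem \ref{nuc_absorb_equiv} combined with the corona factorisation property of $J$ supplied by Lemma \ref{separableideals}.

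The technical heart of the argument is to show $[\phi|^E,\psi|^E]_{KL_{\mathrm{nuc}}(A,J)}=0$. Via the UCT and Theorem \ref{UMCT}, this group identifies with $\mathrm{Hom}_{\Lambda}(\underline K(A),\underline K(J))$. The six-term exact sequences of $0\to J\to E\to D\to 0$ (with and without mod-$n$ coefficients), using the vanishing of $K_1(D)$ and $K_i(D;\mathbb Z/n\mathbb Z)$ from Lemma \ref{seplem}, yield an embedding $K_0(J)\hookrightarrow K_0(E)$ and an isomorphism $K_0(J;\mathbb Z/n)\cong K_0(E;\mathbb Z/n)$. Therefore the $K_0$- and $K_0(\cdot;\mathbb Z/n)$-components of $[\phi|^E,\psi|^E]$ are the corresponding differences on $E$, and vanish by hypothesis. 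The $K_1$-component lands in $K_1(J)=0$, and for the $K_1(\cdot;\mathbb Z/n)$-component I use naturality of the Bockstein: the image of $K_1(A;\mathbb Z/n)\to K_1(J;\mathbb Z/n)$ must lie in the kernel of the Bockstein into $K_0(J)$ (since the $K_0$-component is zero), and this kernel equals the image of $K_1(J)\to K_1(J;\mathbb Z/n)$ and is therefore zero.

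With the $KL_{\mathrm{nuc}}$-class vanishing, Theorem \ref{KL-uniqueness}(ii) supplies a sequence of unitaries $(u_n)_n$ in $J^\dagger\subseteq B_\omega$ with $\|u_n\phi(a)u_n^*-\psi(a)\|\to 0$ for every $a\in A$, and a standard reindexing argument in the ultraproduct $B_\omega$ (using separability of $A$) upgrades this to an exact unitary conjugacy by a single $u\in B_\omega$. The main obstacle in the whole programme is the production of Theorem \ref{KL-uniqueness} itself, which occupies the bulk of the paper through Section \ref{K1_Paschke_chapter}; granted that input, the present uniqueness theorem is a relatively direct exercise in Schafhauser's lifting framework together with $K$-theoretic bookkeeping.
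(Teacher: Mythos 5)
Your proof follows Schafhauser's framework exactly as the paper does, and the overall structure is sound. The one genuine issue is at the application of Lemma~\ref{seplem}: that lemma requires $\underline K(\phi)=\underline K(\psi)$ as a hypothesis, but the hypotheses of Theorem~\ref{MainUniqueness} only give agreement on $K_0$ and $K_0(\cdot;\mathbb Z/n\mathbb Z)$. You invoke Lemma~\ref{seplem} without bridging this gap. The paper fills it by observing that Lemma~\ref{prop:B_omega} gives $K_1(B_\omega)=0$ and $K_1(B_\omega;\mathbb Z/n\mathbb Z)=0$, so the $K_1$-components (with and without coefficients) of $\underline K(\phi)$ and $\underline K(\psi)$ both map into zero groups and automatically coincide; with the assumed agreement on $K_0$ and $K_0(\cdot;\mathbb Z/n\mathbb Z)$, this yields $\underline K(\phi)=\underline K(\psi)$. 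You need some such observation to legitimately invoke the separabilisation lemma as stated.

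Your Bockstein computation of the $K_1(\cdot;\mathbb Z/n\mathbb Z)$-component is a correct and mildly different route from the paper. The paper, after obtaining $\underline K(\phi|^E)=\underline K(\psi|^E)$, simply notes that $\underline K(j)$ is injective in each degree (using $K_1(D)=0$ to get $K_0(j)$ injective, $K_i(D;\mathbb Z/n\mathbb Z)=0$ to get $K_i(j;\mathbb Z/n\mathbb Z)$ injective, and $K_1(J)=0$), and then $\Gamma^{(J)}(\kappa)=0$ follows at a stroke. You instead handle the $K_1(\cdot;\mathbb Z/n\mathbb Z)$-component by Bockstein naturality plus $K_1(J)=0$. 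This works (the kernel of the Bockstein $K_1(J;\mathbb Z/n\mathbb Z)\to K_0(J)$ is the image of $K_1(J)\otimes\mathbb Z/n\mathbb Z$, which is zero), but note that it is redundant once Lemma~\ref{seplem} gives you the full $\underline K(\phi|^E)=\underline K(\psi|^E)$. The place your Bockstein argument would actually earn its keep is in a hypothetical variant where one separabilises only the $K_0$- and $K_0(\cdot;\mathbb Z/n\mathbb Z)$-data, thereby proving the theorem without ever needing to mention the $K_1$-components agree; as written, you do not take that route, so the argument sits awkwardly alongside the invocation of Lemma~\ref{seplem} whose hypothesis you never establish. Fixing the first issue makes the proof complete and essentially identical to the paper's, with the Bockstein paragraph then superfluous but harmless.
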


The proof essentially consists of following Schafhauser's uniqueness theorem (\cite[Proposition 4.3]{Schafhauser.Annals}), gluing in our $KL$-uniqueness theorem (Theorem \ref{KL-uniqueness}) in place of the $\mathcal Q$-stable $KK$-uniqueness theorem (\cite[Proposition 2.7]{Schafhauser.Annals}).\footnote{Comparing to the proof of \cite[Proposition 4.3]{Schafhauser.Annals}, the deunitisation of unitally nuclearly absorbing maps to make them nuclearly absorbing is no longer necessary, as our $KL_{\mathrm{nuc}}$-uniqueness theorem (Theorem \ref{KL-uniqueness}) is proved for unitally nuclearly absorbing maps.}

The other formal difference is that our classification invariant uses $K_0$ together with $K_0$ with coefficients in $\mathbb Z/n\mathbb Z$ for all $n\geq 2$, rather than just $K_0$. In Schafhauser's theorem, the tensorial factor of $\mathcal Q$ in the codomains ensures that $K$-theory with $\mathbb Z/n\mathbb Z$ coefficients vanishes, and hence $K_0$ carries the only $K$-theoretic information associated to a morphism. However there is no real difference here: Schafhauser's methods show that uniqueness corresponds to the vanishing of the class in $KL(A,J_B)$ obtained from the pair $(\phi,\psi)$ (after we adjust so that $\phi$ and $\psi$ agree modulo $J_B$). Under the rest of the hypotheses involved, Dadarlat and Loring's universal multicoefficient theorem shows that this amounts to $\phi$ and $\psi$ agreeing on total $K$-theory. (Just as in \cite{Schafhauser.Annals}, and in contrast to the results of \cite{CGSTW}, there is no algebraic $K_1$ component). In our situation, all $K_1$-groups (with coefficients) of the codomains vanish (as explained in Lemma \ref{prop:B_omega}), and hence the classification is by $K_0$ with all $\mathbb Z/n\mathbb Z$ coefficients. Something very similar happens if $\mathcal Q$ is replaced by another UHF-algebra of infinite type, such as the CAR algebra $M_{2^\infty}$ in Schafhauser's theorem: one gets uniqueness provided the invariant is augmented to include $K_0$ with suitable coefficient groups (in the case of $M_{2^{\infty}}$-stable codomains $B$, one should use the groups $K_0(\cdot;\mathbb Z/n\mathbb Z)$, where $n$ is co-prime to $2$).  This was known to Schafhauser at the time of writing \cite{Schafhauser.Annals}; given for example in his BIRS talk in 2017. 

\begin{proof} 
Fix unital, full and nuclear $^*$-homomorphisms $\phi,\psi\colon A\to B_\omega$ with $\tau_{B_{\omega}}\circ\phi=\tau_{B_{\omega}}\circ\psi$, $K_{0}(\phi) = K_0(\psi)$ and $K_0(\phi;\mathbb Z/n\mathbb Z)=K_0(\psi;\mathbb Z/n\mathbb Z)$ for all $n\geq 2$. We work with the trace-kernel extension 
\begin{equation} 
0\longrightarrow J_B\stackrel{j_B}{\longrightarrow} B_\omega\stackrel{q_B}{\longrightarrow} B^\omega \longrightarrow 0
\end{equation} 
described in Section \ref{Sect:sep}. Since each $\tau_n$ is the unique trace on $B_n$ (by Proposition \ref{prop.codomain.strictcomp}), $B^\omega$ is a finite von Neumann factor with trace $\tau_{B^\omega}$ satisfying $\tau_{B_\omega}=\tau_{B^\omega}\circ q_B$ as set out in Proposition \ref{prop:TKquotient}. By assumption, we have 
\begin{equation} 
\tau_{B^\omega}\circ q_B\circ\phi=\tau_{B_{\omega}}\circ \phi = \tau_{B_{\omega}}\circ\psi = \tau_{B^\omega}\circ q_B\circ\psi. 
\end{equation} 
Then, by the uniqueness of (weakly) nuclear maps which induce the same trace in \cite[Proposition 1.1]{Schafhauser.Annals}, which is a consequence of Connes' theorem, $q_B\circ\phi,q_B\circ\psi\colon A\to B^\omega$ are approximately unitarily equivalent, and hence unitarily equivalent (via reindexing in $B^\omega$ as $A$ is separable).  As unitaries in II$_1$ factors are all of the form $e^{2\pi ih}$, for some self-adjoint contraction $h$, they lift to $B_\omega$, and we can replace $\psi$ by a unitary conjugate so that $q_B\circ\phi=q_B\circ\psi$.  

By Lemma \ref{prop:B_omega}, we have $K_1(B_\omega;\mathbb Z/n\mathbb Z)=0$ for all $n\geq 2$, so that $\underline{K}(\phi)=\underline{K}(\psi)$. We now separablise using Lemma \ref{seplem} to obtain a separable unital subextension 
\begin{equation}
\begin{tikzcd}
0\arrow[r]&J\arrow[r,"j"]\arrow[d]&E\arrow[r,"q"]\arrow[d]&D\arrow[r]\arrow[d]&0\\
0\arrow[r]&J_B\arrow[r,"j_B"]&B_{\omega}\arrow[r,"q_B"]&B^\omega\arrow[r]&0
\end{tikzcd}
\end{equation} 
such that
\begin{enumerate}
\item \label{condition_sep_full} $\phi(A)\cup \psi(A)\subseteq E$, and the corestrictions $\phi|^E,\psi|^E\colon A\to E$ of $\phi$ and $\psi$ to $E$ are unital, nuclear, full, and satisfy $\underline{K}(\phi|^E)=\underline{K}(\psi|^E)$; 
\item $K_{1}(J)$, $K_{1}(D)$ and $K_{i}(D; \mathbb{Z} / n\mathbb{Z})$ vanish for $i\in \{0,1\}$ and $n\geq 2$; 
\item \label{condition_sep_J}$J$ has real rank zero and stable rank one, $V(J)$ is totally ordered and every projection in $J\otimes\mathcal K$ is Murray--von Neumann equivalent to a projection in $J$. 
\end{enumerate}

Let $\lambda\colon E\to \mathcal M(J)$ be the canonical unital $^*$-homomorphism. Since $q_B\circ \phi = q_{B}\circ \psi$, we have $q\circ \phi|^E = q\circ \psi|^E$ and hence 
\begin{equation}
(\lambda\circ\phi|^E,\lambda\circ\psi|^E)\colon A\rightrightarrows \calM(J)\rhd J
\end{equation}
is a Cuntz pair inducing a class $\kappa\in KL_{\mathrm{nuc}}(A,J)$ which satisfies 
\begin{equation}
KL_{\mathrm{nuc}}(A,j)(\kappa)=[\phi|^E]_{KL_{\mathrm{nuc}}(A,E)}-[\psi|^E]_{KL_{\mathrm{nuc}}(A,E)}.\footnote{This is the computation of \cite[Proposition 2.1]{Schafhauser.Annals}, followed by the quotient map from $KK_{\mathrm{nuc}}(A,I)$ to $KL_{\mathrm{nuc}}(A,I)$.}
\end{equation}
Since $A$ satisfies the UCT, Dadarlat and Loring's universal multicoefficient theorem from \cite{Dadarlet-Loring} (see Theorem \ref{UMCT}) gives an isomorphism, 
\begin{equation}
\Gamma^{(I)}\colon KL_{\mathrm{nuc}}(A,I)\rightarrow \text{Hom}_{\Lambda}(\underline{K}(A),\underline{K}(I)), 
\end{equation} 
which is natural in $I$ for any separable $C^*$-algebra $I$. Naturality and the fact that 
\begin{equation} 
\Gamma^{(E)}([\phi|^E]_{KL_{\mathrm{nuc}}(A,E)})=\underline{K}(\phi|^E)=\underline{K}(\psi|^E)=\Gamma^{(E)}([\psi|^E]_{KL_{\mathrm{nuc}}(A,E)})
\end{equation} 
gives 
\begin{equation}\underline{K} (j) \circ \Gamma^{(J)}(\kappa)= \Gamma^{(E)} \circ KL_{\mathrm{nuc}}(A, j) (\kappa) = 0.
\end{equation}

As $K_1(D)=0$, the six-term exact sequence shows that $K_0(j)$ is injective.  Likewise, as $K_i(D;\mathbb Z/n\mathbb Z)=0$, the six-term exact sequence for $K$-theory with coefficients in $\mathbb Z/n\mathbb Z$ shows that $K_i(j;\mathbb Z/n\mathbb Z)$ is injective for all $i\in \{0,1\}$ and $n\geq 2$. Combining this with the fact that $K_1(J)=0$, we get injectivity of $\underline{K}(j)$, and hence (as $\Gamma^{(J)}$ is an isomorphism)  $\kappa = [\lambda\circ\phi|^E,\lambda\circ\psi|^E]_{KL_{\mathrm{nuc}}(A, J)} =0$.

By Lemma \ref{separableideals}, $J$ is stable and has the corona factorisation property, and so $\lambda\circ \phi|^E$ and $\lambda\circ \psi|^E$ are unitally nuclearly absorbing by Elliott--Kucerovsky's generalized Weyl--von Neumann theorem given as Theorem \ref{nuc_absorb_equiv}. Since $\phi|^E$ and $\psi|^E$ are nuclear, so too are $\lambda\circ \phi|^E$ and $\lambda\circ \psi|^E$. Then our $KL_{\mathrm{nuc}}$-uniqueness theorem (Theorem \ref{KL-uniqueness}), shows that there exists a sequence of unitaries $(u_n)_{n}$ in $J^{\dagger}$ such that 
\begin{equation}
\|u_n(\lambda\circ  \phi|^E(a))u_n^*-\lambda\circ  \psi|^E(a)\|\to 0,\quad a\in A. 
\end{equation}
Since $\lambda$ restricts to the identity map on $J^\dagger$, it follows that  
\begin{equation} 
\|u_n  \phi|^E(a)u_n^*- \psi|^E(a)\|\to 0,\quad a\in A. 
\end{equation} 
Thus the unital $^*$-homomorphisms $\phi, \psi: A\rightarrow B_{\omega}$ are approximately unitarily equivalent.  As the codomain is an ultraproduct and $A$ is separable, it is standard that the approximate unitary equivalence automatically upgrades to an exact unitary equivalence (by reindexing, or Kirchberg's $\epsilon$-test).
\end{proof} 

\subsection{Uniqueness of embeddings into ultraproducts of finite factors} 
We now formally collect the main results from the introduction, starting with Theorem \ref{intro:ultraproductthm}.  Here, the case when the dimension condition in Theorem \ref{MainUniqueness} fails amounts to the classification of injective maps $A\to M_r$ for some matrix algebra $M_r$, which is standard. 
\begin{proof}[Proof of Theorem \ref{intro:ultraproductthm}]
Let $(\mathcal M_n)_{n}$ be a sequence of finite von Neumann factors. These satisfy the codomain hypotheses of Theorem \ref{MainUniqueness}. There is a dichotomy between the case when $\omega$-many of the $\mathcal M_n$ are type II$_1$ factors, in which the dimension condition $\lim_{n\to\omega}\pi_{\tau_n}(\mathcal M_n)''$ is automatic, or the case when $\omega$-many of the $\mathcal M_n$ are finite type I factors, i.e. $\mathcal M_n\cong M_{k_n}$ for $\omega$-many $n$. In the latter case, either the dimension condition holds, and the result follows from Theorem \ref{MainUniqueness}, or there exists some $r\in\mathbb N$ such that $\lim_{n\to\omega}k_n=r$. In the remaining case, 
the $C^*$-ultraproduct and von Neumann ultraproduct of $(\mathcal M_n)_{n}$ are both identified with $M_r$, and the result is immediate from the classification of maps into matrix algebras.\end{proof}

Corollary \ref{intro:qdcor} on the uniqueness of quasidiagonality is immediate from Theorem \ref{intro:ultraproductthm}.  It remains to deduce Theorem \ref{II1_factor:intro}. As with all of our results, this works in the level of generality of nuclear maps from exact domains as stated below (which includes Theorem \ref{II1_factor:intro} as a special case).

\begin{thm}\label{II1_factor} 
Let $A$ be a separable, unital and exact $C^*$-algebra satisfying the UCT, and let $\calM$ be a II$_{1}$-factor with trace $\tau_{\calM}$. Let $\phi, \psi: A\rightarrow \calM$ be unital, nuclear and faithful $^*$-homomorphisms such that $\tau_{\calM} \circ \phi = \tau_{\calM} \circ \psi$, then there exists a sequence of unitaries $(u_{n})_{n}$ in $\calM$ such that 
\begin{equation}\label{II1_factor.1} 
\|u_n \phi(a)u_n^*-\psi(a)\|\to 0, \quad a\in A. 
\end{equation} 
\end{thm}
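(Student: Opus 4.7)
The plan is to deduce Theorem \ref{II1_factor} from Theorem \ref{intro:ultraproductthm} by embedding $\mathcal{M}$ diagonally into the $C^*$-algebra ultraproduct $\mathcal{M}_\omega = \prod_{\omega} \mathcal{M}_n$, where we take $\mathcal{M}_n = \mathcal{M}$ for all $n$. The composed maps $\phi, \psi : A \to \mathcal{M} \hookrightarrow \mathcal{M}_\omega$ inherit from $\phi, \psi \colon A\to\mathcal M$ the properties of being unital and nuclear. For fullness: given any nonzero $a \in A$, $\phi(a)$ is a nonzero element of the simple algebra $\mathcal{M}$, so there exist $x_1, \ldots, x_k, y_1, \ldots, y_k \in \mathcal{M}$ with $\sum_i x_i \phi(a) y_i = 1_{\mathcal{M}}$, and the corresponding constant sequences in $\mathcal{M}_\omega$ witness fullness of $\phi$ in $\mathcal M_\omega$, and likewise for $\psi$.

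Next I would verify the $K$-theoretic hypotheses of Theorem \ref{intro:ultraproductthm}. Since $\tau_\omega$ restricts to $\tau_\mathcal{M}$ on the diagonal copy of $\mathcal{M}$, the hypothesis $\tau_\mathcal{M}\circ\phi = \tau_\mathcal{M}\circ\psi$ gives $\tau_\omega\circ\phi = \tau_\omega\circ\psi$. Because $\mathcal{M}$ is a II$_1$-factor, we have $K_0(\mathcal{M}) \cong \mathbb{R}$ (determined by the trace), $K_1(\mathcal{M}) = 0$, and hence $K_0(\mathcal M;\mathbb Z/n\mathbb Z)=0$ for all $n \geq 2$ via the Bockstein sequence \eqref{eq:bockstein-new} and divisibility of $\mathbb{R}$. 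Since the $K$-theory maps $K_0(\phi), K_0(\psi)$ factor through $K_0(\mathcal{M}) = \mathbb{R}$ and are determined by the induced tracial data, they agree; and since $K_0(\phi;\mathbb Z/n\mathbb Z), K_0(\psi;\mathbb Z/n\mathbb Z)$ factor through $K_0(\mathcal M;\mathbb Z/n\mathbb Z) = 0$, they agree trivially. The remaining dimension hypothesis $\lim_{n\to\omega}\dim \pi_{\tau_n}(\mathcal{M}_n)'' = \infty$ is immediate since $\mathcal{M}$ itself is infinite dimensional.

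Theorem \ref{intro:ultraproductthm} then delivers a unitary $u \in \mathcal{M}_\omega$ with $\phi = \mathrm{Ad}(u) \circ \psi$ in $\mathcal{M}_\omega$. It remains to extract the approximate unitary equivalence in $\mathcal{M}$ itself. Lift $u$ to a sequence $(v_n)_n$ of contractions in $\prod_n \mathcal{M}$; since $u$ is unitary, $\lim_{n\to\omega}\|v_n^*v_n - 1\| = \lim_{n\to\omega}\|v_nv_n^* - 1\| = 0$, so along $\omega$ we can modify (via polar decomposition, say) to obtain unitaries $\tilde u_n \in \mathcal{M}$ representing $u$. The equality $\phi(a) = u\psi(a)u^*$ in $\mathcal{M}_\omega$ then reads as
\begin{equation}
\lim_{n\to\omega}\|\phi(a) - \tilde u_n\psi(a)\tilde u_n^*\| = 0, \quad a\in A.
\end{equation}
Fixing a countable dense subset $\{a_k\}_{k\in \mathbb N}$ of $A$, a standard diagonal argument (selecting $n_m$ from the intersection of the sets $\{n : \|\phi(a_k) - \tilde u_n\psi(a_k)\tilde u_n^*\| < 1/m\}$ for $k \leq m$, each of which lies in $\omega$ and is thus infinite) produces a sequence $(u_m)_{m}\coloneqq (\tilde u_{n_m})_m$ in $\mathcal{M}$ with $\|u_m\phi(a)u_m^* - \psi(a)\| \to 0$ for every $a$ in the dense set, and therefore for every $a \in A$ by a standard $3\epsilon$ argument.

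The main substantive work is of course concentrated in Theorem \ref{intro:ultraproductthm}; the deduction above is essentially formal, and the only points requiring any thought are the verification that full maps into a simple $C^*$-algebra induce full maps into its ultraproduct, and the trivial vanishing of $K_0(\mathcal{M};\mathbb Z/n\mathbb Z)$ which renders the coefficient hypotheses automatic in the II$_1$-factor setting.
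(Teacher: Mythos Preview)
Your proposal is correct and follows essentially the same approach as the paper's proof: embed $\mathcal M$ diagonally into $\mathcal M_\omega$, verify that the composed maps satisfy the hypotheses of Theorem \ref{intro:ultraproductthm} (using simplicity of $\mathcal M$ for fullness, the trace for $K_0$, and $K_0(\mathcal M;\mathbb Z/n\mathbb Z)=0$ for the coefficient groups), and then descend from unitary equivalence in $\mathcal M_\omega$ to approximate unitary equivalence in $\mathcal M$. One trivial slip: from $\phi(a)=u\psi(a)u^*$ you obtain $\|\tilde u_n^*\phi(a)\tilde u_n - \psi(a)\|\to 0$ along $\omega$, so you should take $u_m=\tilde u_{n_m}^*$ rather than $\tilde u_{n_m}$ to match the stated conclusion.
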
 

\begin{proof} 
We apply Theorem \ref{intro:ultraproductthm} with $\mathcal M_n=\mathcal M$ and denote the inclusion of $\calM$ into its norm ultrapower by $\iota: \calM\rightarrow \mathcal M_\omega$. Given unital, faithful and nuclear maps $\phi,\psi:A\to\mathcal M$ (which are full since $\calM$ is simple) with $\tau_{\mathcal M}\circ\phi=\tau_{\mathcal M}\circ \psi$, then $\iota\circ \phi, \iota\circ \psi: A\rightarrow \calM_{\omega}$ are unital, full and nuclear and agree on $\tau_{\calM_{\omega}}$. As traces determine projections in $\mathcal M$, we have $K_0(\phi)=K_0(\psi)$, and so $K_0(\iota\circ\phi)=K_0(\iota\circ\psi)$. Also $K_0(\iota\circ\phi;\mathbb Z/n\mathbb Z)=K_0(\iota\circ\psi;\mathbb Z/n\mathbb Z)$, as these maps factor through $\mathcal M$ which has $K_0(\mathcal M;\mathbb Z/n\mathbb Z)=0$. Then $\iota\circ\phi$ and $\iota\circ\psi$ are unitarily equivalent by Theorem \ref{intro:ultraproductthm}, which implies that $\phi$ and $\psi$ are approximately unitarily equivalent.
\end{proof}

Combining Theorem \ref{II1_factor} and \cite[Proposition 1.1]{Schafhauser.Annals} as a consequence of Connes' theorem, we get the equivalence of unitary equivalence in norm and $2$-norm for maps into II$_{1}$-factors. 

\begin{cor} 
    Let $A$ be a separable, unital and exact $C^*$-algebra satisfying the UCT, and let $\mathcal M$ be a II$_1$ factor.  Let $\phi,\psi\colon A\to\mathcal M$ be unital, faithful and nuclear $^*$-homomorphisms. Then the following are equivalent:
\begin{enumerate}
    \item $\tau_{\calM}\circ\phi=\tau_{\calM}\circ\psi$;
    \item there exists a sequence of unitaries $(v_n)_{n}$ in $\mathcal M$ with $\|v_n\phi(a)v_n^*-\psi(a)\|_2\to 0$ for all $a\in A$; 
    \item there exists a sequence of unitaries $(u_n)_{n}$ in $\mathcal M$ with $\|u_n\phi(a)u_n^*-\psi(a)\|\to 0$ for all $a\in A$. 
\end{enumerate}
\end{cor}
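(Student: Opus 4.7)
The plan is to prove the equivalence via the cycle $(1)\Rightarrow(3)\Rightarrow(2)\Rightarrow(1)$, each link of which is essentially immediate from results already established or standard facts.

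First, I would handle $(1)\Rightarrow(3)$: this is precisely the content of Theorem \ref{II1_factor}, applied to the given unital, faithful and nuclear $^*$-homomorphisms $\phi,\psi$ sharing the same trace pullback. So there is nothing more to do here than cite that theorem.

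Next, $(3)\Rightarrow(2)$ follows immediately from the inequality $\|x\|_{2,\tau_{\mathcal M}}\leq \|x\|$ valid for any $x\in\mathcal M$ (using the normalised trace), so the unitaries $(u_n)_n$ witnessing norm approximate unitary equivalence also witness $2$-norm approximate unitary equivalence (take $v_n\coloneqq u_n$).

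Finally, $(2)\Rightarrow(1)$ is a short trace computation. Given unitaries $(v_n)_n$ with $\|v_n\phi(a)v_n^*-\psi(a)\|_{2,\tau_{\mathcal M}}\to 0$ for each $a\in A$, the Cauchy--Schwarz inequality for the trace gives
\begin{equation}
|\tau_{\mathcal M}(v_n\phi(a)v_n^*)-\tau_{\mathcal M}(\psi(a))|\leq \|v_n\phi(a)v_n^*-\psi(a)\|_{2,\tau_{\mathcal M}}\to 0,
\end{equation}
and the left-hand side equals $|\tau_{\mathcal M}(\phi(a))-\tau_{\mathcal M}(\psi(a))|$ by traciality of $\tau_{\mathcal M}$, yielding $\tau_{\mathcal M}\circ\phi=\tau_{\mathcal M}\circ\psi$. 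This closes the cycle.

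I do not expect any serious obstacle: the only substantive input is Theorem \ref{II1_factor}, which is already proved, and the other two implications are routine. Alternatively, one could use Schafhauser's \cite[Proposition 1.1]{Schafhauser.Annals} (the folklore consequence of Connes' theorem recalled in the introduction) to directly obtain $(1)\Rightarrow(2)$, but the implication $(1)\Rightarrow(3)\Rightarrow(2)$ via Theorem \ref{II1_factor} is logically cleaner within this paper.
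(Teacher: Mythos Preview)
Your proposal is correct and essentially matches the paper's approach: the paper states the corollary as an immediate consequence of Theorem \ref{II1_factor} together with \cite[Proposition 1.1]{Schafhauser.Annals} (the Connes-theorem consequence), without spelling out the routine implications $(3)\Rightarrow(2)$ and $(2)\Rightarrow(1)$ that you handle explicitly. Your cycle $(1)\Rightarrow(3)\Rightarrow(2)\Rightarrow(1)$ is a perfectly valid way to close the equivalence, and you even note the alternative route via Schafhauser's result that the paper prefers to cite.
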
 

We end by recording how to recapture Hadwin, Li and Liu's result classifying maps from inductive limits of type I algebras into II$_1$ factors from Theorem \ref{II1_factor}. 

\begin{cor}[{Hadwin--Li--Liu, \cite{LHL:OM}}]
\label{cor:HLL_reproof} 
     Let $A$ be a separable and unital $C^*$-algebra which is an inductive limit of type I $C^*$-algebras and let $\mathcal M$ be a II$_1$-factor. Any two unital $^*$-homomorphisms $\phi,\psi\colon A\to\mathcal M$ with $\tau_\mathcal M\circ\phi=\tau_\mathcal M\circ\psi$ are approximately unitarily equivalent in the norm topology.  
\end{cor}
\begin{proof} 
As $\tau_{\calM}\circ \phi = \tau_{\calM}\circ \psi$ and $\tau_{\calM}$ is faithful, $\phi$ and $\psi$ have the same kernel $I$.  Then $A/I$ is also an inductive limits of type $I$ $C^*$-algebras,\footnote{This is a folklore combintaion of standard facts, but we have not found a precise reference. Firstly quotients of type I $C^*$-algebras are again type I. For this reason we may as well assume that the connecting maps in the inductive limit are injective.  As ideals in a $C^*$-algebra inductive limit are inductive (see \cite[Lemma III.4.1]{Davidson}, for example), it follows that $A/I$ is an inductive limit of type I $C^*$-algebras in just the same way that quotients of AF $C^*$-algebras are AF (see \cite[Theorem III.4.4]{Davidson}, for example).} and hence satisfies the UCT (this goes back to \cite{RS:UCT}).  The result now follows by applying Theorem \ref{II1_factor} to maps $\hat{\phi}, \hat{\psi}: A/I\rightarrow \calM$ induced from $\phi$ and $\psi$. 
\end{proof}

\appendix 
\section{Classification of maps into type III factors} 
\label{Appendix_A} 
In this appendix, we explain how to obtain the following classification of maps into type III factors from Kirchberg's classification theorems. We thank Jamie Gabe and Chris Schafhauser for bringing the $\infty+1=\infty$ trick in the $KK$-calculation below to our attention which means that the UCT is not needed in this case.

\begin{thm}\label{Appendix:Thm}
Let $A$ be a separable, unital and exact $C^*$-algebra, and let $\mathcal M$ be a type III factor with separable predual. Then any two unital and nuclear $^*$-homomorphisms $A\to\mathcal M$ are approximately unitarily equivalent if and only if they have the same kernel.
\end{thm}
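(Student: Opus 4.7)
The forward direction is immediate, so the plan focuses on the converse implication. Given unital nuclear $^*$-homomorphisms $\phi,\psi\colon A\to\calM$ with $\ker\phi=\ker\psi$, my first step would be to pass to the quotient $A/\ker\phi$, which inherits separability, unitality, and exactness (exactness passes to quotients). This reduces the problem to the case when $\phi$ and $\psi$ are unital, full (since $\calM$ is simple), and nuclear, now viewed as maps out of the exact quotient.

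The classification engine will be a Kirchberg-type uniqueness theorem, in the form developed in Gabe's treatment of Kirchberg's work: for unital, full and nuclear $^*$-homomorphisms into a simple purely infinite $C^*$-algebra, approximate unitary equivalence is equivalent to agreement in $KK_{\mathrm{nuc}}$. Since $\calM$ is non-separable as a $C^*$-algebra, applying this requires a separabilisation argument in the spirit of Section \ref{Sect:sep}: one finds a separable simple purely infinite $C^*$-subalgebra of $\calM$ containing $\phi(A)\cup\psi(A)$ and into which $\phi,\psi$ corestrict as unital, full and nuclear maps, so that the Kirchberg--Gabe classification applies.

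To verify $[\phi]_{KK_{\mathrm{nuc}}}=[\psi]_{KK_{\mathrm{nuc}}}$ without invoking the UCT, the key is the $\infty+1=\infty$ trick. Since $\calM$ is properly infinite, one fixes isometries $s_1,s_2\in\calM$ with $s_1s_1^*+s_2s_2^*=1$, and forms the internal Cuntz sum
\begin{equation}
  (\phi\oplus\phi)(a)=s_1\phi(a)s_1^*+s_2\phi(a)s_2^*,\qquad a\in A,
\end{equation}
which is again unital, full and nuclear. Kirchberg's (unital, nuclear) absorption theorem applied in this purely infinite setting yields that $\phi\oplus\phi$ is approximately unitarily equivalent to $\phi$, and hence $2[\phi]=[\phi]$ in $KK_{\mathrm{nuc}}(A,\calM)$, forcing $[\phi]=0$. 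The same argument gives $[\psi]=0$, so $[\phi]=[\psi]$, and the uniqueness theorem from the previous paragraph then produces the desired approximate unitary equivalence.

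The main obstacle is not the $KK$-calculation, which is essentially free once the $\infty+1=\infty$ trick is identified, but the careful transfer of Kirchberg's separably formulated uniqueness and absorption results to the non-separable codomain $\calM$. This requires a separabilisation argument in parallel with Section \ref{Sect:sep}, capturing the images of $\phi$ and $\psi$ together with the structural properties needed (unital simple purely infinite, full nuclear corestrictions) inside a separable $C^*$-subalgebra of $\calM$; once this is in place, Kirchberg's theorems directly furnish the inputs required by the outline above.
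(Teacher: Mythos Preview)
Your approach is correct in outline and uses the same classification engine as the paper, but differs in two execution details worth noting.

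First, the separabilisation you flag as the ``main obstacle'' is unnecessary. The paper invokes Kirchberg's theorem in Gabe's formulation (\cite{Gabe:MAMS}, Theorem B), which applies to unital, full, nuclear, strongly $\calO_\infty$-stable maps into an arbitrary unital $C^*$-algebra $B$; no separability of $B$ is required. Since a type III factor is simple and purely infinite as a $C^*$-algebra, nuclear maps into it are strongly $\calO_\infty$-stable by \cite[Corollary 9.8]{Gabe:MAMS}, and the theorem applies directly to $\calM$. Your separabilisation would work, but it adds effort the paper avoids.

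Second, your $KK_{\mathrm{nuc}}$-computation via $\phi\oplus\phi\sim\phi$ relies on Kirchberg's absorption theorem as an additional nontrivial input (and, as written, approximate unitary equivalence only yields equality in $KL_{\mathrm{nuc}}$, not $KK_{\mathrm{nuc}}$; you would need asymptotic unitary equivalence here). The paper instead uses a purely algebraic variant of the $\infty+1=\infty$ trick: fix a sequence $(v_n)_n$ of isometries in $\calM$ with pairwise orthogonal ranges and $\sum_n v_nv_n^*=1$ (available in any type III factor with separable predual), and form the infinite repeat $\phi^{(\infty)}(a)=\sum_n v_n\phi(a)v_n^*$. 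By construction $\phi^{(\infty)}\oplus\phi$ is unitarily conjugate to $\phi^{(\infty)}$, so $[\phi^{(\infty)}]+[\phi]=[\phi^{(\infty)}]$ in $KK_{\mathrm{nuc}}(A,\calM)$, giving $[\phi]=0$ with no absorption theorem needed. This is both simpler and sidesteps the $KK$/$KL$ subtlety.
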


We will obtain Theorem \ref{Appendix:Thm} from the following classification theorem (we could equally have used a version of the theorem characterising approximate unitary equivalence in terms of $KL_{\mathrm{nuc}}$).
\begin{thm}[Kirchberg]\label{Kirchberg}
Let $A$ be a separable, unital and exact $C^*$-algebra, and let $B$ be a unital, simple purely infinite $C^*$-algebra.  Then two unital, injective and nuclear $^*$-homomorphisms $\phi,\psi:A\to B$ are asymptotically unitarily equivalent (i.e. there is a continuous path of unitaries $(u_t)_{t\geq 0}$ in $B$ with $u_t\phi(a)u_t^*\to \psi(a)$ as $t\to\infty$ for all $a\in A$) if and only if $KK_{\mathrm{nuc}}(\phi)=KK_{\mathrm{nuc}}(\psi)$.
\end{thm}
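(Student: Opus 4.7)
The plan is to split into the easy and hard implications, with the hard direction following Kirchberg's absorption strategy (as subsequently packaged by Gabe). For the easy direction, suppose $(u_t)_{t\geq 0}$ is a continuous path of unitaries in $B$ with $u_t\phi(a)u_t^*\to\psi(a)$ for every $a\in A$. Extending the parameter to the one-point compactification of $[0,\infty)$ by setting the value at $\infty$ to $\psi$, the family $(\mathrm{Ad}(u_t)\circ\phi)_{t\in[0,\infty]}$ together with the constant family $(\phi)$ assembles into a weakly nuclear Cuntz pair $A\rightrightarrows C_\sigma([0,\infty],B)\rhd C_0([0,\infty),B)$ (using that unitary conjugation preserves weak nuclearity). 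Evaluation at $0$ and $\infty$ provides a weakly nuclear homotopy between $(\phi,\phi)$ and $(\phi,\psi)$, so $[\phi,\psi]_{KK_{\mathrm{nuc}}}=[\phi,\phi]_{KK_{\mathrm{nuc}}}=0$, i.e.\ $KK_{\mathrm{nuc}}(\phi)=KK_{\mathrm{nuc}}(\psi)$.

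For the converse, suppose $KK_{\mathrm{nuc}}(\phi)=KK_{\mathrm{nuc}}(\psi)$. Since $B$ is unital simple purely infinite, it contains isometries $s_1,s_2$ with $s_1s_1^*+s_2s_2^*=1_B$, which allow me to form Cuntz sums of unital maps into $B$. I would first establish an absorption theorem: there is a unital and nuclear $^*$-homomorphism $\theta\colon A\to B$ which is unitally nuclearly asymptotically absorbing, in the sense that for any unital and nuclear $\varphi\colon A\to B$, there is a norm-continuous path of unitaries in $B$ asymptotically conjugating $\varphi\oplus\theta$ to $\theta$. A canonical choice is an infinite repeat $\theta=\bigoplus_{n=1}^\infty\theta_0$ of some fixed unital nuclear $\theta_0\colon A\to B$, implemented along an infinite orthogonal family of isometries in $B$; the asymptotic absorption $\varphi\oplus\theta\sim_{\mathrm{asymp}}\theta$ is then the ``$\infty+1=\infty$'' trick, upgraded from approximate to asymptotic unitary equivalence by a careful interpolation argument using the simple purely infinite structure of $B$. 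Applied to $\varphi=\phi$ and $\varphi=\psi$, this gives paths realising
\begin{equation}
\phi\ \sim_{\mathrm{asymp}}\ \phi\oplus\theta\qquad\text{and}\qquad \psi\ \sim_{\mathrm{asymp}}\ \psi\oplus\theta.
\end{equation}

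Next, the vanishing of $[\phi,\psi]_{KK_{\mathrm{nuc}}(A,B)}$, combined with unital nuclear absorption of $\phi\oplus\theta$ and $\psi\oplus\theta$ (both of which are full since $B$ is simple), feeds into a Dadarlat--Eilers type stable uniqueness theorem for Cuntz pairs in the nuclear simple purely infinite setting: this yields a continuous path of unitaries $(v_t)_{t\geq0}$ in $B$ with $v_t(\phi\oplus\theta)(a)v_t^*\to(\psi\oplus\theta)(a)$ for all $a\in A$. Splicing the three paths using the transitivity of asymptotic unitary equivalence produces the desired path from $\phi$ to $\psi$.

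The main obstacle is the asymptotic version of the absorption theorem in Step 1: approximate absorption in the simple purely infinite world is relatively standard, but upgrading to a norm-continuous path of unitaries requires Kirchberg's careful analysis of paths of isometries and the use of $K_1$-triviality of the unitary group of properly infinite simple $C^*$-algebras (so that the stable uniqueness path, which a priori lives in a matrix amplification, can be brought back to $B$ itself). In modern treatments this is most cleanly done by passing through $\mathcal{O}_\infty$-absorbing reformulations and Gabe's framework, where exactness of $A$ plays the role dual to $\mathcal{O}_\infty$-stability of the codomain and allows the nuclear maps to be classified even when $B$ itself is neither nuclear nor separable.
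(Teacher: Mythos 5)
The paper does not prove Theorem~\ref{Kirchberg}: it cites R\o{}rdam's book (Theorem 8.3.3(iii)), Kirchberg's unpublished manuscripts, and, for a complete modern proof, Gabe's memoir (\cite{Gabe:MAMS}, Theorem~B combined with Corollary~9.8 on strong $\mathcal{O}_\infty$-stability of nuclear maps into simple purely infinite algebras). Your proposal is an independent sketch along the classical absorption-plus-stable-uniqueness route, not the $\mathcal{O}_\infty$-stable reformulation the paper points to, so they should be compared as alternative strategies.

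There are two genuine gaps in what you have written. First, in the easy direction, the pair $\big((\mathrm{Ad}(u_t)\circ\phi)_t,\ \phi\big)$ is not a Cuntz pair over $C_0([0,\infty),B)$: for $a\in A$ the difference $u_t\phi(a)u_t^*-\phi(a)$ converges to $\psi(a)-\phi(a)$ as $t\to\infty$, which is generically non-zero, so it does not vanish at infinity. The correct (and standard) argument is that $t\mapsto \mathrm{Ad}(u_t)\circ\phi$ is a homotopy of $^*$-homomorphisms $A\to B$ from $\mathrm{Ad}(u_0)\circ\phi$ to $\psi$, that homotopic $^*$-homomorphisms have the same class, and that $[\mathrm{Ad}(u_0)\circ\phi]_{KK_{\mathrm{nuc}}}=[\phi]_{KK_{\mathrm{nuc}}}$ because $\mathrm{diag}(u_0,u_0^*)$ is connected to the identity in $M_2(B)$.

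The more serious gap is in the hard direction. The infinite repeat $\theta=\bigoplus_{n=1}^\infty\theta_0$ implemented along orthogonal isometries does not converge in norm, hence is not a $^*$-homomorphism into $B$; it only makes sense as a strictly convergent series in $\mathcal{M}(B\otimes\mathcal{K})$. More fundamentally, the assertion that such a $\theta$ is \emph{asymptotically} (not merely approximately) unitally absorbing for all unital nuclear maps, with the absorbing unitaries living in $B$ itself and varying continuously, \emph{is} Kirchberg's absorption theorem---it is precisely the hard technical core of the result, and your sketch asserts it rather than proving it. You acknowledge this (``the main obstacle is the asymptotic version of the absorption theorem in Step~1''), but this step cannot be dispatched by ``a careful interpolation argument''; it is the reason the theorem required Kirchberg's unpublished manuscript and, more recently, Gabe's full-length memoir. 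Similarly, the step from $[\phi,\psi]_{KK_{\mathrm{nuc}}}=0$ plus absorption to a genuine path of unitaries in $B$ (rather than in $\mathcal{M}(B\otimes\mathcal{K})$) conjugating $\phi\oplus\theta$ to $\psi\oplus\theta$ needs the $K_1$-injectivity and path-lifting analysis that is suppressed here. As it stands the sketch records the correct high-level architecture, but the genuine mathematical content is entirely deferred to the steps that are not proved.
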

 
As with many results of this type, unfortunately Kirchberg never published full details of Theorem \ref{Kirchberg}. It can be found as Theorem 8.3.3 (iii) of R{\o}rdam's book (\cite{Rordam:Book}) where it is called `Kirchberg's Classification Theorem'. For the details, R\o{}rdam refers to Kirchberg's manuscript \cite{Kirchberg99}, and Kirchberg's Fields Institute preprint, which morphed into his huge book project, the unfinished version of which was published posthumously in 2025 (\cite{Kirchberg:Manuscript}). This demonstrates that these results were known to Kirchberg back in the 1990s. 
 
Gabe's new treatment \cite{Gabe:Crelle,Gabe:MAMS} now gives a complete proof of Theorem \ref{Kirchberg} and many more results. Though it is not stated explicitly in \cite{Gabe:MAMS}, Theorem \ref{Kirchberg} is a special case of \cite[Theorem B]{Gabe:MAMS} which gives such a classification result for unital, full, nuclear and strongly $\mathcal O_\infty$-stable $^*$-homomorphisms\footnote{See \cite[Section 3.3]{Gabe:Crelle} and \cite[Section 4]{Gabe:MAMS} for a definition and discussion of $\mathcal O_\infty$-stable maps.} from a separable unital $C^*$-algebra $A$ into a unital $C^*$-algebra $B$. For simple (non-zero) $B$, fullness is the same as injectivity, and for such a nuclear map to exist, $A$ will need to be exact. When $A$ is separable and exact, and $B$ is simple and purely infinite, all nuclear $^*$-homomorphisms are strongly $\mathcal O_\infty$-stable by \cite[Corollary 9.8]{Gabe:MAMS}, and in this way \cite[Theorem B]{Gabe:MAMS} encompasses Theorem \ref{Kirchberg}. This approach uses tensorial absorption results (and the complete version of the argument for \cite[Corollary 9.8]{Gabe:MAMS} factors through the ideas used in Kirchberg and R\o{}rdam's extensions of Kirchberg's Geneva theorems to strongly purely infinite nuclear $C^*$-algebras \cite{KR:Adv}).

Alternatively, and closer to the ideas we used to prove Theorem \ref{II1_factor:intro}, Bouwen and Gabe have very recently shown how to use trace-kernel classification techniques to give a new approach to purely infinite classification results.  They obtain tensorial absorption results as a consequence of classification, rather than as an input to it. Theorem 6.16 of \cite{Bouwen-Gabe-2024} gives a classification of maps from separable nuclear $C^*$-algebras into simple purely infinite $C^*$-algebras in terms of (a slight modification of) $KL$. 

\begin{proof}[Proof of Theorem \ref{Appendix:Thm}]
Certainly if $\phi$ and $\psi$ are approximately unitarily equivalent, they have the same kernel. Conversely if they have the same kernel $I$, we can replace $A$ by $A/I$ and it suffices to show that two unital, nuclear and injective maps $\phi,\psi:A\to\mathcal M$ are approximately or even asymptotically unitarily equivalent, which will follow from Theorem \ref{Kirchberg} by showing $KK_{\mathrm{nuc}}(\phi)=0$.

Fix a family $(v_n)_n$ of isometries in $\mathcal M$ with pairwise orthogonal ranges such that $\sum_{n=1}^\infty v_nv_n^*=1$ (with strong convergence). Then given a unital and nuclear $^*$-homomorphism $\phi:A\to\mathcal M$, one can define the infinite repeat $\phi^{(\infty)}:A\to \mathcal M$ by $\phi(x)=\sum_{n=1}^\infty v_n\phi(x)v_n^*$. By construction we will have $[\phi^{(\infty)}]+[\phi]=[\phi^{(\infty)}]$ in $KK_{\mathrm{nuc}}(A,\mathcal M)$, and hence $[\phi]=0$ in $KK_{\mathrm{nuc}}(A,\mathcal M)$.
\end{proof}

\end{document}